\numberwithin{equation}{section}
\newcommand{\Z}{\mathbb{Z}}
\newcommand{\Q}{\mathbb{Q}}
\newcommand\Gal{\mathrm{Gal}}
\newtheorem{lemma}{Lemma}[section]
\newtheorem{theorem}[lemma]{Theorem}
\newtheorem{proposition}[lemma]{Proposition}
\newtheorem{corollary}[lemma]{Corollary}
\newtheorem{definition}[lemma]{Definition}
\newtheorem{conjecture}[lemma]{Conjecture}
\newtheorem{remark}[lemma]{Remark}
\title{\vspace{-\baselineskip}\sffamily\bfseries Two-step nilpotent extensions are not anabelian}
\author[1]{Peter Koymans\thanks{Department of Mathematics, Ann Arbor, MI 48109, USA, koymans@umich.edu}}
\author[2]{Carlo Pagano\thanks{Department of Mathematics and Statistics, Montreal, Quebec H3G 1M8, Canada, carlein90@gmail.com}}
\affil[1]{University of Michigan}
\affil[2]{Concordia University}
\date{\today}
\begin{document}
\maketitle

\begin{abstract}
We prove the existence of two non-isomorphic number fields $K$ and $L$ such that the maximal two-step nilpotent quotients of their absolute Galois groups are isomorphic. In particular, one may take $K$ and $L$ to be any of the fields $\Q(\sqrt{-11})$, $\Q(\sqrt{-19})$, $\Q(\sqrt{-43})$, $\Q(\sqrt{-67})$ or $\Q(\sqrt{-163})$. Furthermore, we give an explicit combinatorial description of these Galois groups in terms of a generalization of the Rado graph. A critical ingredient in our proofs is the back-and-forth method from model theory.
\end{abstract}

\section{Introduction}
\subsection{History of anabelian geometry}
The anabelian program is a broad set of conjectures aimed at reconstructing arithmetic and geometry from Galois theory. An early gem in this direction, established before the program was put in motion, is the celebrated Neukirch--Uchida theorem from the late 1960s, which shows that one may functorially recover number fields from their absolute Galois groups. In particular, one has that 
\begin{align}
\label{eAnabelianNeu}
\mathcal{G}_K \simeq_{\text{top.gr.}} \mathcal{G}_L \Longleftrightarrow K \simeq_{\text{fields}} L,
\end{align}
where $\mathcal{G}_K$, $\mathcal{G}_L$ denote the absolute Galois groups of the respective number fields.

The starting point of the proof was Neukirch's insight that there is a $p$-adic counterpart to Artin--Schreier's characterization of real closed fields. This insight culminated into a topological characterization of the decomposition groups, which is often referred to as the \emph{local theory} of Neukirch's argument. Already the local theory combined with Chebotarev's density theorem readily implies equation (\ref{eAnabelianNeu}) for normal extensions. It took the efforts of Neukirch, Ikheda, Iwasawa and finally Uchida \cite{Uchida} to arrive at a substantial functorial strengthening of equation (\ref{eAnabelianNeu}).

The term \emph{anabelian}, which stands for \emph{beyond abelian}, goes back to Grothendieck's letter \cite{Grothendieck} to Faltings in $1983$, where a vast conjectural program was set forth. He predicted that there should be an analogue of the Neukirch--Uchida theorem for finitely generated fields, which was established successively by Pop \cite{Pop1} and Mochizuki \cite{Moch: fin.generated}. Another influential prediction was a vast generalization of Mordell's conjecture, known as the section conjecture, where, under suitable conditions, rational points on varieties are identified precisely as the set of group-theoretic sections of the map $\pi_1(X) \twoheadrightarrow \mathcal{G}_K$. Remarkable progress on the section conjecture can be found in the work of Koenigsmann \cite{Koen2}, Stix \cite{Stix,Stix2,Stix3}, the work of Pop--Stix \cite{PopStix}, Pop \cite{Pop6} and Betts--Stix \cite{Betts-Stix}.  

\subsection{Main results}
In the early 1990s, Bogomolov \cite{Bo1} set forth an anabelian program for fields of \emph{geometric nature}. In particular, the goal is to reconstruct the function field $k(X)$ (up to an inseparable extension) of a variety $X$ of dimension at least $2$ over an algebraically closed field $k$ from its absolute Galois group $\mathcal{G}_{k(X)}$. Even more, the program predicts that one may reconstruct $k(X)$ from $\mathcal{G}_{k(X)}^2(\ell)$, the largest $2$-nilpotent, pro-$\ell$ quotient of $\mathcal{G}_{k(X)}$, where $\ell$ is a prime assumed to be different from the characteristic $\text{char}(k)$ of the base field $k$. 

An analogue of Neukirch's local theory on the geometric side can be traced back to a fundamental insight of Bogomolov in \cite{Bo1} and expanded in \cite{Bo2}. This circle of ideas has been vastly expanded by Pop \cite{Pop1, Pop2, Pop3, Pop4}. As a result of his work, Bogomolov's program is solved when $k$ is the algebraic closure of a finite field, or more generally when $\text{tr-deg}(k(X))>\text{tr-deg}(k)+1$. The restriction $\text{dim}(X) \geq 2$ is necessary, since $\mathcal{G}_{k(X)}(\ell)$ is free pro-$\ell$ on $\#k$ generators if $X$ is a curve. We refer to Pop's survey article \cite{Pop5} for an overview of anabelian geometry both in the arithmetic and geometric case. For an analogue of the section conjecture in this setting, we refer the reader to the work of Bogomolov--Rovinsky--Tschinkel \cite{Bo4}. 

Given that in the birational program described above the reconstruction takes place from much smaller quotients of the absolute Galois group, it is natural to ask which quotients of the absolute Galois groups of number fields are rigid enough to encode the arithmetic of the field: whether the ones appearing in Bogomolov's program suffice is precisely the question we are able to settle here. 

More formally, let $K^2$ be the compositum, inside $K^{\text{sep}}$, of all the finite Galois extensions of $K$ such that their Galois group is nilpotent of class at most $2$. We write $\mathcal{G}_K^2 = \Gal(K^2/K)$ for the corresponding Galois group.

\begin{theorem} 
\label{Thm: main1}
There exist two non-isomorphic number fields $K_1$ and $K_2$ such that 
$$
\mathcal{G}_{K_1}^2 \simeq_{\textup{top.gr.}} \mathcal{G}_{K_2}^2.
$$
\end{theorem}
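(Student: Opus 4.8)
The plan is to take $K_1 = \Q(\sqrt{-p_1})$ and $K_2 = \Q(\sqrt{-p_2})$ with $p_1 \neq p_2$ in $\{11,19,43,67,163\}$, so that $K_1 \not\simeq K_2$ is immediate from the discriminants, and to prove $\mathcal{G}_{K_1}^2 \simeq_{\textup{top.gr.}} \mathcal{G}_{K_2}^2$. I would begin by describing $\mathcal{G}_K^2$ structurally for any such $K$. Since it is nilpotent of class $\leq 2$, its derived subgroup is central and it sits in an extension
\[
1 \longrightarrow [\mathcal{G}_K^2, \mathcal{G}_K^2] \longrightarrow \mathcal{G}_K^2 \longrightarrow \mathcal{G}_K^{\mathrm{ab}} \longrightarrow 1;
\]
for the \emph{maximal} class-$\leq 2$ quotient, this extension is pinned down by $\mathcal{G}_K^{\mathrm{ab}}$ together with the linking data of Galois cohomology — the commutator pairing $\Lambda^2 \mathcal{G}_K^{\mathrm{ab}} \twoheadrightarrow [\mathcal{G}_K^2, \mathcal{G}_K^2]$ refined by the relevant Bockstein/power maps — and, by global class field theory, all of this reduces to local reciprocity and R\'edei-type symbols. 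Using $h_K = 1$ and $\OO_K^\times = \{\pm 1\}$, both valid for the five fields, one gets $\mathcal{G}_K^{\mathrm{ab}} \simeq \widehat{\OO_K}^\times/\{\pm 1\} = \big(\prod_{\pp} \OO_{K,\pp}^\times\big)/\{\pm 1\}$.

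Next I would repackage this as a countable structure, the promised generalization of the Rado graph. As vertices take the primes $\pp$ of $K$, labelled by the isomorphism type of $\OO_{K,\pp}^\times$ (equivalently by the residue characteristic and the local degree of $\pp$), and as edge-labels the linking values attached to each pair $\{\pp,\qq\}$ — the R\'edei-type symbols above — so that this decorated graph records the poset of finite subextensions of $K^2/K$ together with the splitting of every prime in each of them, and hence determines $\mathcal{G}_K^2$ as a topological group. The crucial point is that, for the fields in our list, the graph satisfies a fixed finite list of first-order axioms: \emph{local axioms} specifying which vertex labels and which symbol values occur and how they constrain each other at a single prime — these are \emph{identical} for all five fields, since $h_K = 1$ makes genus theory trivial, only the prime above $p$ ramifies, and $2$ is inert in each of $\Q(\sqrt{-11}), \Q(\sqrt{-19}), \Q(\sqrt{-43}), \Q(\sqrt{-67}), \Q(\sqrt{-163})$ — together with \emph{richness axioms} asserting that any finite partial configuration consistent with the local axioms and with the global reciprocity relations extends by one more prime.

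The main obstacle is the verification of the richness axioms. Given primes $\pp_1, \dots, \pp_n$ with prescribed labels and pairwise linking, and a consistent prescription of a label and of linking values to $\pp_1, \dots, \pp_n$ for a new prime $\qq$, one must actually exhibit such a $\qq$. This is a Chebotarev-density statement over a suitable finite subextension of $K^2/K$ — a ray-class-type field assembled from the $\pp_i$ and the pertinent cyclic extensions — and the delicate part is to show that no global obstruction blocks the desired splitting type: one has to rule out a Grunwald--Wang-type failure and to check that the reciprocity relations imposed by the commutator pairing and its quadratic refinement are exactly the consistency conditions already encoded in the axioms, with no residual constraint. This is precisely where $h_K = 1$ and the behavior of $2$ enter: because $2$ is inert (unlike in $\Q(\sqrt{-7})$), the relevant $2$-adic local conditions remain jointly realizable and the Grunwald--Wang special case does not interfere.

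Granting the axiomatization, the proof finishes by the back-and-forth method: the list of axioms is complete, so any two countable decorated graphs satisfying it are isomorphic, and since the graph was built to reconstruct the central extension above functorially, such a combinatorial isomorphism lifts to a topological group isomorphism $\mathcal{G}_{K_1}^2 \simeq \mathcal{G}_{K_2}^2$. As $K_1 \not\simeq K_2$, this proves Theorem \ref{Thm: main1}, and running the argument over any pair among the five listed fields yields the sharper statement of the abstract.
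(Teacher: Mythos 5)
Your proposal follows essentially the same route as the paper: encode the extension of $\mathcal{G}_K^{\mathrm{ab}}$ by the commutator subgroup in a decorated graph on the primes of $K$ (with the ramification at $p$ and the $\Z_\ell$-extensions handled separately), verify a Rado-type extension property via Chebotarev while checking that quadratic reciprocity at the inert prime $2$ is the only constraint and that Grunwald--Wang does not interfere, and conclude by back-and-forth plus a reconstruction of the group from the graph. The one subtlety you gloss over — that Chebotarev only realizes the prescribed linking values up to a common unit scaling, since the character attached to the new prime is only canonical up to a choice of generator — is exactly what the paper's ``weakly Rado''/``Rado up to scaling'' device fixes, and it is a refinement within your framework rather than a different argument.
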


A discussion of the substantial literature on Galois theoretic reconstruction results takes place in the next subsection. Our main theorem is a consequence of the following more precise result.  We write $\mathcal{G}(\ell)$ for the maximal pro-$\ell$ quotient of a profinite group $\mathcal{G}$. 

\begin{theorem} 
\label{Thm: main2}
Let $\ell$ be a prime number. Let $K_1$, $K_2$ be two imaginary quadratic number fields such that $\textup{Cl}(K_i)[\ell] = \{0\}$ for each $i \in \{1, 2\}$. For $\ell = 2$, assume furthermore that $2$ is inert in both $K_1$ ,$K_2$ and, for $\ell = 3$, assume that $K_i \otimes_{\Q} \Q_3$ is not isomorphic to $\Q_3(\zeta_3)$ as $\Q_3$-algebra. Then
$$
\mathcal{G}_{K_1}^2(\ell) \simeq_{\textup{top.gr.}} \mathcal{G}_{K_2}^2(\ell).
$$
\end{theorem}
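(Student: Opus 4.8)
The plan is to realize $\mathcal{G}_{K_i}^2(\ell)$ as an explicit group built from purely local data together with a ``large random-looking'' combinatorial gadget, and then to show that this gadget does not depend on $i$ by a back-and-forth argument. First I would analyze the structure of $\mathcal{G}_{K_i}^2(\ell)$ via the two-step filtration: the abelianization $\mathcal{G}_{K_i}^{\mathrm{ab}}(\ell)$ sits in an exact sequence whose inertia part is governed by the completions $K_i \otimes_{\Q} \Q_v$ at $v \mid \ell\infty$ (together with the ramified primes, but since we only ask for the maximal unramified-outside-nothing pro-$\ell$ extension we control ramification via the conductor-discriminant formalism), while the hypothesis $\mathrm{Cl}(K_i)[\ell] = \{0\}$ kills the unramified part: there are no everywhere-unramified $\ell$-extensions. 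The extra hypotheses at $\ell = 2$ (inertness of $2$) and $\ell = 3$ (avoiding $\Q_3(\zeta_3)$) are precisely what is needed to make the local algebras $K_i \otimes_{\Q} \Q_\ell$ isomorphic for $i = 1, 2$, so that $\mathcal{G}_{K_1}^{\mathrm{ab}}(\ell) \simeq \mathcal{G}_{K_2}^{\mathrm{ab}}(\ell)$ as topological groups, compatibly with their local decomposition data.

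Next I would pass from the abelianization to the full two-step quotient. The commutator pairing identifies $[\mathcal{G}_{K_i}^2(\ell), \mathcal{G}_{K_i}^2(\ell)]$ with a quotient of $\bigwedge^2 \mathcal{G}_{K_i}^{\mathrm{ab}}(\ell)$, and by the theory of the quadratic part of the Galois group (Koch's description of $\mathcal{G}_K(\ell)$ by generators and relations, Fröhlich/Shafarevich-style) the relations are encoded by Hilbert-symbol / cup-product data which are again local at $\ell$ and at the ramified primes. So the isomorphism type of $\mathcal{G}_{K_i}^2(\ell)$ is determined by: (i) the local algebra $K_i \otimes \Q_\ell$; (ii) the (infinite) set of rational primes $p$ that split or are inert in $K_i$, together with, for each such $p$, the Frobenius/splitting data in the relevant finite $\ell$-extensions — this is where the ``generalized Rado graph'' enters. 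The claim is that this combinatorial structure — a countable set equipped with finitely many relations recording splitting behavior and local symbols, plus the fixed local datum at $\ell$ — is, for \emph{every} imaginary quadratic $K_i$ satisfying the hypotheses, an $\aleph_0$-categorical structure: any two such are isomorphic because the associated relations satisfy a Rado-type extension axiom (given any finite configuration one can, by Chebotarev, find a prime extending it in any prescribed way).

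The decisive step, and the one I expect to be the main obstacle, is this last point: proving that the relevant combinatorial structure attached to $K_i$ satisfies the back-and-forth extension axioms uniformly. Concretely, one must show that for any finite set of primes $S$ with prescribed splitting/symbol data in $K_i$, and any one further prescription, there exists a prime realizing it — and crucially that the \emph{same} axiom scheme holds for $K_1$ and $K_2$ so that a single back-and-forth yields an isomorphism of the infinite structures, hence (after checking the local datum at $\ell$ matches, by the hypotheses) an isomorphism $\mathcal{G}_{K_1}^2(\ell) \simeq \mathcal{G}_{K_2}^2(\ell)$ of topological groups. The existence of such primes is a Chebotarev density statement in a suitable (finite) extension of $K_i$ built from the data in $S$; the subtlety is that the extension in which one applies Chebotarev, and the conjugacy class one aims for, must be set up so that the resulting axioms are literally independent of $i$. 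I would expect to spend most of the work isolating exactly which local invariants at $\ell$ must be pinned down (this is the role of the inertness and $\zeta_3$ hypotheses) and verifying that, once they are, the rest of the structure is ``free'' in the precise model-theoretic sense, so that Theorem~\ref{Thm: main1} follows by taking $\ell = 2$ and any two of the listed fields $\Q(\sqrt{-11}), \dots, \Q(\sqrt{-163})$ (all of class number one, with $2$ inert).
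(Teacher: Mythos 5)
Your overall strategy is the one the paper follows: describe $\mathcal{G}_{K}^{\mathrm{ab}}(\ell)$ explicitly, identify $[\mathcal{G}_K^2(\ell),\mathcal{G}_K^2(\ell)]$ with (a quotient of) $\wedge^2$ of the abelianization, package the remaining extension data into a countable combinatorial structure indexed by places, verify a Rado-type extension axiom by Chebotarev, and conclude by back-and-forth. However, two of the steps you defer as ``the main obstacle'' hide genuine problems with the axiom scheme as you state it. First, the extension axiom ``given any finite configuration one can, by Chebotarev, find a prime extending it in any prescribed way'' is \emph{false} for the structure that actually arises. The labels are directed: roughly $\lambda(v,s)=\pi_v(\mathrm{Frob}_s)$ and $\lambda(s,v)=-\pi_s(\mathrm{Frob}_v)$. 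Chebotarev in the compositum of the fields cut out by the old characters $\pi_s$ and the Kummer extensions $K(\alpha_s^{1/\ell^m})$ lets you prescribe $\pi_s(\mathrm{Frob}_v)$ exactly, but the new character $\pi_v$ involves a non-canonical choice of generator of $(\mathcal{O}_K/v)^\ast[\ell^\infty]$ that cannot be normalized before $v$ is found, so the values $\pi_v(\mathrm{Frob}_s)$ are only controllable up to one common unit in $(\Z/\ell^{f(v)}\Z)^\ast$. The paper has to introduce ``weakly Rado'' graphs and prove that weakly Rado is equivalent to Rado after rescaling the characters (which does not change the group) precisely to absorb this ambiguity. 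Second, at $\ell=2$ quadratic reciprocity forces relations among the labels (schematically $\lambda(v,w)+\lambda(w,v)\equiv \lambda(v_2(2),v)\cdot\lambda(v_2(2),w)\bmod 2$), so the unconstrained axiom scheme is inconsistent there; one must restrict to ``reciprocity graphs'' and prove the extension property only for configurations satisfying the constraint.

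Two further points. Your claim that the hypotheses at $\ell=2,3$ are ``precisely what is needed to make the local algebras $K_i\otimes\Q_\ell$ isomorphic'' is not right: for $\ell=3$ the hypothesis only excludes $\Q_3(\zeta_3)$ and the two fields may well have non-isomorphic completions at $3$. Its actual role is to guarantee $\mu_\ell\not\subseteq K_{\mathfrak{l}}$ for $\mathfrak{l}\mid\ell$, so that $H^2(\mathcal{G}_{K_\mathfrak{l}}(\ell),\Z/\ell^n\Z)=0$ and the cup products $\pi_v\cup\pi_w$ can be killed locally at $\ell$, which is what drives the local--global vanishing argument. Finally, it is not enough that $[\mathcal{G}^2,\mathcal{G}^2]$ is ``a quotient of $\wedge^2$'': the decisive input is Tate's theorem $H^2(\mathcal{G}_K,\Q/\Z)=0$, which shows the natural surjection $\wedge^2_{\mathrm{prof}}(\mathcal{G}_K^{\mathrm{ab}}(\ell))\to[\mathcal{G}_K^2(\ell),\mathcal{G}_K^2(\ell)]$ is an isomorphism; without this, knowing the abelianization and the combinatorial extension data would not determine the group, and a homomorphism inducing an isomorphism on abelianizations need not be an isomorphism.
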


Observe that for each $\ell$, this theorem applies to infinitely many quadratic fields thanks to a result of Wiles \cite{Wiles}. In particular, for $\ell = 2$, Gauss' genus theory and Theorem \ref{Thm: main2} imply that all $\mathcal{G}_{\Q(\sqrt{-p})}^2(2)$ are isomorphic to an explicit group as $p$ runs through the positive rational primes congruent to $3$ modulo $8$. As we shall see later, we are able to explicitly describe this group, which we coin the \emph{Rado group}, in Section \ref{Rado groups at 2}. From Theorem \ref{Thm: main2}, we obtain the following corollary, which is an explicit version of Theorem \ref{Thm: main1}. 

\begin{corollary} 
\label{Cor: main3}
Let $K$, $L$ be any of the fields $\Q(\sqrt{-11})$, $\Q(\sqrt{-19})$, $\Q(\sqrt{-43})$, $\Q(\sqrt{-67})$ or $\Q(\sqrt{-163})$. Then
$$
\mathcal{G}_K^2 \simeq_{\textup{top.gr.}} \mathcal{G}_L^2.
$$
\end{corollary}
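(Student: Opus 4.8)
The plan is to deduce Corollary \ref{Cor: main3} from Theorem \ref{Thm: main2} by checking that each of the five fields $\Q(\sqrt{-11})$, $\Q(\sqrt{-19})$, $\Q(\sqrt{-43})$, $\Q(\sqrt{-67})$, $\Q(\sqrt{-163})$ satisfies the hypotheses of that theorem, simultaneously for every prime $\ell$. The first and main input is the well-known fact that these are precisely (among) the nine imaginary quadratic fields of class number one (here the discriminants $-11,-19,-43,-67,-163$ all being $\equiv 1 \bmod 4$ gives $\OO_K = \Z[\frac{1+\sqrt{-m}}{2}]$); since $\textup{Cl}(K_i) = \{0\}$, we trivially have $\textup{Cl}(K_i)[\ell] = \{0\}$ for \emph{all} primes $\ell$. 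Thus the class-group hypothesis holds without any restriction on $\ell$.

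Next I would verify the two auxiliary local conditions. For $\ell = 2$: I need $2$ inert in each of the five fields. Since each field has discriminant $-11, -19, -43, -67, -163$, all odd and all $\equiv 5 \bmod 8$, the prime $2$ is inert in $\Q(\sqrt{d})$ iff $d \equiv 5 \bmod 8$, which is exactly the case here; so $2$ is inert in all five fields. For $\ell = 3$: I must check that $K_i \otimes_\Q \Q_3$ is not isomorphic to $\Q_3(\zeta_3)$ as a $\Q_3$-algebra. Now $\Q_3(\zeta_3) = \Q_3(\sqrt{-3})$, which is the unique ramified quadratic extension of $\Q_3$ of discriminant exponent giving $\Q_3(\sqrt{-3})$; since none of $-11,-19,-43,-67,-163$ is $-3$ times a square in $\Q_3^\times$ (equivalently $3 \nmid$ any of these discriminants, so $3$ is either split or inert, never ramified, in these fields), we never get the algebra $\Q_3(\zeta_3)$. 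Hence the $\ell = 3$ condition also holds for all five fields.

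With all hypotheses of Theorem \ref{Thm: main2} verified for each pair $(K_i, K_j)$ of fields from the list and each prime $\ell$, we conclude $\mathcal{G}_K^2(\ell) \simeq_{\textup{top.gr.}} \mathcal{G}_L^2(\ell)$ for every $\ell$. To pass from the pro-$\ell$ quotients to $\mathcal{G}_K^2$ itself, I would use the decomposition of the maximal two-step nilpotent quotient as the product of its pro-$\ell$ parts: a pronilpotent group is canonically the direct product of its pro-$\ell$ Sylow subgroups, and $\mathcal{G}_K^2$ is pronilpotent (an inverse limit of nilpotent finite groups), so $\mathcal{G}_K^2 \simeq \prod_\ell \mathcal{G}_K^2(\ell)$ topologically and functorially. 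Since the factors agree prime by prime, the products agree, giving $\mathcal{G}_K^2 \simeq_{\textup{top.gr.}} \mathcal{G}_L^2$. Finally, the five fields are pairwise non-isomorphic (distinct discriminants), which matches the phrasing of the corollary and, combined with Theorem \ref{Thm: main1}'s assertion, shows these are genuine counterexamples to anabelianity of the two-step nilpotent quotient.

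The only mild subtlety — and the step I would be most careful about — is the reduction from the family of pro-$\ell$ isomorphisms to a single isomorphism of $\mathcal{G}_K^2$: one must make sure the pronilpotent-to-product decomposition is legitimate in the topological category (it is, since each $\mathcal{G}_K^2(\ell)$ is the quotient by the closed subgroup generated by all pro-$\ell'$ Sylow subgroups for $\ell' \neq \ell$, and the natural map to the product is a topological isomorphism), so that assembling the prime-by-prime isomorphisms yields a bona fide topological group isomorphism. Everything else is a finite check on the five discriminants, which is entirely routine.
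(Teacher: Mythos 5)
Your proposal is correct and follows the same route as the paper: verify class number one and the congruence $p \equiv 3 \bmod 8$ (equivalently $2$ inert and, a fortiori, $3$ unramified so the $\ell=3$ condition holds) for the five fields, apply Theorem \ref{Thm: main2} prime by prime, and assemble via the canonical decomposition $\mathcal{G}_K^2 \simeq \prod_\ell \mathcal{G}_K^2(\ell)$ of the pronilpotent group $\mathcal{G}_K^2$ into its pro-$\ell$ Sylow factors. Your write-up merely spells out the finite local checks that the paper leaves implicit.
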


\subsection{Discussion of main results}
There is a rich literature on similar Galois theoretic reconstruction problems, although these results are almost exclusively successful in reconstructing the number field from Galois theoretic data. Theorem \ref{Thm: main1} therefore takes an interesting place in the literature as the first negative Galois theoretic reconstruction result since the results of Onabe \cite{Onabe} from the 1970s. The theory of arithmetic equivalence as initiated by Perlis \cite{Perlis} also provides ample examples of negative reconstruction results, but these are not of Galois theoretic nature. We will now discuss these Galois theoretic reconstruction questions and their relation with our main results.

It is not hard to adapt the proof of Neukirch--Uchida to establish that 
$$
\mathcal{G}_K^{\text{pro-solv}} \simeq_{\text{top.gr.}} \mathcal{G}_L^{\text{pro-solv}} \Longleftrightarrow K \simeq_{\text{fields}} L.
$$
Only very recently a substantial leap was made by Saidi and Tamagawa \cite{Saidi-Tama} . They arrived in particular at the striking conclusion that one may reconstruct $K$ from $\mathcal{G}_K^{3\text{-solv}}$, the largest $3$-solvable quotient. Namely, they proved that
$$
\mathcal{G}_K^{3\text{-solv}} \simeq_{\text{top.gr.}} \mathcal{G}_L^{3\text{-solv}} \Longleftrightarrow K \simeq_{\text{fields}} L.
$$
The case of $2$-solvable quotients remains open. Although our main theorem does not settle this open question, it may still provide important insight as $\mathcal{G}_K^2$ and $\mathcal{G}_K^{2\text{-solv}}$ are rather closely related. Saidi--Tamagawa was inspired by earlier work of Cornelissen--de Smit--Li--Marcolli--Smit \cite{Co1} and Cornelissen--Li--Marcolli--Smit \cite{Co2}, which reconstructed $K$ from $\mathcal{G}_K^1$ together with extra data on the Dirichlet $L$-functions of $K$. Extensive studies of $\mathcal{G}_K^1$ were conducted by Angelakis--Stevenhagen \cite{St} and de Smit--Solomatin \cite{de Smit, de Smit 2}, building upon previous work of Kubota \cite{Kubota} and Onabe \cite{Onabe}. Examples of quadratic number fields $K$ with isomorphic $\mathcal{G}_K^{\text{ab}} = \mathcal{G}_K^{1\text{-solv}}$ feature prominently in several of the above works.

A local analogue of the Neukirch--Uchida theorem was established by Mochizuki \cite{Moch:local}. Here, as shown by Jarden--Ritter \cite{Jarden}, the naive analogue of the theorem \emph{fails}, and one has to consider not merely isomorphisms of topological groups, but rather isomorphisms of \emph{filtered} topological groups, where the filtration on both sides is the upper-ramification filtration. We also mention the work of Ivanov \cite{Ivanov} and Shimizu \cite{Shimizu} for an intriguing extension of Neukirch--Uchida to groups with restricted ramification.  

\subsection{Method of proof}
Theorem \ref{Thm: main2} relies on a concrete combinatorial description of the groups $\mathcal{G}_K^{2}(\ell)$ in terms of \emph{decorated Rado graphs}, a generalization of the classical notion of a Rado graph, discovered independently by Ackermann \cite{Ackermann} and Rado \cite{Rado}. This graph is also the central player in seminal work by Erd\H{o}s--R\'enyi \cite{Erdos--Renyi}, where the Rado graph was shown to be isomorphic to a random graph on countably many vertices with probability $1$. We achieve our combinatorial description by an application of the \emph{back-and-forth} method from model theory (see for example \cite{Poizat}) in the following manner. Thanks to a result of Tate, one has that $H^2(\mathcal{G}_K, \Q/\Z) = 0$ for any number field $K$. As observed already in \cite{MO}, this conveniently gives a natural identification
$$
\wedge_{\text{prof}}^2(\mathcal{G}_K^{\text{ab}}) = [\mathcal{G}_K^{2}, \mathcal{G}_K^{2}].
$$
Hence the group $\mathcal{G}_K^{2}(\ell)$ sits in the exact sequence
$$
0 \to \wedge_{\text{prof}}^2(\mathcal{G}_K^{\text{ab}}(\ell)) \to \mathcal{G}_K^{2}(\ell) \to \mathcal{G}_K^{\text{ab}}(\ell) \to 0,
$$
where the groups $\mathcal{G}_K^{\text{ab}}(\ell)$, $\wedge_{\text{prof}}^2(\mathcal{G}_K^{\text{ab}}(\ell))$ are both explicitly given (see for instance \cite{St, Onabe} for the former group and see Subsection \ref{intermezzo} for background on the profinite exterior square of a profinite abelian group). Then, in order to encode the group $\mathcal{G}_K^{2}(\ell)$, one has to describe the \emph{extension class} between these two explicit groups. 

It is at this point where our fundamental new insight comes in by connecting this extension class to the Rado graph. We show that this extension class can be encoded in a \emph{decorated graph}, which are formally introduced in Section \ref{decorated odd graphs} for $\ell$ odd. In order to achieve this, we happen to need a strengthening of Tate's result that $H^2(\mathcal{G}_K, \Q/\Z) = 0$, which is abstracted in part (C3), (C4) of Definition \ref{def: Rado groups} for $\ell$ odd.

The key next step is then showing that the graphs obtained this way are all \emph{pseudo-random} in the following sense. They share with $100\%$ of decorated graphs an \emph{extension property} that characterizes their isomorphism classes: it is precisely here that we apply the back-and-forth method. The desired pseudo-randomness is ultimately a consequence of the Chebotarev density theorem. We remark that these graphs are closely related to R\'edei matrices, whose randomness properties have played a crucial role in several recent developments in arithmetic statistics, see for instance \cite{KP} for an overview. 

It is instructive to compare the situation with the one of Bogomolov's program mentioned above. Unlike global fields, and unlike geometric function fields of curves, the second cohomology group $H^2(\mathcal{G}_{k(X)}, \Q/\Z)$ is far from trivial when $\text{dim}(X) \geq 2$. Hence in contrast to the case just explained, the commutator subgroup $[\mathcal{G}_{k(X)}^2(\ell), \mathcal{G}_{k(X)}^2(\ell)]$ is not explicitly described in terms of the abelianization. In other words, the natural surjection
$$
\wedge_{\text{prof}}^2(\mathcal{G}_{k(X)}^{\text{ab}}(\ell)) \twoheadrightarrow [\mathcal{G}_{k(X)}^2(\ell), \mathcal{G}_{k(X)}^2(\ell)]
$$
is not an isomorphism. The kernel of this map is in turn precisely what is exploited in Bogomolov's \emph{theory of commuting pairs} built by Bogomolov--Tschinkel \cite{ Bo2, Bo3} following an original idea of Bogomolov \cite{Bo1}. The theory of commuting pairs is the starting point for an analogue of Neukirch's local theory. The first works in this direction were due to Ware \cite{Ware} for $\ell = 2$ and Koenigsmann \cite{Koenigsman} for $\ell$ odd. A complete treatment of the theory was subsequently given by Topaz \cite{Topaz}. For an overview of Bogomolov's program we refer to Pop \cite[Historical note]{Pop4}.

In contrast, the commutator pairing is entirely predictable in our situation. It is in this respect that $\mathcal{G}_K(\ell)$ behaves similarly to a free pro-$\ell$ group, which is, as we mentioned above, what one gets for a curve $X$ over an algebraically closed $k$: the $1$-dimensional nature of $\text{Spec}(\mathcal{O}_K)$ here is manifested in the vanishing of $H^2(\mathcal{G}_K, \Q/\Z)$, which is shared with the case of curves over algebraically closed fields. It is only in the latter geometric case that one has the stronger vanishing $H^2(\mathcal{G}_K, \mathbb{F}_\ell) = 0$, forcing in this case the groups $\mathcal{G}_K(\ell)$ to be actually free pro-$\ell$. This larger class of ``free-like" pro-$\ell$ groups is what we will define as \emph{free on their abelianization} right before Proposition \ref{Enough surjective on abelianization}. 

That said, the arithmetic case of global fields manifests a fundamental difference with the geometric $1$-dimensional case, as the groups $\mathcal{G}_K(\ell)$ in the arithmetic case are actually \emph{never free} (the abelianization has torsion) and this creates the problem of describing the isomorphism class of the extension class between the two known groups. The novel achievement of the present work is precisely the description of this class in terms of \emph{decorated Rado graphs}. This partial resemblance with free pro-$\ell$ groups in dimension $1$, responsible for our combinatorial characterization, resonates with work of Hoshi disproving the pro-$\ell$ section conjecture \cite{Hoshi}. A key step in Hoshi's proof is the fact that the Galois group $\mathcal{G}_{\Q(\zeta_\ell)}^{\text{unr},\ell}(\ell)$ is a free pro-$\ell$ group when $\ell$ is a regular prime, which can be shown using techniques dating back to Shafarevich \cite{Sha}. 

\subsection{Future directions}
Consistently with this picture, our proof proceeds by shuffling places. Inspired by \cite{Saidi-Tama} one could ask whether, modulo twists by center-valued characters, this is the only possible isomorphism. Thanks to our Rado graph description, this is purely a problem in infinitary combinatorial linear algebra, which we have been unable to answer. However, in the positive direction, we show that one may already reconstruct the cyclotomic character (in the following precise sense) from the topological group $\mathcal{G}_K^{2}(\ell)$ for the fields as in Theorem \ref{Thm: main2}, a result that is interesting to compare to \cite[Appendix 4]{Saidi-Tama}. 

\begin{theorem} 
\label{Thm: reconstruction}
Let $\ell$ be a prime. Let $K$ be imaginary quadratic with $\textup{Cl}(K)[\ell] = \{0\}$.

\begin{enumerate}
\item[(a)] Suppose that $\ell$ is odd. If $\ell = 3$, we further require that $K \otimes_{\Q} \Q_3$ is not isomorphic to $\Q_3(\zeta_3)$. Then the kernel of the $\ell$-adic cyclotomic character
$$
\textup{ker}(\pi_{\textup{cyc}}(\ell): \mathcal{G}_K^{2}(\ell) \twoheadrightarrow 1 + \ell \cdot \Z_\ell)
$$
is a characteristic subgroup of $\mathcal{G}_K^{2}(\ell)$, i.e. it is preserved by $\textup{Aut}_{\textup{top.gr.}}(\mathcal{G}_K^{2}(\ell))$. 
\item[(b)] Suppose that $2$ is inert in $K$, i.e. $K := \Q(\sqrt{-p})$ for $p \equiv 3 \bmod 8$ a prime. Then both the kernel of the $2$-adic cyclotomic character 
$$
\textup{ker}(\pi_{\textup{cyc}}(2): \mathcal{G}_K^{2}(2) \to 1 + 4 \cdot \Z_2)
$$
and the kernel of $\chi_{-1}$
$$
\textup{ker}(\chi_{-1}: \mathcal{G}_K^{2}(2) \twoheadrightarrow \mathbb{F}_2)
$$
are characteristic subgroups of $\mathcal{G}_K^{2}(2)$, i.e. they are preserved by $\textup{Aut}_{\textup{top.gr.}}(\mathcal{G}_K^{2}(2))$.
\end{enumerate}
\end{theorem}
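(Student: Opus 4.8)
The plan is to exploit the explicit combinatorial description of $\mathcal{G}_K^{2}(\ell)$ as an extension
$$
0 \to \wedge_{\text{prof}}^2(\mathcal{G}_K^{\text{ab}}(\ell)) \to \mathcal{G}_K^{2}(\ell) \to \mathcal{G}_K^{\text{ab}}(\ell) \to 0
$$
together with the decorated-Rado-graph encoding of the extension class. First I would recall that, by class field theory, $\mathcal{G}_K^{\text{ab}}(\ell)$ is (the pro-$\ell$ part of) the id\`ele class group, and under the hypothesis $\textup{Cl}(K)[\ell] = \{0\}$ its structure is forced: the class group contributes nothing, so $\mathcal{G}_K^{\text{ab}}(\ell)$ is built entirely from the local units at the places above $\ell$ together with a free part coming from the other primes. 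Concretely $\mathcal{G}_K^{\text{ab}}(\ell) \cong T \times \widehat{F}$, where $T$ is the (finite) torsion subgroup coming from $(\mathcal{O}_K \otimes \Z_\ell)^{\times}$ and $\widehat{F}$ is a free $\Z_\ell$-module of countably infinite rank. The cyclotomic character $\pi_{\text{cyc}}(\ell)$ factors through $\mathcal{G}_K^{\text{ab}}(\ell)$, so its kernel in $\mathcal{G}_K^{2}(\ell)$ is the full preimage of $\ker(\pi_{\text{cyc}}(\ell)|_{\mathcal{G}_K^{\text{ab}}(\ell)})$; hence it suffices to show that this subgroup of the abelianization is intrinsic.

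The core of the argument is then a purely group-theoretic recovery statement: I would show that the torsion subgroup $T$ of $\mathcal{G}_K^{\text{ab}}(\ell)$, and within it the precise subgroup cut out by the cyclotomic character, can be detected from the topological group $\mathcal{G}_K^{2}(\ell)$ alone. The torsion subgroup of the abelianization is already characteristic in any group, but the subtlety is that an automorphism of $\mathcal{G}_K^{2}(\ell)$ need not descend to one of $\mathcal{G}_K^{\text{ab}}(\ell)$ respecting a chosen splitting. To pin down $\ker(\pi_{\text{cyc}}(\ell))$ I would use the decorated-graph data: the decoration records, for each vertex (place), the local behavior, and in particular whether the corresponding Frobenius/inertia element lies in the kernel of the cyclotomic character is read off from the vertex's decoration. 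The extension property of the Rado graph guarantees that any automorphism of the group induces an automorphism of the decorated graph preserving all decorations, hence preserving the distinguished subset of vertices; translating this back shows the preimage of $\ker(\pi_{\text{cyc}}(\ell)|_{\text{ab}})$ is preserved. For part (b), the additional character $\chi_{-1}$ (cutting out $K(\sqrt{-1})/K$, i.e.\ the mod-$4$ vs mod-$8$ distinction at the inert prime $2$) is handled the same way: it corresponds to a further canonical quotient of $T$, and the genus-theory/Gauss computation makes $T$ and this quotient completely explicit, so one checks directly that the relevant subgroup is distinguished by its decoration.

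The main obstacle I anticipate is exactly the step of showing that the cyclotomic-character kernel is not merely a canonical subgroup of the \emph{abelianization} but is recognizable from the \emph{full} group $\mathcal{G}_K^{2}(\ell)$ in a way invariant under all topological-group automorphisms — i.e.\ ruling out "exotic" automorphisms that permute the local data in a way incompatible with the cyclotomic grading. This is where the pseudo-randomness of the decorated graph is essential: one must verify that the decoration encoding the cyclotomic behavior at each place is itself part of the intrinsic graph structure (e.g.\ it is determined by the isomorphism type of certain finite subquotients, or by the commutator pairing restricted to torsion elements), rather than extra data one is free to twist. Concretely, the delicate point is the interaction at the prime(s) above $\ell$ — for $\ell = 3$ the excluded case $K\otimes_\Q \Q_3 \cong \Q_3(\zeta_3)$ is precisely where the local unit group acquires extra $3$-torsion and the decoration degenerates, and for $\ell = 2$ one must carefully separate the mod-$4$ and mod-$8$ layers, which is why $\chi_{-1}$ appears as a separate assertion. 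Once the decoration is shown to be intrinsic, the characteristicity follows formally from the functoriality of the back-and-forth equivalence established earlier in the paper.
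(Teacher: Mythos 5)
Your reduction to the abelianization is sound (any $\Z_\ell$-valued character kills torsion and commutators, so it factors through the rank-two free quotient), and you correctly locate the crux: ruling out automorphisms of $\mathcal{G}_K^2(\ell)$ that mix the cyclotomic and anti-cyclotomic directions. But you never close that gap. The assertion that ``any automorphism of the group induces an automorphism of the decorated graph preserving all decorations'' is not something one is free to assume --- it is essentially equivalent to the statement being proved, since an automorphism of the bare topological group need not respect the chosen splitting $\pi_\bullet$, and a priori could send $\pi_{v_\ell(1)}$ to $\lambda\pi_{v_\ell(1)}+\mu\pi_{v_\ell(2)}$ with $\mu\neq 0$. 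You flag this as ``the main obstacle'' and gesture at possible resolutions (finite subquotients, commutator pairing on torsion), but the proof stops there. The paper's Proposition \ref{reconstructing cyclotomic abstract} supplies the missing ingredient: an automorphism-invariant \emph{cohomological characterization} of $\pi_{v_\ell(1)}$, namely that $\ell\cdot\pi_{v_\ell(1)}\cup\pi_0=0$ in $H^2(\mathcal{G},\Z/\ell^n\Z)$ for every finite-order character $\pi_0$ (this follows from axiom (C4), since $\pi_{v_\ell(1)}\cup\pi_v$ is the multiple $-\log_\ell(1+\ell^{f(v)}g(v))\cdot\tilde\theta_v$, which is killed by $\ell$). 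One then uses the weakly Rado extension property to manufacture, for any $\mu\neq 0$, a vertex $v$ witnessing that $\mu\pi_{v_\ell(2)}$ violates this property; that forces $\mu=0$. For $\chi_{-1}$ the paper uses a separate characterization (Theorem \ref{chi-1 in general}): since $\chi_{-1}\cup\chi=\chi\cup\chi$, the character $\chi_{-1}$ is the unique $\chi(0)$ with $\chi(0)\cup\chi=0$ exactly for the $\chi$ lifting to $\Z/4\Z$. Your proposal contains no analogue of either characterization, so the characteristicity does not follow ``formally from the functoriality of the back-and-forth equivalence.''

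Two secondary inaccuracies. First, $\mathcal{G}_K^{\mathrm{ab}}(\ell)\cong\Z_\ell^2\times\prod_{v\in\Omega_{\mathrm{fin}}}\Z/\ell^{f(v)}\Z$ (Proposition \ref{describing abelianization}); the torsion is not finite and the complement is not a free $\Z_\ell$-module of countably infinite rank. Second, the ambiguity to be resolved does not live ``within the torsion subgroup $T$'': $\ker(\pi_{\mathrm{cyc}}(\ell))$ contains all torsion, and the entire difficulty sits in the rank-two torsion-free quotient spanned by the cyclotomic and anti-cyclotomic characters. Neither misstatement is fatal by itself, but together with the missing characterization they leave the core of the argument unproved.
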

We actually prove a stronger result, providing characterizing properties for these characters, see the proof of Proposition \ref{reconstructing cyclotomic abstract} and the statement of Theorem \ref{chi-1 in general}. This reconstruction result has the following interesting consequence.

\begin{corollary}
\label{cQ7}
Let $K \in \{\Q(\sqrt{-11}), \Q(\sqrt{-19}), \Q(\sqrt{-43}), \Q(\sqrt{-67}), \Q(\sqrt{-163})\}$. Then we have that $\mathcal{G}_{\Q(\sqrt{-7})}^2 \not \simeq_{\textup{top.gr.}} \mathcal{G}_K^2$ .
\end{corollary}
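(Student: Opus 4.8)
The plan is to deduce Corollary \ref{cQ7} by combining the reconstruction result Theorem \ref{Thm: reconstruction}(b) with Theorem \ref{Thm: main2}. First I would observe that $\Q(\sqrt{-7})$ and each of the five fields in the statement satisfy the hypotheses of Theorem \ref{Thm: main2} with $\ell = 2$: indeed $7 \equiv 3 \bmod 8$ and $11, 19, 43, 67, 163 \equiv 3 \bmod 8$, so $2$ is inert in all six fields, and each has class number $1$ so in particular $\CL(K)[2] = \{0\}$. Hence Theorem \ref{Thm: main2} gives $\mathcal{G}_{\Q(\sqrt{-7})}^2(2) \simeq_{\textup{top.gr.}} \mathcal{G}_K^2(2)$ for each such $K$, and since the maximal pro-$\ell$ quotient is a topological invariant, any isomorphism $\mathcal{G}_{\Q(\sqrt{-7})}^2 \simeq_{\textup{top.gr.}} \mathcal{G}_K^2$ would restrict to an isomorphism on the pro-$2$ quotients. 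So it suffices to produce an invariant of the topological group $\mathcal{G}_K^2(2)$ that distinguishes $\Q(\sqrt{-7})$ from the other five fields.

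The invariant I would use is the image of the $2$-adic cyclotomic character, read off using Theorem \ref{Thm: reconstruction}(b). That theorem tells us that $\ker(\pi_{\textup{cyc}}(2))$ is a characteristic subgroup of $\mathcal{G}_K^2(2)$, hence preserved by every topological automorphism, and so the quotient $\mathcal{G}_K^2(2) / \ker(\pi_{\textup{cyc}}(2))$ is a topological-group invariant. Now $\mathcal{G}_K^2(2)/\ker(\pi_{\textup{cyc}}(2)) \cong \pi_{\textup{cyc}}(2)\bigl(\mathcal{G}_K^2(2)\bigr) \subseteq 1 + 4\Z_2$, and this image is governed by how $2$-power roots of unity embed in $K$: concretely it is the image of $\Gal(K(\zeta_{2^\infty})/K)$ in $1 + 4\Z_2 \subset \Z_2^\times$. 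The key point is that this image depends on whether $\sqrt{-7}$ (equivalently $\sqrt{7}$, up to $\sqrt{-1}$) lies in the cyclotomic $\Z_2$-tower over $\Q$: since $\Q(\sqrt{7}) \subset \Q(\zeta_8 \cdot \text{something})$... more precisely $7 \equiv -1 \bmod 8$ forces a different splitting behaviour at $2$ than $p \equiv 3 \bmod 8$ with $p$ among $11,19,43,67,163$, so the degree $[K(\zeta_{2^n}):K]$ — equivalently the index of $\pi_{\textup{cyc}}(2)(\mathcal{G}_K^2(2))$ in $1 + 4\Z_2$ — comes out differently. I would compute both sides explicitly: for $K = \Q(\sqrt{-p})$ one has $\zeta_4 \notin K$ always, and whether $\Q(\sqrt{-p}, \zeta_{2^n})/\Q(\sqrt{-p})$ picks up an extra factor of $2$ in degree is detected by $\Q(\sqrt{-p}) \cap \Q(\zeta_{2^n})$; for $p = 7$ this intersection is $\Q(\sqrt{-2}\cdot\text{or }\sqrt{2})$-flavoured differently than for the residue-class-$3\bmod 8$ primes, yielding distinct cyclotomic images.

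Carrying this out: for a prime $p \equiv 3 \bmod 8$, the quadratic subfields of $\Q(\zeta_{2^\infty})$ are $\Q(\sqrt{-1})$, $\Q(\sqrt{2})$, $\Q(\sqrt{-2})$, and $\Q(\sqrt{-p})$ contains none of these, so $[\Q(\sqrt{-p})(\zeta_{2^n}):\Q(\sqrt{-p})] = [\Q(\zeta_{2^n}):\Q]$ and the cyclotomic image is all of $1 + 4\Z_2$; whereas for $p = 7$, since $-7 \equiv 1 \bmod 8$, one checks instead that the relevant ramification/splitting at $2$ differs — in fact $2$ is inert in $\Q(\sqrt{-7})$ too, so I would need to be more careful and compare not the image in $1+4\Z_2$ but rather the combined invariant given by \emph{both} characteristic subgroups in Theorem \ref{Thm: reconstruction}(b), namely $\ker(\pi_{\textup{cyc}}(2))$ and $\ker(\chi_{-1})$, and the position of one relative to the other. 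The genuinely distinguishing feature is that $\Q(\sqrt{-7}) \subset \Q(\zeta_7)$, so $-7$ is a square in $\Q(\zeta_7)$ but $7 \equiv -1 \bmod 8$ makes $\chi_{-1}$ and $\pi_{\textup{cyc}}(2)$ interact differently — concretely $\sqrt{-7} \in \Q_2$ is false but its $2$-adic behaviour places $\Q(\sqrt{-7})\otimes\Q_2$ in a different isomorphism class of étale $\Q_2$-algebra than $\Q(\sqrt{-p})\otimes\Q_2$ for $p \in \{11,19,43,67,163\}$ (since $-7 \equiv 1$, $-11 \equiv 5$, $-19 \equiv 5$, $-43 \equiv 5$, $-67 \equiv 5$, $-163 \equiv 5 \bmod 8$). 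This discrepancy mod $8$ is exactly what the pair of characteristic subgroups detects.

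The main obstacle I expect is the bookkeeping of the last step: extracting from the \emph{topological group} $\mathcal{G}_K^2(2)$ — via the characteristic subgroups furnished by Theorem \ref{Thm: reconstruction}(b) and the finer characterizing properties promised in Theorem \ref{chi-1 in general} — a genuinely group-theoretic invariant that is provably sensitive to the class of $-p \bmod 8$, and then verifying that $-7$ falls in a different class ($1 \bmod 8$) than all of $-11,-19,-43,-67,-163$ ($\equiv 5 \bmod 8$). Once that invariant is pinned down the contradiction is immediate: an isomorphism $\mathcal{G}_{\Q(\sqrt{-7})}^2 \simeq \mathcal{G}_K^2$ would force the invariants to agree, but they do not, so no such isomorphism exists. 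I would write the argument as: assume the isomorphism, pass to pro-$2$ quotients, apply functoriality of the characteristic subgroups, compare the resulting finite combinatorial data attached to $2 \bmod 8$-behaviour, and conclude.
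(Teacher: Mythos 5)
Your opening step contains an arithmetic error that derails the whole plan: $7\equiv 7\bmod 8$, not $3\bmod 8$, so $-7\equiv 1\bmod 8$ and $2$ \emph{splits} in $\Q(\sqrt{-7})$ (your later aside that ``$2$ is inert in $\Q(\sqrt{-7})$ too'' is also false). Consequently neither Theorem \ref{Thm: main2} nor Theorem \ref{Thm: reconstruction}(b) applies to $\Q(\sqrt{-7})$ at $\ell=2$; indeed the paper points out right after Corollary \ref{cQ7} that this corollary is exactly what shows the inertness hypothesis cannot be dropped. Your plan is moreover internally inconsistent: you first invoke Theorem \ref{Thm: main2} to conclude $\mathcal{G}_{\Q(\sqrt{-7})}^2(2)\simeq_{\textup{top.gr.}}\mathcal{G}_K^2(2)$ and then set out to find an invariant of that very pro-$2$ group which distinguishes the two fields --- if the first claim were true, no such invariant could exist. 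Since the odd-$\ell$ quotients genuinely are isomorphic for all six fields (Theorem \ref{tMain1} does apply there), the task is precisely to \emph{disprove} the isomorphism of the pro-$2$ quotients, and the distinguishing invariant you reach for first cannot work: for every $K=\Q(\sqrt{-p})$ with $p$ odd (including $p=7$) one has $K\cap\Q(\zeta_{2^\infty})=\Q$, so $\pi_{\textup{cyc}}(2)$ is surjective onto $1+4\Z_2$ in all cases and its image carries no information. Your fallback, ``the position of $\ker(\chi_{-1})$ relative to $\ker(\pi_{\textup{cyc}}(2))$,'' is never made into a concrete group-theoretic statement; you concede this is the main obstacle, but it is exactly the content of the proof, so the proposal has a genuine gap.

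The paper's argument is different and does not pass through Theorem \ref{Thm: main2} at all. Theorem \ref{chi-1 in general} is proved for an \emph{arbitrary} number field (this is essential, since no Rado-type statement is available for $\Q(\sqrt{-7})$ at $2$) and characterizes $\chi_{-1}$ purely group-theoretically: it is the unique character $\chi(0)$ such that $\chi(0)\cup\chi$ vanishes exactly for those quadratic characters $\chi$ lying in $2\cdot\textup{Hom}_{\textup{top.gr.}}(\mathcal{G}_K,\Z/4\Z)$. Hence any topological isomorphism of the groups $\mathcal{G}^2(2)$ must match the characters $\chi_{-1}$ on the two sides, and therefore must preserve the purely group-theoretic property ``$\chi_{-1}$ lifts to a $\Z/4\Z$-valued character.'' That property separates the fields: the obstruction to lifting is $\chi_{-1}\cup\chi_{-1}=(-1,-1)$, which for $p\equiv 3\bmod 8$ is locally trivial everywhere (at the inert prime $2$ the completion is the unramified quadratic extension of $\Q_2$, where the restriction of $(-1,-1)_{\Q_2}$ dies) and hence globally trivial, whereas for $\Q(\sqrt{-7})$ the primes above $2$ have completion $\Q_2$ itself, where $(-1,-1)$ is nontrivial, so $\chi_{-1}$ does not lift. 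This liftability invariant --- which your proposal never identifies, though your closing remarks about $-7\equiv 1$ versus $-p\equiv 5\bmod 8$ gesture toward the right arithmetic dichotomy --- is what actually completes the proof.
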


The corollary demonstrates that there is additional information contained in $2$-step nilpotent Galois groups beyond the abelianization, since it is well-known \cite{St, Onabe} that the abelianized Galois groups are in fact isomorphic for the fields considered in Corollary \ref{cQ7}. It also shows that, even in the case $\text{Cl}(K) = \{0\}$, we can not remove the condition that $2$ is inert in $K$ in Theorem \ref{Thm: main2}.

Theorem \ref{Thm: main2} and Corollary \ref{Cor: main3} beg two questions: \\
\textbf{Question 1:} Can one explicitly describe $\mathcal{G}_K^{2}(\ell)$ for all global fields $K$ and all primes $\ell$? \\
\textbf{Question 2:} Can one deal with arbitrarily large nilpotency class?

We spend a few final words on these two questions and other future directions. 

Regarding question 1, already the description of $\mathcal{G}_K^{\text{ab}}$ becomes rather more involved: this was done by Kubota \cite{Kubota} in the language of Ulm invariants, which are explicitly given in terms of the arithmetic of the base field. It would be very interesting to find an appropriate extension of Theorem \ref{Thm: main2} to all number fields, with a suitable enlargement of our category of Rado groups from Section \ref{Rado group odd primes} and Section \ref{Rado groups at 2}. Towards this goal, we remark that our techniques may readily be adapted to pin down the isomorphism type of $\mathcal{G}_\Q^2$ in terms of combinatorial data, which in itself is already a novel and exciting achievement. Earlier work of Berrevoets \cite{Berrevoets} gave a description of $\mathcal{G}_\Q^2$ of a more algebraic nature.

Regarding question 2, we believe that it is possible to encode all the successive quotients of the lower central series in suitably \emph{decorated hypergraphs} for the fields of Corollary \ref{Cor: main3} and for $K := \Q$ (and more generally, for all the fields of Theorem \ref{Thm: main2} if $\ell$ is fixed). We also believe that the resulting hypergraphs are pseudo-random, in the sense that they enjoy a characterizing extension property that is also enjoyed by $100\%$ of the decorated hypergraphs. Our intuition is that one may prove this $100\%$ result by utilizing a suitable adaptation of our back-and-forth method.

However, to the best of our understanding, it is extremely hard to \emph{prove} that the hypergraphs arising from Galois groups are indeed pseudo-random. In the present work, this ultimately falls as a consqequence of the Chebotarev density theorem. Already for $3$-nilpotent quotients, Chebotarev will only control \emph{some} of the labels of the hypergraph but not all of them. In particular, it seems that if one would be able to carry out the same strategy in higher nilpotency class, then would get as a by-product a positive answer to the \emph{minimal ramification problem}, which is still open for nilpotent groups. Previous work by Kisilevski--Sonn \cite{Kis1} and Kisilevski--Neftin--Sonn \cite{Kis2} covers certain classes of nilpotent groups. In that vein, we also mention the work of Shusterman \cite{Shusterman}, who made progress over function fields in the large $q$ limit for general groups. 

That being said, the minimal ramification problem is however settled in nilpotency class $2$ due to work of Plans \cite{Plans} for $\ell$ odd and work of Kisilevski--Sonn \cite{Kis1} for general $\ell$: this can be readily reproved using the techniques of this paper, as the minimal ramification problem has a combinatorial counterpart for our Rado groups, which is not difficult to establish. As this result is already known, we leave the details of this approach to the interested reader. 

For a profinite group $\mathcal{G}$, we denote by $\mathcal{G}^{\text{pro-nil}}$ the inverse limit of all finite nilpotent quotients of $\mathcal{G}$. Altogether, this leads us to make the following conjectures. 

\begin{conjecture}
\label{cNil}
Let $K$, $L$ be any of the fields $\Q(\sqrt{-11})$, $\Q(\sqrt{-19})$, $\Q(\sqrt{-43})$, $\Q(\sqrt{-67})$ or $\Q(\sqrt{-163})$. Then
$$
\mathcal{G}_{K}^{\textup{pro-nil}} \simeq_{\textup{top.gr.}} \mathcal{G}_{L}^{\textup{pro-nil}}.
$$
\end{conjecture}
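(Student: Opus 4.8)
The plan is to reduce Conjecture \ref{cNil} to its pro-$\ell$ components and then to propagate the graph-theoretic machinery behind Theorem \ref{Thm: main2} up the entire lower central series. First, since a finite nilpotent group is the direct product of its Sylow subgroups, for any profinite group $\mathcal{G}$ there is a canonical decomposition $\mathcal{G}^{\textup{pro-nil}} \simeq_{\textup{top.gr.}} \prod_{\ell} \mathcal{G}(\ell)$; as a continuous homomorphism from a pro-$\ell$ group to a pro-$\ell'$ group with $\ell \neq \ell'$ is trivial, a topological isomorphism between two such products is precisely a family of topological isomorphisms $\mathcal{G}_K(\ell) \xrightarrow{\sim} \mathcal{G}_L(\ell)$, one for each prime $\ell$. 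Hence Conjecture \ref{cNil} is equivalent to the assertion that $\mathcal{G}_K(\ell) \simeq_{\textup{top.gr.}} \mathcal{G}_L(\ell)$ for every prime $\ell$ and every pair $K, L$ among the five fields. The hypotheses of Theorem \ref{Thm: main2} are met for these fields at all $\ell$: each has class number $1$, so $\textup{Cl}(K)[\ell] = \{0\}$; one has $p \equiv 3 \bmod 8$ in all five cases, so $2$ is inert; and $K \otimes_{\Q} \Q_3 \not\simeq \Q_3(\zeta_3)$ because $3p$ has odd $3$-adic valuation. Consequently the truncation of the conjecture at nilpotency class $2$ is exactly Corollary \ref{Cor: main3}, and the whole difficulty lies in the passage to arbitrary class.

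Second, I would fix $\ell$ and filter $\mathcal{G}_K(\ell)$ by its lower central series $\gamma_1 \supseteq \gamma_2 \supseteq \cdots$. Each graded piece $\gamma_n / \gamma_{n+1}$ is an abelian pro-$\ell$ group determined inductively from $\mathcal{G}_K^{\textup{ab}}(\ell)$ together with the iterated commutator maps, and $\mathcal{G}_K(\ell)$ itself is recovered from the tower of extension classes relating successive pieces, just as in the class-$2$ sequence $0 \to \wedge_{\textup{prof}}^2(\mathcal{G}_K^{\textup{ab}}(\ell)) \to \mathcal{G}_K^2(\ell) \to \mathcal{G}_K^{\textup{ab}}(\ell) \to 0$ of the introduction. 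Following the remarks after Question 2, I would encode these classes in a sequence of \emph{decorated hypergraphs}, one per level: the $1$-dimensional level is the decorated Rado graph of Section \ref{decorated odd graphs} (or Section \ref{Rado groups at 2} when $\ell = 2$), while level $n$ records the class-$n$ obstruction with labels valued in the relevant local and cohomological invariants of $K$, the axioms being a hypergraph analogue of the conditions of Definition \ref{def: Rado groups}.

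Third, one would establish that the hypergraphs attached to the five fields are \emph{pseudo-random}: they satisfy a hypergraph extension property, shared with $100\%$ of all decorated hypergraphs, that determines the isomorphism class. The ``$100\%$'' half is a back-and-forth argument of the kind carried out in this paper, now run level by level up the tower; the ``Galois hypergraphs have the extension property'' half would again be deduced from the Chebotarev density theorem applied to suitably chosen auxiliary extensions. Once these two halves are in place, the required isomorphisms $\mathcal{G}_K(\ell) \simeq_{\textup{top.gr.}} \mathcal{G}_L(\ell)$ follow, and the first step assembles them into the desired isomorphism $\mathcal{G}_K^{\textup{pro-nil}} \simeq_{\textup{top.gr.}} \mathcal{G}_L^{\textup{pro-nil}}$.

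I expect the arithmetic half of the third step to be the main obstacle. As already flagged in the discussion of Question 2, once the nilpotency class is at least $3$ the Chebotarev density theorem controls only \emph{some} of the labels of the hypergraph, and governing the remaining labels appears to require genuinely new arithmetic input; indeed, a complete execution of this strategy would, as a by-product, settle the minimal ramification problem for nilpotent groups, which is open beyond class $2$. Thus while the reduction in the first step is routine and the formalism of the second step is within reach of the present methods, the pseudo-randomness in the third step — and hence Conjecture \ref{cNil} itself — remains, for the time being, a conjecture rather than a theorem.
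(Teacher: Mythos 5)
The statement you are asked about is Conjecture \ref{cNil}: it is stated in the paper as an open conjecture, not as a theorem, and the paper offers no proof of it. There is therefore nothing to compare your argument against except the authors' own informal discussion in the ``Future directions'' subsection, and your proposal is essentially a faithful transcription of that discussion. Your first step is sound and matches what the paper does for the class-$2$ case in Section \ref{sProof}: the canonical decomposition $\mathcal{G}^{\textup{pro-nil}} \simeq \prod_{\ell} \mathcal{G}(\ell)$ into maximal pro-$\ell$ quotients (each factor being characteristic) reduces the conjecture to the second conjecture of the introduction, prime by prime, and your verification that the five fields satisfy the hypotheses of Theorem \ref{Thm: main2} for every $\ell$ (class number $1$, $p \equiv 3 \bmod 8$, and $v_3(3p)$ odd so that $K \otimes_{\Q} \Q_3 \not\simeq \Q_3(\zeta_3)$) is correct.

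The genuine gap is the one you yourself name: steps two and three are a program, not an argument. No definition of the decorated hypergraphs, of their extension property, or of the axioms generalizing Definition \ref{def: Rado groups} to higher class is supplied, and, more seriously, the arithmetic input needed to show that the Galois hypergraphs satisfy the extension property is missing. The paper is explicit that for nilpotency class $\geq 3$ the Chebotarev density theorem controls only some of the labels, and that a full execution of this strategy would resolve the minimal ramification problem for nilpotent groups, which is open. So your proposal does not prove the statement; it correctly reconstructs the authors' intended route and correctly locates the obstruction, which is exactly why the statement appears in the paper as a conjecture rather than a theorem. Nothing in your write-up is wrong, but it should not be presented as a proof, and to your credit you do not present it as one.
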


\begin{conjecture}
Let $\ell$ be a prime and let $K$, $L$ be any fields as in Theorem \ref{Thm: main2}. Then
$$
\mathcal{G}_{K}(\ell) \simeq_{\textup{top.gr.}} \mathcal{G}_{L}(\ell).
$$
\end{conjecture}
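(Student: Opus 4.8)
The plan is to upgrade the strategy behind Theorem \ref{Thm: main2} from the two-step nilpotent quotient $\mathcal{G}_K^2(\ell)$ to the full maximal pro-$\ell$ quotient $\mathcal{G}_K(\ell)$. Recall that $\mathcal{G}_K(\ell)$ is finitely generated, and that because $H^2(\mathcal{G}_K, \Q/\Z) = 0$ it is "free on its abelianization" in the sense introduced before Proposition \ref{Enough surjective on abelianization}; consequently the successive quotients $\gamma_i(\mathcal{G}_K(\ell))/\gamma_{i+1}(\mathcal{G}_K(\ell))$ of its lower central series, together with the iterated commutator (Lie) brackets linking consecutive pieces, are in principle computable from the arithmetic of $K$ (the $\ell$-part of $\CL(K)$, the behaviour at $\ell$, and the units). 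The first step is to encode all of this data as a \emph{decorated hypergraph}, generalizing the decorated graphs of Section \ref{decorated odd graphs}: vertices carry the finite places of $K$, the $i$-ary hyperedges carry the higher commutator pairings read off from Artin/R\'edei-type symbols on abelian-by-nilpotent quotients, and the decorations record the local data at $\ell$ exactly as in Definition \ref{def: Rado groups}. Under the hypotheses of Theorem \ref{Thm: main2}, the abelianization and the relevant global invariants of $\mathcal{G}_K(\ell)$ are independent of $K$, so the entire content of the conjecture is that the decorated hypergraphs attached to $K$ and to $L$ are isomorphic.

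The second step is the purely combinatorial, model-theoretic half. One isolates an \emph{extension property} for these decorated hypergraphs — a higher-arity analogue of the axioms of Definition \ref{def: Rado groups} — and proves, by the back-and-forth method, that any two decorated hypergraphs satisfying it are isomorphic, and moreover that any such isomorphism lifts to a topological isomorphism of the associated pro-$\ell$ groups. Since back-and-forth is essentially insensitive to the arity of the relations once the extension axioms are formulated correctly, this step should go through much as in the two-step case; the bookkeeping of which finite configurations must be extendable becomes heavier, and one additionally needs the back-and-forth construction to be uniform in the nilpotency class so that, after treating each quotient $\mathcal{G}_K(\ell)/\gamma_{c+1}$, one can pass to the inverse limit.

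The third step — establishing that the hypergraph actually attached to $\mathcal{G}_K(\ell)$ satisfies the extension property, i.e. is pseudo-random — is the genuine obstacle. In the two-step case this was a clean consequence of the Chebotarev density theorem: one produces an auxiliary place whose Frobenius realizes any prescribed finite pattern of R\'edei symbols. For higher nilpotency class, the same Chebotarev input pins down only \emph{some} of the hyperedge labels, namely those visible on abelian-by-abelian quotients; controlling the remaining labels amounts to realizing prescribed nilpotent $\ell$-extensions with prescribed small ramification, which is exactly the minimal ramification problem for nilpotent groups, open beyond class $2$ (as noted in the discussion preceding Conjecture \ref{cNil}). I therefore expect any complete proof either to circumvent the minimal ramification problem or to resolve enough of it in the cases at hand. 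A realistic intermediate target is the fixed-class statement $\mathcal{G}_K(\ell)/\gamma_{c+1} \simeq_{\mathrm{top.gr.}} \mathcal{G}_L(\ell)/\gamma_{c+1}$ for each $c$, proved by the scheme above using only the Chebotarev-controllable labels together with a direct analysis of the uncontrolled ones in low class, followed by the uniform passage to the limit from step two.
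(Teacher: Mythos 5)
This statement is not a theorem of the paper: it is stated as a conjecture, the authors offer no proof of it, and your proposal does not supply one either. What you have written is essentially a restatement of the authors' own speculative program from the ``Future directions'' discussion (encode the lower central series data of $\mathcal{G}_K(\ell)$ in decorated hypergraphs, isolate an extension property, run back-and-forth, and prove pseudo-randomness of the arithmetically defined hypergraph), and you explicitly concede the decisive step rather than prove it. The gap is exactly where the paper says it is: Chebotarev only controls the labels visible on abelian-by-abelian quotients, and realizing prescribed patterns for the remaining higher commutator labels with the required small ramification is, as the authors note, essentially the minimal ramification problem for nilpotent groups, open beyond class $2$. Your ``realistic intermediate target'' of fixing the class $c$ and handling the uncontrolled labels ``by direct analysis in low class'' is precisely the part for which no argument exists, in the paper or in your proposal; so nothing here upgrades the conjecture to a theorem.

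Two further points would need repair even as an outline. First, $\mathcal{G}_K(\ell)$ is not finitely generated: its abelianization is $\prod_{v \in \Omega} \Z/\ell^{f(v)}\Z$ with $\Omega$ countably infinite (Proposition \ref{describing abelianization}), so any back-and-forth scheme must be set up for infinitely generated objects from the start, as the paper's Sections \ref{decorated odd graphs}--\ref{Rado group odd primes} do in class $2$. Second, the vanishing $H^2(\mathcal{G}_K, \Q/\Z) = 0$ and the notion ``free on its abelianization'' (used in Proposition \ref{Enough surjective on abelianization}) only identify the commutator quotient $[\mathcal{G},\mathcal{G}]$ with $\wedge^2_{\textup{prof}}$ of the abelianization in the $2$-nilpotent setting; it does not by itself make the higher graded pieces of the lower central series, together with their brackets and extension classes, ``computable from the arithmetic of $K$.'' Determining that data, and proving it is pseudo-random, is the content of the conjecture, not an input to it.
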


In fact, we expect that it should be possible to substantially refine the above two conjectures. Such conjectures would involve a pro-nilpotent analogue of the Rado group defined using \emph{pseudo-random hypergraphs}. When appropriately defined, we expect that $\mathcal{G}_{K}^{\textup{pro-nil}}$ is isomorphic to this pro-nilpotent Rado group, at least for $K$ as in Conjecture \ref{cNil}.

In this sense, the final goal of this line of investigation is a substantial strenghtening of the nilpotent case of Shafarevich's celebrated resolution of the inverse Galois problem for solvable groups \cite[Thm. 9.6.1]{Neukirch} by giving an explicit combinatorial description of $\mathcal{G}_K^{\text{pro-nil}}$ in terms of decorated Rado hypergraphs. This would give full justice to the intuition that the pro-nilpotent closure of a number field is a combinatorial object rather than an arithmetical one, containing relatively little information about the number field itself, a point of view that is consistent with Hoshi's counterexamples to the pro-$\ell$ section conjecture \cite{Hoshi}.  

\subsection{Overview of the paper}
The main theory is developed in the even-numbered sections. The odd-numbered sections give the necessary technical modifications to treat the case $\ell = 2$ and can be skipped upon first reading. 

Section \ref{decorated odd graphs} introduces the category of decorated graphs, which is the appropriate analogue of the Rado graph in our situation, and we prove the fundamental properties of this category using the back-and-forth method. Section \ref{Rado group odd primes} abstracts the key group-theoretic properties satisfied by our Galois groups. This culminates in Theorem \ref{weakly Rado groups are isomorphic}, which shows that all (weakly) Rado groups are isomorphic. We apply our abstract framework to Galois groups of quadratic fields in Section \ref{section: Galois groups l odd}. We show how one may naturally attach a decorated graph to such Galois groups. We end Section \ref{section: Galois groups l odd} by proving that the resulting decorated graphs are weakly Rado by an application of the Chebotarev density theorem. Section \ref{sProof} wraps up the paper by combining the results from the previous sections to deduce our main theorems.

\subsection*{Acknowledgments} 
We warmly thank Hendrik Lenstra for numerous insightful conversations on $2$-nilpotent Galois groups and for summarizing the work \cite{Berrevoets} for us. Many thanks go to Fedor Bogomolov for answering our questions about his birational anabelian program and for providing us context and references. We are very grateful to Florian Pop for answering many questions about anabelian geometry and for providing numerous references that considerably helped us putting our results in context. We thank Alexander Ivanov for a stimulating email exchange.

This work was conceived when both authors were guests at the MPIM Bonn to whom goes our deepest gratitude for excellent working conditions and an inspiring atmosphere. We thank Harry Smit for numerous conversations and all the participants of the anabelian geometry seminar at MPIM in Autumn 2020. We thank Hershy Kisilevski for valuable references on the minimal ramification problem.

\subsection{Notation}
Fix an algebraic closure $\overline{\Q}$ throughout this paper. If $K$ is a number field (always taken inside our fixed algebraic closure $\overline{\Q}$), we write
\begin{itemize}
\item $\mathcal{O}_K$ for the ring of integers of $K$;
\item $\mathcal{O}_K^\ast$ for the units of the ring of integers of $K$;
\item $\Omega_K$ for the set of places of $K$;
\item $\mathcal{G}_K$ for the absolute Galois group of $K$;
\item $K^m$ for the compositum of all extensions $L/K$ inside $\overline{\Q}$ such that $L$ is Galois and nilpotent of class at most $m$;
\item $\mathcal{G}_K^m := \Gal(K^m/K)$. We will sometimes write $\mathcal{G}_K^{\text{ab}}$ for $\mathcal{G}_K^1$.
\end{itemize}
Some of the above notations will also be used for local fields. All our cohomology groups are to be interpreted as profinite group cohomology. If $\mathcal{G}$ is a profinite group, then $\mathcal{G}(\ell)$ denotes the maximal pro-$\ell$ quotient of $\mathcal{G}$.

\section{Rado graphs} 
\label{decorated odd graphs}
This section abstracts the key combinatorial tools in the proof of Theorem \ref{Thm: main2} for odd $\ell$. Since $\ell$ is odd, we may fix once and for all an isomorphism of rank $1$ free modules over $\mathbb{Z}_\ell$
$$
\log_\ell: 1+ \ell \cdot \mathbb{Z}_\ell \to \mathbb{Z}_{\ell}.
$$
Recall that this induces an isomorphism between $1 + \ell^{n+1} \cdot \mathbb{Z}_\ell$ and $\ell^n \cdot \mathbb{Z}_\ell$. Hence to compute $\log_\ell(1 + \ell \cdot \alpha)$ modulo $\ell^n$, we only need to know $\alpha$ modulo $\ell^n$. As such, whenever we need to know the value of the function $\log_\ell$ modulo $\ell^n$, we will tolerate that the argument is defined only modulo $\ell^{n+1}$ and is not a priori an element of $\mathbb{Z}_\ell$. We will now define the central objects of this paper.

\begin{definition}
A \emph{decorated graph} is a $4$-tuple $\Phi := (\Omega, f, g, \lambda)$ such that
\begin{itemize}
\item $\Omega$ is a countably infinite set;
\item $f: \Omega \to \mathbb{Z}_{\geq 1} \cup \{\infty\}$ is a function such that the fiber of $\{\infty\}$ has size $2$. We denote these two elements as $v_\ell(1), v_\ell(2)$ and we write $\Omega_{\textup{fin}} := f^{-1}(\mathbb{Z}_{\geq 1})$;
\item the symbol $g$ denotes a choice, for each $v \in \Omega_{\textup{fin}}$, of a generating element $g(v) \in (\Z/\ell^{f(v)}\Z)^\ast$;
\item the symbol $\lambda$ denotes a choice of an element $\lambda(v, w) \in \mathbb{Z}/\ell^{f(v, w)}\mathbb{Z}$ for each pair $(v, w)$ not both in $\{v_\ell(1), v_\ell(2)\}$, where we define $f(v, w) := \min(f(v), f(w))$. Finally, we demand that 
$$ 
\lambda(v_\ell(1), w) \equiv \log_\ell(1 + g(w) \cdot \ell^{f(w)}) \bmod \ell^{f(w)}
$$
for each $w \in \Omega_{\textup{fin}}$ and that
$$
\lambda(w, v_\ell(i)) = 0
$$
for each $w \in \Omega_{\textup{fin}}$ and $i \in \{1, 2\}$. 
\end{itemize}
It will be convenient to extend the notation $\mathbb{Z}/\ell^n\mathbb{Z}$ to the case $n := \infty$, in which case this is to be interpreted as $\Z_\ell$.

An \emph{isomorphism} between two decorated graphs $\Phi := (\Omega, f, g, \lambda), \Phi' := (\Omega', f', g', \lambda')$ consists of a bijection $\phi: \Omega \to \Omega'$ preserving all these data. Namely, for each $v, w \in \Omega$ and each $x \in \Omega_{\textup{fin}}$ one has that
$$
f'(\phi(v)) = f(v), \quad \lambda'(\phi(v), \phi(w)) = \lambda(v, w), \quad g'(\phi(x)) = g(x).
$$
\end{definition}

We will sometimes refer to $\lambda$ as the labels or the labeling of the decorated graph. We now define what it means for a decorated graph to be a \emph{Rado graph}. 

\begin{definition}
\label{def:Rado}
A decorated graph $(\Omega, f, g, \lambda)$ is said to be \emph{Rado} in case for every finite subset $S \subseteq \Omega$ containing $\{v_\ell(1), v_\ell(2)\}$ the following holds. Given the following choices:
\begin{itemize}
\item an element $n \in \mathbb{Z}_{\geq 1}$;
\item a generator $\alpha$ of $\mathbb{Z}/\ell^n\mathbb{Z}$;
\item a pair
$$
((\lambda_s(1))_{s \in S}, (\lambda_s(2))_{s \in S}),
$$
where $\lambda_s(i) \in \mathbb{Z}/\ell^{\min(n, f(s))}\mathbb{Z}$ for each $s \in S$ and each $i \in \{1, 2\}$, where $\lambda_{v_\ell(1)}(2)$ equals the reduction of $\log_\ell(1 + \alpha \cdot \ell^n)$ modulo $\ell^n$ and where $\lambda_{v_\ell(i)}(1) = 0$ for $i \in \{1, 2\}$,
\end{itemize}
there exists $v \in \Omega - S$ with $f(v) = n$, $g(v) = \alpha$ and
$$
\lambda(v, s) = \lambda_s(1), \quad \lambda(s, v) = \lambda_s(2)
$$
for each $s \in S$. 
\end{definition}

In the context of our application, we need two relaxations of the notion of Rado. We present these relaxations in non-decreasing amount of generalization. We start by defining the \emph{scaling} of a graph. 

Given $\gamma_\bullet := (\gamma_v)_{v \in \Omega_{\text{fin}}} \in \prod_{v \in \Omega_{\text{fin}}} (\mathbb{Z}/\ell^{f(v)}\mathbb{Z})^\ast$ and a decorated graph $(\Omega, f, g, \lambda)$, we define $\gamma_\bullet \cdot \lambda$ to be the label given by
$$
(\gamma_\bullet \cdot \lambda)(v, w) := 
\begin{cases}
\gamma_v \cdot \lambda(v, w) & \text{if } v \in \Omega_{\text{fin}} \\
\lambda(v, w) & \text{if } v \not \in \Omega_{\text{fin}}.
\end{cases}
$$
It is readily verified that this defines an action on the space of decorated graphs.

\begin{definition} 
\label{Rado up to scaling}
A decorated graph $(\Omega, f, g, \lambda)$ is said to be \emph{Rado up to scaling} if there exists $\gamma_\bullet \in \prod_{v \in \Omega_{\textup{fin}}} (\mathbb{Z}/\ell^{f(v)}\mathbb{Z})^\ast$ such that $(\Omega, f, g, \gamma_\bullet \cdot \lambda)$ is Rado. 
\end{definition}

We now give what at first glance may look like a further generalization of the previous definition. 

\begin{definition} 
\label{Weakly Rado graphs}
A decorated graph $(\Omega, f, g, \lambda)$ is said to be \emph{weakly Rado} in case for every finite subset $S \subseteq \Omega$ containing $\{v_\ell(1), v_\ell(2)\}$ the following holds. Given the following choices:
\begin{itemize}
\item an element $n \in \mathbb{Z}_{\geq 1}$;
\item a generator $\alpha$ of $\mathbb{Z}/\ell^n\mathbb{Z}$;
\item a pair
$$
((\lambda_s(1))_{s \in S}, (\lambda_s(2))_{s \in S}),
$$
where $\lambda_s(i) \in \mathbb{Z}/\ell^{\min(n, f(s))}\mathbb{Z}$ for each $s \in S$ and each $i \in \{1, 2\}$, where $\lambda_{v_\ell(1)}(2)$ equals the reduction of $\log_\ell(1 + \alpha \cdot \ell^n)$ modulo $\ell^n$ and where $\lambda_{v_\ell(i)}(1) = 0$ for $i \in \{1, 2\}$,
\end{itemize}
then there exists $v \in \Omega - S$ and $\gamma \in (\mathbb{Z}/\ell^n\mathbb{Z})^\ast$ such that $f(v) = n$, $g(v) = \alpha$ and
$$
\lambda(v, s) =\gamma \cdot \lambda_s(1), \quad \lambda(s, v) = \lambda_s(2)
$$
for each $s \in S$.
\end{definition}

We now prove that the last two definitions are equivalent. 

\begin{proposition}
\label{rado up to scaling if and only if weakly rado}
A decorated graph $(\Omega, f, g, \lambda)$ is Rado up to scaling if and only if it is weakly Rado.
\end{proposition}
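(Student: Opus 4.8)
The plan is to prove the two implications separately. The forward implication, Rado up to scaling $\Rightarrow$ weakly Rado, is essentially formal, whereas the converse needs a countable recursive construction of the scaling.

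For the forward implication I would first record that being weakly Rado is invariant under the scaling action. Concretely, given $\delta_\bullet \in \prod_{v \in \Omega_{\textup{fin}}}(\mathbb{Z}/\ell^{f(v)}\mathbb{Z})^\ast$, a weakly Rado request $(S, n, \alpha, (\mu_s(1))_s, (\mu_s(2))_s)$ for $\delta_\bullet \cdot \lambda$, with the convention $\delta_s := 1$ for $s \in \{v_\ell(1), v_\ell(2)\}$, translates to the request $(S, n, \alpha, (\mu_s(1))_s, (\delta_s^{-1}\mu_s(2))_s)$ for $\lambda$ (the constraint on the $v_\ell(1)$-entry is preserved since $\delta_{v_\ell(1)} = 1$); feeding the latter to the weakly Rado property of $\lambda$ yields a witness $v$ with $f(v) = n$ and a unit $\gamma'$, and then $v$ together with $\gamma := \delta_v\gamma' \in (\mathbb{Z}/\ell^n\mathbb{Z})^\ast$ witnesses the original request for $\delta_\bullet \cdot \lambda$. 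Since any Rado graph is trivially weakly Rado (take the scaling unit to be $1$ in each instance), it follows that $\gamma_\bullet \cdot \lambda$ Rado $\Rightarrow$ $\gamma_\bullet \cdot \lambda$ weakly Rado $\Rightarrow$ $\lambda = \gamma_\bullet^{-1}\cdot(\gamma_\bullet\cdot\lambda)$ weakly Rado.

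For the converse, assume $(\Omega, f, g, \lambda)$ is weakly Rado. There are only countably many Rado requests (a finite $S \ni v_\ell(1), v_\ell(2)$, an integer $n \geq 1$, a generator $\alpha$ of $\mathbb{Z}/\ell^n\mathbb{Z}$, and finitely many labels from finite sets), so enumerate them as $R_1, R_2, \dots$, and build $\gamma_\bullet$ by recursion, maintaining a finite set $D_k \subseteq \Omega_{\textup{fin}}$ on which $\gamma$ is already committed and such that $\gamma_\bullet \cdot \lambda$ already witnesses $R_1, \dots, R_k$. At stage $k$, write $R_k = (S_k, n_k, \alpha_k, (\mu^k_s(1))_s, (\mu^k_s(2))_s)$; first set $\gamma_s := 1$ for each $s \in (S_k \cap \Omega_{\textup{fin}}) \setminus D_{k-1}$, and then invoke the weakly Rado property of $\lambda$ for the finite set $S := S_k \cup D_{k-1} \cup (S_k \cap \Omega_{\textup{fin}})$ with the same $n_k, \alpha_k$ and the data $\lambda_s(1) := \mu^k_s(1)$, $\lambda_s(2) := \gamma_s^{-1}\mu^k_s(2)$ for $s \in S_k$ (again $\gamma_s := 1$ on $\{v_\ell(1), v_\ell(2)\}$) and $\lambda_s(1) := \lambda_s(2) := 0$ for $s \in S \setminus S_k$. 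The compatibility constraints of this request hold because the requirements $\mu^k_{v_\ell(1)}(2) = \log_\ell(1 + \alpha_k\ell^{n_k})\bmod\ell^{n_k}$ and $\mu^k_{v_\ell(i)}(1) = 0$ are part of the definition of a Rado request. Weakly Rado returns $v \in \Omega \setminus S$ with $f(v) = n_k$, $g(v) = \alpha_k$ and a unit $\gamma \in (\mathbb{Z}/\ell^{n_k}\mathbb{Z})^\ast$ such that $\lambda(v, s) = \gamma\,\mu^k_s(1)$ and $\lambda(s, v) = \gamma_s^{-1}\mu^k_s(2)$ for all $s \in S_k$; since $v$ lies outside $S \supseteq D_{k-1} \cup (S_k \cap \Omega_{\textup{fin}})$ it is a fresh vertex of $\Omega_{\textup{fin}}$, so we may set $\gamma_v := \gamma^{-1}$ and $D_k := D_{k-1} \cup (S_k \cap \Omega_{\textup{fin}}) \cup \{v\}$. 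One then checks directly that $(\gamma_\bullet \cdot \lambda)(v, s) = \mu^k_s(1)$ and $(\gamma_\bullet \cdot \lambda)(s, v) = \mu^k_s(2)$ for all $s \in S_k$, using the decorated-graph axioms $\lambda(w, v_\ell(i)) = 0$ and $\lambda(v_\ell(1), w) \equiv \log_\ell(1 + g(w)\ell^{f(w)})\bmod\ell^{f(w)}$ to handle $s \in \{v_\ell(1), v_\ell(2)\}$. Finally extend $\gamma_\bullet$ by $1$ on $\Omega_{\textup{fin}} \setminus \bigcup_k D_k$; as every Rado request is some $R_k$, the graph $\gamma_\bullet \cdot \lambda$ is Rado, i.e.\ $\lambda$ is Rado up to scaling.

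The point I expect to be the main obstacle is the apparent circularity in the converse: the scaling unit $\gamma$ delivered by the weakly Rado property depends on the witness $v$, yet we wish to prescribe the labels $(\gamma_\bullet \cdot \lambda)(v, s)$ before knowing $v$. The resolution is to feed the desired targets $\mu^k_s(1)$ themselves as the weakly Rado data and then absorb the returned unit $\gamma$ into the still-uncommitted value $\gamma_v$ of the global scaling at the fresh vertex $v$; after this idea the remaining verifications are routine, the only care needed being to see that the commitments made at different stages never clash, which holds because each new witness is chosen outside the current finite domain $D_{k-1}$, and vertices placed into some $D_k$ are never revisited.
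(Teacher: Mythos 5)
Your proof is correct and follows essentially the same strategy as the paper's: select a fresh witness vertex for each request and absorb the unit returned by the weakly Rado property into the still-uncommitted value of the scaling at that witness. Your interleaved recursion merely reorganizes the paper's two-step argument (which first builds an injective witness-selector on the whole countable set of requests and then defines the scaling all at once), and you are in fact more explicit than the paper about pre-adjusting the second-coordinate labels $\lambda_s(2)$ by the already-committed scaling values $\gamma_s^{-1}$ --- a point the paper's concluding sentence passes over rather quickly.
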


\begin{proof}
If a decorated graph is Rado up to scaling, then it is also weakly Rado. Therefore it suffices to prove the converse. We will divide the argument in two steps.

\emph{Step 1:} We define $\mathcal{C}$ to be the set of $5$-tuples $(S, n, \alpha, (\lambda_s(1))_{s \in S}, (\lambda_s(2))_{s \in S})$ such that 
\begin{itemize}
\item $S$ is a finite subset of $\Omega$ containing $\{v_\ell(1), v_\ell(2)\}$;
\item $n \in \Z_{\geq 1}$;
\item $\alpha$ belongs to $(\mathbb{Z}/\ell^n\mathbb{Z})^\ast$;
\item $\lambda_s(i) \in \mathbb{Z}/\ell^{\min(n, f(s))}\mathbb{Z}$ for each $s \in S$ and each $i \in \{1, 2\}$, where 
\[
\lambda_{v_\ell(1)}(2) \equiv \log_\ell(1 + \alpha \cdot \ell^n) \bmod \ell^n
\]
and $\lambda_{v_\ell(i)}(1) = 0$ for $i \in \{1, 2\}$.
\end{itemize}
Definition \ref{Weakly Rado graphs} says that a graph is weakly Rado in case there exists a function 
$$
r: \mathcal{C} \to \Omega_{\text{fin}}
$$
such that for each $\mathbf{c} = (S, n, \alpha, (\lambda_s(1))_{s \in S}, (\lambda_s(2))_{s \in S}) \in \mathcal{C}$, there exists an element
$$
\gamma(\mathbf{c}) \in (\mathbb{Z}/\ell^n\mathbb{Z})^\ast
$$
such that $r(\mathbf{c}) \not \in S$ and furthermore
$$
f(r(\mathbf{c})) = n, \quad g(r(\mathbf{c})) = \alpha, \quad \gamma(\mathbf{c}) \cdot \lambda(r(\mathbf{c}), s) = \lambda_s(1), \quad \lambda(s, r(\mathbf{c})) = \lambda_s(2).
$$
We now make the following claim.

\emph{Claim:} For a weakly Rado graph, there exists an \emph{injective} map $r$ as above.

\emph{Proof of claim:} We start by proving that Definition \ref{Weakly Rado graphs} automatically implies the following reinforced version of itself, namely that there exist \emph{infinitely} many vertices $v$ as in the conclusion of Definition \ref{Weakly Rado graphs}. Indeed, if there were only finitely many, let us call $T$ the set of such vertices $v$. Then $S \cup T$ is still a finite set. We now apply Definition \ref{Weakly Rado graphs}, with the same $n$, same $\alpha$ and with any extension of the labels $\lambda$ to $S \cup T$, and reach a contradiction. Now that we know this, we proceed to construct the claimed injective map $r$.

Let $\Omega := \{v_\ell(1), v_\ell(2)\} \cup \{w_1, \ldots, w_h, \ldots\}$ be any enumeration of $\Omega$ (recall that $\Omega$ is countable). We will proceed by induction on $h$, making sure that we have defined $r$ on all elements of $\mathcal{C}$ having as first coordinate a subset of $\{v_\ell(1), v_\ell(2)\} \cup \{w_i : i \leq h\}$. This eventually covers all finite subsets and hence all elements of $\mathcal{C}$.

\emph{Base case:} This is the case where we take $S := \{v_\ell(1), v_\ell(2)\}$. Now observe that once we fix also $n$, then there are only finitely many elements of $\mathcal{C}$ having $S$ as first coordinate and $n$ as second coordinate. On the other hand vertices (i.e. elements of $\Omega$) with different values of $f$ are clearly distinct. Hence the observation that we always have infinitely many vertices $v$ satisfying the conclusion of Definition \ref{Weakly Rado graphs} ensures that we can avoid using the same $v$ twice. Therefore $r$ can be made injective when restricted to elements in $\mathcal{C}$ whose first coordinate is equal to $\{v_\ell(1), v_\ell(2)\}$. 

\emph{Induction step:} Suppose we have constructed $r$ on all the elements of $\mathcal{C}$ having first coordinate a subset of $\{v_1(1), v_\ell(2)\} \cup \{w_i : i \leq h\}$ containing $\{v_\ell(1), v_\ell(2)\}$. We now proceed to extend $r$ to each element of $\mathcal{C}$ whose first coordinate is a subset of $\{v_1(1), v_\ell(2)\} \cup \{w_i : i \leq h + 1\}$ containing $\{v_\ell(1), v_\ell(2)\}$: in other words we need to define it for all the elements of $\mathcal{C}$ whose first coordinate is a subset of $\{v_1(1), v_\ell(2)\} \cup \{w_i : i \leq h + 1\}$ containing $w_{h + 1}$. 

Again we make the key observation that the image of $r$ constructed so far can possibly contain only finitely many vertices with a given value of $f$: there are only finitely many subsets considered so far, and only finitely many elements of $\mathcal{C}$ have some given first two coordinates. And vertices with different values of $f$ are certainly distinct. Hence, once more, the fact that Definition \ref{Weakly Rado graphs} holds also with infinitely many $v$ ensures that we can extend $r$ in an injective fashion, invoking for each element of $\mathcal{C}$ all new vertices not used before.

\emph{Step 2:} Let $r$ be an injective map whose existence is proved in \emph{Step 1} of the argument. We now define an element
$$
\gamma_\bullet \in \left(\prod_{v \in \Omega_{\text{fin}}}\mathbb{Z}/\ell^{f(v)}\mathbb{Z}\right)^\ast
$$
in the following manner. We put
$$
\gamma_v := 
\begin{cases}
1 & \text{if } v \not \in \text{im}(r) \\
\gamma(r^{-1}(v)) & \text{if } v \in \text{im}(r).
\end{cases}
$$
This makes sense, because $r$ is injective. We conclude that the graph $(\Omega, f, g, \gamma_\bullet \cdot \lambda)$ is weakly Rado, since for any point $P \in \mathcal{C}$ we can now choose $r(P)$ to fulfill the requirements of Definition \ref{def:Rado}.  
\end{proof}

We now employ the back-and-forth method from model theory to show that any two Rado graphs are isomorphic. 

\begin{theorem}
\label{Thm:two Rado graphs are isomorphic}
Any two Rado graphs are isomorphic. 
\end{theorem}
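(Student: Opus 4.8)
The plan is to run a \emph{back-and-forth} argument — the same one that establishes uniqueness of the classical Rado graph — while carrying along the extra decoration. Fix two Rado graphs $\Phi = (\Omega, f, g, \lambda)$ and $\Phi' = (\Omega', f', g', \lambda')$, and write $v_\ell(1), v_\ell(2) \in \Omega$ and $v_\ell'(1), v_\ell'(2) \in \Omega'$ for their respective infinite-level vertices. Call a bijection $\psi \colon S \to S'$ between finite subsets $S \subseteq \Omega$ and $S' \subseteq \Omega'$ a \emph{finite partial isomorphism} if $\{v_\ell(1), v_\ell(2)\} \subseteq S$, $\{v_\ell'(1), v_\ell'(2)\} \subseteq S'$, $\psi(v_\ell(i)) = v_\ell'(i)$ for $i \in \{1,2\}$, and $\psi$ transports all of the data: $f'(\psi(v)) = f(v)$ for every $v \in S$, $g'(\psi(x)) = g(x)$ for every $x \in S \cap \Omega_{\textup{fin}}$, and $\lambda'(\psi(v), \psi(w)) = \lambda(v,w)$ for every pair $(v,w)$ from $S$ not both in $\{v_\ell(1), v_\ell(2)\}$. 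An isomorphism $\Phi \to \Phi'$ is then exactly a finite partial isomorphism whose domain is all of $\Omega$ and whose range is all of $\Omega'$.

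The heart of the matter is the following one-point extension property: if $\psi \colon S \to S'$ is a finite partial isomorphism and $x \in \Omega \setminus S$, then there is $x' \in \Omega' \setminus S'$ with $\psi \cup \{(x, x')\}$ again a finite partial isomorphism. Since $v_\ell(1), v_\ell(2)$ are the only infinite-level vertices and both already lie in $S$, we have $x \in \Omega_{\textup{fin}}$; put $n := f(x)$ and $\alpha := g(x)$, a generator of $\mathbb{Z}/\ell^n\mathbb{Z}$. For $s' \in S'$ set $\lambda_{s'}(1) := \lambda(x, \psi^{-1}(s'))$ and $\lambda_{s'}(2) := \lambda(\psi^{-1}(s'), x)$; these lie in $\mathbb{Z}/\ell^{\min(n, f'(s'))}\mathbb{Z}$ because $\psi$ preserves $f$. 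The built-in constraints on $\lambda$ at the infinite vertices in the definition of a decorated graph give precisely $\lambda_{v_\ell'(i)}(1) = \lambda(x, v_\ell(i)) = 0$ and $\lambda_{v_\ell'(1)}(2) = \lambda(v_\ell(1), x) \equiv \log_\ell(1 + \alpha \cdot \ell^n) \bmod \ell^n$, which are exactly the admissibility hypotheses of Definition \ref{def:Rado}. Applying the Rado property of $\Phi'$ to $S'$, to $n$, $\alpha$, and to the tuple $((\lambda_{s'}(1))_{s' \in S'}, (\lambda_{s'}(2))_{s' \in S'})$ produces $x' \in \Omega' \setminus S'$ with $f'(x') = n$, $g'(x') = \alpha$, and $\lambda'(x', \psi(s)) = \lambda(x,s)$, $\lambda'(\psi(s), x') = \lambda(s,x)$ for all $s \in S$; hence $\psi \cup \{(x, x')\}$ is a finite partial isomorphism. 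Swapping the roles of $\Phi$ and $\Phi'$ and using the Rado property of $\Phi$ gives the symmetric ``back'' statement: any $x' \in \Omega' \setminus S'$ can be matched by some $x \in \Omega \setminus S$.

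Now run the shuttle. Enumerate $\Omega = \{x_1, x_2, \dots\}$ and $\Omega' = \{y_1, y_2, \dots\}$. Begin with $\psi_0 \colon \{v_\ell(1), v_\ell(2)\} \to \{v_\ell'(1), v_\ell'(2)\}$, $\psi_0(v_\ell(i)) = v_\ell'(i)$, which is a finite partial isomorphism since there are no $g$- or $\lambda$-constraints among the two infinite-level vertices and $f(v_\ell(i)) = \infty = f'(v_\ell'(i))$. Given $\psi_{2k}$, apply the ``forth'' step to extend it to $\psi_{2k+1}$ whose domain contains the least-indexed $x_i$ not yet in the domain; given $\psi_{2k+1}$, apply the ``back'' step to extend it to $\psi_{2k+2}$ whose range contains the least-indexed $y_j$ not yet in the range. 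The union $\psi := \bigcup_k \psi_k$ is a bijection $\Omega \to \Omega'$ by construction, and because each of the conditions defining an isomorphism of decorated graphs concerns only finitely many vertices at a time, and each relevant finite tuple of vertices already lies in the domain of some $\psi_k$, the map $\psi$ preserves $f$, $g$, and $\lambda$. Thus $\psi$ is an isomorphism $\Phi \to \Phi'$.

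I do not expect a genuine obstacle here, as the argument is routine back-and-forth; the one step that deserves care is the one-point extension, where one must check that the labels $(\lambda_{s'}(i))$ fed into the Rado property are admissible in the sense of Definition \ref{def:Rado}. This is exactly where the mandatory values of $\lambda$ at $v_\ell(1), v_\ell(2)$ are used, and it is also what makes it legitimate to freeze $\psi(v_\ell(i)) = v_\ell'(i)$ for the entire construction: every infinite-level vertex is already placed in the domain at the base step, and the Rado property is only ever invoked to adjoin finite-level vertices.
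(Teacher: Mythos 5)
Your proposal is correct and follows essentially the same back-and-forth argument as the paper: start from the forced base map $v_\ell(i) \mapsto v_\ell(i)'$, use the Rado extension property of the target graph (with the decorated-graph constraints at the infinite-level vertices supplying the admissibility of the prescribed labels) to perform one-point extensions, and alternate forth and back along enumerations of $\Omega$ and $\Omega'$ to exhaust both vertex sets.
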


\begin{proof}
Let $\{v_\ell(1), v_\ell(2)\}, \{v_\ell(1)', v_\ell(2)'\}$ be the fibers above $\infty$ in respectively $\Omega, \Omega'$. Let us enumerate $\Omega$ and $\Omega'$ as
$$
\Omega := \{v_\ell(1), v_\ell(2)\} \cup \{w_1, \ldots, w_n, \ldots\},
$$
and
$$
\Omega' := \{v_\ell(1)', v_\ell(2)'\} \cup \{w_1', \ldots, w_n', \ldots \}.
$$
We now construct an isomorphism $\phi: \Omega \to \Omega'$ as follows. For each non-negative integer $r$ we construct a triple
$$
(T_r, T_r', \phi_r)
$$
with $T_r \subseteq \Omega, T_r' \subseteq \Omega'$ and with $\phi_r$ a map from $T_r$ to $T_r'$ such that the following properties hold:
\begin{enumerate}
\item[(P1)] $T_i \subseteq T_j, T_i' \subseteq T_j'$ for $i \leq j$.
\item[(P2)] The map $\phi_r: T_r \to T_r'$ is a bijection intertwining $f, g, \lambda$ with $f', g', \lambda'$.
\item[(P3)] The sets $T_0, T_0'$ coincide with respectively $\{v_\ell(1), v_\ell(2)\}, \{v_\ell(1)', v_\ell(2)'\}$, and furthermore $\phi_0(v_\ell(i)) = v_\ell(i)'$ for each $i \in \{1, 2\}$.
\item[(P4)] We have $\phi_j|_{T_i} = \phi_i$ for $i \leq j$.
\item[(P5)] If $r = 2k+1$ is an odd, positive integer, then $w_1, \ldots, w_k$ are all in $T_r$. If $s = 2k$ is an even positive integer, then $w_1', \ldots, w_k'$ are in $T_s'$.
\end{enumerate}

Let us first prove that if such a sequence $((T_r, T_r', \phi_r))_{r \in \mathbb{Z}_{\geq 0}}$ exists, then the corresponding decorated graphs are isomorphic. Indeed, thanks to the first part of property (P5), we have that $\Omega$ is the union of the sets $T_r$ as $r$ runs through the non-negative integers. Furthermore, upon invoking property (P4), we find that the system of maps $\phi_r$ glues to a well-defined map $\phi: \bigcup_{r \geq 0} T_r = \Omega \to \Omega'$. Thanks to the second part of property (P5), the map $\phi$ is surjective. Furthermore, by property (P2), the map $\phi$ is injective and intertwines $f, g, \lambda$ and $f', g', \lambda'$. Therefore $\phi$ is a decorated graph isomorphism. Hence we have now reduced the theorem to proving the following claim.

\emph{Claim:} A sequence $((T_r, T_r', \phi_r))_{r \in \mathbb{Z}_{\geq 0}}$ as above exists.

\emph{Proof of Claim:} We proceed by induction on $r$. 

\emph{Base case:} This is $r := 0$. We are forced to put $\phi_0(v_\ell(i)) = v_\ell(i)'$ for each $i \in \{1, 2\}$. This preserves all the labelings thanks to the definition of a decorated graph.

\emph{Inductive step:} Suppose we have constructed the sequence up until a non-negative integer $r$. We now define $(T_{r + 1}, T_{r + 1}', \phi_{r + 1})$. Let us distinguish two cases.

\emph{Case 1:} Suppose that $r$ is even. Let us write $r + 1=2k+1$. Suppose first that $w_k$ is not in $T_r$ already. Then we define
$$
T_{r + 1} := T_r \cup \{w_k\}.
$$
Next, using that $(\Omega', f', g', \lambda')$ is Rado, we can find $v \in \Omega'$ outside of $T_r'$ such that $f(w_k) = f'(v), g(w_k) = g'(v)$ and furthermore
$$
\lambda(w_k, t) = \lambda'(v, \phi_r(t)), \quad \lambda(t, w_k) = \lambda'(\phi_r(t), v)
$$
for each $t \in T_r$. We now define $T_{r + 1}':=T_r' \cup \{v\}$ and extend $\phi_r$ to $\phi_{r + 1}$ by declaring $\phi_{r + 1}(w_k) = v$. 

Suppose next that $w_k$ is already in $T_r$. Then we declare $T_{r + 1} := T_r$, $T_{r + 1}' := T_r'$ and $\phi_{r + 1} := \phi_r$. In either case, $(T_{r + 1}, T_{r + 1}', \phi_{r + 1})$ respects all the requirements.

\emph{Case 2:} Suppose now that $r$ is odd. Let us write $r + 1 = 2k$. We proceed as in Case 1, but with the roles of $(\Omega, f, g, \lambda)$ and $(\Omega', f', g', \lambda')$ interchanged.

This completes the inductive step, thus proving the claim and hence the theorem.
\end{proof}

\begin{remark} 
\label{rmk: for Rado graphs the cyclotomic places is reconstructed}
Observe that isomorphisms of decorated graphs preserve the set $\{v_\ell(1), v_\ell(2)\}$, but a priori there is no distinction between the two elements. However, when the two graphs are \emph{Rado}, it is not difficult to check that an isomorphism has to preserve the indexing.   
\end{remark}

Rado graphs are in some sense abundant. As the next theorem formalizes, almost any graph is Rado. Fixing $\Omega$ and $f: \Omega \to \mathbb{Z}_{\geq 1} \cup \{\infty\}$, we see that the set of graphs (having as first two coordinates $(\Omega, f)$) is now parametrized by the choices of $(g, \lambda)$, which is a vector in
$$
\left(\prod_{v \in \Omega_{\text{fin}}} (\mathbb{Z}/\ell^{f(v)}\mathbb{Z})^\ast \right) \times \left(\prod_{(v, w): v \neq w} \mathbb{Z}/\ell^{f(v, w)}\mathbb{Z}\right).
$$
Here the pairs $(v,w)$ are running only among those where $v \neq v_{\ell}(1)$ and $w \not \in \{v_{\ell}(1),v_{\ell}(2)\}$, as in all other cases the labels are already determined by the rest of the data. 

This is a compact topological group, where we view each factor with the discrete topology and the total group with the product topology. Hence it comes with a Haar measure. 

\begin{theorem} 
\label{prob=1}
Suppose that $f^{-1}(n)$ is infinite for all $n \in \mathbb{Z}_{\geq 1}$. Then 
$$
\mathbb{P}((\Omega, f, g, \lambda) \textup{ is Rado}) = 1.
$$
In particular, two randomly selected decorated graphs are isomorphic with probability $1$. 
\end{theorem}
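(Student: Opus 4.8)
The plan is to fix $(\Omega, f)$ with every fiber $f^{-1}(n)$ infinite and to show that the ``bad'' set of $(g,\lambda)$ for which the Rado property fails is a countable union of null sets. Recall that the Rado property is a conjunction over the countably many data $\mathbf{c} = (S, n, \alpha, (\lambda_s(1))_{s}, (\lambda_s(2))_{s})$ with $S \subseteq \Omega$ finite containing $\{v_\ell(1), v_\ell(2)\}$; there are only countably many such $\mathbf{c}$, since $\Omega$ is countable, $n$ ranges over $\mathbb{Z}_{\geq 1}$, and for fixed $S$ and $n$ the remaining data lies in a finite set. Writing $E_\mathbf{c}$ for the event that \emph{no} vertex $v \in \Omega - S$ realizes the conclusion of Definition \ref{def:Rado} for $\mathbf{c}$, it suffices by countable subadditivity to prove $\mathbb{P}(E_\mathbf{c}) = 0$ for each fixed $\mathbf{c}$.

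First I would fix $\mathbf{c}$ and list the candidate vertices: since $f^{-1}(n)$ is infinite and $S$ is finite, there are infinitely many $v \in \Omega - S$ with $f(v) = n$; enumerate them as $v_1, v_2, \ldots$. For a single such $v_j$, the requirement is that $g(v_j) = \alpha$ (one constraint, cutting down to probability $1/\varphi(\ell^n)$ under Haar measure on the factor $(\mathbb{Z}/\ell^n\mathbb{Z})^\ast$) and that $\lambda(v_j, s) = \lambda_s(1)$, $\lambda(s, v_j) = \lambda_s(2)$ for each $s \in S$ — finitely many constraints, each fixing the value of one independent coordinate of the $\lambda$-vector (one must note that, as remarked after the definition of decorated graph, the labels involving $v_\ell(1)$ or $v_\ell(2)$ are already forced, so the conditions imposed on those particular $s$ are automatically consistent and contribute no further constraint). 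Hence the event $A_j$ that $v_j$ works has some fixed positive probability $\delta = \delta(\mathbf{c}) > 0$ depending only on $n$ and $|S|$.

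The crucial point is \emph{independence}: the coordinates of $(g,\lambda)$ attached to distinct vertices $v_j$ are disjoint (the label $\lambda(v_j, s)$ for $s \in S$ and the value $g(v_j)$ involve only coordinates indexed by pairs touching $v_j$, and for $j \neq j'$ these index sets are disjoint because $v_j, v_{j'} \notin S$). Therefore the events $A_1, A_2, \ldots$ are mutually independent under the product Haar measure, each of probability $\delta > 0$. Consequently $\mathbb{P}(E_\mathbf{c}) = \mathbb{P}\bigl(\bigcap_{j \geq 1} A_j^c\bigr) = \prod_{j \geq 1}(1 - \delta) = 0$. Summing over the countably many $\mathbf{c}$ gives $\mathbb{P}((\Omega,f,g,\lambda) \text{ is Rado}) = 1$. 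The final sentence then follows: by Theorem \ref{Thm:two Rado graphs are isomorphic} any two Rado graphs are isomorphic, and two independently chosen random decorated graphs are each Rado with probability $1$, hence isomorphic with probability $1$ (one should check that the fibers $f^{-1}(n)$ can be taken infinite in the relevant model, or simply restrict attention to such $f$, which is the hypothesis of the theorem).

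The main obstacle — really the only subtle point — is the bookkeeping that isolates a genuinely independent family of positive-probability events: one must verify carefully that imposing the conclusion of Definition \ref{def:Rado} at $v_j$ constrains only finitely many coordinates, that these coordinates are ``free'' (not among the finitely many labels pre-determined by the definition of a decorated graph, in particular those involving $v_\ell(1)$), and that the coordinate sets for different $v_j$ are pairwise disjoint so that Haar measure factors as a product over them. Once this is set up, the Borel--Cantelli-style conclusion $\prod_j (1-\delta) = 0$ is immediate.
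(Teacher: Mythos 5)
Your proposal is correct and follows essentially the same route as the paper: the paper likewise writes the failure of the Rado property as a countable union over the set $\mathcal{C}$ of $5$-tuples and asserts that each individual event has probability $0$ (``readily verified''), which is exactly the independence/Borel--Cantelli computation you spell out using the infinitude of $f^{-1}(n)$ and the disjointness of the coordinate sets attached to distinct candidate vertices. One tiny imprecision: the label $\lambda(v_\ell(2), v)$ is \emph{not} forced by the definition of a decorated graph (only $\lambda(v_\ell(1), w)$ and $\lambda(w, v_\ell(i))$ are), so the condition $\lambda(v_\ell(2), v) = \lambda_{v_\ell(2)}(2)$ is one more genuine constraint on a free coordinate --- but it is still a positive-probability constraint on a coordinate disjoint from those of the other candidates, so your argument is unaffected.
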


\begin{proof}
Observe that the set $\mathcal{C}$ introduced in the proof of Proposition \ref{rado up to scaling if and only if weakly rado} is countable. We also keep the notation for $r$ as used in that proof. Now the property of being not Rado is the countable union, parametrized by $\mathcal{C}$, of the events that one can not define $r(P)$ for $P$ in $\mathcal{C}$. Since this union is countable, it is therefore sufficient to prove that each of these events occur with probability $0$, which is readily verified.
\end{proof}

\section{\texorpdfstring{Rado graphs for $\ell = 2$}{Rado graphs for 2}} 
\label{decorated graphs at 2}
This section adapts the combinatorial tools of Section \ref{decorated odd graphs} to the case $\ell := 2$. Let us start by fixing once and for all an isomorphism of rank $1$ free modules over $\Z_2$
$$
\log_2: 1 + 4 \cdot \Z_2 \to \Z_2.
$$
This induces an isomorphism between $1 + 2^{n + 2} \cdot \mathbb{Z}_2$ and $2^n \cdot \Z_2$. Hence whenever we need to find the value of $\log_2(1 + 4 \cdot \alpha)$ modulo $2^n$, we only need to know the value of $\alpha$ modulo $2^n$.

Our next goal is to define decorated graphs at $2$: these are vastly similar to the graphs appearing for $\ell$ odd with only a slightly more intricate relationship between $f$, $g$ and the labels. These technical differences in our application are motivated by the fact that $\mathbb{Z}_{\ell}^{\ast}$ is pro-cyclic for $\ell$ odd and is not pro-cyclic for $\ell=2$, a difference that also showed up in the slightly modified definition of $\log_2$ above.

\begin{definition} 
\label{def: decorated graphs at 2}
We define the category of \emph{decorated graphs} in the same way as Definition \ref{def: decorated graphs at 2}, except that we make the following changes to the assumptions on the labels $\lambda$.

For $\ell = 2$, the symbol $\lambda$ denotes a choice of an element $\lambda(v, w) \in \mathbb{Z}/2^{f(v, w)}\mathbb{Z}$ for each pair $(v, w)$ not both in $\{v_2(1), v_2(2)\}$. We demand that $\lambda$ satisfies the following axioms:
\begin{itemize}
\item $\lambda(v_2(2), w) \equiv 1 \bmod 2 \Longleftrightarrow f(w) = 1$ for all $w \in \Omega_{\textup{fin}}$;
\item $\lambda(v_2(1), w) \equiv \log_2(1 + g(w) \cdot 2^{f(w)}) \bmod 2^{f(w)}$ for all $w \in \Omega_{\textup{fin}}$ with $f(w) \geq 2$;
\item $\lambda(w, v_2(i)) = 0$ for each $w \in \Omega_{\textup{fin}}$ and $i \in \{1, 2\}$. 
\end{itemize}
\end{definition}

Decorated graphs at $2$ are a strictly bigger class of graphs with respect to the one appearing in our application, which is a class of graphs manifesting additional constraints essentially due to quadratic reciprocity. The bigger class given in Definition \ref{def: decorated graphs at 2} will correspond to a broader class of groups and it is for this class that it is most natural to attach a graph. It is for this reason that we consider this slightly more general class of objects. We now define what it means for a decorated graph to be a \emph{decorated reciprocity graph}, these are precisely the graphs that will show up in our application.  

\begin{definition}
Let $\Phi := (\Omega, f, g, \lambda)$ be a decorated graph. We say that $\Phi$ is a \textup{decorated reciprocity graph} if we have that
$$
\lambda(v, w) \equiv \lambda(w, v) + \lambda(v_2(2), v) \cdot \lambda(v_2(2), w) \bmod 2
$$
for all $v, w \in \Omega_{\textup{fin}}$.
\end{definition}

We can now define what it means for a decorated reciprocity graph to be a \emph{Rado graph}.

\begin{definition}
\label{def:Rado at 2}
A decorated reciprocity graph $(\Omega, f, g, \lambda)$ is said to be \emph{Rado} if for every finite subset $S \subseteq \Omega$ containing $\{v_2(1), v_2(2)\}$ the following holds. Given the following choices:
\begin{itemize}
\item an element $n \in \mathbb{Z}_{\geq 1}$;
\item a generator $\alpha$ of $\mathbb{Z}/2^n\mathbb{Z}$;
\item a pair 
$$
((\lambda_s(1))_{s \in S}, (\lambda_s(2))_{s \in S}),
$$
where $\lambda_s(i) \in \mathbb{Z}/2^{\min(n, f(s))}\mathbb{Z}$ for each $s \in S$ and each $i \in \{1, 2\}$, satisfying the following requirements:
\begin{itemize}
\item we have $n = 1$ if and only if $\lambda_{v_2(2)}(2) \equiv 1 \bmod 2$;
\item if $n \geq 2$, we have that $\lambda_{v_2(1)}(2)$ equals the reduction of $\log_2(1 + \alpha \cdot 2^n)$ modulo $2^n$, while $\lambda_{v_2(i)}(1) = 0$ for $i \in \{1, 2\}$;
\item we have $\lambda_s(1) \equiv \lambda_s(2) + \lambda(v_2(2),s) \cdot \lambda(v_2(2), v) \bmod 2$ for each $s \in S - \{v_2(1), v_2(2)\}$,
\end{itemize}
\end{itemize}
then there exists $v \in \Omega - S$ with $f(v) = n$, $g(v) = \alpha$ and
$$
\lambda(v, s) = \lambda_s(1), \quad \lambda(s, v) = \lambda_s(2)
$$
for each $s \in S$. 
\end{definition}

Scaling of decorated graphs are defined precisely in the same way as for $\ell$ odd. We can make now verbatim the same definition of graphs that are \emph{Rado up to scaling}. 

\begin{definition} 
\label{def: Rado up to scaling at 2}
A decorated reciprocity graph $(\Omega, f, g, \lambda)$ is said to be \emph{Rado up to scaling} if there exists $\gamma_\bullet \in \prod_{v \in \Omega_{\textup{fin}}} (\mathbb{Z}/2^{f(v)}\mathbb{Z})^\ast$ such that $(\Omega, f, g, \gamma_\bullet \cdot \lambda)$ is Rado.
\end{definition}

Likewise, just throwing the reciprocity constraints into the mix, we readily have the analogous definition of \emph{weakly Rado graphs}. 

\begin{definition} 
\label{def: weakly Rado at 2}
A decorated reciprocity graph $(\Omega, f, g, \lambda)$ is said to be \emph{weakly Rado} if for every finite subset $S \subseteq \Omega$ containing $\{v_2(1), v_2(2)\}$ the following holds. Given the choices as in Definition \ref{def:Rado at 2}, there exists $v \in \Omega - S$ and $\gamma \in (\mathbb{Z}/2^n\mathbb{Z})^\ast$ such that $f(v) = n$, $g(v) = \alpha$ and
$$
\lambda(v, s) =\gamma \cdot \lambda_s(1), \quad \lambda(s, v) = \lambda_s(2)
$$
for each $s \in S$.
\end{definition}

\begin{remark}
Observe that Rado, weakly Rado and Rado up to scaling graphs are decorated \emph{reciprocity} graphs by definition. The reason for this particular setup for $\ell = 2$ is that decorated graphs are only accessory to defining the right category of graphs but are not appearing in the actual application to Galois groups. 
\end{remark}

Making straightforward changes to the proof of Proposition \ref{rado up to scaling if and only if weakly rado} we have the following fact. 

\begin{proposition} 
\label{rado up to scaling if and only if weakly rado at 2}
A decorated reciprocity graph $(\Omega, f, g, \lambda)$ is Rado up to scaling if and only if it is weakly Rado.  
\end{proposition}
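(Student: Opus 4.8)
The plan is to imitate, essentially line by line, the two-step proof of Proposition~\ref{rado up to scaling if and only if weakly rado}, carrying the reciprocity constraints along wherever labels are manipulated. Since every scalar $\gamma_v \in (\mathbb{Z}/2^{f(v)}\mathbb{Z})^\ast$ is odd and $v_2(2) \notin \Omega_{\textup{fin}}$, scaling by $\gamma_\bullet$ changes no label modulo $2$ and leaves all labels out of $v_2(2)$ untouched; hence $\gamma_\bullet \cdot \lambda$ is again a decorated reciprocity labeling and the admissibility conditions of Definition~\ref{def:Rado at 2} are insensitive to the twist. This immediately gives the easy direction: if $(\Omega, f, g, \gamma_\bullet \cdot \lambda)$ is Rado, then feeding to its Rado property the admissible tuple obtained from a given one by replacing each second label $\lambda_s(2)$ with $\gamma_s \cdot \lambda_s(2)$ on $S \cap \Omega_{\textup{fin}}$ produces a witness $v$ which, together with $\gamma := \gamma_v^{-1}$, is exactly what Definition~\ref{def: weakly Rado at 2} demands.

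For the converse I would copy \emph{Step 1} verbatim. Let $\mathcal{C}$ be the (countable) set of admissible $5$-tuples $(S, n, \alpha, (\lambda_s(1))_{s \in S}, (\lambda_s(2))_{s \in S})$ satisfying all constraints of Definition~\ref{def:Rado at 2}, reciprocity relations included. First I would check that weakly Rado forces \emph{infinitely many} admissible witnesses for each tuple: if the witness set $T$ of some tuple were finite, one enlarges $S$ to $S \cup T$, extends the target data over $T$ by choosing each $\lambda_t(2)$ freely and then solving the mod-$2$ relation of Definition~\ref{def:Rado at 2} for $\lambda_t(1)$ (always possible), notes the enlarged tuple is in $\mathcal{C}$, and applies weakly Rado to reach a contradiction. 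Then, enumerating $\Omega = \{v_2(1), v_2(2)\} \cup \{w_1, w_2, \ldots\}$, one builds by induction on initial segments an \emph{injective} realizer $r \colon \mathcal{C} \to \Omega_{\textup{fin}}$, along with scalars $\gamma(\mathbf{c}) \in (\mathbb{Z}/2^n\mathbb{Z})^\ast$: injectivity survives because at each stage $\textup{im}(r)$ meets each fiber of $f$ in a finite set while infinitely many admissible witnesses with $f(v) = n$ remain available.

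For \emph{Step 2}, using injectivity I would set $\gamma_v := 1$ off $\textup{im}(r)$ and $\gamma_v := \gamma(r^{-1}(v))$ on $\textup{im}(r)$, observe once more that $\gamma_\bullet \cdot \lambda$ is a reciprocity labeling, and verify that $(\Omega, f, g, \gamma_\bullet \cdot \lambda)$ is Rado: given an admissible tuple for $\gamma_\bullet \cdot \lambda$, apply $r$ to the tuple with the second labels pre-divided by $\gamma_s$ on $S \cap \Omega_{\textup{fin}}$; the resulting vertex $v = r(\cdot)$ then meets all requirements of Definition~\ref{def:Rado at 2}, since $\gamma_v$ equals the recorded scalar $\gamma(\cdot)$ and the $\gamma_s$-factors cancel. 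This shows $(\Omega, f, g, \lambda)$ is Rado up to scaling.

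I do not expect a substantive obstacle: the combinatorial skeleton is identical to that of Proposition~\ref{rado up to scaling if and only if weakly rado}. The only genuinely new work, and hence the part needing the most care, is the reciprocity bookkeeping: verifying that scaling by units preserves the class of decorated reciprocity graphs, and that each tuple manipulation above (enlarging $S$; multiplying or dividing second labels by $\gamma_s$) stays inside $\mathcal{C}$, i.e. respects the mod-$2$ conditions of Definition~\ref{def:Rado at 2}. Each of these reduces to the triviality that the scalars in play are odd, so in the end nothing beyond the odd-$\ell$ argument is needed.
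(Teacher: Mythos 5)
Your proposal is correct and is exactly what the paper intends: the paper's own ``proof'' consists of the single remark that one makes straightforward changes to the proof of Proposition \ref{rado up to scaling if and only if weakly rado}, and your write-up carries out precisely that adaptation, with the right observation that all the new mod-$2$ (reciprocity) constraints are insensitive to multiplication by the odd units $\gamma_v$ and that the fiber over $\infty$ is never scaled.
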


Next, with straightforward modifications to the back-and-forth argument of Theorem \ref{Thm:two Rado graphs are isomorphic}, we obtain the main theorem of this section.

\begin{theorem} 
\label{Thm:two Rado graphs are isomorphic for 2}
Any two Rado graphs are isomorphic.     
\end{theorem}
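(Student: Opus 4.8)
The plan is to run the same back-and-forth construction as in the proof of Theorem \ref{Thm:two Rado graphs are isomorphic}, checking at each step that the extra data carried by decorated reciprocity graphs at $2$ (the parity constraints involving $\lambda(v_2(2), \cdot)$ and the reciprocity relation) is automatically respected by the witnessing vertices produced by the Rado property of Definition \ref{def:Rado at 2}. First I would fix enumerations $\Omega = \{v_2(1), v_2(2)\} \cup \{w_1, w_2, \ldots\}$ and $\Omega' = \{v_2(1)', v_2(2)'\} \cup \{w_1', w_2', \ldots\}$, and construct a chain of finite partial isomorphisms $(T_r, T_r', \phi_r)$ satisfying the exact analogues of (P1)--(P5): $\phi_0$ sends $v_2(i) \mapsto v_2(i)'$, each $\phi_r$ is a bijection $T_r \to T_r'$ intertwining $f, g, \lambda$ with $f', g', \lambda'$, the chain is increasing and coherent, and by stage $2k+1$ (resp. $2k$) the vertex $w_k$ (resp. $w_k'$) has entered $T_r$ (resp. $T_r'$). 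Gluing the $\phi_r$ then yields the desired isomorphism exactly as before, so everything reduces to the inductive step.

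For the inductive step, say $r$ is even, $r+1 = 2k+1$, and $w_k \notin T_r$; set $T_{r+1} := T_r \cup \{w_k\}$. I need a vertex $v \in \Omega' - T_r'$ with $f'(v) = f(w_k) =: n$, $g'(v) = g(w_k) =: \alpha$, and $\lambda'(v, \phi_r(t)) = \lambda(w_k, t)$, $\lambda'(\phi_r(t), v) = \lambda(t, w_k)$ for all $t \in T_r$. Apply Definition \ref{def:Rado at 2} to $(\Omega', f', g', \lambda')$ with $S := T_r'$, the given $n$ and $\alpha$, and the target labels $\lambda_{\phi_r(t)}(1) := \lambda(w_k, t)$, $\lambda_{\phi_r(t)}(2) := \lambda(t, w_k)$. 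The point to verify is that this tuple of choices is \emph{admissible}, i.e. satisfies the three bulleted requirements in Definition \ref{def:Rado at 2}: the parity condition $n = 1 \Leftrightarrow \lambda_{v_2(2)'}(2) \equiv 1 \bmod 2$, the $\log_2$-condition and vanishing of $\lambda_{v_2(i)'}(1)$ when $n \geq 2$, and the reciprocity congruence $\lambda_s(1) \equiv \lambda_s(2) + \lambda'(v_2(2)', s) \cdot \lambda'(v_2(2)', v) \bmod 2$ for $s \in S - \{v_2(1)', v_2(2)'\}$. But each of these is forced: since $w_k$ is an honest vertex of the decorated graph $\Omega$, the defining axioms of Definition \ref{def: decorated graphs at 2} give $\lambda(w_k, v_2(2)) \equiv 1 \bmod 2 \Leftrightarrow f(w_k) = 1$ and $\lambda(w_k, v_2(1)) \equiv \log_2(1 + g(w_k) 2^n) \bmod 2^n$ for $n \geq 2$ and $\lambda(v_2(i), w_k) = 0$; transporting along the partial isomorphism $\phi_r$ (which fixes $v_2(i) \mapsto v_2(i)'$) turns these into exactly the first two admissibility requirements. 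The third follows because $\Omega$ is a reciprocity graph: $\lambda(w_k, t) \equiv \lambda(t, w_k) + \lambda(v_2(2), w_k)\lambda(v_2(2), t) \bmod 2$, and here $\lambda(v_2(2), w_k)$ matches the role of $\lambda'(v_2(2)', v)$ (the parity of $w_k$, equivalently whether $n = 1$) while $\lambda(v_2(2), t) = \lambda'(v_2(2)', \phi_r(t))$ by (P2). Hence Definition \ref{def:Rado at 2} applies and produces the required $v$; we set $T_{r+1}' := T_r' \cup \{v\}$, $\phi_{r+1}(w_k) := v$, and check (P2) for the enlarged sets, which amounts to the label equalities just arranged together with the fact that the newly added pair involving $v_2(i)'$ is governed by the graph axioms on both sides. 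The case $w_k \in T_r$ is trivial ($\phi_{r+1} := \phi_r$), and the odd-$r$ case is the mirror image with the roles of $\Omega$ and $\Omega'$ swapped.

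The main obstacle — really the only thing that is not a verbatim transcription of Theorem \ref{Thm:two Rado graphs are isomorphic} — is bookkeeping the $\bmod\ 2$ constraints: one must confirm that the admissibility requirements in Definition \ref{def:Rado at 2} on the \emph{target} labels $(\lambda_s(1), \lambda_s(2))$ are precisely the images under $\phi_r$ of the structural identities that hold for $w_k$ inside the source reciprocity graph, so that the Rado property can legitimately be invoked. Once that dictionary is spelled out, nothing else changes: coherence (P1), (P4), the initialization (P3), and the exhaustion property (P5) are handled word-for-word as in the odd case, and the conclusion that the glued map $\phi$ is a bijection intertwining $(f, g, \lambda)$ with $(f', g', \lambda')$ is immediate. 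I would therefore present the proof as "identical to that of Theorem \ref{Thm:two Rado graphs are isomorphic}, with the one additional observation that the three requirements on the labels in Definition \ref{def:Rado at 2} are automatically inherited through $\phi_r$ from the axioms of Definition \ref{def: decorated graphs at 2} and the reciprocity relation," and then write out that observation in the two or three lines it deserves.
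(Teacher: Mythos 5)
Your proposal is correct and follows exactly the route the paper intends: the paper itself only says the proof is obtained by ``straightforward modifications to the back-and-forth argument'' of the odd-$\ell$ case, and the one genuine verification you isolate --- that the three admissibility requirements of Definition \ref{def:Rado at 2} on the prescribed labels are inherited through $\phi_r$ from the axioms of Definition \ref{def: decorated graphs at 2} and the reciprocity congruence (with $\lambda'(v_2(2)', v) \bmod 2$ read off from $n$ via the parity axiom) --- is precisely the content of those modifications. Your checks of all three conditions are accurate, so the argument goes through as written.
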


Finally, we verify that Rado graphs are abundant among decorated reciprocity graphs also for $\ell = 2$. We spell out the details mostly to go through the minor technical difference that the space of decorated reciprocity graphs is naturally a \emph{coset} of a group rather than a group itself. Namely, fixing $f: \Omega \to \Z_{\geq 1} \cup \{\infty\}$, we see that the set of decorated graphs (having as first two coordinates $(\Omega, f)$) is now parametrized by the choices of $(g, \lambda)$, which is a vector in 
\begin{align}
\label{eDecSpace}
\left(\prod_{v \in \Omega_{\text{fin}}} (\mathbb{Z}/2^{f(v)}\mathbb{Z})^\ast \right) \times \left(\prod_{(v, w): v \neq w} \mathbb{Z}/2^{f(v, w)}\mathbb{Z}\right).
\end{align}
Here the pairs $(v,w)$ are running among those where $w \not \in \{v_2(1), v_2(2)\}$ and furthermore $v \neq v_2(1)$ if $f(w) = 1$. Indeed, the rest of the labels are determined by the existing data. Observe that not all pairs $(g, \lambda)$ correspond to decorated graphs, as we have not yet encoded the condition that $f(w) = 1$ if and only if $\lambda(v_2(2), w)$ is odd: we will encode this condition directly with the reciprocity conditions (in fact this condition in later sections does ultimately reflect another instance of quadratic reciprocity). 

We endow each factor in equation (\ref{eDecSpace}) with the discrete topology and the total group with the product topology. The set of decorated reciprocity graphs is a closed subspace of this product space. More precisely, consider the group homomorphism
$$
\left(\prod_{v \in \Omega_{\text{fin}}} (\mathbb{Z}/2^{f(v)}\mathbb{Z})^\ast \right) \times \left(\prod_{(v, w): v \neq w} \mathbb{Z}/2^{f(v, w)}\mathbb{Z}\right) \to \prod_{(v,w): v \neq w} \mathbb{F}_2,
$$
given by sending a vector $((\gamma_v), (\lambda(v, w)))$ to $(\lambda(v, w) + \lambda(w, v) \bmod 2)$, when both $v, w \in \Omega_{\text{fin}}$, and by sending it to $\lambda(v_2(2), w) \bmod 2$, when $w \in \Omega_{\text{fin}}$. Recall that all the other pairs involving the fiber above $\infty$ are not in the set of indices of the direct product.  

The set of decorated reciprocity graphs equals the counterimage of the indicator function that $f(v) = f(w) = 1$ for $v, w \in \Omega_{\text{fin}}$ and the indicator function that $f(w) = 1$ for the remaining pairs. As such the set of decorated reciprocity graphs is not only closed, but it is the coset of a closed (hence compact) subgroup $K$ (the kernel of this continuous homomorphism). We push forward the Haar measure along any isomorphism given by translation with a given point $g_0$ in the coset. The choice of $g_0$ in the coset does not change the measure, as it comes from shifting $g_0$ with an element of $K$ and the Haar measure is invariant under this operation. With straightforward modifications to the proof, we now have the analogue of Theorem \ref{prob=1}.

\begin{theorem} 
\label{prob=1 for 2}
Suppose that $f^{-1}(n)$ is infinite for all $n \in \mathbb{Z}_{\geq 1}$. Then 
$$
\mathbb{P}((\Omega, f, g, \lambda) \textup{ is Rado}) = 1.
$$
In particular, two randomly selected decorated reciprocity graphs are isomorphic with probability $1$.   
\end{theorem}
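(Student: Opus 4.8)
The plan is to mimic the proof of Theorem~\ref{prob=1} essentially verbatim; the only genuinely new feature is that the ambient measured space is now a \emph{coset} of a compact group rather than a compact group itself, so one must recheck that the relevant ``witness events'' remain independent and of positive measure.

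First I would record that the set $\mathcal{C}$ of admissible choices appearing in Definition~\ref{def:Rado at 2} — that is, the $5$-tuples $(S,n,\alpha,(\lambda_s(1))_{s\in S},(\lambda_s(2))_{s\in S})$ satisfying all the listed requirements, including the reciprocity-type compatibility conditions — is countable, by the same argument that shows countability of the set $\mathcal{C}$ in the proof of Proposition~\ref{rado up to scaling if and only if weakly rado}. For each $P\in\mathcal{C}$ let $E_P$ denote the event that there is no $v\in\Omega-S$ witnessing the Rado condition for $P$. Then the set of decorated reciprocity graphs that fail to be Rado is $\bigcup_{P\in\mathcal{C}}E_P$, a countable union, so it suffices to prove $\mathbb{P}(E_P)=0$ for each fixed $P$.

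Fix $P=(S,n,\alpha,\dots)$. Since $f^{-1}(n)$ is infinite, I would pick pairwise distinct vertices $u_1,u_2,\dots\in f^{-1}(n)\setminus S$ and let $A_j$ be the event that $u_j$ is a valid witness for $P$, i.e.\ $g(u_j)=\alpha$ together with $\lambda(u_j,s)=\lambda_s(1)$ and $\lambda(s,u_j)=\lambda_s(2)$ for all $s\in S$ (note $f(u_j)=n$ and $u_j\notin S$ hold by construction). Since $E_P\subseteq\bigcap_j A_j^{\mathrm{c}}$, it is enough to show that the $A_j$ are mutually independent and that each has one and the same positive probability $p$; then $\mathbb{P}(E_P)\le\prod_{j}(1-p)=0$, exactly as in the odd case. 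Independence follows from the structure of the closed subgroup $K$ introduced just before the statement of the theorem: every relation cutting out $K$ involves only coordinates incident to a single finite vertex $w$ (namely $\lambda(v_2(2),w)\bmod 2$) or to a single unordered pair of finite vertices $\{v,w\}$ (namely $\lambda(v,w)+\lambda(w,v)\bmod 2$). Hence the coordinate group, the subgroup $K$, and therefore the translated Haar measure on the coset of decorated reciprocity graphs, all factor as a product of ``blocks'', one block $B_v$ for each $v\in\Omega_{\text{fin}}$ gathering $g(v)$ together with all labels incident to $v$, plus a remainder block supported on pairs of fixed vertices. Since the $u_j$ lie outside $S$ and are pairwise distinct, $A_j$ depends only on the block $B_{u_j}$, and distinct such blocks are disjoint, giving independence. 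Positivity is the only point where the reciprocity conditions enter: $A_j$ is a cylinder set, and it is nonempty inside the coset precisely because the compatibility requirements imposed on $P$ in Definition~\ref{def:Rado at 2} (the parity of $\lambda_{v_2(2)}(2)$, the value of $\log_2(1+\alpha\cdot 2^n)$ when $n\ge 2$, and the reciprocity relation for $s\in S-\{v_2(1),v_2(2)\}$) are exactly the constraints that $K$ imposes on the witness block $B_{u_j}$; hence $\mathbb{P}(A_j)=p>0$, the same $p$ for every $j$ since all the $u_j$ share the value $f=n$.

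Finally, the closing assertion is formal: by the above a random decorated reciprocity graph is Rado with probability $1$, and by Theorem~\ref{Thm:two Rado graphs are isomorphic for 2} any two Rado graphs are isomorphic, so two independently chosen decorated reciprocity graphs are isomorphic with probability $1$. I expect the only real work to be the bookkeeping in the independence/positivity step — verifying that no reciprocity relation links coordinates incident to two different vertices outside $S$, and that the constraints of Definition~\ref{def:Rado at 2} are exactly the restriction of the defining relations of $K$ to a single witness block — after which the probabilistic core (that $(1-p)^N\to 0$) is identical to the proof of Theorem~\ref{prob=1}.
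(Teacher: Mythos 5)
Your overall route is the same as the paper's: the paper proves Theorem \ref{prob=1 for 2} by declaring it a straightforward adaptation of Theorem \ref{prob=1}, i.e.\ failure of the Rado property is a countable union, indexed by the countable set $\mathcal{C}$, of events each of which is ``readily verified'' to be null, with the only new point being the coset-of-a-subgroup measure set up just before the statement. Your elaboration of the ``readily verified'' step (infinitely many candidate witnesses $u_j \in f^{-1}(n)\setminus S$, independence, common positive probability, $(1-p)^N \to 0$) is the intended argument and the appeal to Theorem \ref{Thm:two Rado graphs are isomorphic for 2} for the last sentence is exactly what the paper does.

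One piece of your bookkeeping is, however, stated incorrectly and needs repair. Blocks $B_v$ ``gathering $g(v)$ together with all labels incident to $v$'' are \emph{not} pairwise disjoint: for distinct finite $u_i, u_j$ the labels $\lambda(u_i,u_j)$ and $\lambda(u_j,u_i)$ lie in both $B_{u_i}$ and $B_{u_j}$, so the coordinate space does not factor as a product over these blocks; likewise your closing claim that ``no reciprocity relation links coordinates incident to two different vertices outside $S$'' is literally false, since the relation for the pair $\{u_i,u_j\}$ does exactly that. The correct (and easy) fix is to shrink the blocks: the event $A_j$ depends only on $g(u_j)$ and the labels $\lambda(u_j,s)$, $\lambda(s,u_j)$ with $s \in S$, and every defining relation of the subgroup $K$ (a reciprocity relation for an unordered pair of finite vertices, or the parity condition on $\lambda(v_2(2),w)$) either involves only coordinates from a single one of these shrunken blocks or involves none of them -- in particular the relation for $\{u_i,u_j\}$ constrains coordinates on which no $A_j$ depends. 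With this partition the translated Haar measure does factor over the coordinates relevant to the distinct $A_j$, giving independence, and positivity holds because the admissibility conditions in Definition \ref{def:Rado at 2} (membership of $P$ in $\mathcal{C}$, as in the proof of Proposition \ref{rado up to scaling if and only if weakly rado at 2}) are precisely the restriction of the coset constraints to such a block, so $A_j$ is a nonempty cylinder of measure depending only on $n$ and $f|_S$. After this correction your proof is complete and agrees with the paper's.
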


\section{Rado groups} 
\label{Rado group odd primes}
This section abstracts the key group-theoretic tools we employ in the part of the proof of Theorem \ref{Thm: main2} devoted to odd primes. 

Let $\ell$ be an odd prime number. Let $(\Omega, f, g)$ be a triple as in Section \ref{decorated odd graphs} satisfying the axioms of a decorated graph involving only these three variables. 

\begin{definition} 
\label{def: Rado groups} 
A $2$-nilpotent pro-$\ell$ group on $(\Omega, f, g)$ is a pair $\mathcal{S} := (\mathcal{G}, \pi_\bullet)$, where
\begin{enumerate}
\item[(C1)] $\mathcal{G}$ is a $2$-nilpotent pro-$\ell$ group;
\item[(C2)] the map 
$$
\pi_\bullet := (\pi_v)_{v \in \Omega}: \mathcal{G} \to \prod_{v \in \Omega} \mathbb{Z}/\ell^{f(v)}\mathbb{Z}
$$
is a continuous group epimorphism such that $\ker(\pi_\bullet) = [\mathcal{G}, \mathcal{G}]$;
\item[(C3)] for each pair of distinct $v, w \in \Omega$, the natural inflation map induced by $(\pi_v, \pi_w)$
$$
H^2(\mathbb{Z}/\ell^{f(v)}\mathbb{Z} \times \mathbb{Z}/\ell^{f(w)}\mathbb{Z}, \mathbb{Z}/\ell^{f(v, w)}\mathbb{Z}) \xrightarrow{\textup{inf}} H^2(\mathcal{G}, \mathbb{Z}/\ell^{f(v, w)}\mathbb{Z})
$$
has image that coincides with the image of the inflation map
$$
\textup{Ext}^1_{\mathbb{Z}_\ell}(\mathbb{Z}/\ell^{f(v)}\mathbb{Z} \times \mathbb{Z}/\ell^{f(w)}\mathbb{Z}, \mathbb{Z}/\ell^{f(v, w)}\mathbb{Z}) \xrightarrow{\textup{inf}} H^2(\mathcal{G}, \mathbb{Z}/\ell^{f(v, w)}\mathbb{Z});
$$
\item[(C4)] we attach to each $v \in \Omega_{\textup{fin}}$ the unique extension class, which we denote by $\theta_v$, in $\textup{Ext}^1_{\mathbb{Z}_\ell}(\mathbb{Z}/\ell^{f(v)}\mathbb{Z}, \mathbb{Z}/\ell^{f(v)}\mathbb{Z})$ corresponding to the exact sequence where lifting $1$ and multiplying it by $\ell^{f(v)} \cdot g(v)$ yields $1$. Let us call $\tilde{\theta}_v$ the inflation of this class to $H^2(\mathcal{G}, \mathbb{Z}/\ell^{f(v)}\mathbb{Z})$ through $\pi_v$. Then we have that 
$$
\pi_{v_\ell(1)} \cup \pi_v + \log_\ell(1+ \ell^{f(v)} \cdot g(v)) \cdot \tilde{\theta}_v
$$
equals $0$ in $H^2(\mathcal{G}, \mathbb{Z}/\ell^{f(v)}\mathbb{Z})$. 
\end{enumerate}
\end{definition}

\begin{remark}
If $\{v, w\} = \{v_\ell(1), v_\ell(2)\}$, then the two cohomology groups 
$$
H^2(\mathbb{Z}/\ell^{f(v)}\mathbb{Z} \times \mathbb{Z}/\ell^{f(w)}\mathbb{Z}, \mathbb{Z}/\ell^{f(v, w)}\mathbb{Z}) = H^2(\mathbb{Z}_\ell \times \mathbb{Z}_\ell, \mathbb{Z}_\ell) \textup{ and } H^2(\mathcal{G}, \mathbb{Z}/\ell^{f(v, w)}\mathbb{Z}) = H^2(\mathcal{G}, \mathbb{Z}_\ell)
$$ 
are to be interpreted with the profinite topology also on the $\mathbb{Z}_\ell$ appearing in the second argument of the $H^2$, unlike the usual discrete topology that one finds in profinite group cohomology.
\end{remark}

To simplify the exposition, we will adopt the convention that $\tilde{\theta}_v = 0$ whenever $f(v) = \infty$.

Let $\mathcal{S}, \mathcal{S}'$ be a pair of $2$-nilpotent pro-$\ell$ groups on respectively $(\Omega, f, g)$ and $(\Omega', f', g')$. We call an isomorphism between them a pair $(\phi, \gamma)$, where $\gamma: \Omega \to \Omega'$ is a bijection intertwining $(f, g)$ with $(f', g')$ and $\phi: \mathcal{G} \to \mathcal{G'}$ is a group isomorphism such that
$$
\pi_v = \pi_{\gamma(v)} \circ \phi.
$$
Our next goal is to attach to each $\mathcal{S}$ a certain labeling $\lambda_\mathcal{S}$, which will turn 
$$
\Phi_\mathcal{S} := (\Omega, f, g, \lambda_\mathcal{S})
$$
into a graph encoding the isomorphism class of $\mathcal{S}$. But we will first take a detour on profinite alternating squares.

\subsection{\texorpdfstring{Intermezzo: the profinite $\wedge^2$}{Intermezzo on the profinite, second exterior algebra}}
\label{intermezzo}
In this subsection we drop the assumption that $\ell$ is odd and we allow also $\ell=2$. Let us recall the definition of the functor $\wedge^2: \text{Ab-Gr} \to \text{Ab-Gr}$, given by sending an abelian group $A$ to
$$
\wedge^2 A := (A \otimes_{\mathbb{Z}} A)/\langle \{a \otimes a : a \in A\} \rangle
$$
and a group homomorphism $f:A_1 \to A_2$ to
$$
\wedge^2(f): \wedge^2 A_1 \to \wedge^2 A_2,
$$
where the map $\wedge^2(f)$ sends an elementary tensor $a \otimes b$ to $f(a) \otimes f(b)$. 

Recall that if $A$, $B$ are two abelian groups, then $b: A \times A \to B$ is said to be \emph{alternating} in case $b(a, a) = 0$ for each $a \in A$. We have that the alternating map
$$
b_{\text{univ}}(A): A \times A \to \wedge^2 A, 
$$
sending $(a, a')$ to $a \wedge a'$ (which is by definition the image of $a \otimes a'$), is \emph{universal} in the sense that given an alternating map 
$$
b: A \times A \to B,
$$
there is a \emph{unique} homomorphism $\phi: \wedge^2 A \to B$ such that $b = \phi \circ b_{\text{univ}}(A)$. We will refer to this as the \emph{universal property} of $\wedge^2$. 

Recall that for any two abelian groups $A$, $B$ with $A$ acting trivially on $B$, one has a split exact sequence
\begin{align}
\label{eExtWedge}
0 \to \text{Ext}_{\mathbb{Z}}^1(A, B) \to H^2(A, B) \to \text{Hom}_{\text{ab.gr.}}(\wedge^2 A, B) \to 0
\end{align}
coming from the universal coefficient theorem. A concrete interpretation of the second map, in terms of equivalence classes of central extensions, comes from lifting and taking commutators: the result does not depend on the choice of the lift thanks to centrality of the extension. This brings us to the next point.

Recall that a group is said to be nilpotent of class at most $2$ if $[G, [G, G]] = \{\text{id}\}$. We have now two functors
$$
\wedge^2 \circ \mathcal{F}^{\text{ab}}, \mathcal{F}_{[,]}: \{\text{nilpotent groups of class at most 2}\} \to \text{Ab-Gr},
$$
where $\mathcal{F}^{\text{ab}}$ is the abelianization functor and where $\mathcal{F}_{[,]}$ sends $G$ to $[G, G]$. We have a \emph{natural transformation} from the first to the second functor, given by the system of maps
$$
\nu_G: \wedge^2 G^{\text{ab}} \to [G, G]
$$
coming from the alternating map described above: lift and take commutators. That this is indeed a natural transformation comes down to the identity $[\phi(g_1), \phi(g_2)] = \phi([g_1, g_2])$ valid for any group homomorphism $\phi$.

When we specialize the exact sequence (\ref{eExtWedge}) to $\Q/\Z =: B$, we obtain that the first term vanishes as $\Q/\Z$ is injective. Therefore we get an identification
\begin{align}
\label{eH2Wedge}
H^2(A, \Q/\Z) = (\wedge^2 A)^\vee,
\end{align}
where $A^\vee := \text{Hom}_{\text{ab.gr.}}(A, \Q/\Z)$. 

Let us now turn to the \emph{profinite} case. Let $\mathcal{A}$ be a profinite abelian group. Let us define
$$
\wedge^2_{\text{prof}} \mathcal{A} := \varprojlim_{U} \wedge^2 \mathcal{A}_U,
$$
where $U$ runs through the open subgroups of $\mathcal{A}$ and where $\mathcal{A}_U := \frac{\mathcal{A}}{U}$. Observe that this comes with a map
$$
b_{\text{univ}}(\mathcal{A})(a_1, a_2) = a_1 \wedge a_2,
$$
from $\mathcal{A} \times \mathcal{A}$ to $\wedge^2_{\text{prof}} \mathcal{A}$, packaging all the $b_{\text{univ}}(\mathcal{A}_U)$ as $U$ runs through the open subgroups of $\mathcal{A}$. Let us prove that this map is universal among profinite abelian groups. 

\begin{proposition} 
\label{Universal property}
Let $\mathcal{A}$, $\mathcal{B}$ be profinite abelian groups and let
$$
b: \mathcal{A} \times \mathcal{A} \to \mathcal{B}
$$
be a continuous alternating map. Then there exists a unique continuous homomorphism $\phi: \wedge^2_{\textup{prof}} \mathcal{A} \to \mathcal{B}$ such that
$$
b = \phi \circ b_{\textup{univ}}.
$$
\end{proposition}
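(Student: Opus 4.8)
The plan is to deduce the proposition from the classical universal property of $\wedge^2$ for finite abelian groups, which is recalled in the excerpt, by exploiting compactness on both the source $\mathcal{A}$ and the target $\mathcal{B}$. I would establish existence of $\phi$ first and then treat uniqueness separately.

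\emph{Reduction to finite $\mathcal{B}$.} Write $\mathcal{B} = \varprojlim_V \mathcal{B}/V$ over the open subgroups $V \leq \mathcal{B}$. For each such $V$, post-composition of $b$ with the projection $\mathcal{B} \to \mathcal{B}/V$ yields a continuous alternating map $b_V \colon \mathcal{A} \times \mathcal{A} \to \mathcal{B}/V$ with finite target. Granting the result for finite targets, one obtains for every $V$ a unique continuous homomorphism $\phi_V \colon \wedge^2_{\textup{prof}} \mathcal{A} \to \mathcal{B}/V$ with $b_V = \phi_V \circ b_{\textup{univ}}$; by uniqueness the family $(\phi_V)_V$ is compatible with the transition maps of the inverse system, and its limit is the desired $\phi$, with $\phi \circ b_{\textup{univ}} = b$ because this identity can be checked after projecting to each $\mathcal{B}/V$.

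\emph{The case of finite $\mathcal{B}$.} Here the key observation is that $b$, being a continuous map from the compact group $\mathcal{A} \times \mathcal{A}$ to the discrete set $\mathcal{B}$, is ``uniformly locally constant'': there is an open subgroup $U \leq \mathcal{A}$ with $b(a_1 + u_1, a_2 + u_2) = b(a_1, a_2)$ for all $a_1, a_2 \in \mathcal{A}$ and all $u_1, u_2 \in U$. One proves this by covering $\mathcal{A} \times \mathcal{A}$ by open boxes $V \times W$, with $V,W$ cosets of open subgroups, on each of which $b$ is constant, extracting a finite subcover, and intersecting the finitely many open subgroups that occur. Consequently $b$ factors as $\mathcal{A} \times \mathcal{A} \twoheadrightarrow \mathcal{A}_U \times \mathcal{A}_U \xrightarrow{\bar b} \mathcal{B}$, and one checks directly that $\bar b$ is again bi-additive and alternating. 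Since $\mathcal{A}_U$ is a finite abelian group, the classical universal property of $\wedge^2$ provides a unique homomorphism $\psi \colon \wedge^2 \mathcal{A}_U \to \mathcal{B}$ with $\bar b = \psi \circ b_{\textup{univ}}(\mathcal{A}_U)$, and $\psi$ is automatically continuous as its source is finite. Composing $\psi$ with the canonical projection $\wedge^2_{\textup{prof}} \mathcal{A} \to \wedge^2 \mathcal{A}_U$ produces $\phi$, and the relation $\phi \circ b_{\textup{univ}} = b$ follows from the compatibility of $b_{\textup{univ}}(\mathcal{A})$ with $b_{\textup{univ}}(\mathcal{A}_U)$ under the quotient maps $\mathcal{A} \to \mathcal{A}_U$.

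\emph{Uniqueness and the main obstacle.} For uniqueness, needed in both steps above, I would note that each $\wedge^2 \mathcal{A}_U$ is generated as an abelian group by elementary wedges, so the subgroup of $\wedge^2_{\textup{prof}} \mathcal{A}$ generated by the image of $b_{\textup{univ}}$ surjects onto every $\wedge^2 \mathcal{A}_U$ and is therefore dense; two continuous homomorphisms into the Hausdorff group $\mathcal{B}$ that agree on this dense subgroup must coincide. The step requiring genuine care is the compactness argument showing that a continuous alternating map into a finite group factors through a common finite quotient $\mathcal{A}_U \times \mathcal{A}_U$ and that the induced map $\bar b$ retains bi-additivity and the alternating property; everything else is a formal manipulation of inverse limits together with this density observation.
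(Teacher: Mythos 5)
Your proof is correct and follows essentially the same route as the paper: reduce to finite discrete $\mathcal{B}$ by an inverse limit over its open subgroups (using uniqueness to get compatibility of the $\phi_V$), factor the alternating map through a finite quotient $\mathcal{A}_U \times \mathcal{A}_U$ and invoke the classical universal property of $\wedge^2$, with uniqueness coming from the fact that the pure wedges topologically generate $\wedge^2_{\textup{prof}} \mathcal{A}$. The only cosmetic difference is in the factoring step for finite targets, where you use a compactness covering argument to make $b$ constant on cosets of $U \times U$, whereas the paper finds an open subgroup $V$ with $b(V \times V) = 0$ and then intersects the finitely many open kernels coming from coset representatives; both versions are valid.
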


\begin{proof}
We will first treat the special case where $\mathcal{B} = B$ is discrete and finite. Then observe that the counterimage of $0_B$ is an open subset containing $(0, 0)$. Hence, by construction of the product topology, this counterimage contains a product of two open sets $V_1 \times V_2$, both containing $0$. Therefore there exists an open subgroup $V$ such that $b(V \times V) = 0$. It follows that the triple intersection between $V$, the right kernel and the left kernel equals the intersection of finitely many open subgroups (no more than the index of $V$ in $\mathcal{A}$). We have therefore proved that the bilinear maps factors through the projection $\pi_W:\mathcal{A} \to \mathcal{A}_W$ for some open subgroup $W$. 

Hence in the special case where $B$ is a finite, discrete abelian group, we have shown that there exists a map $\phi: \wedge^{2}_{\text{prof}} \mathcal{A} \to B$ such that $b=\phi \circ b_{\text{univ}}$. To see that $\phi$ is unique, observe that the subset
$$
\{a_1 \wedge a_2 : a_1,a_2 \in \mathcal{A}\} \subseteq \wedge^2_{\text{prof}} \mathcal{A}
$$
is a set of topological generators. Indeed, for each open subgroup $U$ of $\mathcal{A}$, it gives a set of generators in $\wedge^2_{\text{prof}} \mathcal{A}_U$. This last property holding for each open subgroup $U$ is equivalent to being a set of topological generators, for any profinite group. Hence, since $\phi(a_1 \wedge a_2) = b(a_1, a_2)$ for all $a_1, a_2 \in \mathcal{A}$, we must have that $\phi$ is unique. Thus we have proved the desired universality property in the special case that $\mathcal{B}$ is finite and discrete.

Let now $\mathcal{B}$ be an arbitrary profinite abelian group. For each open subgroup $U$ of $\mathcal{B}$, we may consider the continuous bilinear alternating map
$$
\pi_U \circ b: \mathcal{A} \times \mathcal{A} \to \mathcal{B}_U.
$$
From the special case already treated, we know there exists a unique continuous homomorphism $\phi_U:\wedge^2_{\text{prof}} \mathcal{A} \to \mathcal{B}_U$ such that
$$
\phi_U \circ b_{\text{univ}}(\mathcal{A}) = \pi_U \circ b.
$$
By the uniqueness of the $\phi_U$ we deduce that they form a compatible system. Hence we obtain a continuous homomorphism $\phi: \mathcal{A} \to \mathcal{B}$ satisfying $\phi_U = \pi_U \circ \phi$ and
$$ 
\pi_U \circ \phi \circ b_{\text{univ}}(\mathcal{A}) = \pi_U \circ b
$$
for each open subgroup $U$ of $\mathcal{A}$. The last equality holding for each open $U$ is equivalent to the equality
$$
\phi \circ b_{\text{univ}}(\mathcal{A}) = b,
$$
as desired. This proves that $\phi$ exists. To see that $\phi$ is unique, we argue precisely as we did in the case that $\mathcal{B}$ was finite, by using that the set $\{a_1 \wedge a_2 : a_1, a_2 \in \mathcal{A}\} \subseteq \wedge^2_{\text{prof}} \mathcal{A}$ is a set of topological generators in $\wedge^2_{\text{prof}} \mathcal{A}$ and $\phi$ is already determined on each of them by $\phi(a_1 \wedge a_2) = b(a_1, a_2)$. Since $\phi$ is continuous, this shows that there is at most one such $\phi$ as desired.
\end{proof}

The universal property established in Proposition \ref{Universal property} also proves that $\wedge^2_{\text{prof}}$ extends to a functor from profinite abelian groups to profinite abelian groups. Given a homomorphism $f:\mathcal{A}_1 \to \mathcal{A}_2$, the alternating continuous bilinear map
$$
b_{\text{univ}}(\mathcal{A}_2) \circ (f, f): \mathcal{A}_1 \times \mathcal{A}_1 \to \wedge^2_{\text{prof}} \mathcal{A}_2
$$
induces by Proposition \ref{Universal property} a unique homomorphism 
$$
\wedge^2_{\text{prof}}(f): \wedge^2_{\text{prof}} \mathcal{A}_1 \to \wedge^2_{\text{prof}} \mathcal{A}_2
$$
such that
$$
\wedge^2_{\text{prof}}(f) \circ b_{\text{univ}}(\mathcal{A}_1)=b_{\text{univ}}(\mathcal{A}_2) \circ (f, f).
$$
This observation allows one to readily verify that $\wedge^2_{\text{prof}}$ is indeed a functor.

Thanks to equation (\ref{eH2Wedge}), we have a natural isomorphism
$$
e_{\mathcal{A}}: \wedge^2_{\text{prof}} \mathcal{A} \to H^2(\mathcal{A}, \Q/\Z)^\vee,
$$
in other words the profinite group $\wedge^2_{\text{prof}} \mathcal{A}$ and the discrete group $H^2(\mathcal{A}, \Q/\Z)$ are Pontryagin duals. Consider the functors
$$
\wedge^2 \circ \mathcal{F}^{\text{ab}}, \mathcal{F}_{[,]}: \{\text{profinite nilpotent groups of class at most 2}\} \to \{\text{profinite abelian groups}\},
$$
where $\mathcal{F}^{\text{ab}}$ is the abelianization functor and where $\mathcal{F}_{[,]}$ sends $G$ to $[G, G]$. Here $[G, G]$ denotes the commutator subgroup in the usual profinite sense, which is by definition the closure of the subgroup generated by elements of the form $ghg^{-1}h^{-1}$. We can also extend the \emph{natural transformation} from the first to the second, given by the system of maps
$$
\nu_{\mathcal{G}}: \wedge^2_{\text{prof}} \mathcal{G}^{\text{ab}} \to [\mathcal{G}, \mathcal{G}],
$$
coming from lifting and taking commutators: this is a continuous alternating map so we get the map $\nu_{\mathcal{G}}$ through Proposition \ref{Universal property}. Observe that $\nu_{\mathcal{G}}$ is surjective since the image of the pure wedges are all the commutators, which are a set of topological generators for the commutator subgroup. 

An alternative proof of this result is to employ the inflation-restriction exact sequence. Since the restriction map from $H^1(\mathcal{G}, \Q/\Z)$ to $H^1([\mathcal{G}, \mathcal{G}], \Q/\Z)$ is trivial, we get an injection
$$
{[\mathcal{G}, \mathcal{G}]}^\vee \to H^2(\mathcal{G}^{\text{ab}}, \Q/\Z),
$$
which composed with $e_{\mathcal{G}^{\text{ab}}}$ gives an injection $$[\mathcal{G}, \mathcal{G}]^\vee \to (\wedge^2_{\text{prof}}\mathcal{G}^{\text{ab}})^\vee.
$$
Direct inspection of the inflation-restriction sequence gives that this is none other than the alternating map
$$
(g_1, g_2) \mapsto \phi([g_1', g_2']),
$$
where $g_1'$ and $g_2'$ are lifts of respectively $g_1$ and $g_2$ to $\mathcal{G}$. Hence the dual of this map is nothing else than the surjection $\nu_{\mathcal{G}}$ explained above. The following propositions will be helpful. 

\begin{proposition} 
\label{inflation of h^2=0}
Let $\mathcal{G}$ be a profinite nilpotent group of class at most $2$. Suppose that the inflation map $H^2(\mathcal{G}^{\textup{ab}}, \Q/\Z) \to H^2(\mathcal{G}, \Q/\Z)$ is the zero map. Then $\nu_{\mathcal{G}}$ is an isomorphism.
\end{proposition}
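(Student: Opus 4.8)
The plan is to obtain injectivity of $\nu_{\mathcal{G}}$ from the hypothesis by dualizing, and then to combine it with the surjectivity of $\nu_{\mathcal{G}}$ already observed above. Since $\wedge^2_{\text{prof}}\mathcal{G}^{\text{ab}}$ and $[\mathcal{G},\mathcal{G}]$ are profinite groups and Pontryagin duality is an exact anti-equivalence on locally compact abelian groups, the map $\nu_{\mathcal{G}}$ is injective if and only if its dual $\nu_{\mathcal{G}}^\vee \colon [\mathcal{G},\mathcal{G}]^\vee \to (\wedge^2_{\text{prof}}\mathcal{G}^{\text{ab}})^\vee$ is surjective; so it suffices to prove the latter.

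The next step is to identify $\nu_{\mathcal{G}}^\vee$ with the transgression in the five-term inflation--restriction exact sequence attached to the central extension $1 \to [\mathcal{G},\mathcal{G}] \to \mathcal{G} \to \mathcal{G}^{\text{ab}} \to 1$ with coefficients in $\Q/\Z$ --- this identification is precisely what the discussion preceding the statement establishes (via the natural isomorphism $e_{\mathcal{G}^{\text{ab}}}$ and the duality $(\wedge^2_{\text{prof}}\mathcal{G}^{\text{ab}})^\vee \cong H^2(\mathcal{G}^{\text{ab}},\Q/\Z)$), so in the write-up I would simply invoke it rather than redo it. The two inputs making the five-term sequence usable here are that $[\mathcal{G},\mathcal{G}]$ is central, so the conjugation action of $\mathcal{G}^{\text{ab}}$ on $H^1([\mathcal{G},\mathcal{G}],\Q/\Z) = [\mathcal{G},\mathcal{G}]^\vee$ is trivial and the invariants are the whole group, and that the restriction map $H^1(\mathcal{G},\Q/\Z) \to H^1([\mathcal{G},\mathcal{G}],\Q/\Z)$ vanishes because every continuous character of $\mathcal{G}$ kills commutators. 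Hence the sequence reduces to
$$
0 \to [\mathcal{G},\mathcal{G}]^\vee \xrightarrow{d_2} H^2(\mathcal{G}^{\text{ab}},\Q/\Z) \xrightarrow{\mathrm{inf}} H^2(\mathcal{G},\Q/\Z),
$$
with $d_2$ corresponding to $\nu_{\mathcal{G}}^\vee$ under the identifications above.

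With this in hand the conclusion is immediate: the hypothesis says the rightmost $\mathrm{inf}$ is the zero map, so exactness forces $d_2$ to be surjective, hence an isomorphism; transporting back, $\nu_{\mathcal{G}}^\vee$ is surjective, so $\nu_{\mathcal{G}}$ is injective. Being also surjective, $\nu_{\mathcal{G}}$ is a continuous bijective homomorphism between compact Hausdorff groups, hence a homeomorphism, i.e.\ an isomorphism of profinite groups.

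I expect the only real obstacle to be bookkeeping in the profinite/topological setting: that the five-term exact sequence is valid for continuous cochain cohomology of profinite groups with the discrete torsion module $\Q/\Z$, that all maps in sight are continuous for the discrete topology on these (torsion) $H^2$'s, and above all that the Pontryagin dual of $\nu_{\mathcal{G}}$ really is the transgression $d_2$, not its negative or some twist. This last point is the crux, but it is exactly the content of the paragraph immediately before the proposition, so in the proof it should be cited rather than re-derived; everything else is formal.
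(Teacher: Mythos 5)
Your proof is correct and follows essentially the same route as the paper's: both use the five-term inflation--restriction sequence for the central extension $1 \to [\mathcal{G},\mathcal{G}] \to \mathcal{G} \to \mathcal{G}^{\textup{ab}} \to 1$, the vanishing of the restriction map on $H^1$, the identification of the transgression with $\nu_{\mathcal{G}}^\vee$ established in the preceding discussion, and Pontryagin duality to conclude. The only cosmetic difference is that the paper reads off directly that the transgression is an isomorphism and dualizes once, whereas you extract injectivity of $\nu_{\mathcal{G}}$ from surjectivity of its dual and then combine with the separately observed surjectivity; both are valid.
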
 

\begin{proof}
As explained above, the restriction map from $H^1(\mathcal{G}, \Q/\Z)$ to $H^1([\mathcal{G}, \mathcal{G}], \Q/\Z)$ is the zero map. The assumption on the inflation tells us that the connecting homomorphism in the inflation-restriction sequence is an isomorphism
$$
[\mathcal{G}, \mathcal{G}]^\vee \to H^2(\mathcal{G}^{\text{ab}}, \Q/\Z),
$$
which composed with $e_{\mathcal{G}^{\text{ab}}}^\vee$ therefore yields an isomorphism
$$
[\mathcal{G}, \mathcal{G}]^\vee \to (\wedge^2_{\text{prof}} \mathcal{G}^{\text{ab}})^\vee.
$$
The dual of this map, as explained right before this proof, is precisely $\nu_{\mathcal{G}}$. Hence, since $\nu_{\mathcal{G}}^\vee$ is an isomorphism, so is $\nu_{\mathcal{G}}$, which is the desired conclusion. 
\end{proof}

We call a profinite nilpotent group \emph{free on their abelianization} in case the inflation map $H^2(\mathcal{G}^{\textup{ab}}, \Q/\Z) \to H^2(\mathcal{G}, \Q/\Z)$ is the zero map.

\begin{proposition} 
\label{Enough surjective on abelianization}
Let $\mathcal{G}_1$, $\mathcal{G}_2$ be two profinite nilpotent groups of class at most $2$, both free on their abelianization. Suppose that we have a continuous homomorphism
$$
\phi: \mathcal{G}_1 \to \mathcal{G}_2
$$
that induces an isomorphism on the abelianizations. Then $\phi$ is an isomorphism. 
\end{proposition}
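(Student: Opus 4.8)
The plan is to leverage the characterization of ``free on their abelianization'' via the vanishing of the inflation map $H^2(\mathcal{G}^{\textup{ab}}, \Q/\Z) \to H^2(\mathcal{G}, \Q/\Z)$, and to reduce the isomorphism question to the two ``known pieces'' that sit in the central extension: the abelianization and the commutator subgroup. Concretely, for $i \in \{1, 2\}$ the group $\mathcal{G}_i$ fits into a central extension
$$
0 \to [\mathcal{G}_i, \mathcal{G}_i] \to \mathcal{G}_i \to \mathcal{G}_i^{\textup{ab}} \to 0,
$$
and by Proposition \ref{inflation of h^2=0} the natural transformation $\nu_{\mathcal{G}_i}\colon \wedge^2_{\textup{prof}} \mathcal{G}_i^{\textup{ab}} \to [\mathcal{G}_i, \mathcal{G}_i]$ is an isomorphism, since $\mathcal{G}_i$ is free on its abelianization. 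So the commutator subgroup is canonically identified with $\wedge^2_{\textup{prof}}$ of the abelianization.

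First I would record that $\phi$ restricts to a map $[\mathcal{G}_1, \mathcal{G}_1] \to [\mathcal{G}_2, \mathcal{G}_2]$ on commutator subgroups (functoriality of $\mathcal{F}_{[,]}$), and that the square relating $\phi$, its restriction, $\nu_{\mathcal{G}_1}$, $\nu_{\mathcal{G}_2}$ and $\wedge^2_{\textup{prof}}(\phi^{\textup{ab}})$ commutes — this is exactly the statement that $\nu$ is a natural transformation, established in Subsection \ref{intermezzo}. Since $\phi^{\textup{ab}}$ is an isomorphism by hypothesis, $\wedge^2_{\textup{prof}}(\phi^{\textup{ab}})$ is an isomorphism by functoriality (its inverse is $\wedge^2_{\textup{prof}}((\phi^{\textup{ab}})^{-1})$). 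Combining this with the fact that both $\nu_{\mathcal{G}_1}$ and $\nu_{\mathcal{G}_2}$ are isomorphisms, the commuting square forces $\phi|_{[\mathcal{G}_1,\mathcal{G}_1]}$ to be an isomorphism as well.

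At this point I would finish with the five lemma (in its profinite, i.e. topological, incarnation): we have a morphism of short exact sequences of profinite abelian-by-abelian groups where $\phi$ induces isomorphisms on the sub $[\mathcal{G}_1,\mathcal{G}_1] \xrightarrow{\sim} [\mathcal{G}_2,\mathcal{G}_2]$ and on the quotient $\mathcal{G}_1^{\textup{ab}} \xrightarrow{\sim} \mathcal{G}_2^{\textup{ab}}$, hence $\phi$ is an isomorphism of abstract groups; since it is a continuous bijection of profinite (hence compact Hausdorff) groups, it is automatically a homeomorphism, so it is an isomorphism of topological groups.

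The only mild subtlety — and the step I would be most careful about — is the compatibility of the natural transformation $\nu$ with $\phi$ at the level of the \emph{commutator subgroups} themselves rather than just $\wedge^2_{\textup{prof}}$ of the abelianizations: one must check that $\phi$ genuinely carries $[\mathcal{G}_1,\mathcal{G}_1]$ into $[\mathcal{G}_2,\mathcal{G}_2]$ (clear, as $\phi$ sends commutators to commutators and is continuous, so it preserves the closed subgroup generated by them) and that the resulting square of continuous maps commutes on the dense set of pure wedges $a_1 \wedge a_2$, whence everywhere by continuity. Everything else is formal diagram-chasing together with the standard fact that continuous bijections of profinite groups are isomorphisms.
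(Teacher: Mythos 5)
Your proposal is correct and follows essentially the same route as the paper: the first half (naturality of $\nu$, functoriality of $\wedge^2_{\textup{prof}}$, and Proposition \ref{inflation of h^2=0} to get an isomorphism on commutator subgroups) is identical, and your appeal to the five lemma is exactly the diagram chase the paper writes out by hand as ``a standard fact\ldots replicated for completeness.'' Your explicit remark that a continuous bijection of profinite groups is automatically a homeomorphism is a point the paper leaves implicit, but it is standard and does not constitute a difference in approach.
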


\begin{proof}
We claim that the restriction of $\phi: [\mathcal{G}_1, \mathcal{G}_1] \to [\mathcal{G}_2, \mathcal{G}_2]$ to the commutator subgroups is an isomorphism. Indeed, since $\nu$ is a natural transformation, we have that
$$
\mathcal{F}_{[,]}(\phi) \circ \nu_{\mathcal{G}_1} = \nu_{\mathcal{G}_2} \circ \wedge^2_{\text{prof}}(\mathcal{F}^{\text{ab}}(\phi)).
$$
By assumption $\mathcal{F}^{\text{ab}}(\phi)$ is an isomorphism. Hence $\wedge^2_{\text{prof}}(\mathcal{F}^{\text{ab}}(\phi))$ is also an isomorphism by functoriality of $\wedge_2^{\text{prof}}$. Thanks to Proposition \ref{inflation of h^2=0}, we know that $\nu_{\mathcal{G}_1}, \nu_{\mathcal{G}_2}$ are isomorphisms. We deduce that $\mathcal{F}_{[,]}(\phi)$ is an isomorphism as claimed. 

So far we have shown that $\mathcal{F}_{[,]}(\phi), \mathcal{F}^{\text{ab}}(\phi)$ are both isomorphisms. Hence $\phi$ is an isomorphism, as we now explain (this is a standard fact, but we replicate the proof for completeness). Let us first show that $\phi$ is injective. If $\phi(g) = \text{id}$, then $g$ has to be in the commutator subgroup, because $\mathcal{F}^{\text{ab}}(\phi)$ is injective. Therefore, since $\mathcal{F}_{[,]}(\phi)$ is also injective, we deduce that $g = \text{id}$ as desired. 

It remains to show that $\phi$ is surjective. To this end, take $\gamma \in \mathcal{G}_2$. Because $\mathcal{F}^{\text{ab}}(\phi)$ is surjective, there exists $g \in \mathcal{G}_1$ such that $\phi(g)\gamma^{-1}$ is a commutator. Next, we can always find an element $h$ of the commutator subgroup of $\mathcal{G}_1$ such that $\phi(h) = \phi(g)\gamma^{-1}$, since $\mathcal{F}_{[,]}(\phi)$ is also surjective. Therefore we have $\phi(h^{-1}g) = \gamma$, showing that $\phi$ is surjective. 
\end{proof}

We end this subsection with the following handy fact. We keep the convention used so far that if we have a function
$$
f: \Omega \to \mathbb{Z}_{\geq 1} \cup \{\infty\},
$$
then $f(v, w) := \min(f(v), f(w))$ and the convention that $\mathbb{Z}/\ell^n\mathbb{Z} = \mathbb{Z}_\ell$ when $n:=\infty$. 

\begin{proposition} 
\label{computation of profinite wedge 2}
Let $(\Omega, \leq)$ be a totally ordered set and let $f: \Omega \to \mathbb{Z}_{\geq 1} \cup \{\infty\}$ be a function. Denote by
$$
\mathcal{A} := \prod_{v \in \Omega} \mathbb{Z}/\ell^{f(v)}\mathbb{Z}.
$$
For $v < w$ with $v, w \in \Omega$, denote by
$$
\psi(v, w): \wedge^2_{\textup{prof}} \mathcal{A} \to \mathbb{Z}/\ell^{f(v, w)}\mathbb{Z}
$$
the unique homomorphism induced by sending each copy of $\mathcal{A}$ to $\Z/\ell^{f(v)}\Z \times \Z/\ell^{f(w)}\Z$ followed by the determinant. Then we have that
$$
\psi_\bullet: \wedge^2_{\textup{prof}} \mathcal{A} \to \prod_{v < w} \mathbb{Z}/\ell^{f(v, w)}\mathbb{Z}
$$
is a continuous isomorphism. 
\end{proposition}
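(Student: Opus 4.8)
The plan is to compute $\wedge^2_{\text{prof}} \mathcal{A}$ directly from its defining presentation $\varprojlim_U \wedge^2 \mathcal{A}_U$, where $U$ ranges over the open subgroups of $\mathcal{A}$ and $\mathcal{A}_U = \mathcal{A}/U$, by first understanding each finite quotient and then passing to the limit.

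First I would record the elementary computation of $\wedge^2$ for a finite product of finite cyclic $\ell$-groups. For any abelian groups one has $\wedge^2(M_1 \oplus \dots \oplus M_n) \cong \bigoplus_i \wedge^2 M_i \oplus \bigoplus_{i < j}(M_i \otimes_{\Z} M_j)$, and since $\wedge^2(\Z/\ell^a\Z) = 0$ and $\Z/\ell^a\Z \otimes_{\Z} \Z/\ell^b\Z \cong \Z/\ell^{\min(a,b)}\Z$, this yields, for a finite index set $F$ and finite values $k_v$,
\[
\wedge^2\Big( \prod_{v \in F} \Z/\ell^{k_v}\Z \Big) \;\cong\; \bigoplus_{\substack{v, w \in F \\ v < w}} \Z/\ell^{\min(k_v, k_w)}\Z.
\]
Evaluating on the standard generators shows that this isomorphism is exactly the product of the determinant maps: the wedge $e_v \wedge e_w$ generates the $(v,w)$-summand, and $\psi(v,w)(e_v \wedge e_w)$ is the determinant of the $2\times 2$ identity matrix, hence a generator of $\Z/\ell^{\min(k_v,k_w)}\Z$, while it vanishes on the summands indexed by all other pairs.

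Next I would pass to the limit. A cofinal family of open subgroups of $\mathcal{A}$ is given by the $U_{F,\mathbf{k}} := \prod_{v \in F} \ell^{k_v}\Z/\ell^{f(v)}\Z \times \prod_{v \notin F} \Z/\ell^{f(v)}\Z$ with $F \subseteq \Omega$ finite and $1 \leq k_v \leq f(v)$ for $v \in F$; here $\mathcal{A}/U_{F,\mathbf{k}} \cong \prod_{v \in F}\Z/\ell^{k_v}\Z$ is finite, so the previous paragraph identifies $\wedge^2(\mathcal{A}/U_{F,\mathbf{k}})$ with $\bigoplus_{v<w \in F}\Z/\ell^{\min(k_v,k_w)}\Z$ via the determinant maps. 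One then checks that, under these identifications, the transition map attached to an inclusion $U_{F',\mathbf{k}'} \subseteq U_{F,\mathbf{k}}$ is the coordinate projection combined with the reductions $\Z/\ell^{\min(k'_v,k'_w)}\Z \to \Z/\ell^{\min(k_v,k_w)}\Z$, a summand indexed by a pair meeting $\Omega \setminus F$ being sent to $0$; this holds because determinants are compatible with coordinate projections and with reduction of coefficients. Taking the inverse limit — first letting the $k_v$ increase to $f(v)$, so that $\varprojlim_{k_v,k_w}\Z/\ell^{\min(k_v,k_w)}\Z = \Z/\ell^{f(v,w)}\Z$ with the convention $\Z/\ell^\infty\Z = \Z_\ell$, then letting $F$ exhaust $\Omega$ — gives $\varprojlim_U \wedge^2(\mathcal{A}/U) \cong \prod_{v<w}\Z/\ell^{f(v,w)}\Z$, and the resulting isomorphism is precisely $\psi_\bullet$, since its $(v,w)$-component is the map induced by $\psi(v,w)$. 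Continuity of $\psi_\bullet$ is automatic, as each $\psi(v,w)$ factors through a finite quotient of $\mathcal{A}$ and a map into a product of profinite groups is continuous iff all of its components are.

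The proof is essentially bookkeeping on top of the decomposition of $\wedge^2$ of a finite direct sum, so the main obstacle is only a mild one: organizing the double limit (over $F$ and over $\mathbf{k}$) correctly, and handling the two coordinates with $f(v) = \infty$, where $\Z/\ell^{f(v)}\Z = \Z_\ell$ is itself pro-$\ell$. The latter causes no real difficulty, since every open subgroup $U_{F,\mathbf{k}}$ still has a finite quotient $\prod_{v \in F}\Z/\ell^{k_v}\Z$ with each $k_v$ an honest integer, so the finite-group computation applies verbatim and the value $\infty$ only enters in the final passage to the limit, where $\varprojlim_k \Z/\ell^k\Z = \Z_\ell$ as wanted. (Alternatively, via the identification $e_{\mathcal{A}}$ one could instead compute the discrete group $H^2(\mathcal{A}, \Q/\Z)$ and dualize, which comes down to the same direct-sum decomposition.)
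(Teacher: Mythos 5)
Your proof is correct, but it proceeds differently from the paper. You compute $\wedge^2_{\textup{prof}}\mathcal{A} = \varprojlim_U \wedge^2 \mathcal{A}_U$ head-on: the classical splitting $\wedge^2(\bigoplus_i M_i) \cong \bigoplus_i \wedge^2 M_i \oplus \bigoplus_{i<j} M_i \otimes M_j$ together with $\wedge^2(\Z/\ell^a\Z)=0$ and $\Z/\ell^a\Z \otimes \Z/\ell^b\Z \cong \Z/\ell^{\min(a,b)}\Z$ handles each finite quotient, and then a cofinal family $U_{F,\mathbf{k}}$ plus a check of the transition maps (projection and reduction, compatible with determinants) gives the product in the limit. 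The paper instead never unwinds the inverse limit: it shows that the explicit alternating map $b_\bullet: \mathcal{A}\times\mathcal{A} \to \prod_{v<w}\Z/\ell^{f(v,w)}\Z$ built from the coordinate determinants satisfies the universal property of Proposition \ref{Universal property} — uniqueness by evaluating on the topological generators $e_v \wedge e_w$, existence by constructing, for each open subgroup $U$ of the target, the finite sum $\phi_U((\gamma(v,w))) = \sum \gamma(v,w)\,\pi_U(b(e_v,e_w))$ and gluing — so that $\psi_\bullet$ is an isomorphism by abstract nonsense. The paper's route reuses the universal-property machinery already established and avoids any bookkeeping with cofinal systems and double limits; your route is more elementary and concrete, at the cost of having to organize the limit over $(F,\mathbf{k})$ and verify the transition maps, which you do. One small inaccuracy in your continuity remark: when $f(v)=f(w)=\infty$ the map $\psi(v,w)$ lands in $\Z_\ell$ and does \emph{not} factor through a finite quotient of $\mathcal{A}$; continuity still holds because each reduction of $\psi(v,w)$ modulo $\ell^n$ does factor through a finite quotient (equivalently, your isomorphism is a limit of maps of finite groups), so this does not affect the argument.
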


\begin{proof}
It suffices to show that the alternating map $b_\bullet: \mathcal{A} \times \mathcal{A} \to \prod_{v < w} \mathbb{Z}/\ell^{f(v, w)}\mathbb{Z}$ corresponding to $\psi_\bullet$ satisfies the universal property of $\wedge^2_{\text{prof}}$. To this end, let 
$$
b: \mathcal{A} \times \mathcal{A} \to \mathcal{B}
$$
be a continuous alternating map, where $\mathcal{B}$ is another profinite abelian group. Let $e_v$ be the vector in $\mathcal{A}$ having all coordinates equal to $0$ except at $v$ where it equals $1$. Likewise, for $v < w$, let $e(v, w)$ be the vector in $\prod_{v < w} \mathbb{Z}/\ell^{f(v, w)}\mathbb{Z}$ having all coordinates equal to $0$ except at $(v, w)$ where it equals $1$. Observe that $\{e_v : v \in \Omega\}$ is a set of topological generators of $\mathcal{A}$, while $\{e(v, w) : v < w\}$ is a set of topological generators of $\prod_{v < w} \mathbb{Z}/\ell^{f(v, w)}\mathbb{Z}$. Further observe that
$$
b_\bullet(e_v, e_w) = e(v, w)
$$
for each $v < w$. Therefore there is at most one continuous homomorphism
$$
\phi: \prod_{v < w} \mathbb{Z}/\ell^{f(v, w)}\mathbb{Z} \to \mathcal{B}
$$
satisfying $b = \phi \circ b_\bullet$, since this equation already determines $\phi$ on a set of topological generators. This shows that $\phi$, if it exists, is unique. We now prove existence. 

Let $U$ be an open subgroup of $\mathcal{B}$. Consider the bilinear, alternating map
$$
\pi_U \circ b: \mathcal{A} \times \mathcal{A} \to \mathcal{B}_U.
$$
We now define
$$
\phi_U((\gamma(v, w))_{v < w}) := \sum_{v < w} \gamma(v, w) \cdot \pi_U(b(e_v, e_w)).
$$
As explained in the proof of Proposition \ref{Universal property}, the bilinear, alternating map $\pi_U \circ b$ has in its left and right kernel all but finitely many of the $e_v$. Therefore the above sum contains only finitely many terms. In particular, $\phi_U$ is continuous. The map $\phi_U$ satisfies
$$
\phi_U \circ b_\bullet = \pi_U \circ b.
$$
Furthermore, $\phi_U$ is unique as we have already shown uniqueness. This forces the system of maps $\phi_U$ to be compatible. Hence they induce a continuous homomorphism
$$
\phi: \prod_{v < w} \mathbb{Z}/\ell^{f(v, w)}\mathbb{Z} \to \mathcal{B}
$$
with the property that $\phi_U \circ b_\bullet = \pi_U \circ b$ for all open subgroups $U$. Since this holds for all open subgroups $U$, we must have that $\phi \circ b_\bullet = b$. Hence we have shown the desired universality, completing the proof of the proposition.
\end{proof}

\subsection{\texorpdfstring{Reconstructing $\mathcal{S}$ from $\Phi_{\mathcal{S}}$}{Reconstructing the group}}
We are now ready to attach a decorated graph to a $2$-nilpotent pro-$\ell$ group on $(\Omega, f, g)$. Throughout this subsection, we always assume that $\ell$ is odd. We also fix a well-ordering $\leq$ on the set $\Omega$ with the convention that $v_\ell(1) < v_\ell(2)$ are the first two minima of $\Omega$. 

\begin{proposition} 
\label{prop:defining the invariant}
Let $\mathcal{S} := (\mathcal{G}, \pi_\bullet)$ be a $2$-nilpotent pro-$\ell$ group on $(\Omega, f, g)$. For each $v < w$ in $\Omega$, there exists exactly one pair $\lambda_\mathcal{S}(v, w), \lambda_\mathcal{S}(w, v)$ in $\mathbb{Z}/\ell^{f(v, w)}\mathbb{Z}$ such that  
$$
\pi_v \cup \pi_w + \lambda_\mathcal{S}(w, v) \cdot \tilde{\theta}_v + \lambda_\mathcal{S}(v, w) \cdot \tilde{\theta}_w = 0 \quad \textup{in } H^2(\mathcal{G}, \mathbb{Z}/\ell^{f(v, w)}\mathbb{Z})
$$
and such that $\lambda_\mathcal{S}(w, v) = 0$ whenever $f(v) = \infty$.

Furthermore, the $4$-tuple $\Phi_\mathcal{S} := (\Omega, f, g, \lambda_\mathcal{S})$ is a decorated graph. 
\end{proposition}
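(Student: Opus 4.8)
The plan is to fix the well-ordering $\leq$ on $\Omega$ (with $v_\ell(1) < v_\ell(2)$ its two minima) and, for each pair $v < w$ with $w \in \Omega_{\textup{fin}}$, to read $\lambda_{\mathcal{S}}(v,w)$ and $\lambda_{\mathcal{S}}(w,v)$ off the structure of $H^2(\mathcal{G}, \mathbb{Z}/\ell^{f(v,w)}\mathbb{Z})$; for the one excluded pair $\{v_\ell(1), v_\ell(2)\}$ there is nothing to define (indeed (C3) together with $\textup{Ext}^1_{\mathbb{Z}_\ell}(\mathbb{Z}_\ell \times \mathbb{Z}_\ell, \mathbb{Z}_\ell) = 0$ even forces $\pi_{v_\ell(1)} \cup \pi_{v_\ell(2)}$ to vanish in $H^2(\mathcal{G}, \mathbb{Z}_\ell)$). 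So fix such $v < w$ and put $M := \mathbb{Z}/\ell^{f(v,w)}\mathbb{Z}$ and $\mathcal{A}_{v,w} := \mathbb{Z}/\ell^{f(v)}\mathbb{Z} \times \mathbb{Z}/\ell^{f(w)}\mathbb{Z}$. By (C2) the abelianization $\mathcal{A} := \mathcal{G}^{\textup{ab}}$ is $\prod_u \mathbb{Z}/\ell^{f(u)}\mathbb{Z}$, and $(\pi_v,\pi_w)$ factors through the coordinate quotient $\mathcal{A} \twoheadrightarrow \mathcal{A}_{v,w}$. I would first analyze $H^2(\mathcal{A}_{v,w}, M)$ via the (profinite form of the) universal coefficient sequence (\ref{eExtWedge}): since $\wedge^2$ kills cyclic groups, $\wedge^2_{\textup{prof}}\mathcal{A}_{v,w} \cong \mathbb{Z}/\ell^{f(v,w)}\mathbb{Z}$ via the determinant by Proposition \ref{computation of profinite wedge 2}, so $\textup{Hom}(\wedge^2_{\textup{prof}}\mathcal{A}_{v,w}, M)$ is cyclic of order $\ell^{f(v,w)}$ with $\pi_v \cup \pi_w$ mapping to a generator, whereas $\textup{Ext}^1_{\mathbb{Z}_\ell}(\mathcal{A}_{v,w}, M)$ splits as a sum of two cyclic groups of order $\ell^{f(v,w)}$ whose generators inflate, through $\pi_v$ and $\pi_w$, to $\tilde\theta_v$ and $\tilde\theta_w$ (using that $g(v), g(w)$ are units, so that the extension classes $\theta_v, \theta_w$ of (C4) are generators).

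Existence is then immediate from (C3): the class $\pi_v \cup \pi_w$ lies in the image of $\textup{inf}\colon H^2(\mathcal{A}_{v,w}, M) \to H^2(\mathcal{G}, M)$, which by (C3) equals the image of the $\textup{Ext}^1$-subgroup, i.e.\ the subgroup $\langle\tilde\theta_v, \tilde\theta_w\rangle$; hence $\pi_v \cup \pi_w = -\lambda_{\mathcal{S}}(w,v)\tilde\theta_v - \lambda_{\mathcal{S}}(v,w)\tilde\theta_w$ for suitable $\lambda_{\mathcal{S}}(w,v), \lambda_{\mathcal{S}}(v,w) \in M$, and one takes $\lambda_{\mathcal{S}}(w,v) := 0$ when $f(v) = \infty$ (in which case $\tilde\theta_v = 0$), which is the required constraint.

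The main obstacle is uniqueness, which amounts to showing the homomorphism $M \oplus M \to H^2(\mathcal{G}, M)$, $(a,b) \mapsto a\tilde\theta_v + b\tilde\theta_w$, is injective (when $f(v) = \infty$, so $\tilde\theta_v = 0$, it is understood that $\lambda_{\mathcal{S}}(w,v)$ has been fixed to $0$ and only injectivity of $b \mapsto b\tilde\theta_w$ is wanted). The classes $\tilde\theta_v, \tilde\theta_w$ are inflated from classes $\bar\theta_v, \bar\theta_w \in \textup{Ext}^1_{\mathbb{Z}_\ell}(\mathcal{A}, M) \subseteq H^2(\mathcal{A}, M)$ (pullbacks of $\theta_v, \theta_w$ along coordinate projections), and restricting along the section $\mathcal{A}_{v,w} \hookrightarrow \mathcal{A}$ already shows $(a,b) \mapsto a\bar\theta_v + b\bar\theta_w$ is injective into $H^2(\mathcal{A}, M)$; so it remains to check $\ker\big(\textup{inf}\colon H^2(\mathcal{A}, M) \to H^2(\mathcal{G}, M)\big) \cap \textup{Ext}^1_{\mathbb{Z}_\ell}(\mathcal{A}, M) = 0$. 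For this I would run the five-term exact sequence of $1 \to [\mathcal{G},\mathcal{G}] \to \mathcal{G} \to \mathcal{A} \to 1$: since $\mathcal{G}$ has class at most $2$, $[\mathcal{G},\mathcal{G}]$ is central, so $H^1([\mathcal{G},\mathcal{G}], M) = \textup{Hom}([\mathcal{G},\mathcal{G}], M)$ with trivial action and $H^1(\mathcal{G}, M) \to H^1([\mathcal{G},\mathcal{G}], M)$ is zero, whence $\ker(\textup{inf})$ is the image of the transgression $\textup{tg}\colon \textup{Hom}([\mathcal{G},\mathcal{G}], M) \to H^2(\mathcal{A}, M)$. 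Composing $\textup{tg}$ with the universal coefficient quotient $H^2(\mathcal{A}, M) \to \textup{Hom}(\wedge^2_{\textup{prof}}\mathcal{A}, M)$ yields, up to sign, the map $\phi \mapsto \phi \circ \nu_{\mathcal{G}}$ — both maps being concretely "lift and take commutators", cf.\ the discussion after (\ref{eExtWedge}) and the construction of $\nu_{\mathcal{G}}$ — and this is injective because $\nu_{\mathcal{G}}$ is surjective; so $\textup{tg}$ is injective and its image meets the kernel of the universal coefficient quotient, namely $\textup{Ext}^1_{\mathbb{Z}_\ell}(\mathcal{A}, M)$, only in $0$. The only delicate point is bookkeeping the profinite versions of the universal coefficient sequence and of the transgression, which runs as in the proof of Proposition \ref{inflation of h^2=0}. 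In particular, when $f(v) = \infty$ this shows $\tilde\theta_w$ has order $\ell^{f(w)}$ in $H^2(\mathcal{G}, \mathbb{Z}/\ell^{f(w)}\mathbb{Z})$.

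Finally, to see that $\Phi_{\mathcal{S}}$ is a decorated graph: the triple $(\Omega, f, g)$ satisfies the first three bullets by hypothesis and $\lambda_{\mathcal{S}}(v,w) \in \mathbb{Z}/\ell^{f(v,w)}\mathbb{Z}$ by construction, so only the normalizations of $\lambda$ remain. For $v := v_\ell(1)$ and any $w \in \Omega_{\textup{fin}}$ the defining relation collapses to $\pi_{v_\ell(1)} \cup \pi_w + \lambda_{\mathcal{S}}(v_\ell(1), w)\tilde\theta_w = 0$ (since $\tilde\theta_{v_\ell(1)} = 0$ and $\lambda_{\mathcal{S}}(w, v_\ell(1)) = 0$); comparing with (C4), which says $\pi_{v_\ell(1)} \cup \pi_w + \log_\ell(1 + g(w)\ell^{f(w)})\tilde\theta_w = 0$, and invoking the uniqueness of the coefficient of $\tilde\theta_w$, we get $\lambda_{\mathcal{S}}(v_\ell(1), w) \equiv \log_\ell(1 + g(w)\ell^{f(w)}) \bmod \ell^{f(w)}$. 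And $\lambda_{\mathcal{S}}(w, v_\ell(i)) = 0$ for $w \in \Omega_{\textup{fin}}$, $i \in \{1, 2\}$, holds by construction, since $v_\ell(1) < v_\ell(2) < w$ makes $v_\ell(i)$ the smaller element of the pair, with $f(v_\ell(i)) = \infty$, so that the label was set to $0$. This verifies all the axioms of a decorated graph.
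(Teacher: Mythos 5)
Your proposal is correct, and its existence half and decorated-graph verification follow the paper's proof: you use (C3) to move $\pi_v \cup \pi_w$ into the inflated $\textup{Ext}$-subgroup, identify that subgroup as $\langle \tilde{\theta}_v, \tilde{\theta}_w \rangle$ (empty for the pair $\{v_\ell(1), v_\ell(2)\}$, forcing $\pi_{v_\ell(1)} \cup \pi_{v_\ell(2)} = 0$), and read the normalizations of $\lambda_{\mathcal{S}}$ off (C4) and the convention $\lambda_{\mathcal{S}}(w,v) = 0$ for $f(v) = \infty$ --- in fact you spell out the (C4) comparison more explicitly than the paper does. Where you genuinely diverge is uniqueness. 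The paper proves injectivity of the inflation $\textup{Ext}^1_{\Z_\ell}(\Z/\ell^{f(v)}\Z \times \Z/\ell^{f(w)}\Z, M) \to H^2(\mathcal{G}, M)$ by two elementary splitting arguments: injectivity of $\textup{Ext}^1(\mathcal{G}^{\textup{ab}}, M) \to H^2(\mathcal{G}, M)$ via the universal property of the abelianization, and injectivity of the pullback along $(\pi_v,\pi_w)$ at the level of $\textup{Ext}^1$ via the retraction supplied by the product decomposition in (C2) (the image of a lift on the subgroup supported at $\{v,w\}$ splits the extension). You instead control the entire kernel of $H^2(\mathcal{G}^{\textup{ab}}, M) \to H^2(\mathcal{G}, M)$ through the five-term exact sequence: it equals the image of the transgression, and composing the transgression with the universal-coefficient quotient gives $\phi \mapsto \phi \circ \nu_{\mathcal{G}}$ up to sign, which is injective since $\nu_{\mathcal{G}}$ is surjective, so the kernel meets $\textup{Ext}^1(\mathcal{G}^{\textup{ab}}, M)$ trivially; injectivity of $(a,b) \mapsto a\bar{\theta}_v + b\bar{\theta}_w$ inside $\textup{Ext}^1(\mathcal{G}^{\textup{ab}}, M)$ you then get by restricting along the coordinate section, which is the same retraction idea the paper extracts from (C2). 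Your route buys a slightly stronger fact (injectivity of inflation on all of $\textup{Ext}^1(\mathcal{G}^{\textup{ab}}, M)$) and reuses the $\nu_{\mathcal{G}}$ and transgression machinery already set up in the intermezzo, at the cost of invoking the profinite universal-coefficient sequence and Hochschild--Serre with finite coefficients; this is legitimate since for every pair other than $\{v_\ell(1), v_\ell(2)\}$ the module $\Z/\ell^{f(v,w)}\Z$ is finite, so everything reduces to finite quotients by a colimit argument, but the paper's splitting arguments avoid this bookkeeping entirely and are the more elementary path.
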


\begin{proof}
We will start with the case where $v, w \in \Omega_{\text{fin}}$. Observe that $\pi_v \cup \pi_w$ is a class in the image of the inflation map
$$
H^2(\mathbb{Z}/\ell^{f(v)}\mathbb{Z} \times \mathbb{Z}/\ell^{f(w)}\mathbb{Z}, \mathbb{Z}/\ell^{f(v, w)}\mathbb{Z}) \xrightarrow{\text{inf}} H^2(\mathcal{G}, \mathbb{Z}/\ell^{f(v, w)}\mathbb{Z}).
$$
Therefore, by property (C3) of Definition \ref{def: Rado groups}, we have that this class is also in the inflation 
$$
\text{Ext}^1_{\mathbb{Z}_\ell}(\mathbb{Z}/\ell^{f(v)}\mathbb{Z} \times \mathbb{Z}/\ell^{f(w)}\mathbb{Z}, \mathbb{Z}/\ell^{f(v, w)}\mathbb{Z}) \xrightarrow{\text{inf}} H^2(\mathcal{G}, \mathbb{Z}/\ell^{f(v, w)}\mathbb{Z}).
$$
This latter inflation is generated by $\{\tilde{\theta}_v, \tilde{\theta}_w\}$ if $v, w \in \Omega_{\text{fin}}$: here $\tilde{\theta}_v$ has been defined in property (C4) of Definition \ref{def: Rado groups}. This gives the existence of the coefficients $\lambda_\mathcal{S}(v, w)$ and $\lambda_\mathcal{S}(w, v)$ satisfying the desired property. To demonstrate uniqueness, we need to prove that the inflation map 
$$
\text{Ext}^1_{\mathbb{Z}_\ell}(\mathbb{Z}/\ell^{f(v)}\mathbb{Z} \times \mathbb{Z}/\ell^{f(w)}\mathbb{Z}, \mathbb{Z}/\ell^{f(v, w)}\mathbb{Z}) \xrightarrow{\text{inf}} H^2(\mathcal{G}, \mathbb{Z}/\ell^{f(v, w)}\mathbb{Z})
$$ 
is injective. Observe that, by the universal property of the abelianization, the inflation map
$$
\text{Ext}^1_{\mathbb{Z}_\ell}(\mathcal{G}^{\text{ab}}, \mathbb{Z}/\ell^{f(v, w)}\mathbb{Z}) \xrightarrow{\text{inf}} H^2(\mathcal{G}, \mathbb{Z}/\ell^{f(v, w)}\mathbb{Z})
$$
is injective. Therefore it suffices to prove that the inflation map
$$
\text{Ext}^1_{\mathbb{Z}_\ell}(\mathbb{Z}/\ell^{f(v)}\mathbb{Z} \times \mathbb{Z}/\ell^{f(w)}\mathbb{Z}, \mathbb{Z}/\ell^{f(v, w)}\mathbb{Z}) \xrightarrow{\text{inf}} \text{Ext}^1_{\mathbb{Z}_\ell}(\mathcal{G}^{\text{ab}}, \mathbb{Z}/\ell^{f(v, w)}\mathbb{Z})
$$
is injective. To this end, suppose that we have a finite abelian group $G$ and a surjection
$$
G \twoheadrightarrow \mathbb{Z}/\ell^{f(v)}\mathbb{Z} \times \mathbb{Z}/\ell^{f(w)}\mathbb{Z}
$$
that becomes split after pulling it back via $(\pi_v, \pi_w): \mathcal{G}^{\text{ab}} \rightarrow \mathbb{Z}/\ell^{f(v)}\mathbb{Z} \times \mathbb{Z}/\ell^{f(w)} \mathbb{Z}$. This is equivalent to the existence of a lift of $(\pi_v, \pi_w): \mathcal{G}^{\text{ab}} \rightarrow \mathbb{Z}/\ell^{f(v)}\mathbb{Z} \times \mathbb{Z}/\ell^{f(w)}\mathbb{Z}$ to a continuous homomorphism $f$ from $\mathcal{G}^{\text{ab}}$ to $G$. Then we must show that the surjection $G \twoheadrightarrow \mathbb{Z}/\ell^{f(v)}\mathbb{Z} \times \mathbb{Z}/\ell^{f(w)}\mathbb{Z}$ itself must split. We will now use property (C2) of Definition \ref{def: Rado groups}. Indeed, the image of $f$ on the subgroup consisting of elements that after having applied $\pi_\bullet$ have all coordinates outside of $v$, $w$ equal to $0$ gives such a splitting. 

The other cases are very similar, with the only difference that now the image of the inflation is generated by the smaller set $\{\tilde{\theta}_v : f(v) \neq \infty\} \cup \{\tilde{\theta}_w : f(w) \neq \infty\}$. The rest of the argument is completely analogous.

It remains to show that $(\Omega, f, g, \lambda_{\mathcal{S}})$ is a decorated graph. Property (C4) of Definition \ref{def: Rado groups} makes sure that the conditions on $\lambda_\mathcal{S}(v_\ell(1), -)$ are satisfied. Our own conventions on $\lambda_\mathcal{S}(-, v_\ell(i))$ ensure that the conditions on these labels are also met. 
\end{proof}

Our end goal in this section is to reconstruct $\mathcal{S}$ from the graph $\Phi_{\mathcal{S}}$. 

\begin{proposition} 
\label{free on abelianization}
Let $\mathcal{S} := (\mathcal{G}, \pi_\bullet)$ be a $2$-nilpotent pro-$\ell$ group on $(\Omega, f, g)$. Then $\mathcal{G}$ is free on its abelianization. 
\end{proposition}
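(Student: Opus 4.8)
The plan is to show that the inflation map $H^2(\mathcal{G}^{\text{ab}}, \Q/\Z) \to H^2(\mathcal{G}, \Q/\Z)$ is the zero map, which is exactly the definition of $\mathcal{G}$ being free on its abelianization. By Proposition \ref{inflation of h^2=0}, this is also equivalent to $\nu_{\mathcal{G}}$ being an isomorphism, but it is the vanishing of inflation that we attack directly. First I would use property (C2) to identify $\mathcal{G}^{\text{ab}}$ with $\mathcal{A} := \prod_{v \in \Omega} \mathbb{Z}/\ell^{f(v)}\mathbb{Z}$ via $\pi_\bullet$, so that by equation (\ref{eH2Wedge}) and Proposition \ref{computation of profinite wedge 2} we have a canonical identification
$$
H^2(\mathcal{G}^{\text{ab}}, \Q/\Z)^\vee = \wedge^2_{\text{prof}} \mathcal{A} \cong \prod_{v < w} \mathbb{Z}/\ell^{f(v,w)}\mathbb{Z},
$$
the last isomorphism being $\psi_\bullet$, which sends a pure wedge $e_v \wedge e_w$ (for $v<w$) to the standard generator $e(v,w)$.

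Dually, to show inflation is zero it suffices to show that its Pontryagin dual, the map $[\mathcal{G},\mathcal{G}] \hookleftarrow$ (no — rather the map on duals) vanishes; concretely, since $\Q/\Z$ is injective the dual of inflation on $H^2$ is a map $\wedge^2_{\text{prof}}\mathcal{A} \to$ (something), but the cleanest route is: inflation $H^2(\mathcal{G}^{\text{ab}},\Q/\Z) \to H^2(\mathcal{G},\Q/\Z)$ is zero if and only if every element of $H^2(\mathcal{G}^{\text{ab}},\Q/\Z)$, viewed via the splitting (\ref{eExtWedge}) for $B=\Q/\Z$ as an element of $\mathrm{Hom}(\wedge^2_{\text{prof}}\mathcal{A}, \Q/\Z)$, inflates to $0$. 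Writing such an element in terms of the coordinates from Proposition \ref{computation of profinite wedge 2}, it is a (continuous) linear combination of the functionals dual to the $e(v,w)$, i.e. of the cup products $\pi_v \cup \pi_w$ for $v<w$. So it suffices to prove that for all $v < w$ the inflation of $\pi_v \cup \pi_w$ to $H^2(\mathcal{G}, \Q/\Z)$ vanishes, and then conclude by continuity/linearity (using that inflation is a continuous homomorphism and that the $\pi_v\cup\pi_w$ topologically span).

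The key input is Proposition \ref{prop:defining the invariant}: for each $v<w$ there exist $\lambda_{\mathcal{S}}(v,w), \lambda_{\mathcal{S}}(w,v) \in \mathbb{Z}/\ell^{f(v,w)}\mathbb{Z}$ with
$$
\pi_v \cup \pi_w + \lambda_\mathcal{S}(w,v)\cdot\tilde\theta_v + \lambda_\mathcal{S}(v,w)\cdot\tilde\theta_w = 0 \quad\text{in } H^2(\mathcal{G}, \mathbb{Z}/\ell^{f(v,w)}\mathbb{Z}),
$$
hence also in $H^2(\mathcal{G},\Q/\Z)$ after pushing forward along $\mathbb{Z}/\ell^{f(v,w)}\mathbb{Z}\hookrightarrow \Q/\Z$. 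Now $\tilde\theta_v$ and $\tilde\theta_w$ are by construction inflations through $\pi_v$, $\pi_w$ of classes in $\mathrm{Ext}^1_{\mathbb{Z}_\ell}$ of the \emph{cyclic} groups $\mathbb{Z}/\ell^{f(v)}\mathbb{Z}$, $\mathbb{Z}/\ell^{f(w)}\mathbb{Z}$; such an $\mathrm{Ext}^1$-class, when pushed into $\Q/\Z$, lands in the image of the coboundary $H^1(\mathbb{Z}/\ell^{f(v)}\mathbb{Z}, \Q/\Z) \to H^2$ coming from $0\to\mathbb{Z}/\ell^{f(v)}\mathbb{Z}\to\Q/\Z\to\Q/\Z\to 0$, because $\Q/\Z$ is divisible and an extension of a cyclic group by a cyclic group pulled back to $\Q/\Z$ becomes a pushout of a \emph{symmetric} $2$-cocycle — i.e., $\tilde\theta_v$ maps to $0$ in $H^2(\mathcal{G},\Q/\Z)$. (This is precisely the statement that the $\mathrm{Ext}^1$-part of (\ref{eExtWedge}) for $B=\Q/\Z$ vanishes, applied after restricting along $\pi_v$.) Therefore the identity above shows $\mathrm{inf}(\pi_v\cup\pi_w) = 0$ in $H^2(\mathcal{G},\Q/\Z)$ for every $v<w$, which is what we needed.

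The main obstacle I anticipate is the bookkeeping in the reduction of the second paragraph: making precise, with the product-topology subtleties of Proposition \ref{computation of profinite wedge 2}, that vanishing of inflation on each generator $\pi_v\cup\pi_w$ (together with continuity of inflation and the fact that these classes topologically generate $H^2(\mathcal{G}^{\text{ab}},\Q/\Z)$ — equivalently, their duals topologically generate $\wedge^2_{\text{prof}}\mathcal{A}$) forces inflation to vanish identically. One must be careful that $H^2(\mathcal{G}^{\text{ab}},\Q/\Z)$ is a \emph{discrete} torsion group, so each of its elements is supported on finitely many pairs $(v,w)$, which makes the linear-combination argument finite and hence immediate once the generator case is settled. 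A secondary, purely formal point is to verify the claim that $\tilde\theta_v$ dies in $H^2(\mathcal{G},\Q/\Z)$: this follows because $\theta_v \in \mathrm{Ext}^1_{\mathbb{Z}_\ell}(\mathbb{Z}/\ell^{f(v)}\mathbb{Z}, \mathbb{Z}/\ell^{f(v)}\mathbb{Z})$ has image $0$ under $\mathrm{Ext}^1_{\mathbb{Z}_\ell}(\mathbb{Z}/\ell^{f(v)}\mathbb{Z},\Q/\Z) = 0$ (injectivity of $\Q/\Z$), and inflation is functorial, so $\tilde\theta_v = \mathrm{inf}_{\pi_v}(\theta_v)$ maps to $\mathrm{inf}_{\pi_v}(0) = 0$.
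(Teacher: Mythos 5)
Your proposal is correct and follows essentially the same route as the paper: both reduce to showing that the generators $\pi_v \cup \pi_w$ of the image of inflation die in $H^2(\mathcal{G},\Q/\Z)$, by expressing them via (C3) (you route this through Proposition \ref{prop:defining the invariant}, which is proved earlier and causes no circularity) as combinations of the abelian extension classes $\tilde{\theta}_v$, which vanish with divisible coefficients. The only point the paper is slightly more explicit about is the pair $\{v_\ell(1), v_\ell(2)\}$, where one must work with the reductions $\textup{red}_{\ell^n} \circ \pi_{v_\ell(i)}$ to land in the discrete group $H^2(\mathcal{G}^{\textup{ab}},\Q/\Z)$; your argument covers this since the relevant cup product already vanishes with $\Z_\ell$ coefficients.
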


\begin{proof}
Write $\text{red}_{\ell^n}: \Z_\ell \rightarrow \Z/\ell^n\Z$ for the reduction map. Thanks to Subsection \ref{intermezzo}, we have that the image of the inflation map
$$
H^2(\mathcal{G}^{\text{ab}}, \Q/\Z) \xrightarrow{\text{inf}} H^2(\mathcal{G}, \Q/\Z)
$$
is generated by the set 
$$
\{\pi_v \cup \pi_w : v, w \in \Omega, \{v, w\} \neq \{v_\ell(1), v_\ell(2)\}\} \cup \{(\text{red}_{\ell^n} \circ \pi_{v_\ell(1)}) \cup (\text{red}_{\ell^n} \circ \pi_{v_\ell(2)}) : n \in \Z_{\geq 0}\}
$$
viewed as a subset of $H^2(\mathcal{G}, \Q/\Z)$ via the natural inclusion $\mathbb{Z}/\ell^{f(v, w)}\mathbb{Z} \subseteq \Q/\Z$ sending $1$ to $\frac{1}{\ell^{f(v, w)}}$. Therefore we deduce from property (C3) of Definition \ref{def: Rado groups} that each of these generators vanishes when viewed with coefficients in $\Q/\Z$ as the abelian extension classes $\tilde{\theta}_v$ have, trivially, vanishing image in $H^2(\mathcal{G}^{\text{ab}}, \Q/\Z)$.
\end{proof}

With the work done so far, we have already determined the structure of $[\mathcal{G}, \mathcal{G}]$: indeed, combining Proposition \ref{inflation of h^2=0}, Proposition \ref{computation of profinite wedge 2}, Proposition \ref{free on abelianization} and property (C2) of Definition \ref{def: Rado groups}, we obtain an isomorphism, natural in $\mathcal{S}$, with $\prod_{v < w} \mathbb{Z}/\ell^{f(v, w)}\mathbb{Z}$.

Now that we understand ${\mathcal{G}}^{{\text{ab}}}$ and $[\mathcal{G}, \mathcal{G}]$, we want to describe the extension class: as we are going to show, that information is precisely encoded in the decorated graph $\Phi_\mathcal{S}$. As a first step towards this goal, let us attach to each decorated graph $\Phi$ a $2$-nilpotent pro-$\ell$ group on $(\Omega, f, g)$, which we will denote by $\text{Gr}(\Phi)$. This is done as follows. 

We start by defining the set
$$
\mathcal{G}(\Phi) := \left(\prod_{v < w} \mathbb{Z}/\ell^{f(v, w)}\mathbb{Z} \cdot \{(v, w)\}\right) \times \prod_{v \in \Omega}\mathbb{Z}/\ell^{f(v)}\mathbb{Z}.
$$
We next equip $\mathcal{G}(\Phi)$ with a group law. Fix for each $v \in \Omega$ a $2$-cocycle $(\prod_{v \in \Omega}\mathbb{Z}/\ell^{f(v)}\mathbb{Z})^2$ to $\mathbb{Z}/\ell^{f(v)}\mathbb{Z}$ representing the inflation of $\theta_v$ to $\prod_{v \in \Omega}\mathbb{Z}/\ell^{f(v)}\mathbb{Z}$ and vanishing on $(0,0)$. We will abuse notation by continuing to write $\theta_v$ for its inflation and for the natural reduction of its inflation modulo $\ell^m$ for $m \leq f(v)$. 

We view $\pi_v \cup \pi_w$ as a $2$-cocycle from $(\prod_{v \in \Omega}\mathbb{Z}/\ell^{f(v)}\mathbb{Z})^2$ to $\mathbb{Z}/\ell^{f(v, w)}\mathbb{Z}$, with the assignment that sends a pair of elements $(g_1, g_2)$ to $\pi_v(g_1) \cdot \pi_w(g_2)$, where the product takes place in $\mathbb{Z}/\ell^{f(v, w)}\mathbb{Z}$. We can now package them together in one $2$-cocycle $\theta_\Phi$ whose $(v, w)$-coordinate is $\pi_v \cup \pi_w + \lambda(w, v) \cdot \theta_v + \lambda(v, w) \cdot \theta_w$ 
$$
\theta_\Phi: \prod_{v \in \Omega} (\mathbb{Z}/\ell^{f(v)}\mathbb{Z})^2 \to \prod_{v < w} \mathbb{Z}/\ell^{f(v, w)}\mathbb{Z}. 
$$
We now define the group law $*_\Phi$ on $\mathcal{G}(\Phi)$ via the formula
\begin{align}
\label{eCocycleGroup}
(\mu_\bullet, \rho_\bullet) *_\Phi (\mu_\bullet', \rho_\bullet') = (\mu_\bullet + \mu_\bullet' + \theta_\Phi(\rho_\bullet, \rho_\bullet'), \rho_\bullet + \rho_\bullet').
\end{align}
This is a group law with the neutral element being $(0, 0)$. Using that each coordinate is a $2$-cocycle, $\theta_\Phi$ defines a central extension of $\prod_{v \in \Omega} \mathbb{Z}/\ell^{f(v)}\mathbb{Z}$. Of course we have a natural surjective projection homomorphism
$$
\pi(\Phi): \mathcal{G}(\Phi) \to \prod_{v \in \Omega}\mathbb{Z}/\ell^{f(v)}\mathbb{Z}.
$$
We now put
$$
\text{Gr}(\Phi) := (\mathcal{G}(\Phi), \pi(\Phi)).
$$

\begin{proposition} 
\label{it's acceptable group}
The pair $\textup{Gr}(\Phi)$ defines a $2$-nilpotent pro-$\ell$ group on $(\Omega, f, g)$.
\end{proposition}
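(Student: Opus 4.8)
The plan is to verify the four conditions (C1)--(C4) of Definition \ref{def: Rado groups} for $\textup{Gr}(\Phi) = (\mathcal{G}(\Phi), \pi(\Phi))$, after first confirming that formula (\ref{eCocycleGroup}) really does equip $\mathcal{G}(\Phi)$ with a profinite group structure. Write $A := \prod_{v \in \Omega} \mathbb{Z}/\ell^{f(v)}\mathbb{Z}$ and $K := \prod_{v < w} \mathbb{Z}/\ell^{f(v,w)}\mathbb{Z}$. Each coordinate $\pi_v \cup \pi_w + \lambda(w,v)\theta_v + \lambda(v,w)\theta_w$ of $\theta_\Phi$ is a normalized $2$-cocycle on $A$ valued in $\mathbb{Z}/\ell^{f(v,w)}\mathbb{Z}$: a direct check shows $(g_1,g_2) \mapsto \pi_v(g_1)\pi_w(g_2)$ satisfies the cocycle identity, the $\theta_v$ are cocycles by choice, and all of them factor through the projection of $A$ onto its $v$- and $w$-coordinates, hence are continuous. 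Therefore $\theta_\Phi$ is a continuous normalized $2$-cocycle $A \times A \to K$, so (\ref{eCocycleGroup}) defines a topological group structure presenting $\mathcal{G}(\Phi)$ as a central extension $0 \to K \to \mathcal{G}(\Phi) \xrightarrow{\pi(\Phi)} A \to 0$ of class $[\theta_\Phi] \in H^2(A,K)$. As a space $\mathcal{G}(\Phi)$ is a product of copies of $\mathbb{Z}/\ell^n\mathbb{Z}$ and $\mathbb{Z}_\ell$, hence compact, Hausdorff and totally disconnected, so it is profinite; it is pro-$\ell$ since all these pieces are, and nilpotent of class at most $2$ because it is a central extension of an abelian group by an abelian group. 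This settles (C1).

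For (C2), the projection $\pi(\Phi)$ is visibly a continuous epimorphism with kernel $K$, and $K$ equals the commutator subgroup. Indeed $[\mathcal{G}(\Phi), \mathcal{G}(\Phi)] \subseteq K$ since $A$ is abelian; conversely, the commutator pairing of the central extension is the alternating form attached to $[\theta_\Phi]$ under the ``lift and take commutators'' map $H^2(A,K) \to \textup{Hom}(\wedge^2_{\textup{prof}} A, K)$, and since the abelian classes $\theta_v$ contribute nothing to it, its $(v,w)$-coordinate is $(a,b) \mapsto \pi_v(a)\pi_w(b) - \pi_w(a)\pi_v(b)$. Evaluating at the standard basis vectors $e_v, e_w$ produces the element of $K$ supported at the coordinate $(v,w)$ with value $1$; these (together with their $\mathbb{Z}_\ell$-multiples in the $(v_\ell(1),v_\ell(2))$-slot) topologically generate $K$, so the closed subgroup $[\mathcal{G}(\Phi),\mathcal{G}(\Phi)]$ is all of $K$.

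The key input for (C3) and (C4) is the following splitting observation. For the central extension $0 \to K \to \mathcal{G}(\Phi) \xrightarrow{\pi(\Phi)} A \to 0$ of class $[\theta_\Phi]$ and any continuous homomorphism $\chi \colon K \to B$ into a profinite abelian group $B$ (trivial action), the pushforward $\chi_*[\theta_\Phi] \in H^2(A, B)$ inflates to $0$ in $H^2(\mathcal{G}(\Phi), B)$: inflating $\chi_*[\theta_\Phi]$ along $\pi(\Phi)$ classifies the pullback along $\pi(\Phi)$ of the pushout extension $0 \to B \to \chi_*\mathcal{G}(\Phi) \to A \to 0$, and this pullback is split by $g \mapsto (g, \overline{\chi}(g))$, where $\overline{\chi} \colon \mathcal{G}(\Phi) \to \chi_*\mathcal{G}(\Phi)$ is the canonical map. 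Taking $\chi$ to be the projection of $K$ onto its $(v,w)$-coordinate, we conclude that the corresponding coordinate of $\theta_\Phi$ inflates to $0$: for $v<w$ not both in $\{v_\ell(1),v_\ell(2)\}$ this reads, in $H^2(\mathcal{G}(\Phi), \mathbb{Z}/\ell^{f(v,w)}\mathbb{Z})$,
\[
\pi_v \cup \pi_w + \lambda(w,v)\cdot\tilde{\theta}_v + \lambda(v,w)\cdot\tilde{\theta}_w = 0,
\]
where $\tilde{\theta}_v$ is the inflation of $\theta_v$ through $\pi_v$ (reduced modulo $\ell^{f(v,w)}$, and coinciding with the term appearing in $\theta_\Phi$), and where, by the conventions $\tilde{\theta}_{v_\ell(i)} = 0$ and the decorated-graph axiom $\lambda(w, v_\ell(i)) = 0$, the term $\lambda(w,v)\tilde{\theta}_v$ vanishes as soon as $v = v_\ell(1)$; for $\{v,w\} = \{v_\ell(1),v_\ell(2)\}$ it simply reads $\pi_{v_\ell(1)} \cup \pi_{v_\ell(2)} = 0$.

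Conditions (C3) and (C4) then follow formally. For (C3), fix distinct $v,w$; the inclusion of the image of inflation from $\textup{Ext}^1$ into that from $H^2$ is automatic, so it suffices to reverse it. By the universal coefficient sequence (\ref{eExtWedge}) — and its profinite variant discussed in Subsection \ref{intermezzo} when $f(v,w)=\infty$ — together with the identification $\wedge^2_{\textup{prof}}(\mathbb{Z}/\ell^{f(v)}\mathbb{Z} \times \mathbb{Z}/\ell^{f(w)}\mathbb{Z}) \cong \mathbb{Z}/\ell^{f(v,w)}\mathbb{Z}$ of Proposition \ref{computation of profinite wedge 2}, the group $H^2(\mathbb{Z}/\ell^{f(v)}\mathbb{Z} \times \mathbb{Z}/\ell^{f(w)}\mathbb{Z}, \mathbb{Z}/\ell^{f(v,w)}\mathbb{Z})$ is generated by the image of $\textup{Ext}^1_{\mathbb{Z}_\ell}$ together with the cup product class of the two coordinate projections, which inflates to $\pi_v \cup \pi_w$. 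By the displayed relation, $\pi_v \cup \pi_w$ lies in the $\mathbb{Z}_\ell$-span of $\tilde{\theta}_v, \tilde{\theta}_w$ in $H^2(\mathcal{G}(\Phi), \mathbb{Z}/\ell^{f(v,w)}\mathbb{Z})$, hence in the image of inflation from $\textup{Ext}^1$; therefore so is the image of inflation from all of $H^2$. For (C4), apply the displayed relation to the pair $v_\ell(1) < v$ for $v \in \Omega_{\textup{fin}}$: since $\tilde{\theta}_{v_\ell(1)} = 0$ it becomes $\pi_{v_\ell(1)} \cup \pi_v + \lambda(v_\ell(1),v)\cdot\tilde{\theta}_v = 0$, and the decorated-graph axiom gives $\lambda(v_\ell(1),v) \equiv \log_\ell(1 + g(v)\ell^{f(v)}) \bmod \ell^{f(v)}$, which is exactly the assertion of (C4). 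The main obstacle is the bookkeeping around this displayed relation: correctly matching each coordinate of $\theta_\Phi$ with the obstruction to inflation, tracking which coefficient ring ($\mathbb{Z}/\ell^{f(v,w)}\mathbb{Z}$, possibly $\mathbb{Z}_\ell$) each class lives in, and checking that every appeal to the decorated-graph axioms governing the distinguished vertices $v_\ell(1), v_\ell(2)$ and the convention $\tilde{\theta}_{v_\ell(i)} = 0$ is legitimate; once the splitting of pullbacks of pushout extensions is in place, the verification of the four axioms is routine.
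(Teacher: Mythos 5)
Your proposal is correct and takes essentially the same route as the paper: (C1)--(C2) via the commutator computation on elements lifting the standard generators of $\prod_{v}\mathbb{Z}/\ell^{f(v)}\mathbb{Z}$, and (C3)--(C4) via the vanishing in $H^2(\mathcal{G}(\Phi), \mathbb{Z}/\ell^{f(v,w)}\mathbb{Z})$ of each coordinate $\pi_v \cup \pi_w + \lambda(w,v)\cdot\tilde{\theta}_v + \lambda(v,w)\cdot\tilde{\theta}_w$ of $\theta_\Phi$, combined with the fact that $H^2$ of the two-coordinate quotient is generated by the cup product together with the abelian extension classes, and with the decorated-graph axioms handling (C4). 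The only cosmetic difference is that where you invoke the splitting of the pullback of the pushout extension, the paper writes down the explicit $1$-cochain (the set-theoretic projection of $\mathcal{G}(\Phi)$ onto the $(v,w)$-coordinate) whose coboundary is that coordinate cocycle --- the same fact in concrete form.
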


\begin{proof}
Let us verify the properties one by one. 

Certainly, $\mathcal{G}(\Phi)$ is a pro-$\ell$ group having an epimorphism to a pro-$\ell$ group with kernel a pro-$\ell$ group. More precisely, $\mathcal{G}(\Phi)$ is a central extension of an abelian pro-$\ell$ group, and therefore it has class at most $2$. Take distinct $v, w \in \Omega$. Let $\sigma \in \mathcal{G}(\Phi)$ be an element that projects under $\pi(\Phi)$ to the unique element that is $0$ outside of $v$ and $1$ on the $v$ coordinate. Define $\tau \in \mathcal{G}(\Phi)$ similarly with $w$ taking the role of $v$. From the cocycle representation, one gets that the commutator $[\sigma, \tau]$ projects to $0$ outside of $\{v, w\}$, and is a topological generator of the $\{v, w\}$ coordinate. Hence $\mathcal{G}(\Phi)$ is non-abelian, and so its class is precisely $2$ as desired. This establishes (C1).

Furthermore, the last calculation shows that the commutators topologically generate a group containing $\ker(\pi(\Phi))$ and therefore this subgroup must coincide with the commutator subgroup, establishing (C2).

Next we observe that, thanks to Subsection \ref{intermezzo},
$$
H^2(\mathbb{Z}/\ell^{f(v)}\mathbb{Z} \times \mathbb{Z}/\ell^{f(w)}\mathbb{Z}, \mathbb{Z}/\ell^{f(v, w)}\mathbb{Z})
$$ 
is generated by $\pi_v \cup \pi_w, \theta_v, \theta_w$. On the other hand, we have by construction that
$$
\pi_v \cup \pi_w + \lambda(w, v) \cdot \theta_v + \lambda(v, w) \cdot \theta_w
$$
vanishes when inflated to $\mathcal{G}(\Phi)$. Indeed, let $\phi(v, w)$ be the $1$-cochain given by the set-theoretic projection of $\mathcal{G}(\Phi)$ on the $\{v, w\}$-coordinate. Then
$$
d\phi(v, w) = \pi_v \cup \pi_w + \lambda(w, v) \cdot \theta_v + \lambda(v, w) \cdot \theta_w,
$$
as desired. So this shows that the inflation of $H^2(\mathbb{Z}/\ell^{f(v)}\mathbb{Z} \times \mathbb{Z}/\ell^{f(w)}\mathbb{Z}, \mathbb{Z}/\ell^{f(v, w)}\mathbb{Z})$ coincides with the inflation of $\text{Ext}^1_{\mathbb{Z}_\ell}(\mathbb{Z}/\ell^{f(v)}\mathbb{Z} \times \mathbb{Z}/\ell^{f(w)}\mathbb{Z}, \mathbb{Z}/\ell^{f(v, w)}\mathbb{Z})$, giving (C3).

The assumption that $\Phi$ is a decorated graph deals automatically with condition (C4).
\end{proof}

\begin{theorem} 
\label{Thm: the graph determines the group}
Let $\mathcal{S} := (\mathcal{G}, \pi_\bullet)$ be a $2$-nilpotent pro-$\ell$ group on $(\Omega, f, g)$. Then $\mathcal{S}$ is isomorphic to $\textup{Gr}(\Phi_\mathcal{S})$ as $2$-nilpotent pro-$\ell$ groups on $(\Omega, f, g)$. 
\end{theorem}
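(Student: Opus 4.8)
The plan is to produce the isomorphism as an equivalence of the two central extensions of $\mathcal{G}^{\mathrm{ab}}$ underlying $\mathcal{S}$ and $\mathrm{Gr}(\Phi_\mathcal{S})$. Set $\mathcal{A} := \prod_{v \in \Omega} \mathbb{Z}/\ell^{f(v)}\mathbb{Z}$ and $N := \prod_{v < w} \mathbb{Z}/\ell^{f(v, w)}\mathbb{Z}$. By property (C2) of Definition \ref{def: Rado groups}, both $\pi_\bullet$ and $\pi(\Phi_\mathcal{S})$ are the abelianization maps onto $\mathcal{A}$. Since $\mathcal{G}$ and $\mathcal{G}(\Phi_\mathcal{S})$ are free on their abelianization — Proposition \ref{free on abelianization}, which applies to $\mathrm{Gr}(\Phi_\mathcal{S})$ by Proposition \ref{it's acceptable group} — Propositions \ref{inflation of h^2=0} and \ref{computation of profinite wedge 2} identify their commutator subgroups canonically with $N$; I will write $\iota_\mathcal{G}$, $\iota_{\mathcal{G}(\Phi_\mathcal{S})}$ for these identifications. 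The commutator computation inside the proof of Proposition \ref{it's acceptable group} shows that $\iota_{\mathcal{G}(\Phi_\mathcal{S})}$ is the tautological projection, so the central extension $0 \to N \to \mathcal{G}(\Phi_\mathcal{S}) \to \mathcal{A} \to 0$ is represented by the cocycle $\theta_{\Phi_\mathcal{S}}$ of \eqref{eCocycleGroup}, whose $(v, w)$-entry is $c_{v, w} := \pi_v \cup \pi_w + \lambda_{\mathcal{S}}(w, v) \cdot \theta_v + \lambda_{\mathcal{S}}(v, w) \cdot \theta_w$. I would then reduce the theorem to the assertion that the extension $0 \to N \to \mathcal{G} \to \mathcal{A} \to 0$, transported along $\iota_\mathcal{G}$, has the same class in $H^2(\mathcal{A}, N)$: granting this, there is a morphism of central extensions $\phi \colon \mathcal{G}(\Phi_\mathcal{S}) \to \mathcal{G}$ lying over $\mathrm{id}_{\mathcal{A}}$, i.e. intertwining $\pi(\Phi_\mathcal{S})$ with $\pi_\bullet$; it induces the identity on abelianizations, hence is an isomorphism by Proposition \ref{Enough surjective on abelianization}, which is exactly an isomorphism of $2$-nilpotent pro-$\ell$ groups on $(\Omega, f, g)$.

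To compare the two classes in $H^2(\mathcal{A}, N)$, I would note that every continuous homomorphism from $N$ to a finite group factors through finitely many coordinates and that $H^2(\mathcal{A}, -)$ commutes with finite products, so it suffices to match, for each $v < w$, the pushforwards of the two extension classes along the projection $p_{v, w} \colon N \to \mathbb{Z}/\ell^{f(v, w)}\mathbb{Z}$. For $\mathcal{G}(\Phi_\mathcal{S})$ this pushforward is $c_{v, w}$ by the previous paragraph. For $\mathcal{G}$, I would invoke the five-term inflation–restriction sequence of the central extension $[\mathcal{G}, \mathcal{G}] \to \mathcal{G} \to \mathcal{A}$ with coefficients $\mathbb{Z}/\ell^{f(v, w)}\mathbb{Z}$: the restriction map to $H^1([\mathcal{G}, \mathcal{G}], \mathbb{Z}/\ell^{f(v, w)}\mathbb{Z})$ vanishes because homomorphisms kill commutators, so the transgression
$$
d_2 \colon \mathrm{Hom}\bigl([\mathcal{G}, \mathcal{G}], \mathbb{Z}/\ell^{f(v, w)}\mathbb{Z}\bigr) \longrightarrow H^2\bigl(\mathcal{A}, \mathbb{Z}/\ell^{f(v, w)}\mathbb{Z}\bigr)
$$
is injective, has image $\ker(\mathrm{inf})$, and sends $\chi$ to $\chi_{\ast}(c_{\mathcal{G}})$, where $c_\mathcal{G}$ is the extension class of $\mathcal{G}$. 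The pushforward I want is therefore $d_2(p_{v, w} \circ \iota_\mathcal{G})$.

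The last point would be to check $d_2(p_{v, w} \circ \iota_\mathcal{G}) = c_{v, w}$. By Proposition \ref{prop:defining the invariant}, $c_{v, w}$ inflates to $0$ in $H^2(\mathcal{G}, \mathbb{Z}/\ell^{f(v, w)}\mathbb{Z})$, so $c_{v, w} = d_2(\chi)$ for a unique $\chi$. Under the profinite form of the split sequence \eqref{eExtWedge}, the image of a class in $\mathrm{Hom}(\wedge^2_{\mathrm{prof}} \mathcal{A}, \mathbb{Z}/\ell^{f(v, w)}\mathbb{Z})$ is its commutator pairing; for $d_2(\chi) = \chi_\ast(c_\mathcal{G})$ this is $\chi \circ \nu_\mathcal{G}$, while for $c_{v, w}$ it is the pairing of $\pi_v \cup \pi_w$ — the classes $\theta_v, \theta_w$ being pulled back along $\pi_v, \pi_w$ from abelian extensions, they pair trivially — which is exactly the determinant homomorphism $\psi(v, w)$ of Proposition \ref{computation of profinite wedge 2}. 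Since also $(p_{v, w} \circ \iota_\mathcal{G}) \circ \nu_\mathcal{G} = p_{v, w} \circ \psi_\bullet = \psi(v, w)$ by the definition of $\iota_\mathcal{G}$, and $\nu_\mathcal{G}$ is surjective, one gets $\chi = p_{v, w} \circ \iota_\mathcal{G}$, hence $d_2(p_{v, w} \circ \iota_\mathcal{G}) = c_{v, w}$ as needed, matching the extension classes and completing the argument.

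The hard part, I expect, will be the bookkeeping of the last two paragraphs: correctly identifying the transgression $d_2$ with pushforward of the extension class, verifying that the commutator pairing attached to $\pi_v \cup \pi_w$ is precisely $\psi(v, w)$ with the sign convention built into the definition of $\lambda_\mathcal{S}$, and checking that $\theta_v$, $\theta_w$ contribute nothing to it. There is also the minor topological subtlety that $H^2(\mathcal{A}, -)$ must be handled on the infinite product $N$ and with the $\mathbb{Z}_\ell$-coefficients hidden in the fiber over $\infty$, which I would dispose of by reducing everything to finite quotients.
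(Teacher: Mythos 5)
Your argument is correct, but it is organized differently from the paper's proof, which is much more direct: the paper takes the $1$-cochains $\phi(v,w)$ on $\mathcal{G}$ trivializing each class $\pi_v \cup \pi_w + \lambda_\mathcal{S}(w,v)\tilde{\theta}_v + \lambda_\mathcal{S}(v,w)\tilde{\theta}_w$ (these exist precisely because Proposition \ref{prop:defining the invariant} says the class vanishes in $H^2(\mathcal{G}, \Z/\ell^{f(v,w)}\Z)$), packages $(\phi_\bullet, \pi_\bullet)$ into a homomorphism $\mathcal{G} \to \mathcal{G}(\Phi_\mathcal{S})$ lying over the identity of the common abelianization, and then concludes by Proposition \ref{Enough surjective on abelianization}, both groups being free on their abelianization by Propositions \ref{free on abelianization} and \ref{it's acceptable group}. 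You instead prove the stronger statement that the two central extensions of $\mathcal{A}$ by $N$ have the same class in $H^2(\mathcal{A}, N)$, by identifying the transgression with pushforward of the extension class and pinning down the preimage of $c_{v,w}$ via the commutator-pairing component of (\ref{eExtWedge}); this is sound (the key step $\chi \circ \nu_\mathcal{G} = \psi(v,w) = (p_{v,w}\circ\iota_\mathcal{G})\circ\nu_\mathcal{G}$ with $\nu_\mathcal{G}$ surjective does determine $\chi$, and the antisymmetrization killing $\theta_v, \theta_w$ is class-independent), but it buys the conclusion at the cost of extra machinery that the paper's route avoids: the identification of $d_2$ with $\chi \mapsto \chi_\ast(c_\mathcal{G})$ in continuous cohomology, the passage from coordinatewise equality to equality in $H^2(\mathcal{A}, N)$ (which needs $\varprojlim^1$ to vanish — fine here since the coordinate system has a countable cofinal chain with surjective transition maps, but worth saying), the existence of a continuous section so that the extension of $\mathcal{A}$ by $[\mathcal{G},\mathcal{G}]$ is represented by a continuous cocycle, and the transgression sign convention. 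Note that the sign issue is harmless even if it comes out opposite to your expectation, since negation on the central fiber gives an isomorphism over $\mathrm{id}_\mathcal{A}$, which is all that an isomorphism of $2$-nilpotent pro-$\ell$ groups on $(\Omega, f, g)$ requires; note also that once the classes agree, the resulting equivalence of extensions is automatically bijective, so your final appeal to Proposition \ref{Enough surjective on abelianization} is redundant — conversely, if you are willing to invoke that proposition anyway, you can skip the class comparison entirely and just write down the map from trivializing cochains, which is exactly what the paper does.
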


\begin{proof}
Thanks to Proposition \ref{prop:defining the invariant}, there exists an $1$-cochain $\phi(v, w): \mathcal{G} \to \mathbb{Z}/\ell^{f(v, w)}\mathbb{Z}$ for each $v < w$ such that
$$
d\phi(v, w) = \pi_v \cup \pi_w + \lambda(w, v) \cdot \theta_v + \lambda(v, w) \cdot \theta_w.
$$
This can be packaged into a homomorphism
$$
(\phi_\bullet, \pi_\bullet): \mathcal{G} \to \mathcal{G}(\Phi_\mathcal{S}).
$$
Since $\pi_\bullet, \pi(\Phi_\mathcal{S})$ are precisely the abelianization maps, we see that this homomorphism induces an isomorphism on the abelianizations. Therefore, by Proposition \ref{Enough surjective on abelianization}, it suffices to prove that both groups are free on their abelianization. Thanks to Proposition \ref{free on abelianization} this holds once we know that both $\mathcal{S}$ and $\text{Gr}(\Phi_\mathcal{S})$ are $2$-nilpotent pro-$\ell$ groups on $(\Omega, f, g)$. But for $\mathcal{S}$ this is true by assumption, while for $\text{Gr}(\Phi_\mathcal{S})$ this has been shown in Proposition \ref{it's acceptable group}. 
\end{proof}

We now show that is sufficient to control the isomorphism class of the graph to control the isomorphism class of the group. As in the rest of this section, we assume that each triple $(\Omega, f, g)$ comes with a well-ordering having $v_\ell(1), v_\ell(2)$ as the first two minima. 

\begin{proposition} 
\label{isomorphic graphs, isomorphic groups}
Suppose that $\Phi_1: = (\Omega_1, f_1, g_1, \lambda_1)$ and $\Phi_2: = (\Omega_2, f_2, g_2, \lambda_2)$ are isomorphic as decorated graphs. Then the pairs $\textup{Gr}(\Phi_1)$ and $\textup{Gr}(\Phi_2)$ are isomorphic.
\end{proposition}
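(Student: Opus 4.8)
The plan is to reduce everything to Theorem~\ref{Thm: the graph determines the group}. Let $\phi\colon\Omega_1\to\Omega_2$ be a decorated graph isomorphism and write $\mathrm{Gr}(\Phi_1)=(\mathcal{G}(\Phi_1),\pi_\bullet)$. Since $\phi$ intertwines $(f_1,g_1)$ with $(f_2,g_2)$, one has $\mathbb{Z}/\ell^{f_1(\phi^{-1}(v'))}\mathbb{Z}=\mathbb{Z}/\ell^{f_2(v')}\mathbb{Z}$ for every $v'\in\Omega_2$, so the reindexed family $\pi'_\bullet:=(\pi_{\phi^{-1}(v')})_{v'\in\Omega_2}$ is again a continuous epimorphism onto $\prod_{v'\in\Omega_2}\mathbb{Z}/\ell^{f_2(v')}\mathbb{Z}$. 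The first step is to check that $\mathcal{S}':=(\mathcal{G}(\Phi_1),\pi'_\bullet)$ is a $2$-nilpotent pro-$\ell$ group on $(\Omega_2,f_2,g_2)$ in the sense of Definition~\ref{def: Rado groups}. Properties (C1) and (C2) are inherited verbatim, since the underlying group, its pro-$\ell$ and class-$\le 2$ properties, and its commutator subgroup are untouched, and $\ker(\pi'_\bullet)=\ker(\pi_\bullet)$. Property (C3) for a pair $(v',w')$ of $\Omega_2$ is, through $\pi'_{v'}=\pi_{\phi^{-1}(v')}$, exactly (C3) for the pair $(\phi^{-1}(v'),\phi^{-1}(w'))$ of $\Omega_1$. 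For (C4) at a finite vertex $v'$ of $\Omega_2$ one uses $g_2(v')=g_1(\phi^{-1}(v'))$, hence $\log_\ell(1+\ell^{f_2(v')}g_2(v'))=\log_\ell(1+\ell^{f_1(\phi^{-1}(v'))}g_1(\phi^{-1}(v')))$: if $\phi$ preserves the labelling of the fibre above $\infty$ this translates (C4) for $\mathcal{S}'$ into (C4) for $\mathrm{Gr}(\Phi_1)$, while if $\phi$ exchanges $v_\ell(1)$ and $v_\ell(2)$ one is in the degenerate case, and then the requirement $\lambda_2(\phi(v),\phi(w))=\lambda_1(v,w)$ forces $\lambda_1(v_\ell(2),-)\equiv\log_\ell(1+g_1(-)\,\ell^{f_1(-)})$, so that the defining relation of $\mathrm{Gr}(\Phi_1)$ at the pair $\{v_\ell(2),\phi^{-1}(v')\}$, together with $\tilde\theta_{v_\ell(2)}=0$, again yields (C4). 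With this in hand, $(\mathrm{id}_{\mathcal{G}(\Phi_1)},\phi)$ is by construction an isomorphism $\mathrm{Gr}(\Phi_1)\xrightarrow{\sim}\mathcal{S}'$ of $2$-nilpotent pro-$\ell$ groups.

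It remains to prove $\mathcal{S}'\cong\mathrm{Gr}(\Phi_2)$. Both are $2$-nilpotent pro-$\ell$ groups on the \emph{same} triple $(\Omega_2,f_2,g_2)$, so by Theorem~\ref{Thm: the graph determines the group} each is isomorphic to $\mathrm{Gr}$ of its own associated decorated graph. Since $\Phi_{\mathrm{Gr}(\Phi_2)}=\Phi_2$ — immediate from the construction of $\mathrm{Gr}$ and the uniqueness clause of Proposition~\ref{prop:defining the invariant}, because in $\mathcal{G}(\Phi_2)$ one has $d\phi(v,w)=\pi_v\cup\pi_w+\lambda_2(w,v)\theta_v+\lambda_2(v,w)\theta_w$ — it suffices to show $\Phi_{\mathcal{S}'}=\Phi_2$. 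Unwinding Proposition~\ref{prop:defining the invariant} for $\mathcal{S}'$, the labels $\lambda_{\mathcal{S}'}(v',w')$ are characterized inside $\mathcal{G}(\Phi_1)$ by the vanishing of $\pi_{\phi^{-1}(v')}\cup\pi_{\phi^{-1}(w')}+\lambda_{\mathcal{S}'}(w',v')\tilde\theta_{\phi^{-1}(v')}+\lambda_{\mathcal{S}'}(v',w')\tilde\theta_{\phi^{-1}(w')}$, which is precisely the relation (read through $\phi$) characterizing the labels $\lambda_1$ of $\mathrm{Gr}(\Phi_1)$. As $\phi$ carries $\lambda_1$ to $\lambda_2$, the uniqueness in Proposition~\ref{prop:defining the invariant} forces $\lambda_{\mathcal{S}'}=\lambda_2$, i.e.\ $\Phi_{\mathcal{S}'}=\Phi_2$. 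Then Theorem~\ref{Thm: the graph determines the group} gives $\mathcal{S}'\cong\mathrm{Gr}(\Phi_2)$, and composing with $\mathrm{Gr}(\Phi_1)\cong\mathcal{S}'$ proves the proposition.

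The step I expect to require the most care is the identification $\Phi_{\mathcal{S}'}=\Phi_2$. Proposition~\ref{prop:defining the invariant} writes its defining relation with the $\le$-smaller element of a pair first, and since the cup product $\pi_v\cup\pi_w$ is anti-symmetric, reversing the orientation of a pair changes the corresponding labels by a sign; as $\phi$ need not be compatible with the two fixed well-orderings, one must track orientations throughout the transport. A clean way to dispose of this is to observe that all that has been fixed is \emph{some} well-ordering of $(\Omega_1,f_1,g_1)$ with $v_\ell(1)<v_\ell(2)$ as the first two minima, and to take it to be the pull-back along $\phi$ of the given one on $\Omega_2$, which makes $\phi$ order-preserving and removes the sign ambiguity; checking that this choice is harmless is the delicate point. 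One must also treat the distinguished pair $\{v_\ell(1),v_\ell(2)\}$ separately, since there the coefficient group is $\mathbb{Z}_\ell$ rather than finite cyclic and the label conventions are dictated by the definition of a decorated graph, but there $\tilde\theta_{v_\ell(1)}=\tilde\theta_{v_\ell(2)}=0$, so the relation collapses to $\pi_{v_\ell(1)}\cup\pi_{v_\ell(2)}=0$ on both sides and nothing further is needed.
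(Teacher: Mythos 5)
Your argument is correct in substance but takes a genuinely different route from the paper. The paper's proof is a direct, explicit construction: the graph isomorphism $\gamma$ induces a reindexing bijection $\mathcal{G}(\Phi_1)\to\mathcal{G}(\Phi_2)$ on the underlying product sets, and since $\gamma$ preserves $g$ and $\lambda$ one checks $\theta_{\Phi_1}=\theta_{\Phi_2}\circ\phi$ on the nose, so the bijection is a group isomorphism by the shape of the group law (\ref{eCocycleGroup}); no cohomology of $\mathcal{G}(\Phi_1)$ and no appeal to Theorem~\ref{Thm: the graph determines the group} is needed. You instead transport the pair $\mathrm{Gr}(\Phi_1)$ along $\phi$ to a $2$-nilpotent pro-$\ell$ group $\mathcal{S}'$ on $(\Omega_2,f_2,g_2)$, verify (C1)--(C4), identify $\Phi_{\mathcal{S}'}$ with $\Phi_2$ via the uniqueness clause of Proposition~\ref{prop:defining the invariant}, and invoke the rigidity theorem. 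This works and is arguably more conceptual (it treats $\mathrm{Gr}(\Phi_1)$ purely through its invariant rather than its coordinates), but it is heavier: it imports Propositions~\ref{inflation of h^2=0}, \ref{free on abelianization} and \ref{Enough surjective on abelianization} to prove a statement that is really just a relabelling. Your discussion of the delicate points is accurate: the sign dependence of $\lambda_{\mathcal{S}}$ on the chosen well-ordering is a genuine issue (the paper's own one-line proof silently assumes $\gamma$ is order-compatible), and your fix of pulling back the ordering along $\phi$ is the right move, though it leaves the small residual task of checking that $\mathrm{Gr}(\Phi_1)$ built from two admissible well-orderings yields isomorphic pairs — true, because flipping the orientation of a pair $(v,w)$ replaces the $(v,w)$-component of $\theta_{\Phi_1}$ by its negative plus the coboundary $d(\pi_v\cdot\pi_w)$, which is absorbed by negating that coordinate of the central kernel — but this should be said rather than only flagged. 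Your separate treatment of the degenerate case where $\phi$ swaps $v_\ell(1)$ and $v_\ell(2)$, and of the pair with $\tilde{\theta}$ vanishing on the infinite fibre, is correct.
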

    
\begin{proof}
Let $\gamma: \Omega_1 \to \Omega_2$ be the decorated graph isomorphism. Since $\gamma$ needs to preserve $f$, we find a well-defined set-theoretic bijection $\phi: \mathcal{G}(\Phi_1) \to \mathcal{G}(\Phi_2)$ given by the formula
$$
\phi((\mu_{v_1, w_1})_{v_1<w_1 \in \Omega_1}, (\rho_{v_1})_{v_1 \in \Omega_1}) = ((\mu_{\gamma^{-1}(v_2), \gamma^{-1}(w_2)})_{\gamma^{-1}(v_2)<\gamma^{-1}(w_2) \in \Omega_2}, (\rho_{\gamma^{-1}(v_2)})_{v_2 \in \Omega_2}).
$$
Since $\gamma$ preserves $g$, we find that $\theta_v = \theta_{\gamma(v)} \circ \phi$ for each $v \in \Omega_{1, \text{fin}}$. Finally, using that $\gamma$ preserves $\lambda$, we see that 
$$
\theta_{\Phi_1} = \theta_{\Phi_2} \circ \phi.
$$
From the shape of the group law, we deduce that $\phi$ is also a group homomorphism. In total, we have that $(\phi, \gamma)$ is an isomorphism between $\text{Gr}(\Phi_1)$ and $\text{Gr}(\Phi_2)$. 
\end{proof}

We now define \emph{weakly Rado groups}.

\begin{definition}
Let $\mathcal{S}$ be a $2$-nilpotent pro-$\ell$ group on $(\Omega, f, g)$. We say that it is \emph{weakly Rado} in case the graph $\Phi_{\mathcal{S}}$ is weakly Rado. 
\end{definition}

Let $\mathcal{S} := (\mathcal{G}, \pi_\bullet)$ be a $2$-nilpotent pro-$\ell$ group on $(\Omega, f, g)$ and let $\gamma_\bullet \in \prod_{v \in \Omega_{\text{fin}}}(\mathbb{Z}/\ell^{f(v)}\mathbb{Z})^\ast$. Then we can diagonally multiply $\pi_\bullet$ by $\gamma_\bullet$ in the $\Omega_{\text{fin}}$-coordinates: we denote the result as $\gamma_\bullet \cdot \pi_\bullet$. Let us verify that the pair
$$
\gamma_\bullet\cdot \mathcal{S} := (\mathcal{G}, \gamma_\bullet \cdot \pi_\bullet)
$$
is still a $2$-nilpotent pro-$\ell$ group on $(\Omega, f, g)$. 

Condition (C1) of Definition \ref{def: Rado groups} is for free as we did not change the group $\mathcal{G}$. Since the homomorphisms $\pi_\bullet$ and $\gamma_\bullet \cdot \pi_\bullet$ differ by an automorphism of the target, this does not change the kernel. This gives (C2). Next, since the automorphism is diagonal, the images of the two inflations in (C3) are unaffected. Finally, (C4) comes for free from the fact that the action did not involve the $v_\ell(1)$ or $v_\ell(2)$ coordinate.

We also have the relation
\begin{align}
\label{eLambdaTransform}
\lambda_{\gamma_\bullet \cdot \mathcal{S}} = \gamma_\bullet \cdot \lambda_\mathcal{S}
\end{align}
between the resulting labels. By Theorem \ref{prob=1}, we may certainly fix one Rado graph $\Phi(\text{Rado})$. Define $\text{Gr}(\Phi(\text{Rado})) := (\mathcal{G}(\text{Rado}), \pi_\bullet(\text{Rado}))$. The following theorem is the key result for our application. 

\begin{theorem} 
\label{weakly Rado groups are isomorphic}
Let $(\mathcal{G}, \pi), (\mathcal{G}', \pi')$ be $2$-nilpotent pro-$\ell$ groups on $(\Omega, f, g)$ and $(\Omega', f', g')$ respectively. Suppose that they are both weakly Rado. Then
$$
\mathcal{G} \simeq_{\textup{top.gr.}} \mathcal{G}'.
$$
More precisely, $\mathcal{G}$ and $\mathcal{G}'$ are isomorphic to $\mathcal{G}(\textup{Rado})$.
\end{theorem}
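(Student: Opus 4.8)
The plan is to obtain the theorem as a synthesis of the reconstruction results already established, routed through decorated graphs. Since an isomorphism of $2$-nilpotent pro-$\ell$ groups on a triple contains, in particular, an isomorphism of the underlying topological groups, it suffices to prove the ``more precisely'' part: that $\mathcal{G}$ is isomorphic, as a topological group, to $\mathcal{G}(\textup{Rado})$. The same argument applied to $(\mathcal{G}', \pi')$ then gives $\mathcal{G}' \simeq_{\textup{top.gr.}} \mathcal{G}(\textup{Rado})$, and composing yields $\mathcal{G} \simeq_{\textup{top.gr.}} \mathcal{G}'$ as well. So I would fix $\mathcal{S} := (\mathcal{G}, \pi)$, a weakly Rado $2$-nilpotent pro-$\ell$ group on $(\Omega, f, g)$, keeping in force the well-ordering on $\Omega$ with $v_\ell(1), v_\ell(2)$ as its first two minima, and recall that $\Phi(\textup{Rado})$ is the fixed Rado graph chosen via Theorem \ref{prob=1}, with $\textup{Gr}(\Phi(\textup{Rado})) = (\mathcal{G}(\textup{Rado}), \pi_\bullet(\textup{Rado}))$.

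First I would feed in the hypothesis. By definition, $\mathcal{S}$ being weakly Rado means the decorated graph $\Phi_{\mathcal{S}} = (\Omega, f, g, \lambda_{\mathcal{S}})$ of Proposition \ref{prop:defining the invariant} is weakly Rado, hence by Proposition \ref{rado up to scaling if and only if weakly rado} it is Rado up to scaling. Choose $\gamma_\bullet \in \prod_{v \in \Omega_{\textup{fin}}} (\mathbb{Z}/\ell^{f(v)}\mathbb{Z})^\ast$ such that $(\Omega, f, g, \gamma_\bullet \cdot \lambda_{\mathcal{S}})$ is Rado. As recorded in the discussion preceding equation (\ref{eLambdaTransform}), the diagonally twisted pair $\gamma_\bullet \cdot \mathcal{S} = (\mathcal{G}, \gamma_\bullet \cdot \pi)$ is again a $2$-nilpotent pro-$\ell$ group on $(\Omega, f, g)$ — conditions (C1)--(C4) of Definition \ref{def: Rado groups} are unaffected by the twist — and it has the \emph{same} underlying topological group $\mathcal{G}$. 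Moreover, by equation (\ref{eLambdaTransform}), its associated graph is $\Phi_{\gamma_\bullet \cdot \mathcal{S}} = (\Omega, f, g, \gamma_\bullet \cdot \lambda_{\mathcal{S}})$, which is Rado by the choice of $\gamma_\bullet$.

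Next I would run the reconstruction chain. By Theorem \ref{Thm:two Rado graphs are isomorphic}, the Rado graph $\Phi_{\gamma_\bullet \cdot \mathcal{S}}$ is isomorphic, as a decorated graph, to $\Phi(\textup{Rado})$; applying Proposition \ref{isomorphic graphs, isomorphic groups} to that isomorphism produces an isomorphism of $2$-nilpotent pro-$\ell$ groups $\textup{Gr}(\Phi_{\gamma_\bullet \cdot \mathcal{S}}) \simeq \textup{Gr}(\Phi(\textup{Rado})) = (\mathcal{G}(\textup{Rado}), \pi_\bullet(\textup{Rado}))$. On the other hand, Theorem \ref{Thm: the graph determines the group} gives $\gamma_\bullet \cdot \mathcal{S} \simeq \textup{Gr}(\Phi_{\gamma_\bullet \cdot \mathcal{S}})$ as $2$-nilpotent pro-$\ell$ groups on $(\Omega, f, g)$. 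Composing these isomorphisms and forgetting the marking data, I obtain $\mathcal{G} \simeq_{\textup{top.gr.}} \mathcal{G}(\textup{Rado})$. Running the identical chain for $(\mathcal{G}', \pi')$ — choose $\gamma_\bullet'$ making $\Phi_{\mathcal{S}'}$ Rado after scaling, then invoke Theorem \ref{Thm:two Rado graphs are isomorphic}, Proposition \ref{isomorphic graphs, isomorphic groups} and Theorem \ref{Thm: the graph determines the group} in turn — yields $\mathcal{G}' \simeq_{\textup{top.gr.}} \mathcal{G}(\textup{Rado})$, which finishes the proof.

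Essentially no genuine obstacle remains at this stage: all the real content lives in the results invoked — the back-and-forth argument behind Theorem \ref{Thm:two Rado graphs are isomorphic}, the equivalence of Proposition \ref{rado up to scaling if and only if weakly rado}, and the cohomological reconstruction of $\mathcal{S}$ from $\Phi_{\mathcal{S}}$ (Theorem \ref{Thm: the graph determines the group}, which rests on Propositions \ref{free on abelianization} and \ref{Enough surjective on abelianization}). The points I would be careful about are purely organizational: confirming once more that $\gamma_\bullet \cdot \mathcal{S}$ really is a $2$-nilpotent pro-$\ell$ group on $(\Omega, f, g)$ so that Theorem \ref{Thm: the graph determines the group} applies to it; checking that the well-ordering conventions on $\Omega$ used to define $\lambda_{\mathcal{S}}$ are consistently maintained; and noting that one may freely discard the epimorphisms $\pi_\bullet$ and $\pi_\bullet(\textup{Rado})$ at the very end, since the conclusion only concerns topological groups.
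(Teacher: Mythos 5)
Your proposal is correct and follows essentially the same route as the paper's proof: pass from weakly Rado to Rado up to scaling via Proposition \ref{rado up to scaling if and only if weakly rado} and equation (\ref{eLambdaTransform}), apply Theorem \ref{Thm: the graph determines the group} to the scaled pair, then Theorem \ref{Thm:two Rado graphs are isomorphic} and Proposition \ref{isomorphic graphs, isomorphic groups}, and conclude since scaling leaves the underlying topological group unchanged.
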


\begin{proof}
Thanks to Proposition \ref{rado up to scaling if and only if weakly rado} and equation (\ref{eLambdaTransform}), we find that there exists a scaling $\gamma_\bullet$ such that $\Phi_{\gamma_\bullet \cdot \mathcal{S}}$ is Rado. By Theorem \ref{Thm: the graph determines the group}, it follows that $\gamma_\bullet \cdot \mathcal{S}$ is isomorphic to $\text{Gr}(\Phi_{\gamma_\bullet \cdot \mathcal{S}})$ as $2$-nilpotent pro-$\ell$ group on $(\Omega, f, g)$. Thanks to Theorem \ref{Thm:two Rado graphs are isomorphic} and Proposition \ref{isomorphic graphs, isomorphic groups}, we know that this latter group is isomorphic to $\text{Gr}(\Phi(\text{Rado}))$ as $2$-nilpotent pro-$\ell$ groups on $(\Omega, f, g)$. In total, since the diagonal scaling leaves the groups unaffected, we find out that both $\mathcal{G}$ and $\mathcal{G}'$ are isomorphic, as topological groups, to $\mathcal{G}(\text{Rado})$.
\end{proof}

We end this section with a reconstruction theorem. In Remark \ref{rmk: for Rado graphs the cyclotomic places is reconstructed} we saw that one may distinguish between the two labels $\{v_\ell(1), v_\ell(2)\}$ in the case of Rado graphs. We will now show how to reconstruct the character $\pi_{v_\ell(1)}$ up to scaling for weakly Rado groups.

\begin{proposition} 
\label{reconstructing cyclotomic abstract}
Let $\mathcal{S} := (\mathcal{G}, \pi_{\bullet})$ be a weakly Rado group on $(\Omega, f, g)$. Then the subgroup $\textup{ker}(\pi_{v_\ell(1)})$ is stable under $\textup{Aut}_{\textup{top.gr.}}(\mathcal{G})$.
\end{proposition}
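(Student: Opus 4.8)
The plan is to characterize $\ker(\pi_{v_\ell(1)})$ intrinsically in terms of the group structure of $\mathcal{G}$, so that any topological automorphism must preserve it. Since an automorphism $\psi$ of $\mathcal{G}$ induces an automorphism $\overline{\psi}$ of $\mathcal{G}^{\text{ab}} = \prod_{v \in \Omega} \Z/\ell^{f(v)}\Z$, and since $\ker(\pi_{v_\ell(1)})$ is the preimage of $\ker(\overline{\pi_{v_\ell(1)}})$, it suffices to show that the subgroup $\ker(\overline{\pi_{v_\ell(1)}}) \subseteq \mathcal{G}^{\text{ab}}$ is characteristic, where we use that $v_\ell(1)$ is one of the two coordinates on which $f = \infty$, so that $\pi_{v_\ell(1)}$ is (up to scaling by $\Z_\ell^\ast$) the unique surjection to $\Z_\ell$ killing the torsion subgroup and the other $\Z_\ell$-factor. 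The point is that $v_\ell(2)$ is distinguished from $v_\ell(1)$ by property (C4) together with the Rado property: the commutator pairing involving $\pi_{v_\ell(1)}$ has a special ``cyclotomic'' shape via the classes $\tilde\theta_v$, whereas $\pi_{v_\ell(2)}$ does not. Concretely, I would first show that an automorphism sends the pair of $\Z_\ell$-valued characters $\{\pi_{v_\ell(1)}, \pi_{v_\ell(2)}\}$ to the same pair up to $\Z_\ell^\ast$-scaling and up to adding torsion-valued and other-coordinate-valued characters — this is forced because these are (up to scaling and such modifications) the only continuous surjections $\mathcal{G}^{\text{ab}} \to \Z_\ell$, as all other coordinates are finite.

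The next step is to break the symmetry between $v_\ell(1)$ and $v_\ell(2)$. Here I would invoke Remark \ref{rmk: for Rado graphs the cyclotomic places is reconstructed} in the following form: by Proposition \ref{rado up to scaling if and only if weakly rado} and equation (\ref{eLambdaTransform}), after rescaling by some $\gamma_\bullet$ the graph $\Phi_{\gamma_\bullet \cdot \mathcal{S}}$ is Rado, and by Theorem \ref{Thm: the graph determines the group} the group $\mathcal{S}$ (equivalently $\gamma_\bullet \cdot \mathcal{S}$, since the underlying group is unchanged) is reconstructed from this Rado graph together with the data of $\pi_\bullet$. An automorphism $\psi$ of $\mathcal{G}$ transports the whole package $(\mathcal{G}, \pi_\bullet)$ to an isomorphic $2$-nilpotent pro-$\ell$ group on a possibly reshuffled $(\Omega', f', g')$; composing with the reconstruction identifications, $\psi$ induces an isomorphism of the associated Rado graphs, which by Remark \ref{rmk: for Rado graphs the cyclotomic places is reconstructed} must preserve the ordered pair $(v_\ell(1), v_\ell(2))$ and not merely the unordered set. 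The key computational input behind that remark is that in a Rado graph the vertex $v_\ell(1)$ is characterized by the relation $\lambda(v_\ell(1), w) \equiv \log_\ell(1 + g(w)\ell^{f(w)}) \bmod \ell^{f(w)}$ for all finite $w$, whereas $\lambda(w, v_\ell(2)) = 0$; these asymmetric conditions cannot be interchanged. So the character $\pi_{v_\ell(1)}$ is pinned down up to an element of $\Z_\ell^\ast$ and up to modification by characters factoring through $\ker(\pi_{v_\ell(1)})$ restricted appropriately — but in all cases $\ker(\pi_{v_\ell(1)})$ itself is unchanged.

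To make this airtight I would carefully separate what ``reconstruct $\pi_{v_\ell(1)}$ up to scaling'' means at the level of subgroups: the ambiguity of scaling by $\Z_\ell^\ast$ does not move $\ker(\pi_{v_\ell(1)})$, and the ambiguity of adding a character valued in the torsion part or in the $v_\ell(2)$-line must be controlled — this is where I expect to lean on the precise form of (C4), which ties $\pi_{v_\ell(1)} \cup \pi_v$ to $\tilde\theta_v$ with the \emph{specific} coefficient $\log_\ell(1 + \ell^{f(v)}g(v))$, leaving no room to twist $\pi_{v_\ell(1)}$ by another genuine character without destroying these identities in $H^2$. The main obstacle is exactly this last bookkeeping: showing that the ``up to scaling'' slack in identifying $\pi_{v_\ell(1)}$ is genuinely only $\Z_\ell^\ast$-scaling on the relevant coordinate and cannot secretly include a shear mixing in other coordinates that would move the kernel. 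Once that is nailed down — most cleanly by arguing that $\ker(\pi_{v_\ell(1)})$ is the unique closed subgroup $N$ such that $\mathcal{G}/N \cong \Z_\ell$ and the induced cup-product pairing $H^1(\mathcal{G}/N_{\text{tors-mod}}) \to H^2$ realizes the cyclotomic shape dictated by (C4) — the stability under $\Aut_{\text{top.gr.}}(\mathcal{G})$ is immediate, since automorphisms preserve all of these intrinsic cohomological conditions.
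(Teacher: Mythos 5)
Your opening reduction is fine and agrees with the paper's: since any continuous $\Z_\ell$-valued character of $\mathcal{G}$ kills every finite coordinate of $\mathcal{G}^{\textup{ab}}$ (by (C2)), an automorphism $\psi$ satisfies $\pi_{v_\ell(1)}\circ\psi=\lambda\cdot\pi_{v_\ell(1)}+\mu\cdot\pi_{v_\ell(2)}$ with $\lambda,\mu\in\Z_\ell$, and the proposition amounts to ruling out $\mu\neq 0$. The problem is the mechanism you use to break the symmetry. An arbitrary element of $\textup{Aut}_{\textup{top.gr.}}(\mathcal{G})$ is \emph{not} given as an isomorphism of structured pairs and does not induce an isomorphism of decorated graphs: the transported structure $(\mathcal{G},\pi_\bullet\circ\psi)$ is again a $2$-nilpotent pro-$\ell$ group on the \emph{same} $(\Omega,f,g)$, its labels computed via Proposition \ref{prop:defining the invariant} are literally the same (everything has been hit by $\psi^\ast$), and the induced graph isomorphism is the identity. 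So invoking Remark \ref{rmk: for Rado graphs the cyclotomic places is reconstructed} through this identification gives no information about how $\psi$ acts on the individual characters; in particular it does not exclude a shear with $\mu\neq0$. Knowing that every abstract automorphism of $\mathcal{G}$ permutes the coordinate characters up to scaling (so that it \emph{would} induce a graph isomorphism) is essentially the open ``only shuffling of places'' question raised in the paper's introduction, so it cannot be assumed. You flag this exact point as ``the main obstacle,'' but the proposal does not close it, and your closing suggestion (``the unique closed $N$ with $\mathcal{G}/N\cong\Z_\ell$ realizing the cyclotomic shape of (C4)'') is stated in terms of the data $g$, $\tilde\theta_v$, i.e.\ in terms of $\pi_\bullet$ itself, so as written it is not visibly an automorphism-invariant condition.

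The paper closes the gap with a genuinely intrinsic cohomological characterization plus the weak Rado property. By (C4), $\pi_{v_\ell(1)}\cup\pi_v=-\log_\ell(1+\ell^{f(v)}g(v))\cdot\tilde\theta_v$, and the coefficient lies in $\ell^{f(v)-1}\Z_\ell$ while $\tilde\theta_v$ is killed by $\ell^{f(v)}$; combined with (C2) this gives that $\ell\cdot\pi_{v_\ell(1)}\cup\pi_0=0$ in $H^2(\mathcal{G},\Z/\ell^n\Z)$ for \emph{every} $n$ and every continuous $\pi_0\colon\mathcal{G}\to\Z/\ell^n\Z$ --- a property phrased purely in terms of the topological group, hence preserved under precomposition by automorphisms. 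So $\lambda\pi_{v_\ell(1)}+\mu\pi_{v_\ell(2)}$, and therefore $\mu\pi_{v_\ell(2)}$, must satisfy it. If $\mu\neq0$, Proposition \ref{prop:defining the invariant} gives $\pi_{v_\ell(2)}\cup\pi_w=-\lambda_{\mathcal{S}}(v_\ell(2),w)\cdot\tilde\theta_w$, and the weakly Rado extension property supplies $w\in\Omega_{\textup{fin}}$ with $f(w)$ arbitrarily large and $\lambda_{\mathcal{S}}(v_\ell(2),w)$ a prescribed unit (note the $\lambda_s(2)$-labels are untouched by the scaling ambiguity $\gamma$), while $\tilde\theta_w$ has exact order $\ell^{f(w)}$; taking $f(w)$ larger than one plus the $\ell$-adic valuation of $\mu$ contradicts the vanishing. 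Without this ``multiply by $\ell$'' observation and the use of weak Radoness to manufacture the witnessing place, the decisive step is missing, so the proposal has a genuine gap.
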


\begin{proof}
Let $\pi$ be an element of the $\textup{Aut}_{\textup{top.gr.}}(\mathcal{G})$-orbit of $\pi_{v_{\ell}(1)}$. Thanks to part (C2) of Definition \ref{def: Rado groups}, we have $\lambda, \mu \in \Z_{\ell}$ such that
$$
\pi = \lambda \cdot \pi_{v_{\ell}(1)} + \mu \cdot \pi_{v_{\ell}(2)}.
$$
The desired conclusion is equivalent to the claim $\mu = 0$. Let us pinpoint what in the end will be a characterizing property for the $\Z_{\ell}$-character $\pi_{v_{\ell}(1)}$. Take a positive integer $n$ and take a continuous homomorphism $\pi_0: \mathcal{G} \to \Z/\ell^n\Z$. Then
$$
\ell \cdot \pi_{v_{\ell}(1)} \cup \pi_0 = 0
$$
in $H^2(\mathcal{G}, \Z/\ell^n\Z)$. Indeed, this follows upon combining property (C4) of Definition \ref{def: Rado groups}, which gives this conclusion for the characters $\pi_v$ with $v \in \Omega_{\text{fin}}$, and property (C2).

This property is clearly preserved by the action of $\textup{Aut}_{\textup{top.gr.}}(\mathcal{G})$ on $\Z_{\ell}$-characters. It follows therefore that $\mu \cdot \pi_{v_{\ell}(2)}$ has this property. If $\mu$ were not to be $0$, we readily reach a contradiction upon combining Proposition \ref{prop:defining the invariant} with the extension property of a weakly Rado graph. 
\end{proof}

\section{\texorpdfstring{Rado groups for $\ell = 2$}{Rado groups for 2}}
\label{Rado groups at 2}
We now turn to $\ell = 2$.  Let $(\Omega, f, g)$ be a triple as in Section \ref{decorated graphs at 2} satisfying the axioms of a decorated graph involving only these three variables. 

\begin{definition} 
\label{def: Rado groups at 2} 
A $2$-nilpotent pro-$2$ group on $(\Omega, f, g)$ is a pair $\mathcal{S} := (\mathcal{G}, \pi_\bullet)$ as in Definition \ref{def: Rado groups}, except that condition (C4) is slightly weakened as follows:
\begin{enumerate}
\item[(C4)] we attach to each $v \in \Omega_{\textup{fin}}$ the unique extension class, which we denote by $\theta_v$, in $\textup{Ext}^1_{\mathbb{Z}_2}(\mathbb{Z}/2^{f(v)}\mathbb{Z}, \mathbb{Z}/2^{f(v)}\mathbb{Z})$ corresponding to the exact sequence where lifting $1$ and multiplying it by $2^{f(v)} \cdot g(v)$ yields $1$. Let us call $\tilde{\theta}_v$ the inflation of this class to $H^2(\mathcal{G}, \mathbb{Z}/2^{f(v)}\mathbb{Z})$ through $\pi_v$. Then in case $f(v) \geq 2$, we have that 
$$
\pi_{v_2(1)} \cup \pi_v + \log_2(1+ 2^{f(v)} \cdot g(v)) \cdot \tilde{\theta}_v
$$
equals $0$ in $H^2(\mathcal{G}, \mathbb{Z}/2^{f(v)}\mathbb{Z})$. Furthermore, we have that
$$
(\textup{red}_2 \circ \pi_{v_2(2)}) \cup (\textup{red}_2 \circ \pi_v) \textup{ is trivial in } H^2(\mathcal{G}, \Z/2\Z) \Longleftrightarrow f(v) \geq 2,
$$
where $\textup{red}_2: \Z/2^k\Z \rightarrow \Z/2\Z$ is the natural quotient map.
\end{enumerate}
\end{definition}

Let $\mathcal{S}$, $\mathcal{S}'$ be a pair of $2$-nilpotent pro-$2$ groups on respectively $(\Omega, f, g)$ and $(\Omega', f', g')$. We define an isomorphim between $\mathcal{S}$ and $\mathcal{S}'$ exactly as in Section \ref{Rado group odd primes}. Our next goal is to attach to each $\mathcal{S}$ a certain labeling $\lambda_\mathcal{S}$, which will turn 
$$
\Phi_\mathcal{S} := (\Omega, f, g, \lambda_\mathcal{S})
$$
into a decorated graph. The only difference with the case $\ell$ odd is in the slightly different constraints on the labels between the fiber above $\infty$ and the fibers above $\mathbb{Z}_{\geq 1}$. But the basic mechanics of the proof are all based on Subsection \ref{intermezzo}, where $\ell = 2$ is conveniently allowed. Like in Section \ref{Rado group odd primes}, we fix a well-ordering $\leq$ on $\Omega$ with the requirement that $v_2(1) < v_2(2)$ are the first two minima.  

With this in mind, a straightforward adaptation of the proofs in Section \ref{Rado group odd primes} yields the following analogous facts. 

\begin{proposition} 
\label{prop: defining the invariant at 2}
Let $\mathcal{S} := (\mathcal{G}, \pi_\bullet)$ be a $2$-nilpotent pro-$2$ group on $(\Omega, f, g)$. For each $v < w$ in $\Omega$, there exists exactly one pair $\lambda_\mathcal{S}(v, w), \lambda_\mathcal{S}(w, v)$ in $\mathbb{Z}/2^{f(v, w)}\mathbb{Z}$ such that  
$$
\pi_v \cup \pi_w + \lambda_\mathcal{S}(w, v) \cdot \tilde{\theta}_v + \lambda_\mathcal{S}(v, w) \cdot \tilde{\theta}_w = 0 \quad \textup{in } H^2(\mathcal{G}, \mathbb{Z}/2^{f(v, w)}\mathbb{Z})
$$
and such that $\lambda_\mathcal{S}(w, v) = 0$ whenever $f(v) = \infty$.

Furthermore, the $4$-tuple $\Phi_\mathcal{S} := (\Omega, f, g, \lambda_\mathcal{S})$ is a decorated graph.    
\end{proposition}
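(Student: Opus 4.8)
The plan is to transcribe the proof of Proposition \ref{prop:defining the invariant} almost verbatim, the only new inputs being the two parts of the modified condition (C4) of Definition \ref{def: Rado groups at 2} and the adjusted label axioms of Definition \ref{def: decorated graphs at 2}. The existence and uniqueness of the pair $\lambda_\mathcal{S}(v,w),\lambda_\mathcal{S}(w,v)$ is obtained exactly as in the odd case: for $v,w\in\Omega_{\mathrm{fin}}$ the class $\pi_v\cup\pi_w$ lies in the image of inflation from $H^2(\mathbb{Z}/2^{f(v)}\mathbb{Z}\times\mathbb{Z}/2^{f(w)}\mathbb{Z},\mathbb{Z}/2^{f(v,w)}\mathbb{Z})$, which by (C3) coincides with the image of inflation from $\mathrm{Ext}^1_{\mathbb{Z}_2}$, and the latter is generated by $\tilde\theta_v,\tilde\theta_w$; this produces the coefficients. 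Uniqueness follows since the inflation $\mathrm{Ext}^1_{\mathbb{Z}_2}(\mathbb{Z}/2^{f(v)}\mathbb{Z}\times\mathbb{Z}/2^{f(w)}\mathbb{Z},\mathbb{Z}/2^{f(v,w)}\mathbb{Z})\to H^2(\mathcal{G},\mathbb{Z}/2^{f(v,w)}\mathbb{Z})$ is injective: it factors through $\mathrm{Ext}^1_{\mathbb{Z}_2}(\mathcal{G}^{\mathrm{ab}},-)\hookrightarrow H^2(\mathcal{G},-)$, and the first map is injective by the splitting trick from the proof of Proposition \ref{prop:defining the invariant}, where a lift of $(\pi_v,\pi_w)$ to the extension, restricted to the subgroup of $\mathcal{G}^{\mathrm{ab}}$ on which all $\pi_u$ with $u\notin\{v,w\}$ vanish, gives a genuine section — here one uses (C2). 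When one of $v,w$ equals $v_2(1)$ or $v_2(2)$ the argument is identical with the generating set reduced to those $\tilde\theta$ with finite $f$, and the convention $\lambda_\mathcal{S}(w,v)=0$ for $f(v)=\infty$ makes the pair well-defined.

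It then remains to check that $\Phi_\mathcal{S}$ obeys the label constraints of Definition \ref{def: decorated graphs at 2}. That $\lambda_\mathcal{S}(w,v_2(i))=0$ is precisely the convention above. For $\lambda_\mathcal{S}(v_2(1),w)\equiv\log_2(1+g(w)2^{f(w)})\bmod 2^{f(w)}$ when $f(w)\geq 2$, one specializes the defining relation to the pair $v_2(1)<w$: since $\tilde\theta_{v_2(1)}=0$ and $\lambda_\mathcal{S}(w,v_2(1))=0$, it reads $\pi_{v_2(1)}\cup\pi_w+\lambda_\mathcal{S}(v_2(1),w)\cdot\tilde\theta_w=0$, and comparison with the first displayed relation of (C4), together with the uniqueness just established, gives the claim. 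The constraint $\lambda_\mathcal{S}(v_2(2),w)\equiv 1\bmod 2\Longleftrightarrow f(w)=1$ is the one genuinely new point: specialize the defining relation to $v_2(2)<w$ and reduce modulo $2$ to obtain, in $H^2(\mathcal{G},\mathbb{Z}/2\mathbb{Z})$,
\[
(\mathrm{red}_2\circ\pi_{v_2(2)})\cup(\mathrm{red}_2\circ\pi_w)+\big(\lambda_\mathcal{S}(v_2(2),w)\bmod 2\big)\cdot\mathrm{red}_2(\tilde\theta_w)=0.
\]
The class $\mathrm{red}_2(\tilde\theta_w)$ is nonzero in $H^2(\mathcal{G},\mathbb{Z}/2\mathbb{Z})$, being the inflation via $\pi_w$ of a generator of $\mathrm{Ext}^1_{\mathbb{Z}_2}(\mathbb{Z}/2^{f(w)}\mathbb{Z},\mathbb{Z}/2\mathbb{Z})\cong\mathbb{Z}/2\mathbb{Z}$, and that inflation is injective by the same (C2) splitting argument. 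Hence $\lambda_\mathcal{S}(v_2(2),w)$ is even exactly when $(\mathrm{red}_2\circ\pi_{v_2(2)})\cup(\mathrm{red}_2\circ\pi_w)$ vanishes, which by the second part of (C4) happens precisely when $f(w)\geq 2$; equivalently $\lambda_\mathcal{S}(v_2(2),w)$ is odd exactly when $f(w)=1$.

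I expect the only real obstacle to be this last step — verifying that reduction modulo $2$ of the defining relation faithfully records the parity of $\lambda_\mathcal{S}(v_2(2),w)$, which hinges on the non-vanishing of $\mathrm{red}_2(\tilde\theta_w)$. Everything else carries over unchanged from Section \ref{Rado group odd primes}: in particular the facts that $\mathcal{G}$ is free on its abelianization and $[\mathcal{G},\mathcal{G}]\cong\prod_{v<w}\mathbb{Z}/2^{f(v,w)}\mathbb{Z}$, established via Subsection \ref{intermezzo}, hold with $\ell=2$ since that subsection was written to allow it.
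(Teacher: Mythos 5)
Your proposal is correct and is exactly the ``straightforward adaptation'' of the proof of Proposition \ref{prop:defining the invariant} that the paper invokes without writing out: existence and uniqueness via (C2)/(C3), the $\lambda(v_2(1),w)$ constraint from the first display of (C4), and the parity constraint on $\lambda(v_2(2),w)$ from the second display of (C4) after reducing the defining relation modulo $2$ and noting that $\mathrm{red}_2(\tilde{\theta}_w)$ inflates injectively. You have correctly isolated the one genuinely new verification for $\ell=2$ and carried it out as intended.
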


\begin{proposition}
\label{free on the abelianization at 2}
Let $\mathcal{S} := (\mathcal{G}, \pi)$ be a $2$-nilpotent pro-$2$ group on $(\Omega, f, g)$. Then $\mathcal{G}$ is free on its abelianization.     
\end{proposition}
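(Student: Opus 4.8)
The plan is to reuse, essentially verbatim, the proof of Proposition \ref{free on abelianization}: that argument invokes only conditions (C1)--(C3), which are word-for-word the same in Definition \ref{def: Rado groups at 2}, together with the profinite $\wedge^2$-machinery of Subsection \ref{intermezzo}, and the latter was set up precisely so as to cover $\ell = 2$ as well. The weakened form of (C4) plays no role whatsoever; it is there only to control the labels between the fiber above $\infty$ and the finite part, not to control the second cohomology.

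Concretely, I would first use condition (C2) to identify $\mathcal{G}^{\textup{ab}}$ with $\mathcal{A} := \prod_{v \in \Omega} \Z/2^{f(v)}\Z$, and then combine the profinite form of the identification $H^2(\mathcal{A}, \Q/\Z) = (\wedge^2_{\textup{prof}} \mathcal{A})^\vee$ from Subsection \ref{intermezzo} with Proposition \ref{computation of profinite wedge 2}, applied to the fixed well-ordering $\leq$ on $\Omega$. Exactly as in the odd case, this exhibits the image of the inflation map $H^2(\mathcal{G}^{\textup{ab}}, \Q/\Z) \to H^2(\mathcal{G}, \Q/\Z)$ as generated by the classes $\pi_v \cup \pi_w$ over pairs $\{v, w\} \neq \{v_2(1), v_2(2)\}$, together with the classes $(\textup{red}_{2^n} \circ \pi_{v_2(1)}) \cup (\textup{red}_{2^n} \circ \pi_{v_2(2)})$ for $n \geq 0$, all viewed in $H^2(\mathcal{G}, \Q/\Z)$ through the coefficient inclusions $\Z/2^k\Z \hookrightarrow \Q/\Z$ sending $1$ to $1/2^k$.

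Next I would kill each of these generators. For distinct $v, w$ the class $\pi_v \cup \pi_w$ lies in the image of the inflation from $H^2(\Z/2^{f(v)}\Z \times \Z/2^{f(w)}\Z, \Z/2^{f(v, w)}\Z)$ induced by $(\pi_v, \pi_w)$; hence by (C3) it lies in the image of the inflation from $\textup{Ext}^1_{\Z_2}(\Z/2^{f(v)}\Z \times \Z/2^{f(w)}\Z, \Z/2^{f(v, w)}\Z)$, i.e. it equals a $\Z_2$-combination of inflations of abelian extension classes in $H^2(\mathcal{G}, \Z/2^{f(v, w)}\Z)$. Pushing this identity forward along $\Z/2^{f(v, w)}\Z \hookrightarrow \Q/\Z$, the right-hand side becomes $0$ because $\textup{Ext}^1_\Z(-, \Q/\Z) = 0$ by injectivity of $\Q/\Z$, so every abelian extension class dies once the coefficients are enlarged; hence $\pi_v \cup \pi_w = 0$ in $H^2(\mathcal{G}, \Q/\Z)$. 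For the pair $\{v_2(1), v_2(2)\}$ the argument is even cleaner: by (C3) the image of the inflation from $H^2(\Z_2 \times \Z_2, \Z_2)$ coincides with that of $\textup{Ext}^1_{\Z_2}(\Z_2 \times \Z_2, \Z_2) = 0$, so $\pi_{v_2(1)} \cup \pi_{v_2(2)}$ already inflates to $0$ in $H^2(\mathcal{G}, \Z_2)$, whence all its reductions modulo $2^n$ inflate to $0$ in $H^2(\mathcal{G}, \Q/\Z)$. Thus every generator of the image of $H^2(\mathcal{G}^{\textup{ab}}, \Q/\Z) \to H^2(\mathcal{G}, \Q/\Z)$ vanishes, so this inflation is the zero map and $\mathcal{G}$ is free on its abelianization.

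The only genuine point requiring care — and the reason this is not entirely formal — is checking that the profinite $\wedge^2$-formalism of Subsection \ref{intermezzo} and Proposition \ref{computation of profinite wedge 2} really do apply with $\ell = 2$; but that subsection was explicitly written with $\ell = 2$ permitted throughout (and Proposition \ref{computation of profinite wedge 2} is stated there for a general $\ell$), so no new work is needed. Once this is granted, the computation above is identical to the odd-prime case and, crucially, never touches condition (C4).
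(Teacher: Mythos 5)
Your proposal is correct and coincides with the paper's own treatment: the paper proves this proposition precisely by the "straightforward adaptation" of Proposition \ref{free on abelianization} that you spell out, using only (C2), (C3) and the $\wedge^2_{\textup{prof}}$ formalism of Subsection \ref{intermezzo} (which indeed allows $\ell = 2$), with the cup-product generators killed because the inflated $\textup{Ext}^1$-classes die in $\Q/\Z$ coefficients and $\textup{Ext}^1_{\Z_2}(\Z_2 \times \Z_2, \Z_2) = 0$ for the pair $\{v_2(1), v_2(2)\}$. Your observation that the weakened condition (C4) is never used is also consistent with the paper's argument.
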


Following the recipe given in Section \ref{Rado group odd primes}, we can attach a $2$-nilpotent pro-$2$ group to a decorated graph $\Phi$, which we denote by $\text{Gr}(\Phi)$.

\begin{theorem}  
\label{Thm: the graph determines the group at 2}
Let $\mathcal{S}$ be a $2$-nilpotent pro-$2$ group on $(\Omega, f, g)$. Then $\mathcal{S}$ is isomorphic to $\textup{Gr}(\Phi_\mathcal{S})$ as $2$-nilpotent pro-$2$ groups on $(\Omega, f, g)$. 
\end{theorem}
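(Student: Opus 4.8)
The plan is to mirror the proof of Theorem \ref{Thm: the graph determines the group} verbatim, substituting $\ell = 2$ throughout and checking that the only genuinely new ingredient --- the modified condition (C4) in Definition \ref{def: Rado groups at 2} --- causes no difficulty. First I would invoke Proposition \ref{prop: defining the invariant at 2} to produce, for each pair $v < w$, a $1$-cochain $\phi(v, w): \mathcal{G} \to \mathbb{Z}/2^{f(v, w)}\mathbb{Z}$ with
$$
d\phi(v, w) = \pi_v \cup \pi_w + \lambda_\mathcal{S}(w, v) \cdot \theta_v + \lambda_\mathcal{S}(v, w) \cdot \theta_w,
$$
exactly as in the odd case; here one needs to know that the class $\pi_v \cup \pi_w$ lies in the inflation of $\textup{Ext}^1_{\mathbb{Z}_2}$, which is precisely (C3), and that the relevant inflation is injective, which follows from (C2) as before. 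Packaging the $\phi(v, w)$ together with $\pi_\bullet$ gives a continuous homomorphism $(\phi_\bullet, \pi_\bullet): \mathcal{G} \to \mathcal{G}(\Phi_\mathcal{S})$, where $\mathcal{G}(\Phi_\mathcal{S})$ is the group built from $\Phi_\mathcal{S}$ by the cocycle recipe of Section \ref{Rado group odd primes}, now with $\ell = 2$.

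Next I would observe that this map induces an isomorphism on abelianizations: by (C2) both $\pi_\bullet$ and the structure map $\pi(\Phi_\mathcal{S})$ of $\textup{Gr}(\Phi_\mathcal{S})$ are literally the abelianization maps, onto $\prod_{v \in \Omega} \mathbb{Z}/2^{f(v)}\mathbb{Z}$, and the square commutes by construction. Then I would apply Proposition \ref{Enough surjective on abelianization} --- whose intermezzo inputs (Proposition \ref{Universal property}, Proposition \ref{inflation of h^2=0}) are all stated with $\ell = 2$ allowed --- which reduces the theorem to showing that both $\mathcal{G}$ and $\mathcal{G}(\Phi_\mathcal{S})$ are free on their abelianization. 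For $\mathcal{G}$ this is Proposition \ref{free on the abelianization at 2}. For $\textup{Gr}(\Phi_\mathcal{S})$ one needs the pro-$2$ analogue of Proposition \ref{it's acceptable group}, i.e. that $\textup{Gr}(\Phi)$ is a $2$-nilpotent pro-$2$ group on $(\Omega, f, g)$ in the sense of Definition \ref{def: Rado groups at 2}; granting this, Proposition \ref{free on the abelianization at 2} applies to it as well, and Proposition \ref{Enough surjective on abelianization} finishes the argument.

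The one point requiring care --- and the main (mild) obstacle --- is verifying condition (C4) for $\textup{Gr}(\Phi_\mathcal{S})$, since (C4) at $2$ has two clauses: the logarithmic relation for $f(v) \geq 2$, which is handled exactly as in the proof of Proposition \ref{it's acceptable group} using that $\Phi_\mathcal{S}$ is a decorated graph at $2$ (the defining constraint $\lambda(v_2(1), w) \equiv \log_2(1 + g(w) 2^{f(w)}) \bmod 2^{f(w)}$ is precisely what is needed), and the new mod-$2$ dichotomy
$$
(\textup{red}_2 \circ \pi_{v_2(2)}) \cup (\textup{red}_2 \circ \pi_v) \text{ is trivial} \Longleftrightarrow f(v) \geq 2.
$$
This last equivalence for $\textup{Gr}(\Phi_\mathcal{S})$ translates, via the cocycle formula, into the statement that $\lambda_\mathcal{S}(v_2(2), v)$ is odd if and only if $f(v) = 1$, which is exactly the first axiom on the labels in Definition \ref{def: decorated graphs at 2}; so it holds automatically because $\Phi_\mathcal{S}$ is a decorated graph. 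I would therefore state and quickly prove the pro-$2$ analogue of Proposition \ref{it's acceptable group} as a preliminary step, noting that (C1), (C2), (C3) go through word for word from the odd case and only (C4) uses the $2$-specific label axioms, and then assemble the four ingredients as above.
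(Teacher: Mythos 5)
Your proposal is correct and matches the paper's intended argument: the paper itself gives no separate proof of this theorem, stating only that it follows by a straightforward adaptation of the proof of Theorem \ref{Thm: the graph determines the group}, which is precisely the cochain-packaging plus Proposition \ref{Enough surjective on abelianization} route you describe. Your extra care in verifying the two clauses of the modified condition (C4) for $\textup{Gr}(\Phi_\mathcal{S})$ (the pro-$2$ analogue of Proposition \ref{it's acceptable group}) correctly isolates and resolves the only genuinely new point.
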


Next, following the proof of Proposition \ref{isomorphic graphs, isomorphic groups}, we see that any isomorphism between the graphs translates into an isomorphism of groups.

\begin{proposition} 
\label{isomorphic graphs, isomorphic groups at 2}
Suppose that $\Phi_1: = (\Omega_1, f_1, g_1, \lambda_1)$ and $\Phi_2: = (\Omega_2, f_2, g_2, \lambda_2)$ are isomorphic as decorated graphs. Then the pairs $\textup{Gr}(\Phi_1)$ and $\textup{Gr}(\Phi_2)$ are isomorphic.
\end{proposition}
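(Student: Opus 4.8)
The plan is to repeat the proof of Proposition \ref{isomorphic graphs, isomorphic groups} with only cosmetic changes. The features that distinguish $\ell = 2$ from odd $\ell$ are confined to the constraints on the labels touching the fiber $\{v_2(1), v_2(2)\}$ above $\infty$, and these intervene only when one checks that $\textup{Gr}(\Phi)$ is a $2$-nilpotent pro-$2$ group, not when one compares two such groups along a graph isomorphism. So let $\gamma \colon \Omega_1 \to \Omega_2$ be the given decorated graph isomorphism; by definition it intertwines $(f_1, g_1, \lambda_1)$ with $(f_2, g_2, \lambda_2)$. Since $\gamma$ preserves $f$, the groups $\mathbb{Z}/2^{f_1(v)}\mathbb{Z}$ and $\mathbb{Z}/2^{f_2(\gamma(v))}\mathbb{Z}$ (resp.\ $\mathbb{Z}/2^{f_1(v, w)}\mathbb{Z}$ and $\mathbb{Z}/2^{f_2(\gamma(v), \gamma(w))}\mathbb{Z}$) coincide, so $\gamma$ induces a set-theoretic bijection $\phi \colon \mathcal{G}(\Phi_1) \to \mathcal{G}(\Phi_2)$ by permuting coordinates along $\gamma$, via the same formula as in the proof of Proposition \ref{isomorphic graphs, isomorphic groups}. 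Here one should keep in mind that the fixed well-orderings of $\Omega_1$ and $\Omega_2$ need not be compatible with $\gamma$; this is harmless, because the labels $\lambda$, the cocycles $\theta_v$ and the cup products $\pi_v \cup \pi_w$ are all functions of \emph{ordered} pairs, so once one fixes, for each unordered pair, the representative increasing in the ambient well-ordering, the transport of coordinates along $\gamma$ is unambiguous.

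Next I would verify that $\phi$ intertwines all the data entering the group law. As $\phi$ merely relabels coordinates, one has $\pi(\Phi_1) = \pi(\Phi_2) \circ \phi$ on the nose. Because $\gamma$ preserves $g$ and the extension class $\theta_v$ attached to $v$ in condition (C4) of Definition \ref{def: Rado groups at 2} depends only on the pair $(f(v), g(v))$, the cocycle $\theta_v$ on $\mathcal{G}(\Phi_1)$ corresponds under $\phi$ to $\theta_{\gamma(v)}$ on $\mathcal{G}(\Phi_2)$ for each $v \in \Omega_{1, \textup{fin}}$, and likewise $\pi_v \cup \pi_w$ corresponds to $\pi_{\gamma(v)} \cup \pi_{\gamma(w)}$. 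Since $\gamma$ preserves $\lambda$, assembling these identities coordinate by coordinate shows that $\phi$ carries the packaged cocycle $\theta_{\Phi_1}$ to $\theta_{\Phi_2}$.

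Given the explicit shape of the group law in equation (\ref{eCocycleGroup}), the compatibility of $\theta_{\Phi_1}$ with $\theta_{\Phi_2}$ under $\phi$ immediately forces $\phi$ to be a group homomorphism, hence --- being a bijection --- a group isomorphism; together with $\pi(\Phi_1) = \pi(\Phi_2) \circ \phi$ this exhibits $(\phi, \gamma)$ as an isomorphism of $2$-nilpotent pro-$2$ groups on $(\Omega_1, f_1, g_1)$ and $(\Omega_2, f_2, g_2)$, as desired. I do not expect a genuine obstacle: all the substance is already contained in the construction of $\textup{Gr}(\Phi)$ and in Subsection \ref{intermezzo}, and the only points needing a little care are the bookkeeping around the two a priori unrelated well-orderings noted above and the innocuous remark that a decorated-graph isomorphism at $2$ need not preserve the labeling of $\{v_2(1), v_2(2)\}$ individually --- which does not affect the argument, since all pairs of vertices occur in $\mathcal{G}(\Phi)$ and the cocycles $\theta_v$ attached to the two $\infty$-fibres vanish by convention.
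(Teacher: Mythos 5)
Your proposal is correct and is essentially the paper's own argument: the paper proves this proposition by simply observing that the proof of Proposition \ref{isomorphic graphs, isomorphic groups} carries over verbatim, constructing $\phi$ as the coordinate permutation induced by $\gamma$ and checking that preservation of $f$, $g$, $\lambda$ forces $\theta_{\Phi_1} = \theta_{\Phi_2} \circ \phi$, hence that $\phi$ respects the group law (\ref{eCocycleGroup}). Your extra remarks on the well-orderings and on the $\infty$-fibre are sensible bookkeeping but do not change the substance.
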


We can now pinpoint the groups needed for our application. 

\begin{definition} 
\label{def: reciprocity groups}
Let $\mathcal{S}$ be a $2$-nilpotent pro-$2$ group on $(\Omega, f, g)$. We say that $\mathcal{S}$ is a \emph{reciprocity} $2$-nilpotent pro-$2$ group on $(\Omega, f, g)$ precisely when $\Phi_\mathcal{S}$ is a decorated reciprocity graph.    
\end{definition}

\begin{definition} 
\label{def: weakly rado groups at 2}
Let $\mathcal{S}$ be a reciprocity $2$-nilpotent pro-$2$ group on $(\Omega, f, g)$. We say that it is \emph{weakly Rado} in case the graph $\Phi_\mathcal{S}$ is weakly Rado. 
\end{definition}

Following the argument given in Section \ref{Rado group odd primes} for Theorem \ref{weakly Rado groups are isomorphic}, we obtain the main result of this section.

\begin{theorem} 
\label{weakly Rado groups are isomorphic at 2}
Let $(\mathcal{G}, \pi), (\mathcal{G}', \pi')$ be reciprocity $2$-nilpotent pro-$2$ groups on $(\Omega, f, g)$ and $(\Omega', f', g')$ respectively. Suppose that they are both weakly Rado. Then
$$
\mathcal{G} \simeq_{\textup{top.gr.}} \mathcal{G}'.
$$
\end{theorem}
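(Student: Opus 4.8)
The plan is to mirror the argument for Theorem~\ref{weakly Rado groups are isomorphic} almost verbatim, replacing each ingredient by its $\ell=2$ counterpart established earlier in Section~\ref{Rado groups at 2} and Section~\ref{decorated graphs at 2}. First I would observe that since $\mathcal{S} := (\mathcal{G}, \pi)$ and $\mathcal{S}' := (\mathcal{G}', \pi')$ are reciprocity $2$-nilpotent pro-$2$ groups, Proposition~\ref{prop: defining the invariant at 2} attaches to each the decorated graph $\Phi_{\mathcal{S}}$, respectively $\Phi_{\mathcal{S}'}$, and by Definition~\ref{def: reciprocity groups} these are in fact decorated \emph{reciprocity} graphs. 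By hypothesis (Definition~\ref{def: weakly rado groups at 2}) both of these graphs are weakly Rado.

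Next I would invoke the scaling mechanism. Exactly as in the odd case, diagonally rescaling the abelianization map $\pi_\bullet$ by some $\gamma_\bullet \in \prod_{v \in \Omega_{\text{fin}}}(\mathbb{Z}/2^{f(v)}\mathbb{Z})^\ast$ produces a new reciprocity $2$-nilpotent pro-$2$ group $\gamma_\bullet \cdot \mathcal{S}$ on the same underlying topological group $\mathcal{G}$, and the labels transform by $\lambda_{\gamma_\bullet \cdot \mathcal{S}} = \gamma_\bullet \cdot \lambda_\mathcal{S}$ (the verification that (C1)--(C4) are preserved is identical to the one given after Proposition~\ref{reconstructing cyclotomic abstract} for $\ell$ odd; one only needs to note that the rescaling never touches the $v_2(1)$ or $v_2(2)$ coordinates, so the extra clause in (C4) involving $\textup{red}_2$ is also untouched, and that it preserves the reciprocity relation since that relation is symmetric in the rescaling). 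Since $\Phi_{\mathcal{S}}$ is weakly Rado, Proposition~\ref{rado up to scaling if and only if weakly rado at 2} gives a choice of $\gamma_\bullet$ for which $\Phi_{\gamma_\bullet \cdot \mathcal{S}} = \gamma_\bullet \cdot \lambda_{\mathcal{S}}$ is actually Rado; likewise for $\mathcal{S}'$ we get $\gamma'_\bullet$ with $\Phi_{\gamma'_\bullet \cdot \mathcal{S}'}$ Rado.

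Now I would assemble the chain of isomorphisms. By Theorem~\ref{Thm: the graph determines the group at 2}, the group $\gamma_\bullet \cdot \mathcal{S}$ is isomorphic, as a $2$-nilpotent pro-$2$ group on $(\Omega, f, g)$, to $\textup{Gr}(\Phi_{\gamma_\bullet \cdot \mathcal{S}})$, and similarly $\gamma'_\bullet \cdot \mathcal{S}' \cong \textup{Gr}(\Phi_{\gamma'_\bullet \cdot \mathcal{S}'})$. Both $\Phi_{\gamma_\bullet \cdot \mathcal{S}}$ and $\Phi_{\gamma'_\bullet \cdot \mathcal{S}'}$ are Rado (decorated reciprocity) graphs, hence by Theorem~\ref{Thm:two Rado graphs are isomorphic for 2} they are isomorphic as decorated graphs, and then Proposition~\ref{isomorphic graphs, isomorphic groups at 2} upgrades this to an isomorphism $\textup{Gr}(\Phi_{\gamma_\bullet \cdot \mathcal{S}}) \cong \textup{Gr}(\Phi_{\gamma'_\bullet \cdot \mathcal{S}'})$ of pairs. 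Composing, $\gamma_\bullet \cdot \mathcal{S} \cong \gamma'_\bullet \cdot \mathcal{S}'$ as $2$-nilpotent pro-$2$ groups on $(\Omega,f,g)$ (here one also uses, as in the odd case, that one may fix a single Rado reciprocity graph $\Phi(\textup{Rado})$ and route both through it). Finally, since diagonal rescaling of $\pi_\bullet$ does not change the underlying topological group, the underlying topological group of $\gamma_\bullet \cdot \mathcal{S}$ is just $\mathcal{G}$ and that of $\gamma'_\bullet \cdot \mathcal{S}'$ is just $\mathcal{G}'$; forgetting the maps $\pi_\bullet$ yields $\mathcal{G} \simeq_{\textup{top.gr.}} \mathcal{G}'$, as claimed.

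The only point requiring genuine care — and thus the main obstacle — is the very first reduction: one must check that the $\ell=2$ analogues of Proposition~\ref{rado up to scaling if and only if weakly rado} and of the "rescaling preserves the axioms" verification genuinely go through in the reciprocity setting. The subtlety is that the space of decorated reciprocity graphs is a coset rather than a subgroup (as emphasized before Theorem~\ref{prob=1 for 2}), so one must confirm that the scaling action $\gamma_\bullet \cdot \lambda$ preserves the reciprocity constraint $\lambda(v,w) \equiv \lambda(w,v) + \lambda(v_2(2),v)\lambda(v_2(2),w) \bmod 2$ — which it does, because $\gamma_v \equiv 1 \bmod 2$ for every unit $\gamma_v \in (\mathbb{Z}/2^{f(v)}\mathbb{Z})^\ast$, so the mod-$2$ reductions of all labels, and hence the reciprocity relation, are unaffected. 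Granting this, together with Proposition~\ref{rado up to scaling if and only if weakly rado at 2} which the excerpt has already asserted, everything else is a purely formal concatenation of the cited results.
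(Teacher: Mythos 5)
Your proposal is correct and follows exactly the route the paper intends: the paper's own ``proof'' of this theorem is simply the instruction to repeat the argument of Theorem~\ref{weakly Rado groups are isomorphic}, substituting the $\ell=2$ versions of each ingredient (Proposition~\ref{rado up to scaling if and only if weakly rado at 2}, Theorem~\ref{Thm: the graph determines the group at 2}, Theorem~\ref{Thm:two Rado graphs are isomorphic for 2}, Proposition~\ref{isomorphic graphs, isomorphic groups at 2}), which is precisely your chain. Your explicit check that diagonal scaling by units (which are odd, hence trivial mod $2$) preserves the reciprocity constraint and the extra clause of (C4) is the one point the paper leaves implicit, and you handle it correctly.
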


We end the section with the analogue of Proposition \ref{reconstructing cyclotomic abstract} for $\ell = 2$.

\begin{proposition} 
\label{reconstructing cyclotomic abstract at 2}
Let $\mathcal{S} := (\mathcal{G}, \pi_{\bullet})$ be a weakly Rado group on $(\Omega, f, g)$. Then the subgroup $\textup{ker}(\pi_{v_2(1)})$ is stable under $\textup{Aut}_{\textup{top.gr.}}(\mathcal{G})$.
\end{proposition}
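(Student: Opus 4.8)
The plan is to follow the proof of Proposition~\ref{reconstructing cyclotomic abstract} essentially verbatim, replacing the multiplier $\ell$ appearing there by $4$. This substitution is forced because $\log_2$ is only defined on $1+4\Z_2$, so that the isomorphism $1+2^{f(v)+2}\Z_2\xrightarrow{\sim}2^{f(v)}\Z_2$ makes $\log_2(1+2^{f(v)}g(v))$ have $2$-adic valuation $f(v)-2$ rather than $f(v)-1$; everything else is bookkeeping.

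First I would isolate the characterizing property: for any $2$-nilpotent pro-$2$ group $\mathcal{S}=(\mathcal{G},\pi_\bullet)$ on $(\Omega,f,g)$ and any continuous homomorphism $\pi_0\colon\mathcal{G}\to\Z/2^n\Z$ one has
$$
4\cdot\pi_{v_2(1)}\cup\pi_0=0 \quad\text{in }H^2(\mathcal{G},\Z/2^n\Z).
$$
To see this, use property (C2) to write $\pi_0$ as a finite $\Z_2$-combination $\sum_v c_v\pi_v$ (every continuous homomorphism $\mathcal{G}\to\Z/2^n\Z$ factors through $\mathcal{G}^{\textup{ab}}=\prod_v\Z/2^{f(v)}\Z$ and hence through finitely many coordinates) and treat the summands one at a time. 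For $v=v_2(1)$ one has $\pi_{v_2(1)}\cup\pi_{v_2(1)}=0$ because $\Z_2$ has cohomological dimension $1$; for $v=v_2(2)$ the vanishing $\pi_{v_2(1)}\cup\pi_{v_2(2)}=0$ follows from property (C3) applied to the pair $\{v_2(1),v_2(2)\}$, since $\textup{Ext}^1_{\Z_2}(\Z_2\times\Z_2,\Z_2)=0$; for $v\in\Omega_{\textup{fin}}$ with $f(v)\ge 2$, property (C4) gives $\pi_{v_2(1)}\cup\pi_v=-\log_2(1+2^{f(v)}g(v))\cdot\tilde\theta_v$ and $4\log_2(1+2^{f(v)}g(v))\equiv 0\bmod 2^{f(v)}$ by the valuation computation above; finally, for $f(v)=1$ the class $\pi_{v_2(1)}\cup\pi_v$ already lies in a group annihilated by $2$. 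Summing yields the claim.

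Next, this property is preserved by $\textup{Aut}_{\textup{top.gr.}}(\mathcal{G})$ by functoriality of cohomology and naturality of cup products, so every $\pi$ in the $\textup{Aut}_{\textup{top.gr.}}(\mathcal{G})$-orbit of $\pi_{v_2(1)}$ satisfies $4\pi\cup\pi_0=0$ for all $\pi_0$. Writing $\pi=\lambda\cdot\pi_{v_2(1)}+\mu\cdot\pi_{v_2(2)}$ with $\lambda,\mu\in\Z_2$ (again via (C2)), the previous paragraph gives $4\mu\cdot\pi_{v_2(2)}\cup\pi_0=0$ for every $\pi_0$, and the desired conclusion is equivalent to the claim $\mu=0$ (once $\mu=0$, comparing images forces $\lambda\in\Z_2^\ast$, so $\ker\pi=\ker\pi_{v_2(1)}$). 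Suppose $\mu\neq 0$ and set $n:=v_2(\mu)+4$. Apply Definition~\ref{def: weakly Rado at 2} with $S:=\{v_2(1),v_2(2)\}$, this $n$, any generator $\alpha$ of $\Z/2^n\Z$, the labels forced on $v_2(1),v_2(2)$, and $\lambda_{v_2(2)}(2):=2$. We obtain $v\in\Omega-S$ with $f(v)=n$ and $\lambda_\mathcal{S}(v_2(2),v)=2$; by Proposition~\ref{prop: defining the invariant at 2} together with the convention $\tilde\theta_{v_2(2)}=0$ this means $\pi_{v_2(2)}\cup\pi_v=-2\tilde\theta_v$, and since $\tilde\theta_v$ has order exactly $2^n$ in $H^2(\mathcal{G},\Z/2^n\Z)$ we get $4\mu\cdot\pi_{v_2(2)}\cup\pi_v=-8\mu\cdot\tilde\theta_v\neq 0$ because $n>v_2(8\mu)$. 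This contradicts $4\mu\cdot\pi_{v_2(2)}\cup\pi_0=0$, so $\mu=0$.

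The main obstacle is the $2$-adic bookkeeping: verifying that the correct multiplier is $4$ and not $2$, and realizing — via the weakly Rado property — a vertex $v$ whose label $\lambda_\mathcal{S}(v_2(2),v)$ has a prescribed small \emph{positive} valuation, despite the parity constraint on $\lambda_{v_2(2)}(2)$ built into Definition~\ref{def:Rado at 2} (which makes $\lambda_{v_2(2)}(2)$ even as soon as $n\ge 2$, so that one cannot take it to be a unit, only of valuation exactly $1$).
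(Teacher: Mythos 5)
Your proof is correct and follows the same route as the paper, which simply declares the argument identical to the odd-$\ell$ case (Proposition \ref{reconstructing cyclotomic abstract}): isolate the automorphism-invariant property $4\cdot\pi_{v_2(1)}\cup\pi_0=0$ for all $\pi_0$, and use the weakly Rado extension property together with Proposition \ref{prop: defining the invariant at 2} to rule out a nonzero $\pi_{v_2(2)}$-component. Your observations that the multiplier must be $4$ rather than $\ell=2$ (since $\log_2$ shifts valuations by $2$) and that the parity constraint forces $\lambda_{v_2(2)}(2)$ to have valuation exactly $1$ (hence the choice $n=v_2(\mu)+4$) are precisely the adjustments the paper leaves implicit.
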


\begin{proof}
The proof is identical to the proof of Proposition \ref{reconstructing cyclotomic abstract}.
\end{proof}

\section{Galois groups} 
\label{section: Galois groups l odd}
Let $\ell$ be an odd prime number. Fix an imaginary quadratic number field $K$. Denote by $\mathcal{O}_K$ its ring of integers and by $\text{Cl}(K)$ its class group. For the rest of this section we will operate under the assumption that $\text{Cl}(K)[\ell] = \{0\}$. Furthermore, in case $\ell = 3$, we assume that $K \otimes_{\mathbb{Q}} \mathbb{Q}_3$ does not contain a primitive third root of unity. 

In the rest of this section, $v_\ell(1)$ denotes the cyclotomic $\mathbb{Z}_\ell$-extension of $K$ and $v_\ell(2)$ denotes the anti-cyclotomic extension of $K$.

Furthermore, define $\Omega := \{v_\ell(1), v_\ell(2)\} \cup \Omega_{\text{fin}}$, where $\Omega_{\text{fin}}$ consists of the places $v \in \Omega_K$ such that $\mathcal{O}_K/v$ has a primitive $\ell$-th root of unity (i.e. the size of $\mathcal{O}_K/v$ is congruent to $1$ modulo $\ell$). We define $f(v)$ to be the $\ell$-adic valuation of $\#(\mathcal{O}_K/v)^\ast$ for $v \in \Omega_{\text{fin}}$ and we define $f(v) := \infty$ for $v \in \{v_\ell(1), v_\ell(2)\}$. 

\begin{proposition} 
\label{a totally ramified extension}
For each $v \in \Omega_{\textup{fin}}$, there exists a character $\pi_v: \mathcal{G}_K \twoheadrightarrow \mathbb{Z}/\ell^{f(v)}\mathbb{Z}$ that is totally ramified at $v$ and unramified elsewhere.
\end{proposition}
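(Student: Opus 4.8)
The plan is to build $L/K$, the fixed field of $\ker(\pi_v)$, as a subextension of a ray class field and to invoke global class field theory. Setting $\mathfrak{m} := v$ — and noting right away that $v \nmid \ell$, since otherwise $\#(\mathcal{O}_K/v) \in \{\ell, \ell^2\}$, which is never $\equiv 1 \bmod \ell$, contradicting $v \in \Omega_{\mathrm{fin}}$ — I would write $H_v$ for the ray class field of conductor $v$ and recall the exact sequence
$$
1 \to (\mathcal{O}_K/v)^\ast / \overline{\mathcal{O}_K^\ast} \to \mathrm{Cl}_v(K) \to \mathrm{Cl}(K) \to 1,
$$
where $\overline{\mathcal{O}_K^\ast}$ denotes the image of the global units and there is no archimedean contribution because $K$ is imaginary quadratic. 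Under the Artin isomorphism $\mathrm{Gal}(H_v/K) \cong \mathrm{Cl}_v(K)$, the left-hand subgroup is exactly the inertia group at $v$ (standard local class field theory: at a tamely ramified prime the inertia is the image of the local units); note $v$ is tame since its residue characteristic differs from $\ell$.

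Next I would tensor this sequence with $\Z_\ell$, which is exact on finite abelian groups. Since $\mathrm{Cl}(K)[\ell] = \{0\}$ we have $\mathrm{Cl}(K) \otimes \Z_\ell = 0$, so the inclusion becomes an isomorphism $\big((\mathcal{O}_K/v)^\ast/\overline{\mathcal{O}_K^\ast}\big) \otimes \Z_\ell \xrightarrow{\sim} \mathrm{Cl}_v(K) \otimes \Z_\ell$. The group $(\mathcal{O}_K/v)^\ast$ is cyclic, so its $\ell$-Sylow subgroup is $\Z/\ell^{f(v)}\Z$ by the very definition of $f(v)$; and $\overline{\mathcal{O}_K^\ast}$ has order dividing $\#\mathcal{O}_K^\ast \in \{2,4,6\}$, which is coprime to $\ell$ in every case under consideration — the only possible clash is $\ell = 3$ with $K = \Q(\sqrt{-3})$, but that field is excluded by the running hypothesis since $\Q(\sqrt{-3}) \otimes_\Q \Q_3 \cong \Q_3(\zeta_3)$. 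Hence $\overline{\mathcal{O}_K^\ast} \otimes \Z_\ell = 0$, so $\big((\mathcal{O}_K/v)^\ast/\overline{\mathcal{O}_K^\ast}\big) \otimes \Z_\ell \cong (\mathcal{O}_K/v)^\ast \otimes \Z_\ell \cong \Z/\ell^{f(v)}\Z$, and we conclude $\mathrm{Cl}_v(K) \otimes \Z_\ell \cong \Z/\ell^{f(v)}\Z$.

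Finally I would let $L \subseteq H_v$ be the subfield fixed by the kernel of $\mathrm{Cl}_v(K) \twoheadrightarrow \mathrm{Cl}_v(K) \otimes \Z_\ell$, so that the Artin map yields a surjection $\pi_v : \mathcal{G}_K \twoheadrightarrow \mathrm{Gal}(L/K) \cong \Z/\ell^{f(v)}\Z$. This character is unramified at every place $\neq v$ because $H_v/K$ is, and because the archimedean place of $K$ is complex; and it is totally ramified at $v$ because the inertia subgroup at $v$ in $\mathrm{Gal}(L/K)$ is the image of $(\mathcal{O}_K/v)^\ast/\overline{\mathcal{O}_K^\ast}$, which surjects onto $\mathrm{Cl}_v(K) \otimes \Z_\ell = \mathrm{Gal}(L/K)$ by the isomorphism just displayed. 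The only genuinely delicate point is the bookkeeping of the unit contribution $\overline{\mathcal{O}_K^\ast}$, which is precisely where the auxiliary hypothesis on $K \otimes_\Q \Q_3$ for $\ell = 3$ is used; everything else is a routine application of class field theory.
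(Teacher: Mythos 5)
Your proposal is correct and follows essentially the same route as the paper: the ray class group exact sequence for modulus $v$, tensoring with $\Z_\ell$, killing $\mathrm{Cl}(K)\otimes\Z_\ell$ via $\mathrm{Cl}(K)[\ell]=\{0\}$ and the unit contribution via the coprimality of $\#\mathcal{O}_K^\ast$ to $\ell$ (with the $\ell=3$, $\Q(\sqrt{-3})$ case excluded by the running hypothesis). You merely spell out the inertia-equals-local-units identification and the total ramification check that the paper leaves implicit.
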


\begin{proof}
Let $v$ be as in the statement. Recall that in general we have an exact sequence
$$
0 \to \frac{(\mathcal{O}_K/v)^\ast}{\mathcal{O}_K^\ast} \to \text{Cl}(K, v) \to \text{Cl}(K) \to 0.
$$
Here $\text{Cl}(K, v)$ denotes the ray class group of modulus $v$. When we tensor this sequence with $\mathbb{Z}_\ell$, the sequence remains exact and the last term vanishes precisely owing to our assumption $\text{Cl}(K)[\ell] = \{0\}$. We remark that our imaginary quadratic field $K$ does not possess a primitive $\ell$-th root of unity. Therefore, recalling once more that $\ell$ is odd and $K$ is imaginary, we have
$$
\mathcal{O}_K^\ast \otimes_{\mathbb{Z}} \mathbb{Z}_\ell=\{0\}.
$$
Overall, the sequence tensored with $\mathbb{Z}_\ell$ yields an identification
$$
(\mathcal{O}_K/v)^\ast[\ell^{\infty}] = \text{Cl}(K, v)[\ell^{\infty}].
$$
The left hand side is a cyclic group of order $\ell^{f(v)}$. Fixing any isomorphism $\pi_v$ between this group and $\mathbb{Z}/\ell^{f(v)}\mathbb{Z}$ gives the desired character.
\end{proof}

For each $v \in \Omega_{\text{fin}}$, fix a choice of $\pi_v$ as guaranteed by Proposition \ref{a totally ramified extension}. Let
$$
\pi_{v_\ell(1)} := \log_\ell \circ \pi_{\text{cyc}}: \mathcal{G}_K \to \mathbb{Z}_\ell,
$$
where $\pi_{\text{cyc}}$ is the cyclotomic character from $\mathcal{G}_K$ to $1+ \ell\cdot \mathbb{Z}_\ell$. Since $K$ is quadratic and $\ell$ is odd, $K$ is linearly disjoint from the unique $\mathbb{Z}_\ell$-extension of $\mathbb{Q}$. Therefore we deduce that $\pi_{v_\ell(1)}$ is \emph{surjective}. Finally, fix any realization $\pi_{v_\ell(2)}: \mathcal{G}_K \twoheadrightarrow \mathbb{Z}_\ell$ of the anti-cyclotomic character. We can now package this construction into a single continuous homomorphism
$$
\pi_\bullet: \mathcal{G}_K \to \prod_{v \in \Omega} \mathbb{Z}/\ell^{f(v)}\mathbb{Z}.
$$

\begin{proposition} 
\label{describing abelianization}
The homomorphism $\pi_\bullet$ is surjective. Furthermore, $\pi_\bullet$ factors through $\mathcal{G}_K(\ell)$ and induces an isomorphism between $\mathcal{G}_K(\ell)/[\mathcal{G}_K(\ell), \mathcal{G}_K(\ell)]$ and $\prod_{v \in \Omega} \mathbb{Z}/\ell^{f(v)}\mathbb{Z}$.
\end{proposition}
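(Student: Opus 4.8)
The plan is to deduce everything from the single claim that $\pi_\bullet$ induces an \emph{isomorphism} on the maximal abelian pro-$\ell$ quotient. Each component $\pi_v$ is a continuous homomorphism from $\mathcal{G}_K$ to an abelian pro-$\ell$ group, so $\pi_\bullet$ factors through the maximal abelian pro-$\ell$ quotient $\mathcal{G}_K^{\text{ab}}(\ell)$ of $\mathcal{G}_K$, hence a fortiori through $\mathcal{G}_K(\ell)$; and $\mathcal{G}_K(\ell)/[\mathcal{G}_K(\ell),\mathcal{G}_K(\ell)]$ is canonically $\mathcal{G}_K^{\text{ab}}(\ell)$. It therefore suffices to show that the induced continuous homomorphism
$$
\overline{\pi}_\bullet: \mathcal{G}_K^{\text{ab}}(\ell) \longrightarrow \prod_{v \in \Omega} \mathbb{Z}/\ell^{f(v)}\mathbb{Z}
$$
is a topological isomorphism; the surjectivity of $\pi_\bullet$ and the final assertion then follow at once.

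The first real step is to compute the source by class field theory. Starting from the exact sequence linking the ray class groups (or the idele class group) to $\mathcal{O}_K^\ast$ and $\text{Cl}(K)$ and tensoring with $\mathbb{Z}_\ell$: the hypothesis $\text{Cl}(K)[\ell] = \{0\}$ removes the class group contribution, and $\mathcal{O}_K^\ast \otimes_{\mathbb{Z}} \mathbb{Z}_\ell = 0$ — automatic for $\ell > 3$, and for $\ell = 3$ exactly guaranteed by the assumption $K \otimes_{\mathbb{Q}} \mathbb{Q}_3 \not\simeq \mathbb{Q}_3(\zeta_3)$, which excludes $K = \mathbb{Q}(\sqrt{-3})$. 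This yields a natural identification $\mathcal{G}_K^{\text{ab}}(\ell) \simeq \prod_{v} \mathcal{O}_{K_v}^\ast(\ell)$ over all finite places $v$, under which the factor $\mathcal{O}_{K_v}^\ast(\ell)$ is the inertia subgroup $I_v$ of $\mathcal{G}_K^{\text{ab}}(\ell)$ at $v$ (using $\text{Cl}(K)[\ell] = \{0\}$ once more, in the form that the inertia subgroups topologically generate $\mathcal{G}_K^{\text{ab}}(\ell)$ because the $\ell$-part of the Hilbert class field of $K$ is trivial). Now $\mathcal{O}_{K_v}^\ast(\ell)$ is trivial when $v \nmid \ell$ and $\#(\mathcal{O}_K/v) \not\equiv 1 \bmod \ell$; cyclic of order $\ell^{f(v)}$ when $v \in \Omega_{\text{fin}}$; and torsion-free of $\mathbb{Z}_\ell$-rank $[K_v : \mathbb{Q}_\ell]$ when $v \mid \ell$, the torsion $\mu_{\ell^\infty}(K_v)$ vanishing again because $\zeta_\ell \in K_v$ would force $K \otimes_{\mathbb{Q}} \mathbb{Q}_3 \simeq \mathbb{Q}_3(\zeta_3)$. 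Writing $A_\ell := \prod_{v \mid \ell} I_v$, free of rank $\sum_{v \mid \ell}[K_v:\mathbb{Q}_\ell] = 2$, we obtain
$$
\mathcal{G}_K^{\text{ab}}(\ell) \;=\; A_\ell \times \prod_{v \in \Omega_{\text{fin}}} I_v \;\simeq\; \mathbb{Z}_\ell^{2} \times \prod_{v \in \Omega_{\text{fin}}} \mathbb{Z}/\ell^{f(v)}\mathbb{Z},
$$
which is abstractly $\prod_{v \in \Omega} \mathbb{Z}/\ell^{f(v)}\mathbb{Z}$.

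It remains to verify that $\overline{\pi}_\bullet$ carries this decomposition to the product decomposition on the target. For $w \in \Omega_{\text{fin}}$ the character $\pi_w$ of Proposition \ref{a totally ramified extension} is totally ramified at $w$ and unramified elsewhere, so it maps $I_w \simeq \mathbb{Z}/\ell^{f(w)}\mathbb{Z}$ onto the $w$-th coordinate $\mathbb{Z}/\ell^{f(w)}\mathbb{Z}$, hence isomorphically by order count, and kills every $I_{w'}$ with $w'\neq w$ in $\Omega_{\text{fin}}$ as well as $A_\ell$; while $\pi_{v_\ell(1)}$ and $\pi_{v_\ell(2)}$ (the cyclotomic and anticyclotomic characters) are unramified outside $\ell$, hence vanish on all $I_w$ with $w \in \Omega_{\text{fin}}$ and factor through $A_\ell$. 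Thus $\overline{\pi}_\bullet$ is the product of the isomorphisms $\pi_w: I_w \xrightarrow{\ \sim\ } \mathbb{Z}/\ell^{f(w)}\mathbb{Z}$ with the map $(\pi_{v_\ell(1)},\pi_{v_\ell(2)}): A_\ell \simeq \mathbb{Z}_\ell^2 \to \mathbb{Z}_\ell \times \mathbb{Z}_\ell$, and it only remains to see that this last map is an isomorphism. Since a surjective $\mathbb{Z}_\ell$-linear endomorphism of $\mathbb{Z}_\ell^2$ is automatically injective, it is enough to prove surjectivity, equivalently that the cyclotomic and anticyclotomic $\mathbb{Z}_\ell$-extensions $L_{\mathrm{cyc}}, L_{\mathrm{anti}}$ of $K$ are linearly disjoint, i.e.\ $L_{\mathrm{cyc}} \cap L_{\mathrm{anti}} = K$. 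Both are Galois over $\mathbb{Q}$, hence so is $E := L_{\mathrm{cyc}} \cap L_{\mathrm{anti}}$; complex conjugation acts trivially on $\mathrm{Gal}(E/K)$ through $L_{\mathrm{cyc}}$, since $\mathrm{Gal}(L_{\mathrm{cyc}}/\mathbb{Q}) = \mathrm{Gal}(K/\mathbb{Q}) \times \mathbb{Z}_\ell$ ($K$ being linearly disjoint from the $\mathbb{Z}_\ell$-extension of $\mathbb{Q}$), and by $-1$ through the pro-dihedral extension $L_{\mathrm{anti}}$, so $\mathrm{Gal}(E/K)$ is annihilated by $2$; as $\ell$ is odd, $E = K$.

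The step I expect to be the main obstacle is the class field theory computation of $\mathcal{G}_K^{\text{ab}}(\ell)$ — in particular certifying that it has $\mathbb{Z}_\ell$-rank exactly $2$ and \emph{no} torsion beyond $\prod_{v \in \Omega_{\text{fin}}} \mathbb{Z}/\ell^{f(v)}\mathbb{Z}$. This is precisely where all the hypotheses are consumed: $\text{Cl}(K)[\ell] = \{0\}$ (also supplying the triviality of the $\ell$-Hilbert class field, so that inertia generates), the vanishing of $\mathcal{O}_K^\ast \otimes_{\mathbb{Z}} \mathbb{Z}_\ell$, and the $\ell = 3$ condition ruling out $\zeta_\ell \in K_v$. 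Once the structure of the abelianization is pinned down, the identification of $\overline{\pi}_\bullet$ with the isomorphism and the cyclotomic/anticyclotomic disjointness are comparatively routine.
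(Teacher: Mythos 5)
Your proof is correct, and it takes a genuinely different (more structural) route than the paper. The paper dualizes: it checks that the Pontryagin dual of $\overline{\pi}_\bullet$ is injective (a relation $\sum_v \gamma_v \pi_v = 0$ is killed by restricting to inertia at each $v \in \Omega_{\textup{fin}}$ and then invoking independence of the cyclotomic and anti-cyclotomic characters) and surjective (given a character $\chi$ outside the image with minimal ramification locus, subtract suitable multiples of the $\pi_v$ and of $\pi_{v_\ell(1)}, \pi_{v_\ell(2)}$ to make $\chi$ unramified, whence $\chi = 0$ since $\textup{Cl}(K)[\ell] = 0$); it never writes down the structure of $\mathcal{G}_K^{\textup{ab}}(\ell)$ explicitly. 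You instead compute $\mathcal{G}_K^{\textup{ab}}(\ell) \simeq \prod_v \mathcal{O}_{K_v}^\ast(\ell)$ idelically (in the style of Angelakis--Stevenhagen) and check that $\overline{\pi}_\bullet$ is block-diagonal for this decomposition, each block being an isomorphism. The two arguments consume exactly the same arithmetic inputs ($\textup{Cl}(K)[\ell] = 0$, vanishing of $\mathcal{O}_K^\ast \otimes \Z_\ell$, absence of $\zeta_\ell$ locally at $\ell$, total ramification of $\pi_v$ at $v$), but yours buys the full structure theorem for the abelianization as a by-product and, notably, supplies an actual proof of the step the paper leaves implicit in its surjectivity argument --- namely that $(\pi_{v_\ell(1)}, \pi_{v_\ell(2)})$ exhausts the part of $\mathcal{G}_K^{\textup{ab}}(\ell)$ coming from above $\ell$; your complex-conjugation argument (conjugation acts trivially on the cyclotomic quotient and by $-1$ on the anti-cyclotomic one, so their intersection is killed by $2$ and hence trivial for $\ell$ odd) is clean and correct. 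The only cosmetic point: your justification that $\mu_{\ell^\infty}(K_v) = 0$ for $v \mid \ell$ is phrased only for $\ell = 3$; for $\ell \geq 5$ one should instead note $[K_v : \Q_\ell] \leq 2 < \ell - 1 = [\Q_\ell(\zeta_\ell) : \Q_\ell]$, which is trivial but worth saying.
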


\begin{proof}
It suffices to prove the last part of the proposition. It is clear that $\pi_\bullet$ factors through $\mathcal{G}_K(\ell)$ and that $[\mathcal{G}_K(\ell), \mathcal{G}_K(\ell)]$ is inside the kernel of $\pi_\bullet$. We must now check that the induced map between $\mathcal{G}_K(\ell)/[\mathcal{G}_K(\ell), \mathcal{G}_K(\ell)]$ and $\prod_{v \in \Omega} \mathbb{Z}/\ell^{f(v)}\mathbb{Z}$ is an isomorphism. By Pontryagin duality, it suffices to show that the dual map is an isomorphism.

Let us first check injectivity. Injectivity of the dual map is formally equivalent to the following statement: if we have a finite subset $S \subseteq \Omega$ and a $\Z$-linear dependency
\begin{align}
\label{eLinear}
\sum_{v \in S} \gamma_v \cdot \pi_v = 0,
\end{align}
then $\gamma_v$ is divisible by $\ell^{f(v)}$ for each $v \in S$ (so in particular $\gamma_v = 0$ if $f(v) = \infty$). 

Take such a linear dependency. First suppose that there exists an element $v \in \Omega_{\text{fin}} \cap S$. If we restrict equation (\ref{eLinear}) to an inertia subgroup of $v$ in $\mathcal{G}_K$, all terms disappear except for $\gamma_v \cdot \pi_v$. Since the extension corresponding to $\pi_v$ is totally ramified at $v$, the restriction of $\pi_v$ to an inertia subgroup at $v$ still surjects onto $\mathbb{Z}/\ell^{f(v)}\mathbb{Z}$. This forces
$$
\gamma_v \equiv 0 \bmod \ell^{f(v)}
$$
and therefore $\gamma_v \cdot \pi_v = 0$. Hence it suffices to treat the case $S \subseteq \{v_\ell(1), v_\ell(2)\}$. But the cyclotomic and anti-cyclotomic characters are linearly independent over $\mathbb{Z}_\ell$, also ruling out this case.

Let us now check surjectivity. To this end, suppose that there exists a character $\chi$ of $\mathcal{G}_K(\ell)$ that is not in the image of the dual map. Without loss of generality we may assume that the number of ramified primes in the fixed field of $\chi$ is minimal amongst all such characters. If $\chi$ were to ramify at some place $v$, then $v$ must be finite since $\ell$ is odd. If $v$ does not lie above $\ell$, then we may subtract a suitable multiple of $\pi_v$ to make the ramification locus of $\chi$ strictly smaller. If $\chi$ ramifies at some place above $\ell$, then we may subtract multiples of $\pi_{v_\ell(1)}$ and $\pi_{v_\ell(2)}$ to make $\chi$ unramified at all places above $\ell$, i.e. $v_\ell(1)$ and $v_\ell(2)$. Therefore $\chi$ must be unramified. This forces $\chi = 0$ because $\text{Cl}(K)[\ell] = \{0\}$ by assumption, which is the desired contradiction.
\end{proof}

For each $w \in \Omega_{\text{fin}}$, let us put
$$
q(w) := \# \mathcal{O}_K/w.
$$
We write $g(w)$ for the unique element of $(\mathbb{Z}/\ell^{f(w)}\mathbb{Z})^\ast$ satisfying
$$
q(w) \equiv 1+ \ell^{f(w)} \cdot g(w) \bmod \ell^{2f(w)}.
$$

\begin{proposition}
\label{Galois groups are ok}
The pair $\mathcal{S}_K := (\mathcal{G}_K^2(\ell), \pi_\bullet)$ is a $2$-nilpotent pro-$\ell$ group on $(\Omega, f, g)$.  
\end{proposition}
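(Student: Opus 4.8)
The plan is to verify the four axioms (C1)--(C4) of Definition \ref{def: Rado groups} for the pair $\mathcal{S}_K = (\mathcal{G}_K^2(\ell), \pi_\bullet)$; (C1)--(C2) are formal, while (C3)--(C4) carry the arithmetic content and are an incarnation of global duality. For (C1): $\mathcal{G}_K^2(\ell)$ is pro-$\ell$ by construction and is the maximal nilpotent-of-class-$\le 2$ quotient of $\mathcal{G}_K(\ell)$, hence is itself nilpotent of class at most $2$; in particular $\mathcal{G}_K^2(\ell)^{\mathrm{ab}} = \mathcal{G}_K^{\mathrm{ab}}(\ell)$ and $[\mathcal{G}_K^2(\ell),\mathcal{G}_K^2(\ell)] = \ker(\mathcal{G}_K^2(\ell)\twoheadrightarrow\mathcal{G}_K^{\mathrm{ab}}(\ell))$. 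For (C2): Proposition \ref{describing abelianization} shows that $\pi_\bullet$ factors through $\mathcal{G}_K^{\mathrm{ab}}(\ell)$ and induces an isomorphism onto $\prod_{v\in\Omega}\mathbb{Z}/\ell^{f(v)}\mathbb{Z}$, so composing with the abelianization map gives a continuous epimorphism with kernel exactly the commutator subgroup.

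For (C3) and (C4) I would first reduce to a computation over $\mathcal{G}_K$ itself. Every class involved --- the cup products $\pi_v\cup\pi_w$ and the inflated torsion classes $\tilde\theta_v$ --- is inflated from $\mathcal{G}_K^{\mathrm{ab}}(\ell)$; since $g(v),g(w)$ are units and $f(v,w)=\min(f(v),f(w))$, the inflation of $\mathrm{Ext}^1_{\mathbb{Z}_\ell}(\mathbb{Z}/\ell^{f(v)}\mathbb{Z}\times\mathbb{Z}/\ell^{f(w)}\mathbb{Z},\mathbb{Z}/\ell^{f(v,w)}\mathbb{Z})$ is spanned by $\tilde\theta_v,\tilde\theta_w$, so (C3) is precisely the statement that $\pi_v\cup\pi_w\in\langle\tilde\theta_v,\tilde\theta_w\rangle$ in $H^2(\mathcal{G}_K^2(\ell),\mathbb{Z}/\ell^{f(v,w)}\mathbb{Z})$ and (C4) pins down the coefficient for $\{v_\ell(1),v\}$. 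Comparing the Hochschild--Serre five-term sequences for $1\to[\mathcal{G}_K^2(\ell),\mathcal{G}_K^2(\ell)]\to\mathcal{G}_K^2(\ell)\to\mathcal{G}_K^{\mathrm{ab}}(\ell)\to 1$ and for $\mathcal{G}_K(\ell)$ --- whose transgression maps can be identified with a single map out of $\mathrm{Hom}([\mathcal{G}_K^2(\ell),\mathcal{G}_K^2(\ell)],-)$ --- one checks that $\ker(\mathrm{inf}\colon H^2(\mathcal{G}_K^{\mathrm{ab}}(\ell),M)\to H^2(\mathcal{G}_K^2(\ell),M)) = \ker(\mathrm{inf}\colon H^2(\mathcal{G}_K^{\mathrm{ab}}(\ell),M)\to H^2(\mathcal{G}_K(\ell),M))$ for all finite $\ell$-groups $M$ (and for $M=\mathbb{Z}_\ell$), while $\mathrm{inf}\colon H^2(\mathcal{G}_K(\ell),M)\hookrightarrow H^2(\mathcal{G}_K,M)$ is injective because the kernel of $\mathcal{G}_K\twoheadrightarrow\mathcal{G}_K(\ell)$ admits no nonzero $\mathcal{G}_K(\ell)$-invariant homomorphism to $M$ (maximality of the pro-$\ell$ quotient). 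Hence all identities in (C3), (C4) can be tested inside $H^2(\mathcal{G}_K,M)$; applying the same comparison with $M=\mathbb{Q}_\ell/\mathbb{Z}_\ell$ together with Tate's $H^2(\mathcal{G}_K,\mathbb{Q}/\mathbb{Z})=0$ also shows, as a byproduct, that $\mathcal{G}_K^2(\ell)$ is free on its abelianization.

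Next I would localize. Because $\ell$ is odd, the local--global kernel of $H^1(\mathcal{G}_K,\mu_{\ell^n})$ vanishes (Grunwald--Wang), so by Poitou--Tate the localization map $H^2(\mathcal{G}_K,\mathbb{Z}/\ell^n)\hookrightarrow\bigoplus_v H^2(\mathcal{G}_{K_v},\mathbb{Z}/\ell^n)$ is injective, reducing (C3)/(C4) to local identities place by place (note that $\Omega_{\mathrm{fin}}$ contains no place above $\ell$, since $\ell\nmid q(v)-1$ there). At a place $u$ where all characters in the identity are unramified --- in particular any $u\notin\{v,w\}$ for a pair of finite places --- both sides lie in $H^2(\mathcal{G}_{K_u}/I_u,-)=H^2(\widehat{\mathbb{Z}},-)=0$. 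At $u=v$ (and symmetrically $u=w$), $\pi_v|_{\mathcal{G}_{K_v}}$ is totally ramified of order $\ell^{f(v)}$ while the other character is unramified, so $\tilde\theta_w|_{\mathcal{G}_{K_v}}=0$ and one is left to identify $\pi_v|_{\mathcal{G}_{K_v}}\cup\chi_{\mathrm{ur}}$ with a multiple of $\tilde\theta_v|_{\mathcal{G}_{K_v}}$ determined by $\chi_{\mathrm{ur}}(\mathrm{Frob}_v)$ --- a local class field theory computation, in which $\pi_v$ is read through the reciprocity map as a character of $\mathcal{O}_{K_v}^\ast$ and $\tilde\theta_v$ is its "Bockstein" twisted by $g(v)$. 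For (C4) the unramified partner is $\pi_{v_\ell(1)}|_{\mathcal{G}_{K_v}}=\log_\ell\circ\pi_{\mathrm{cyc}}|_{\mathcal{G}_{K_v}}$, whose $\mathrm{Frob}_v$ acts by $q(v)=\#\mathcal{O}_K/v=1+\ell^{f(v)}g(v)$, so its value is $\log_\ell(1+\ell^{f(v)}g(v))$ and the local computation delivers exactly $\pi_{v_\ell(1)}\cup\pi_v+\log_\ell(1+\ell^{f(v)}g(v))\cdot\tilde\theta_v=0$. The remaining pairs --- $\{v_\ell(1),v_\ell(2)\}$, pairs involving $v_\ell(2)$, and the contributions at places above $\ell$ for pairs involving $v_\ell(1)$ --- are handled by the same mechanism using the explicit ramification of the cyclotomic and anticyclotomic $\mathbb{Z}_\ell$-extensions; it is here, for $\ell=3$, that the hypothesis $K\otimes_{\mathbb{Q}}\mathbb{Q}_3\not\cong\mathbb{Q}_3(\zeta_3)$ is used, to force the needed local vanishing at $3$.

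I expect the main obstacle to be the local computation at $v$ underlying (C4): one must normalize the inflated torsion class $\tilde\theta_v$ --- coming from the extension in which a lift of $1$ multiplied by $\ell^{f(v)}g(v)$ returns $1$ --- against the cup product of the totally ramified $\pi_v$ with an unramified character, and carefully carry the unit $g(v)$ and the normalization of $\log_\ell$ through, so that the coefficient is $\log_\ell(q(v))$ \emph{on the nose} and not merely up to a unit. A secondary difficulty is the handling of the places above $\ell$ in the pairs containing the cyclotomic character, where $\pi_{\mathrm{cyc}}$ is ramified and one needs the precise behaviour of local roots of unity (whence the $\mathbb{Q}_3(\zeta_3)$ exclusion), alongside the Grunwald--Wang input that underlies the reduction to local identities.
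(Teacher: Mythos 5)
Your proposal is correct and follows essentially the same route as the paper's proof: (C1)--(C2) are formal consequences of the construction and of Proposition \ref{describing abelianization}, while (C3)--(C4) are reduced, via the universal properties of the maximal pro-$\ell$ and $2$-nilpotent quotients, to identities in $H^2(\mathcal{G}_K,-)$ that are verified place by place (unramified places give $H^2(\hat{\Z},-)=0$, places above $\ell$ are killed by the absence of local $\ell$-th roots of unity --- exactly where the $\ell=3$ hypothesis enters --- and at $v$, $w$ the local $H^2$ is generated by $\tilde{\theta}_v$, $\tilde{\theta}_w$) and then globally by the local--global principle for $H^2$. The only difference in execution is the normalization in (C4): the paper extracts the coefficient $\log_\ell(q(w))$ from the tame relation $[\mathrm{Frob}_w,\sigma_w]=\sigma_w^{q(w)-1}$ read through the explicit cocycle presentation of the central extension (determinant for the cup product, the powering map for the $\mathrm{Ext}$ class), rather than through the local-duality computation you sketch, but these amount to the same calculation and it does deliver the coefficient on the nose.
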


\begin{proof}
Let us verify the desired properties in the corresponding order. Property (C1) holds since $\mathcal{G}_K^2(\ell)$ is by construction a $2$-nilpotent pro-$\ell$ group. Property (C2) is exactly the content of Proposition \ref{describing abelianization}.

Let us now verify (C3). We divide this verification in three steps. In the first step we examine the behavior of $\pi_v \cup \pi_w$ locally at the various places of $K$. In the second step we examine the local behavior of the classes $\tilde{\theta}_v$. In the third step we finish the argument.

\emph{Step 1:} Observe that $\pi_v \cup \pi_w$ is locally trivial at all places different from $v, w$ and places above $\ell$. Indeed, for all such places this is an unramified class, and hence comes from $H^2(\hat{\mathbb{Z}}, \mathbb{Z}/\ell^{f(v, w)}\mathbb{Z}) = 0$. We now claim that the class is trivial also when restricted locally at a prime $\mathfrak{l}$ above $\ell$. Indeed, by our assumption, we know that there are no $\ell$-th roots of unity locally at $\mathfrak{l}$. Hence by \cite[Theorem 7.5.11(i)]{Neukirch}, we know that the decomposition group at this place has free pro-$\ell$ completion, and hence $H^2(\mathcal{G}_{K_{\mathfrak{l}}}(\ell), \mathbb{Z}/\ell^{f(v, w)}\mathbb{Z})$ vanishes. The infinite place can be disregarded, because this completion equals $\mathbb{C}$.

\emph{Step 2:} Take $v \in \Omega_{\text{fin}}$. Firstly, let us observe that $\tilde{\theta}_v$ is trivial at all places different from $v$, since it ramifies only at $v$. We claim that the restriction of $\tilde{\theta}_v$ to $\mathcal{G}_{K_v}$ is actually a \emph{generator} of
$$
H^2(\mathcal{G}_{K_v}, \mathbb{Z}/\ell^{f(v)}\mathbb{Z}).
$$
Indeed, in $K_v$ we have a $\ell^{f(v)}$-th root of unity. So this last $H^2$ is precisely the $\ell^{f(v)}$-torsion of the Brauer group of $K_v$, which is cyclic of order $\ell^{f(v)}$. Hence it suffices to show that the restriction of $\tilde{\theta}_v$ to $\mathcal{G}_{K_v}$ has order $\ell^{f(v)}$. Notice that we have an isomorphism
$$
\mathcal{G}_{K_v}^{\text{ab}}(\ell) = \mathbb{Z}_\ell \times \mathbb{Z}/\ell^{f(v)}\mathbb{Z}.
$$
The restriction of $\tilde{\theta}_v$ is a generator of $\text{Ext}^1_{\mathbb{Z}_\ell}(\mathbb{Z}_\ell \times \mathbb{Z}/\ell^{f(v)}\mathbb{Z}, \mathbb{Z}/\ell^{f(v)}\mathbb{Z})$, which is cyclic of order $\ell^{f(v)}$, coming from the second coordinate. On the subgroup of such abelian classes, the inflation to $\mathcal{G}_{K_v}$ is injective, thanks to the universal property of the abelianization. This gives that $\tilde{\theta}_v$ is locally at $v$ of order $\ell^{f(v)}$ and hence a generator. 

Observe that we now certainly have that $\tilde{\theta}_v$ generates $H^2(\mathcal{G}_{K_v}, \mathbb{Z}/\ell^{n}\mathbb{Z})$ for $n \leq f(v)$: here we are still using the abuse of notation that $\tilde{\theta}_v$ also denotes any reduction of this class modulo smaller powers of $\ell$.

\emph{Step 3:} Let us distinguish cases. First consider the case that both $v, w \in \Omega_{\text{fin}}$. Then we know that $\pi_v \cup \pi_w$ is trivial at all places different from $v$, $w$ by \emph{Step 1}. Thanks to \emph{Step 2}, we know that we can find $\lambda(v, w), \lambda(w, v) \in \mathbb{Z}/\ell^{f(v, w)}\mathbb{Z}$, such that $\pi_v \cup \pi_w + \lambda(v, w) \cdot \tilde{\theta}_w$ vanishes locally at $w$ and $\pi_v \cup \pi_w +\lambda(w, v) \cdot \tilde{\theta}_v$ vanishes locally at $v$. Furthermore, thanks to \emph{Step 2} once more, we know that $\tilde{\theta}_v$ is locally trivial at $w$ and $\tilde{\theta}_w$ is locally trivial at $v$. Overall we have made sure that 
$$
\pi_v \cup \pi_w + \lambda(w, v) \cdot \tilde{\theta}_v + \lambda(v, w) \cdot \tilde{\theta}_w
$$
is locally trivial at all places. Therefore, invoking \cite[Corollary 9.1.10]{Neukirch}, we conclude that this class actually vanishes in $H^2(\mathcal{G}_K, \mathbb{Z}/\ell^{f(v, w)}\mathbb{Z})$. Thanks to the universal property of the pro-$\ell$ completion it therefore also vanishes in
$$
H^2(\mathcal{G}_K(\ell), \mathbb{Z}/\ell^{f(v, w)}\mathbb{Z}).
$$
Finally, since the class comes from inflation from the abelianization, from the universal property of the maximal $2$-nilpotent quotient we have that the class vanishes in
$$
H^2(\mathcal{G}_K^2(\ell), \mathbb{Z}/\ell^{f(v, w)}\mathbb{Z}),
$$
as desired. It follows from equation (\ref{eExtWedge}) that $H^2(\mathbb{Z}/\ell^{f(v)}\Z \times \mathbb{Z}/\ell^{f(w)}\mathbb{Z}, \mathbb{Z}/\ell^{f(v, w)}\mathbb{Z})$ is generated by $\pi_v \cup \pi_w, \theta_v, \theta_w$, hence we have obtained the desired conclusion. In case precisely one of $v, w$ is not in $\Omega_{\text{fin}}$ (say $v$) the proof follows precisely the same lines. Now the class $\pi_v \cup \pi_w$ is trivial locally everywhere outside of $w$. Locally at $w$ we find $\lambda(v, w)$ such that $\pi_v \cup \pi_w + \lambda(v, w) \cdot \tilde{\theta}_w$ is trivial also at $w$. Now this class is trivial locally everywhere and we finish the argument in the same way as above. In case $f(v) = f(w) = \infty$, we proceed similarly.

It remains to check (C4). Let $w$ be a place in $\Omega_{\text{fin}}$. Let us fix a generator $\sigma_w$ of tame inertia for $\mathcal{G}_{K_w}$ that is sent to $1$ through $\pi_w$. This in particular yields a generator of the image of inertia in $\mathcal{G}_{K_w}(\ell)$, which is, as a profinite group, isomorphic to $\mathbb{Z}_\ell$. Let us fix a Frobenius element $\text{Frob}_w$ at $w$. Recall that we have the crucial relationship
$$
[\text{Frob}_w, \sigma_w] = \sigma_w^{q(w) - 1}. 
$$
In the previous step we established that the central extension given by the $2$-cocycle
$$
\theta := \pi_{v_\ell(1)} \cup \pi_w + \lambda(v_\ell(1), w) \cdot \tilde{\theta}_w,
$$
is realizable by a group epimorphism
$$
\phi(\theta): \mathcal{G}_K \to (\mathbb{Z}/\ell^{f(w)}\mathbb{Z} \times (\mathbb{Z}_\ell \times \mathbb{Z}/\ell^{f(w)}\mathbb{Z}), *_{\theta}),
$$
where the group law is defined in the same way as equation (\ref{eCocycleGroup}). We will now compute $[\text{Frob}_w, \sigma_w] = \sigma_w^{q(w) - 1}$ in two ways. We observe that in the exact sequence (\ref{eExtWedge}), with $A = \mathbb{Z}_\ell \times \mathbb{Z}/\ell^{f(w)}\mathbb{Z}$ and $B = \mathbb{Z}/\ell^{f(w)}\mathbb{Z}$, the cup product maps to the determinant. This implies that $\theta$ must map to the determinant as well, since $\tilde{\theta}_w$ comes from $\text{Ext}(A, B)$ and is therefore trivial in $\text{Hom}(\wedge^2 A, B)$ by exactness. But by definition this map is also the commutator pairing. Therefore to calculate the commutator $[\text{Frob}_w, \sigma_w]$, we need to calculate the determinant of
\[
\begin{pmatrix}
\pi_{v_\ell(1)}(\text{Frob}_w) & \pi_{v_\ell(1)}(\sigma_w) \\
\pi_w(\text{Frob}_w) & \pi_w(\sigma_w)
\end{pmatrix}
=
\begin{pmatrix}
\pi_{v_\ell(1)}(\text{Frob}_w) & 0 \\
\pi_w(\text{Frob}_w) & 1
\end{pmatrix},
\]
which equals the reduction of $\pi_{v_\ell(1)}(\text{Frob}_w)$ modulo $\ell^{f(w)}$.

We will now compute $\sigma_w^{q(w) - 1}$. Consider the map from $H^2(\mathbb{Z}_\ell \times \mathbb{Z}/\ell^{f(w)}\mathbb{Z}, \mathbb{Z}/\ell^{f(w)}\mathbb{Z})$ to $\text{Hom}(\Z/\ell^{f(w)}\Z, \Z/\ell^{f(w)}\Z)$ given by taking $a \in \Z/\ell^{f(w)}\Z$, then lifting $(0, a)$ to the central extension and then powering the lift by $q(w) - 1$. By construction this lands in the center $\Z/\ell^{f(w)}\Z$, and therefore is well-defined. This homomorphism sends the cup product to the zero map, while $\tilde{\theta}_w$ maps $1$ to $1$ by our choice of normalization of $\tilde{\theta}_w$.

Therefore we conclude that $\lambda(v_\ell(1), w)$ equals the reduction of $\pi_{v_\ell(1)}(\text{Frob}_w)$ modulo $\ell^{f(w)}$. This is in turn equal to $\log_\ell(\pi_{\text{cyc}}(\text{Frob}_w)) = \log_\ell(q(w)) \equiv \log_\ell(1 + \ell^{f(w)} \cdot g(w)) \bmod \ell^{f(w)}$, as desired.
\end{proof}

We now make the key calculation of the labels of the graph $\Phi_{\mathcal{S}_K}$. This is a minor generalization of the calculation at the end of the last proof.

\begin{proposition} 
\label{Computing labels}
Let $v, w \in \Omega$ with $v < w$. Suppose that $w \in \Omega_{\textup{fin}}$. Then
$$
\lambda_{\mathcal{S}_K}(v, w) = \pi_v(\textup{Frob}_w). 
$$
Further suppose that $v \in \Omega_{\textup{fin}}$. Then
$$ 
\lambda_{\mathcal{S}_K}(w, v) = -\pi_w(\textup{Frob}_v). 
$$
\end{proposition}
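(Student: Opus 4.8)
The plan is to reduce to the computation already carried out at the end of the proof of Proposition~\ref{Galois groups are ok} when verifying axiom~(C4): that computation is exactly the first formula in the special case $v=v_\ell(1)$, and the general statement requires only the same commutator bookkeeping, now with the harmless extra class $\tilde\theta_v$ present.

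First I would fix $v<w$ with $w\in\Omega_{\textup{fin}}$ and record, from Proposition~\ref{prop:defining the invariant} together with Step~3 of the proof of Proposition~\ref{Galois groups are ok} (uniqueness of the coefficients forces the two notions of $\lambda$ to coincide), that the class
$$
\pi_v\cup\pi_w+\lambda_{\mathcal{S}_K}(w,v)\,\tilde\theta_v+\lambda_{\mathcal{S}_K}(v,w)\,\tilde\theta_w
$$
vanishes already in $H^2(\mathcal{G}_K,\mathbb{Z}/\ell^{f(v,w)}\mathbb{Z})$. All three summands are inflated, along the surjection $(\pi_v,\pi_w)\colon\mathcal{G}_K\twoheadrightarrow\mathbb{Z}/\ell^{f(v)}\mathbb{Z}\times\mathbb{Z}/\ell^{f(w)}\mathbb{Z}$ of Proposition~\ref{describing abelianization}, from a $2$-cocycle $\bar c:=\pi_v\cup\pi_w+\lambda_{\mathcal{S}_K}(w,v)\,\theta_v+\lambda_{\mathcal{S}_K}(v,w)\,\theta_w$ on the target, where $\theta_v$ (resp.\ $\theta_w$) is inflated from the first (resp.\ second) coordinate (and $\theta_v=0$, $\lambda_{\mathcal{S}_K}(w,v)=0$ if $f(v)=\infty$). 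Hence the central extension $E$ of $\mathbb{Z}/\ell^{f(v)}\mathbb{Z}\times\mathbb{Z}/\ell^{f(w)}\mathbb{Z}$ by $\mathbb{Z}/\ell^{f(v,w)}\mathbb{Z}$ attached to $\bar c$ pulls back to a split extension of $\mathcal{G}_K$, so $(\pi_v,\pi_w)$ lifts to a continuous homomorphism $\Psi\colon\mathcal{G}_K\to E$.

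For the first formula I would compute, inside $E$, the image under $\Psi$ of the tame relation $[\mathrm{Frob}_w,\sigma_w]=\sigma_w^{q(w)-1}$ (which descends to the maximal pro-$\ell$ quotient of $\mathcal{G}_{K_w}$ since $w\nmid\ell$, hence is respected by $\Psi$), where $\sigma_w$ generates tame inertia at $w$ with $\pi_w(\sigma_w)=1$; note $\pi_v(\sigma_w)=0$ because $\pi_v$ is unramified at $w$ — by construction when $v\in\Omega_{\textup{fin}}$, and because the cyclotomic and anticyclotomic $\mathbb{Z}_\ell$-extensions are unramified outside $\ell$ when $v\in\{v_\ell(1),v_\ell(2)\}$. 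On one side, the commutator pairing of $E$ is the antisymmetrization of $\bar c$ evaluated at the $(\pi_v,\pi_w)$-images; since $\theta_v,\theta_w$ are pulled back from $\mathrm{Ext}^1$ they are symmetric and contribute nothing, so only $\pi_v\cup\pi_w$ survives, yielding
$$
\det\begin{pmatrix}\pi_v(\mathrm{Frob}_w)&\pi_v(\sigma_w)\\\pi_w(\mathrm{Frob}_w)&\pi_w(\sigma_w)\end{pmatrix}=\det\begin{pmatrix}\pi_v(\mathrm{Frob}_w)&0\\\pi_w(\mathrm{Frob}_w)&1\end{pmatrix}=\pi_v(\mathrm{Frob}_w)\bmod\ell^{f(v,w)}.
$$
On the other side, applying to $\bar c$ the homomorphism ``lift $(0,1)$ to $E$ and raise it to the power $q(w)-1$'' (well defined into the centre exactly as in the (C4) argument, since $\ell^{f(w)}\mid q(w)-1$): the cup product and $\theta_v$ both restrict trivially to the subgroup generated by $(0,1)$ — for $\theta_v$ because it is pulled back from the first coordinate, on which $\sigma_w$ is trivial — whereas $\theta_w$ contributes $\lambda_{\mathcal{S}_K}(v,w)$ by its normalization. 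Equating the two computations gives $\lambda_{\mathcal{S}_K}(v,w)=\pi_v(\mathrm{Frob}_w)$ in $\mathbb{Z}/\ell^{f(v,w)}\mathbb{Z}$.

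For the second formula, now assuming $v\in\Omega_{\textup{fin}}$, I would repeat the argument with the two coordinates interchanged, using $[\mathrm{Frob}_v,\sigma_v]=\sigma_v^{q(v)-1}$ with $\pi_v(\sigma_v)=1$ and $\pi_w(\sigma_v)=0$. The commutator side becomes the determinant of the matrix with $\mathrm{Frob}_v$ in the first column and $\sigma_v$ in the second, i.e.\ $\pi_v(\mathrm{Frob}_v)\cdot 0-1\cdot\pi_w(\mathrm{Frob}_v)=-\pi_w(\mathrm{Frob}_v)$ — the minus sign being precisely what appears once the inertia generator sits in the first slot — while the homomorphism ``lift $(1,0)$, raise to the power $q(v)-1$'' kills $\pi_v\cup\pi_w$ and $\theta_w$ and extracts the coefficient $\lambda_{\mathcal{S}_K}(w,v)$ of $\theta_v$; hence $\lambda_{\mathcal{S}_K}(w,v)=-\pi_w(\mathrm{Frob}_v)$ in $\mathbb{Z}/\ell^{f(v,w)}\mathbb{Z}$. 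The one delicate point I anticipate is the bookkeeping of which summand of $\bar c$ feeds which computation — symmetric classes die under antisymmetrization, and $\theta_u$ is invisible to the power homomorphism attached to the other prime $u'$ because $\sigma_{u'}$ has trivial $u$-coordinate — together with keeping the normalizations of $\theta_v,\theta_w$ and the reductions modulo $\ell^{f(v,w)}$ straight; all of this already occurs, in the case $v=v_\ell(1)$, in the proof of Proposition~\ref{Galois groups are ok}.
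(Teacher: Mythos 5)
Your proposal is correct and is essentially the paper's own argument: one plugs the tame relation $[\mathrm{Frob}_w,\sigma_w]=\sigma_w^{q(w)-1}$ into the lift of $(\pi_v,\pi_w)$ to the central extension defined by the vanishing class $\pi_v\cup\pi_w+\lambda_{\mathcal{S}_K}(w,v)\,\tilde\theta_v+\lambda_{\mathcal{S}_K}(v,w)\,\tilde\theta_w$, computes the commutator side as the determinant (the $\mathrm{Ext}$-classes dying there) and the power side via the normalization of $\tilde\theta_w$, exactly as in the verification of (C4) in Proposition \ref{Galois groups are ok}. Your treatment of the second formula, with the sign coming from the inertia generator occupying the other column of the determinant, matches the paper's proof as well.
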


\begin{proof}
Let us fix $\sigma_w$, $\text{Frob}_w$ as at the end of the proof of Proposition \ref{Galois groups are ok}. We now plug the relationship
$$
[\text{Frob}_w, \sigma_w] = \sigma_w^{q(w) - 1}
$$
into the continuous homomorphism from $\mathcal{G}_K$ to the group
$$
(\mathbb{Z}/\ell^{f(v, w)} \times (\mathbb{Z}/\ell^{f(v)}\mathbb{Z}\times \mathbb{Z}/\ell^{f(w)}\mathbb{Z}),*_{\theta}),
$$
where 
$$
\theta := \pi_v \cup \pi_w + \lambda_{\mathcal{S}_K}(w, v) \cdot \tilde{\theta}_v + \lambda_{\mathcal{S}_K}(v, w) \cdot \tilde{\theta}_w.
$$
The left hand side yields $\pi_v(\text{Frob}_w)$, while the right hand side yields $\lambda_{\mathcal{S}_K}(v, w)$, owing to our normalization of the extension classes. Hence we obtain
$$
\lambda_{\mathcal{S}_K}(v, w) = \pi_v(\text{Frob}_w).
$$
Let now $v$ be in $\Omega_{\text{fin}}$. The argument goes precisely the same way, with the only exception that when we plug in $[\text{Frob}_v, \sigma_v]$ we get $-\pi_w(\text{Frob}_v)$. Indeed, we now consider the determinant of the matrix whose first column equals $(\pi_v(\text{Frob}_v), \pi_w(\text{Frob}_v))$, while the second column equals $(\pi_v(\sigma_v), \pi_w(\sigma_v)) = (1, 0)$.
\end{proof}

We now introduce certain auxiliary Galois groups that will aid us in obtaining the desired level of control on the labels of the graph. Let us first fix some notation. Let $v$ be a finite place of $k$ coprime to $\ell$ and write $\mathfrak{p}$ for the corresponding prime ideal. We fix any generator $\alpha_v$ of $\mathfrak{p}^{\#\text{Cl}(K)}$ (there are only two choices, since we know by assumption on $K$ that $1, -1$ are the only roots of unity inside of $K$). Observe that $\text{Cl}(K)[\ell] = \{0\}$ implies that $\alpha_v$ has $v$-adic valuation coprime with $\ell$: this will play a key role in the proof of Corollary \ref{Galois groups are Rado}.

Let $m$ now be a positive integer. For each $v \in \Omega_{\text{fin}}$, we choose a fixed element $\alpha_v^{\frac{1}{\ell^m}}$ of $K^{\text{sep}}$, whose $\ell^m$-th power is $\alpha_v$. Let $S_0$ be a finite subset of $\Omega_{\text{fin}}$. We write $\text{red}_{\ell^m}: \Z_\ell \rightarrow \Z/\ell^m\Z$ for the natural reduction map. We define
$$
K(S_0, m)
$$
to be the compositum of the fields $K(\text{red}_{\ell^{2m}} \circ \pi_{v_\ell(1)})$, $K(\text{red}_{\ell^m} \circ \pi_{v_\ell(2)})$, $K(\{\alpha_v^{\frac{1}{\ell^{\min(m, f(v))}}} : v \in S_0\})$ and $K(\{\pi_{v} : v \in S_0\})$. Here the first, second and fourth fields are the fixed fields of the respective characters. We also fix a choice of an $\ell^m$-th root of unity $\zeta_{\ell^m}$ and denote by $i_m$ the unique identification of $\langle \zeta_{\ell^m} \rangle$ and $\mathbb{Z}/\ell^{f(m)}\mathbb{Z}$ sending $\zeta_{\ell^m}$ to $1$. 

This notation having been fixed, we will define a continuous homomorphism
$$
\pi(S_0, m): \mathcal{G}_K \to \left(\prod_{v \in S_0}\mathbb{Z}/\ell^{\min(m, f(v))}\mathbb{Z}\right)^2 \rtimes ((\mathbb{Z}/\ell^{2m}\mathbb{Z})^\ast \times \mathbb{Z}/\ell^m\mathbb{Z}).
$$
Let us explain the yet unspecified action on the target. We let $\mathbb{Z}/\ell^m\mathbb{Z}$ act trivially, while $(\mathbb{Z}/\ell^{2m}\mathbb{Z})^\ast$ acts by a joint diagonal scaling on the first copy of $\prod_{v \in S_0}\mathbb{Z}/\ell^{\min(m, f(v))}\mathbb{Z}$ and trivially on the second copy. We define a map by sending $\sigma \in \mathcal{G}_K$ to the vector
$$
\left(\left(i_m\left(\frac{\sigma(\alpha_v^{\frac{1}{\ell^{\min(m, f(v))}}})}{\alpha_v^{\frac{1}{\ell^{\min(m, f(v))}}}}\right)\right)_{v \in S_0},\left(\pi_v(\sigma)\right)_{v \in S_0}, \text{red}_{\ell^{2m}}(\pi_{\text{fcyc}}(\sigma)), \text{red}_{\ell^m}(\pi_{v_\ell(2)}(\sigma))\right),
$$
where $\pi_{\text{fcyc}}$ now denotes the full cyclotomic character $\mathcal{G}_K \rightarrow \hat{\Z}^\ast$ (unlike earlier, where $\pi_{\text{cyc}}$ denoted a related but slightly different map). This map induces a group isomorphism
$$
\pi(S_0, m): \text{Gal}(K(S_0,m)/K) \to \left(\prod_{v \in S_0}\mathbb{Z}/\ell^{\min(m, f(v))}\mathbb{Z}\right)^2 \rtimes ((\mathbb{Z}/\ell^{2m}\mathbb{Z})^\ast\times \mathbb{Z}/\ell^m\mathbb{Z}).
$$
Observe that for the surjectivity of the map it is crucial that the fields 
$$
K(\{\alpha_v^{\frac{1}{\ell^{\min(m, f(v))}}} : v \in S_0\}), \quad K(\{\pi_v : v \in S_0\})
$$
are linearly disjoint, which follows from the fact that $K$ does not have $\ell$-th roots of unity.

\begin{corollary} 
\label{Galois groups are Rado}
We have that $\mathcal{S}_K$ is weakly Rado.  
\end{corollary}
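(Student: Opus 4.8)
The plan is to verify the \emph{weakly Rado} extension property of Definition \ref{Weakly Rado graphs} directly for the graph $\Phi_{\mathcal{S}_K}$, using Proposition \ref{Computing labels} to translate the labels into Frobenius values and then invoking the Chebotarev density theorem applied to the auxiliary extensions $K(S_0,m)/K$ constructed just above. Fix a finite $S \subseteq \Omega$ containing $\{v_\ell(1), v_\ell(2)\}$, an integer $n \geq 1$, a generator $\alpha$ of $\mathbb{Z}/\ell^n\mathbb{Z}$, and a prescribed pair $((\lambda_s(1))_{s\in S},(\lambda_s(2))_{s\in S})$ of the required shape. Write $S_0 := S \cap \Omega_{\text{fin}}$ and choose $m$ large enough that $\min(m,f(s)) \geq$ the relevant precision for all $s \in S_0$ and $m \geq n$. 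We must produce a place $v \in \Omega_{\text{fin}}$, $v \notin S$, together with a scalar $\gamma \in (\mathbb{Z}/\ell^n\mathbb{Z})^\ast$, such that $f(v) = n$, $g(v) = \alpha$, and
$$
\lambda_{\mathcal{S}_K}(v,s) = \gamma \cdot \lambda_s(1), \qquad \lambda_{\mathcal{S}_K}(s,v) = \lambda_s(2)
$$
for all $s \in S$. By Proposition \ref{Computing labels}, for $s \in S_0$ this reads $\pi_s(\text{Frob}_v) = \lambda_s(2)$ and $\pi_v(\text{Frob}_s) = \gamma\cdot\lambda_s(1)$ (with signs absorbed into the choices), while for $s \in \{v_\ell(1),v_\ell(2)\}$ the conditions on $\lambda(s,v)$ are automatic and the conditions on $\lambda(v,s)$ are: $\pi_{v_\ell(1)}(\text{Frob}_v) = \gamma \cdot \lambda_{v_\ell(1)}(1)$ is forced (it must be $\log_\ell(1+\alpha\ell^n)$, consistent with the $g(v)=\alpha$, $f(v)=n$ requirements via the computation at the end of Proposition \ref{Galois groups are ok}), and the anti-cyclotomic value $\pi_{v_\ell(2)}(\text{Frob}_v)$ can be prescribed freely modulo $\ell^n$.

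The key step is then to realize all of these as a single Chebotarev condition. The conditions $f(v)=n$ and $g(v)=\alpha$ say exactly that $q(v) \equiv 1 + \alpha\cdot\ell^n \bmod \ell^{2n}$, i.e.\ that $\text{Frob}_v$ has a prescribed image under $\text{red}_{\ell^{2n}}\circ\pi_{\text{fcyc}}$; this is one component of the target of $\pi(S_0,m)$. The condition $\pi_s(\text{Frob}_v) = \lambda_s(2)$ for $s\in S_0$ pins down the second $\prod_{v\in S_0}\mathbb{Z}/\ell^{\min(m,f(s))}\mathbb{Z}$ component. The condition $\pi_{v_\ell(2)}(\text{Frob}_v) = $ (prescribed) pins down the $\mathbb{Z}/\ell^m\mathbb{Z}$ component. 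Finally, the values $\pi_v(\text{Frob}_s)$ for $s\in S_0$ are governed, via Artin reciprocity applied to the Kummer extensions $K(\alpha_s^{1/\ell^{\min(m,f(s))}})$, by the image of $\text{Frob}_v$ in the first $\prod_{v\in S_0}\mathbb{Z}/\ell^{\min(m,f(s))}\mathbb{Z}$ component — this is where the scalar $\gamma$ enters, because $\alpha_s$ has $v$-adic valuation coprime to $\ell$ (using $\text{Cl}(K)[\ell]=\{0\}$), so that the Kummer symbol $i_m(\sigma(\alpha_s^{1/\ell^k})/\alpha_s^{1/\ell^k})$ differs from $\pi_v(\text{Frob}_s)$ only by a unit depending on that valuation, and the $(\mathbb{Z}/\ell^{2m}\mathbb{Z})^\ast$-factor acts precisely by the corresponding diagonal scaling. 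Thus a choice of conjugacy class in $\text{Gal}(K(S_0,m)/K)$ whose image under $\pi(S_0,m)$ has the prescribed components — which exists because $\pi(S_0,m)$ is an isomorphism onto the stated semidirect product and the compatibility constraints among the prescribed data (the reciprocity relations built into Definition \ref{Weakly Rado graphs} and the normalization of $\lambda_{v_\ell(1)}$) are exactly what make the target element well-defined — yields, by Chebotarev, infinitely many places $v$ with the desired Frobenius. Any such $v$ outside $S$ does the job; automatically $v \in \Omega_{\text{fin}}$ since $q(v)\equiv 1 \bmod \ell$.

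The main obstacle I expect is bookkeeping the interaction between the Kummer side and the scalar $\gamma$: one has to check carefully that the single scalar $\gamma \in (\mathbb{Z}/\ell^n\mathbb{Z})^\ast$ coming from the diagonal $(\mathbb{Z}/\ell^{2m}\mathbb{Z})^\ast$-action can simultaneously match $\pi_v(\text{Frob}_s) = \gamma\cdot\lambda_s(1)$ across all $s \in S_0$, and that it is consistent with the forced value of $\pi_{v_\ell(1)}(\text{Frob}_v)$ (equivalently $g(v) = \alpha$) — this is exactly why the statement asks only for \emph{weakly} Rado rather than Rado, and it is the reason the auxiliary group $\pi(S_0,m)$ was built as a semidirect product with the unit-group factor acting by scaling. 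The secondary technical point is ensuring linear disjointness of $K(\{\alpha_v^{1/\ell^{\min(m,f(v))}}\})$ and $K(\{\pi_v\})$ and the cyclotomic/anti-cyclotomic pieces, which is handled by the absence of $\ell$-th roots of unity in $K$ together with $\text{Cl}(K)[\ell]=\{0\}$, as already noted in the construction of $K(S_0,m)$.
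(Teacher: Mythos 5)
Your proposal matches the paper's own proof: the paper applies Chebotarev to $K(S_0,n)$, chooses the conjugacy class of the element $x := ((\lambda_s(1))_{s\in S_0},(\lambda_s(2))_{s\in S_0},1+\ell^n\alpha,\lambda_{v_\ell(2)}(2))$ in the semidirect product (whose conjugacy orbit is exactly the scaling of the first Kummer component, which is where the single scalar $\gamma$ and hence ``weakly'' comes from), and then relates $\pi_v(\mathrm{Frob}_s)$ to the Kummer coordinates through $\alpha_s$, $\#\mathrm{Cl}(K)$ and units $\gamma_1,\gamma_2,\gamma_3\in\Z_\ell^\ast$, just as you describe. The only blemish is a harmless index transposition for the infinite vertices (it is $\lambda(v_\ell(1),v)=\pi_{v_\ell(1)}(\mathrm{Frob}_v)$ that is forced to be $\log_\ell(1+\alpha\ell^n)$, i.e.\ the constraint sits on $\lambda_{v_\ell(1)}(2)$, while $\lambda(v,v_\ell(i))=0=\lambda_{v_\ell(i)}(1)$ is the automatic one), which does not affect the argument.
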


\begin{proof}
Let us fix a finite subset $S_0$ of $\Omega_{\text{fin}}$ and let us put $S := \{v_\ell(1), v_\ell(2)\} \cup S_0$. Let us fix the choices of $n$, $\alpha$, $((\lambda_s(1))_{s \in S}, (\lambda_s(2))_{s \in S})$ as in Definition \ref{Weakly Rado graphs}. We have to show that there exist $v \in \Omega - S$ and $\gamma \in (\mathbb{Z}/\ell^n\mathbb{Z})^\ast$ satisfying $f(v) = n$, $g(v) = \alpha$ and
$$
\lambda(v, s) = \gamma \cdot \lambda_s(1), \quad \lambda(s, v) = \lambda_s(2)
$$
for each $s \in S$.

To this end, let us consider the element
\begin{multline*}
x := ((\lambda_s(1))_{s \in S_0}, (\lambda_s(2))_{s \in S_0}, 1 + \ell^n \cdot \alpha, \lambda_{v_\ell(2)}(2)) \in \\
\left(\prod_{v \in S_0} \mathbb{Z}/\ell^{\min(n, f(v))}\mathbb{Z}\right)^2 \rtimes ((\mathbb{Z}/\ell^{2n}\mathbb{Z})^\ast\times \mathbb{Z}/\ell^n\mathbb{Z}).
\end{multline*}
Observe that the conjugacy class of $x$, which we will denote by $\mathcal{C}(x)$, consists precisely of the elements of the form
$$
x := (\gamma_0 \cdot (\lambda_s(1))_{s \in S_0}, (\lambda_s(2))_{s \in S_0}, 1 + \ell^n \cdot \alpha, \lambda_{v_\ell(2)}(2))
$$
as $\gamma_0$ runs through $(\mathbb{Z}/\ell^n\mathbb{Z})^\ast$.

Thanks to the Chebotarev density theorem, there exists a place $v \in \Omega - S$ such that
$$
\pi(S_0, n)(\text{Frob}_v) \in \mathcal{C}(x)
$$
and $v > s$ for each $s \in S$. Now combining Proposition \ref{Computing labels} with the fact that $v > s$ for each $s \in S$ and that $\pi(S_0, n)(\text{Frob}_v) \in \mathcal{C}(x)$, we see that
$$
f(v) = n, \quad g(v) = \alpha, \quad \lambda(s, v) = \lambda_s(2)
$$
for each $s \in S$. Let us denote by $\gamma_1 := \#\text{Cl}(K) \in \mathbb{Z}_\ell^\ast$. The fact that $\gamma_1$ is a $\ell$-adic unit is precisely due to the fact that $\text{Cl}(K)[\ell] = \{0\}$. Then we have that
$$
\gamma_1 \cdot (\pi_v(\text{Frob}_s))_{s \in S_0} = (\pi_v(\text{Art}(\alpha_s)))_{s \in S_0}.
$$
Now a moment reflection shows that there exists $\gamma_2 \in \mathbb{Z}_\ell^\ast$ such that
$$
\gamma_2 \cdot (\pi_v(\text{Art}(\alpha_s)))_{s \in S_0} = \rho(\pi(S_0, n)(\text{Frob}_v)),
$$
where $\rho$ is the projection on (the first copy of) $\prod_{v \in S_0} \mathbb{Z}/\ell^{\min(n, f(v))}\mathbb{Z}$. Hence invoking the description of the conjugacy class $\mathcal{C}(x)$, we see that there exists $\gamma_3 \in \mathbb{Z}_\ell^\ast$ such that
$$
\gamma_3 \cdot \rho(\pi(S_0, n)(\text{Frob}_v)) = (\lambda_s(1))_{s \in S_0}.
$$
Now, combining with Proposition \ref{Computing labels}, we conclude that
$$
-\gamma_1 \cdot \gamma_2 \cdot \gamma_3 \cdot (\lambda(v, s))_{s \in S_0} = (\lambda_s(1))_{s \in S_0}.
$$
Therefore, by construction of the labels $\lambda_s(1)$ for $s \in \{v_\ell(1), v_\ell(2)\}$, we have that
\[
-\gamma_1 \cdot \gamma_2 \cdot \gamma_3 \cdot (\lambda(v, s))_{s \in S} = (\lambda_s(1))_{s \in S}.
\]
This ends the required verification (since we checked $\gamma_1, \gamma_2, \gamma_3 \in \mathbb{Z}_\ell^\ast$) and hence the proof. 
\end{proof}

We are now ready to prove the main result of this section. 

\begin{theorem}
\label{tMain1}
Let $K_1$, $K_2$ be two imaginary quadratic number fields with class number not divisible by $\ell$. For $\ell = 3$, further suppose that $K_i \otimes_{\mathbb{Q}} \mathbb{Q}_3$ does not have a primitive third root of unity for $i \in \{1, 2\}$. Then 
$$
\mathcal{G}_{K_1}^2(\ell) \simeq_{\textup{top.gr.}} \mathcal{G}_{K_2}^2(\ell). 
$$
More precisely, both are isomorphic, as profinite groups, to $\mathcal{G}(\Phi(\textup{Rado}))$. 
\end{theorem}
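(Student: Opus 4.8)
The plan is to deduce the theorem by assembling the abstract machinery of Sections \ref{decorated odd graphs} and \ref{Rado group odd primes} with the concrete computations of Section \ref{section: Galois groups l odd}, so the actual argument will be short: all the genuine difficulty has been front-loaded into earlier results. First I would observe that the hypotheses of the theorem are exactly the standing assumptions of Section \ref{section: Galois groups l odd}: $\textup{Cl}(K_i)[\ell] = \{0\}$ (which is the same as the class number not being divisible by $\ell$), and, when $\ell = 3$, the \'etale $\Q_3$-algebra $K_i \otimes_{\Q} \Q_3$ contains no primitive third root of unity. Hence, for each $i \in \{1,2\}$, the triple $(\Omega_i, f_i, g_i)$ of Section \ref{section: Galois groups l odd} is defined — here one uses that $\Omega_{i,\textup{fin}}$ is countably infinite, which is immediate from Chebotarev, and that $g_i(w)$ is a genuine generator by its very definition — and by Proposition \ref{Galois groups are ok} the pair $\mathcal{S}_{K_i} := (\mathcal{G}_{K_i}^2(\ell), \pi_\bullet)$ is a $2$-nilpotent pro-$\ell$ group on $(\Omega_i, f_i, g_i)$.

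Next I would invoke Corollary \ref{Galois groups are Rado}, which says that $\mathcal{S}_{K_i}$ is weakly Rado; this is the step carrying all the arithmetic content, resting on the Chebotarev density theorem applied to the auxiliary Galois groups $\pi(S_0, m)$ together with the identity $\lambda_{\mathcal{S}_K}(v, w) = \pi_v(\textup{Frob}_w)$ of Proposition \ref{Computing labels}. Since both $\mathcal{S}_{K_1}$ and $\mathcal{S}_{K_2}$ are weakly Rado, Theorem \ref{weakly Rado groups are isomorphic} applies verbatim and gives that $\mathcal{G}_{K_1}^2(\ell)$ and $\mathcal{G}_{K_2}^2(\ell)$ are each isomorphic, as topological groups, to the fixed group $\mathcal{G}(\textup{Rado}) = \mathcal{G}(\Phi(\textup{Rado}))$, hence to one another. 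Unwinding the proof of Theorem \ref{weakly Rado groups are isomorphic}, this isomorphism is produced by: scaling $\mathcal{S}_{K_i}$ so that its decorated graph becomes Rado (Proposition \ref{rado up to scaling if and only if weakly rado} and equation (\ref{eLambdaTransform})), recovering the group from the graph (Theorem \ref{Thm: the graph determines the group}), and matching any two Rado graphs by the back-and-forth method (Theorem \ref{Thm:two Rado graphs are isomorphic}); scaling does not affect the underlying group. This yields both the displayed isomorphism and the ``more precisely'' clause.

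I do not anticipate a real obstacle at this stage — the theorem is a one-line corollary of results already proved — so the only thing to check carefully is that the hypotheses genuinely line up. The single cosmetic point worth a sentence is the equivalence, for $\ell = 3$, of ``$K_i \otimes_{\Q} \Q_3$ contains no primitive third root of unity'' with the formulation ``$K_i \otimes_{\Q} \Q_3 \not\simeq \Q_3(\zeta_3)$ as $\Q_3$-algebra'' used in Theorem \ref{Thm: main2}: these coincide because $[\Q_3(\zeta_3):\Q_3] = 2$, so any quadratic \'etale $\Q_3$-algebra containing $\zeta_3$ must itself be the field $\Q_3(\zeta_3)$, while the only other quadratic \'etale $\Q_3$-algebras ($\Q_3 \times \Q_3$ or a ramified/other unramified quadratic field) do not contain $\zeta_3$. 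The substantive obstacle — establishing that the decorated graphs coming from Galois theory are weakly Rado, i.e. pseudo-random — has already been surmounted in Corollary \ref{Galois groups are Rado}, and everything else here is bookkeeping.
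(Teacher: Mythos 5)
Your proposal is correct and follows exactly the paper's own route: the paper proves this theorem in one line by combining Corollary \ref{Galois groups are Rado} with Theorem \ref{weakly Rado groups are isomorphic}, which is precisely what you do, and your additional remarks (verifying the hypotheses match the standing assumptions of Section \ref{section: Galois groups l odd}, the $\ell=3$ equivalence, and unwinding how the isomorphism is produced) are accurate bookkeeping consistent with the earlier results you cite.
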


\begin{proof}
This is a consequence of Corollary \ref{Galois groups are Rado} and Theorem \ref{weakly Rado groups are isomorphic}.   
\end{proof}

\section{\texorpdfstring{Galois groups for $\ell = 2$}{Galois groups for 2}} \label{section: Galois groups at 2}
We now adapt the material of Section \ref{section: Galois groups l odd} to the case $\ell := 2$. We let $K$ be an imaginary quadratic field such that $2$ is inert and $\text{Cl}(K)[2] = \{0\}$. Thanks to Gauss' genus theory, this is equivalent to saying that $K:=\Q(\sqrt{-p})$, where $p$ is an odd positive prime with $p \equiv 3 \bmod 8$. 

In the rest of this section $v_2(1)$ denotes the cyclotomic $\mathbb{Z}_2$-extension of $K$ and $v_2(2)$ denotes a fixed $\mathbb{Z}_2$-extension of $K$ having $K(\sqrt{-1})$ as the quadratic subextension. We define $\Omega := \{v_2(1),v_2(2)\} \cup \Omega_{\text{fin}}$, where $\Omega_{\text{fin}}$ consists of the finite places $v$ of $K$ that are coprime to $2$. We define $f(v)$ to be the $2$-adic valuation of $\#(\mathcal{O}_K/v)^{\ast}$ for $v \in \Omega_{\text{fin}}$ and we define $f(v) := \infty$ for $v \in \{v_2(1), v_2(2)\}$. 

Before moving on, we shall need two basic facts about the local cohomology of $K_2 \simeq_{\Q_2-\text{alg.}} \Q_4$, the completion of $K$ at the prime $2$. The following fact will be handy in normalizing our choices of characters locally at $2$ as well as in the computation of Hilbert symbols taking place in the proof of Corollary \ref{Galois groups are reciprocity groups}.

\begin{proposition} 
\label{Hilbert symbols at K2}
Let $\epsilon := 2 + \sqrt{5} \in \mathcal{O}_{K_2}^\ast$. Then $\{\epsilon, -1, 2\sqrt{5} - 5\}$ is a basis of $\frac{\mathcal{O}_{K_2}^\ast}{\mathcal{O}_{K_2}^{2\ast}}$. Furthermore, the Hilbert symbol pairing is given in this basis by the matrix 
\[
\begin{pmatrix}
1 & 1 & 0 \\
1 & 0 & 0 \\
0 & 0 & 0
\end{pmatrix}.
\]
\end{proposition}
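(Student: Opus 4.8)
The field $K_2 \cong \Q_2(\sqrt{5}) = \Q_4$ is the unramified quadratic extension of $\Q_2$. The plan is first to establish that $\{\epsilon, -1, 2\sqrt 5 - 5\}$ is an $\FF_2$-basis of $\OO_{K_2}^\ast/\OO_{K_2}^{\ast 2}$, and then to compute the $3\times 3$ Gram matrix of the Hilbert symbol pairing on this basis. For the first part, I would recall the standard structure of the units of an unramified $2$-adic field: for $K_2/\Q_2$ unramified of degree $2$ one has $\OO_{K_2}^\ast \cong \mu_{q-1}\times (1+2\OO_{K_2})$ with $q = 4$, and $1+2\OO_{K_2} \cong \Z_2^{2}\times \Z/2^{a}$ for the appropriate torsion, so that $\OO_{K_2}^\ast/\OO_{K_2}^{\ast 2}$ is an $\FF_2$-vector space of dimension $[K_2:\Q_2]+1 = 3$ (this is $\dim_{\FF_2} K_2^\ast/K_2^{\ast 2} = 4$ minus the class of a uniformizer, here $2$). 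It then suffices to check the three displayed elements are linearly independent modulo squares. The element $-1$ is a nontrivial square class since $K_2$ contains no $\sqrt{-1}$ (as $2$ is inert in $K=\Q(\sqrt{-p})$, equivalently $4 \nmid$ the relevant torsion). The element $\epsilon = 2+\sqrt 5$ is a unit of norm $N_{K_2/\Q_2}(\epsilon) = (2+\sqrt5)(2-\sqrt5) = -1$, which is not a norm from... — more directly, one checks $\epsilon \not\equiv 1 \bmod {K_2^{\ast2}}$ and $\epsilon \not\equiv -1$ by a congruence computation modulo a sufficiently high power of $2$ (it suffices to work modulo $8$ or $16$ in $\OO_{K_2}$). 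Finally $2\sqrt5 - 5 = (\sqrt5)^2 \cdot(\tfrac{2}{\sqrt5}-1)$, hmm — better: $N_{K_2/\Q_2}(2\sqrt5-5) = (2\sqrt5-5)(-2\sqrt5-5) = 25 - 20 = 5$, and $5$ is a unit but $2\sqrt 5-5 \equiv -5 + 2\sqrt 5$, a nonsquare class distinct from the span of $\epsilon,-1$; again a finite congruence check modulo a bounded power of $2$ settles this.

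For the Gram matrix, I would compute the six relevant Hilbert symbols $(\cdot,\cdot)_{K_2}\in\{\pm1\}$ (written additively as entries in $\FF_2$). The diagonal entry for $\epsilon$ is $(\epsilon,\epsilon) = (\epsilon,-1)$ since $(x,x)=(x,-1)$ always; so the $(1,1)$ and $(1,2)$ entries coincide, consistent with the claimed matrix having $1$'s there. The entry $(-1,-1)_{K_2}$: over $\Q_4$ the form $x^2+y^2$ is isotropic (it represents $0$ nontrivially because $-1$ becomes a square in the residue field $\FF_4$, so $x^2+y^2+z^2$ is isotropic, equivalently $(-1,-1)=1$ in additive notation, i.e. the symbol is trivial) — this gives the $(2,2)$ entry $=0$. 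The entry $(\epsilon,-1)_{K_2} = 1$ (nontrivial): this is the crucial computation; I would verify it by checking that $z^2 = \epsilon x^2 - y^2$ has no nontrivial solution, e.g. via the norm residue interpretation $(\epsilon,-1)=1 \iff -1 \notin N_{K_2(\sqrt\epsilon)/K_2}$, using that $\epsilon$ has norm $-1$ down to $\Q_2$. Finally, the third basis vector $2\sqrt5-5$ should pair trivially with everything, giving the zero row/column; since $N_{K_2/\Q_2}(2\sqrt5 - 5) = 5 \equiv -1 \cdot(-5)$, hmm, more robustly: $2\sqrt 5 - 5 = -(\sqrt5 -1)^2/\text{(something)}$? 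I would instead just show $(2\sqrt5-5,\,u)_{K_2}=0$ for $u\in\{\epsilon,-1\}$ by exhibiting $2\sqrt5-5$ as a norm from the relevant quadratic extensions, or by a direct Hilbert-symbol computation via the formula for the symbol over $\Q_4$ in terms of the residue field and valuations (both arguments are units, so the tame symbol vanishes and one is reduced to a wild computation modulo $8\OO_{K_2}$).

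The main obstacle is the genuinely wild nature of the $2$-adic Hilbert symbol: unlike the odd-residue-characteristic case there is no clean tame-symbol formula, so each symbol involving units must be evaluated either by the explicit (Artin–Hasse / Springer-type) formula for $(\cdot,\cdot)_{K_2}$ or by directly deciding solvability of $z^2 = a x^2 + b y^2$ in $\Q_4$, which amounts to a congruence computation modulo a bounded power of $2$ in $\OO_{K_2}=\Z_2[\sqrt5]$. I would organize this as a single lemma computing $(x,y)_{K_2}$ for $x,y$ ranging over $\{\epsilon,-1,2\sqrt5-5\}$ by reduction modulo $16$, checking the quadratic-form solvability by hand (a finite check), and then simply read off the matrix. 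The key nontrivial inputs are (i) $-1$ is a nonsquare in $K_2$, (ii) $x^2+y^2$ is isotropic over $K_2$ but $\epsilon x^2 - y^2$ is not, and (iii) $2\sqrt5-5$ lies in the radical of the pairing — each reduces to a bounded computation, and the first is exactly the hypothesis that $2$ is inert in $K=\Q(\sqrt{-p})$ with $p\equiv 3\bmod 8$.
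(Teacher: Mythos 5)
The paper's own proof of this proposition is the single sentence ``This is a direct verification,'' so your proposal is best read as supplying that verification; your overall strategy (a dimension count for $\OO_{K_2}^\ast/\OO_{K_2}^{\ast 2}$, then a symbol-by-symbol evaluation via norms and congruences modulo a bounded power of $2$) is the right one and does work. Most of the individual steps are sound: the dimension is indeed $[K_2:\Q_2]+1=3$; independence of the three classes follows from reduction modulo $8\OO_{K_2}$ (a unit $1+2a$ is a square iff $a\equiv 0\bmod 2$ and, writing it as $1+4b$, the residue $\bar b$ lies in the Artin--Schreier image $\wp(\FF_4)=\FF_2$); and $(\epsilon,-1)_{K_2}=(N_{K_2/\Q_2}(\epsilon),-1)_{\Q_2}=(-1,-1)_{\Q_2}\neq 0$ by the projection formula, exactly as you indicate, which also gives $(\epsilon,\epsilon)=(\epsilon,-1)$.

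One of your justifications, however, is wrong as stated: you argue that $(-1,-1)_{K_2}$ is trivial ``because $-1$ becomes a square in the residue field $\FF_4$, so $x^2+y^2+z^2$ is isotropic.'' In residue characteristic $2$ this implication fails --- $-1$ is also a square in $\FF_2$, yet $x^2+y^2+z^2$ is anisotropic over $\Q_2$ and $(-1,-1)_{\Q_2}$ is the nontrivial symbol. The correct quick argument is again the projection formula, $(-1,-1)_{K_2}=(N_{K_2/\Q_2}(-1),-1)_{\Q_2}=(1,-1)_{\Q_2}=0$; equivalently, restriction multiplies Brauer invariants by $[K_2:\Q_2]=2$, so every quaternion algebra over $\Q_2$ splits over $K_2$. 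For the vanishing third row and column there is also a cleaner route than case-by-case norm or congruence checks: writing $\omega=\frac{1+\sqrt5}{2}$, one has $2\sqrt5-5=1+4(\omega-2)$ with $\bar\omega\notin\wp(\FF_4)$, so $K_2(\sqrt{2\sqrt5-5})/K_2$ is the unramified quadratic extension; hence every unit is a norm from it and $2\sqrt5-5$ pairs trivially with all of $\OO_{K_2}^\ast$. With these repairs your outline is a complete proof of the proposition.
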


\begin{proof}
This is a direct verification.
\end{proof}

The next proposition gives us a good explicit handle on classes in the local cohomology groups $H^2(\mathcal{G}_{K_2}, \Z/2^n\Z)$.

\begin{proposition} 
\label{local cohomology at 2}
Let $n$ be a positive integer. Then the reduction map $H^2(\mathcal{G}_{K_2}, \Z/2^n\Z) \to H^2(\mathcal{G}_{K_2}, \Z/2\Z)$ is an isomorphism. In particular, $H^2(\mathcal{G}_{K_2}, \Z/2^n\Z)$ is cyclic of order $2$. 
\end{proposition}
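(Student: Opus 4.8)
The plan is to derive the statement from local Tate duality, the only input specific to $K_2$ being the computation of its $2$-power roots of unity. First I would record that, under the standing assumption that $2$ is inert in $K$ (equivalently $p\equiv 3\bmod 8$), the completion $K_2$ is the \emph{unramified} quadratic extension $\Q_4$ of $\Q_2$, hence $K_2\subseteq\Q_2^{\mathrm{ur}}$. Since $\Q_2(\sqrt{-1})/\Q_2$ is ramified, $\sqrt{-1}\notin\Q_2^{\mathrm{ur}}$, and therefore $\mu_{2^\infty}(K_2)=\{\pm1\}$. In particular
\[
H^0(\mathcal{G}_{K_2},\mu_{2^m})=\mu_{2^m}(K_2)=\mu_2\qquad\text{for every }m\geq1 .
\]

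Next I would invoke local Tate duality \cite[Thm.~7.2.6]{Neukirch}: for every finite $\mathcal{G}_{K_2}$-module $M$, cup product and the invariant map give a perfect pairing
\[
H^2(\mathcal{G}_{K_2},M)\times H^0(\mathcal{G}_{K_2},M^\vee)\longrightarrow \Q/\Z,\qquad M^\vee:=\mathrm{Hom}(M,\overline{K_2}^{\,\ast}),
\]
which is functorial in $M$ in the sense that a morphism $f\colon M\to N$ induces on the groups $H^2(\mathcal{G}_{K_2},-)$ the transpose of the map $f^\vee_\ast\colon H^0(\mathcal{G}_{K_2},N^\vee)\to H^0(\mathcal{G}_{K_2},M^\vee)$ coming from the Cartier dual $f^\vee\colon N^\vee\to M^\vee$. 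Applying this with $M=\Z/2^m\Z$ (trivial action), so that $M^\vee=\mu_{2^m}$, the first step gives
\[
H^2(\mathcal{G}_{K_2},\Z/2^m\Z)\;\cong\;\mathrm{Hom}\bigl(\mu_{2^m}(K_2),\Q/\Z\bigr)\;=\;\mathrm{Hom}(\mu_2,\Q/\Z)\;\cong\;\Z/2\Z
\]
for all $m\geq1$, which already yields the cyclicity of order $2$.

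Finally, to handle the reduction map itself, I would note that the Cartier dual of $\rho\colon\Z/2^n\Z\twoheadrightarrow\Z/2\Z$ is the natural inclusion $\rho^\vee\colon\mu_2\hookrightarrow\mu_{2^n}$. By the functoriality recorded above, the reduction map $\rho_\ast\colon H^2(\mathcal{G}_{K_2},\Z/2^n\Z)\to H^2(\mathcal{G}_{K_2},\Z/2\Z)$ is the transpose of $\rho^\vee_\ast\colon H^0(\mathcal{G}_{K_2},\mu_2)\to H^0(\mathcal{G}_{K_2},\mu_{2^n})$. But by the first step both of these $H^0$'s equal $\mu_2$ and $\rho^\vee_\ast$ is the identity; hence $\rho_\ast$ is an isomorphism, between groups that by the previous step are cyclic of order $2$.

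The argument is essentially formal once local duality is in hand; the genuine content is the equality $\mu_{2^\infty}(K_2)=\mu_2$, which is exactly where inertness of $2$ (so that $K_2/\Q_2$ is unramified and cannot contain $\sqrt{-1}$) enters. The one thing to be careful about is the variance in the functoriality of Tate duality — that a map on coefficients induces the \emph{transpose} of the dual map on the Pontryagin-dual cohomology groups — which is standard but easy to get backwards. For $n=1$ one can bypass duality entirely: $\Z/2\Z\cong\mu_2$ as $\mathcal{G}_{K_2}$-modules, whence $H^2(\mathcal{G}_{K_2},\Z/2\Z)\cong\mathrm{Br}(K_2)[2]\cong\Z/2\Z$; but the uniform treatment above deals with all $n$ at once.
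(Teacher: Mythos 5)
Your proof is correct, and for the main assertion it takes a genuinely different route from the paper. Both arguments use local Tate duality together with the fact that $\mu_{2^\infty}(K_2)=\{\pm 1\}$ (since $K_2/\Q_2$ is unramified while $\Q_2(\sqrt{-1})/\Q_2$ is ramified) to see that $H^2(\mathcal{G}_{K_2},\Z/2^m\Z)$ has order $2$ for every $m$. For the statement that the reduction map is an isomorphism, however, the paper does not invoke functoriality of the duality pairing: it argues by induction on $n$, applying cohomology to the sequence $0\to\Z/2^{n-1}\Z\to\Z/2^n\Z\to\Z/2\Z\to 0$ and using local class field theory to show that $H^1(\mathcal{G}_{K_2},\Z/2\Z)$ modulo the image of $H^1(\mathcal{G}_{K_2},\Z/2^n\Z)$ has order $2$; this forces the connecting map to surject onto $H^2(\mathcal{G}_{K_2},\Z/2^{n-1}\Z)$, hence the reduction map on $H^2$ is injective, hence bijective by the order count. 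You instead use naturality of the duality pairing in the coefficient module: the reduction $\Z/2^n\Z\to\Z/2\Z$ is Cartier dual to the inclusion $\mu_2\hookrightarrow\mu_{2^n}$, which is an isomorphism on $K_2$-points, so the dual map on $H^2$ is an isomorphism. Your version is shorter, avoids the induction and the $H^1$ computation, and isolates the single arithmetic input ($\sqrt{-1}\notin K_2$); its only delicate point is the variance in the functoriality of the duality, which you correctly flag and handle. The paper's version uses duality only as a counting statement and otherwise stays within the long exact sequence and an explicit class-field-theoretic computation --- ultimately resting on the same input, since the order-$2$ cokernel of the $H^1$'s again reflects that the $2$-power roots of unity in $K_2^\ast$ are just $\pm 1$.
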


\begin{proof}
By local Tate duality, the last part is equivalent to $H^0(\mathcal{G}_{K_2}, \mu_{2^n})$ being cyclic of order $2$, which is indeed the case. The first part is trivial for $n = 1$. From now on we assume that $n > 1$ and proceed by induction on $n$. Local class field theory shows that 
$$
\frac{H^1(\mathcal{G}_{K_2}, \Z/2\Z)}{2^{n - 1} \cdot H^1(\mathcal{G}_{K_2}, \Z/2^n\Z)}
$$ 
is cyclic of order $2$. Applying cohomology to the short exact sequence
$$
0 \rightarrow \Z/2^{n - 1}\Z \rightarrow \Z/2^n\Z \rightarrow \Z/2\Z \rightarrow 0
$$
and using that $H^2(\mathcal{G}_{K_2}, \Z/2^{n - 1}\Z)$ has order $2$ (by induction), we obtain that the reduction map is injective and therefore an isomorphism.
\end{proof}

\begin{lemma}
\label{lLocalGlobal}
Let $n$ be a positive integer and let $\theta \in H^2(\mathcal{G}_K, \Z/2^n\Z)$. Suppose that $\theta$ is locally trivial at all odd places. Then $\theta$ is trivial in $H^2(\mathcal{G}_K, \Z/2^n\Z)$.
\end{lemma}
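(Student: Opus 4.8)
The plan is to deduce the statement from the Hasse principle \cite[Corollary 9.1.10]{Neukirch}; to do so I must upgrade the hypothesis of local triviality at the odd places of $K$ to local triviality at \emph{every} place of $K$. The only places not covered by the hypothesis are the archimedean place and the unique place of $K$ lying above $2$, unique because $2$ is inert. The archimedean place costs nothing: the completion of $K$ there is $\mathbb{C}$, so the corresponding decomposition group is trivial and $H^2(\mathcal{G}_{\mathbb{C}}, \Z/2^n\Z) = 0$. Hence the whole content is to show $\theta$ is locally trivial at the place above $2$, whose completion is $K_2 \simeq \Q_4$.

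For this I would invoke the Poitou--Tate exact sequence for the finite module $\Z/2^n\Z$, whose Cartier dual is $\mu_{2^n}$ (see \cite[\S 8.6]{Neukirch}):
\[
H^2(\mathcal{G}_K, \Z/2^n\Z) \xrightarrow{(\mathrm{loc}_v)_v} \bigoplus_{v} H^2(\mathcal{G}_{K_v}, \Z/2^n\Z) \xrightarrow{\Sigma} H^0(\mathcal{G}_K, \mu_{2^n})^\vee \to 0,
\]
where $\Sigma$ pairs a family of local classes, via local Tate duality at each place, against the diagonal image of $\mu_{2^n}(K)$. By exactness the family $(\mathrm{loc}_v(\theta))_v$ lies in $\ker(\Sigma)$, and since all of its components vanish except possibly the one at $2$, the vanishing of $\Sigma$ on this family says precisely that $\langle \mathrm{loc}_2(\theta), \zeta\rangle_2 = 0$ for every $\zeta \in \mu_{2^n}(K)$. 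Here the inertness of $2$ enters twice: it forces $K \neq \Q(\sqrt{-1})$, so $\mu_{2^n}(K) = \{\pm 1\}$; and it forces $K_2 = \Q_4$, which does not contain $\sqrt{-1}$ (as $\Q_2(\sqrt{-1})/\Q_2$ is ramified while $\Q_4/\Q_2$ is unramified), so $H^0(\mathcal{G}_{K_2}, \mu_{2^n}) = \{\pm 1\}$ as well. By Proposition \ref{local cohomology at 2} the group $H^2(\mathcal{G}_{K_2}, \Z/2^n\Z)$ is cyclic of order $2$, hence it is paired perfectly with the order-$2$ group $H^0(\mathcal{G}_{K_2}, \mu_{2^n})$; in particular pairing with $-1$ is injective on $H^2(\mathcal{G}_{K_2}, \Z/2^n\Z)$. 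Combined with $\langle \mathrm{loc}_2(\theta), -1\rangle_2 = 0$ from the previous step, this gives $\mathrm{loc}_2(\theta) = 0$.

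Having shown $\theta$ is locally trivial at every place of $K$, the Hasse principle \cite[Corollary 9.1.10]{Neukirch} — equivalently, the injectivity of $H^2(\mathcal{G}_K, \Z/2^n\Z) \to \bigoplus_v H^2(\mathcal{G}_{K_v}, \Z/2^n\Z)$, which holds here because $K$ is imaginary quadratic and therefore avoids the Grunwald--Wang special case — yields $\theta = 0$. The one genuinely delicate point is the bookkeeping at the place above $2$: one must make sure the local duality pairing there detects the nontrivial class of the order-$2$ group $H^2(\mathcal{G}_{K_2}, \Z/2^n\Z)$ against an element of $\mu_{2^n}$ that is defined over $K$, namely $-1$, and this is exactly what the inertness of $2$ — keeping $\sqrt{-1}$ out of $K_2$ — guarantees.
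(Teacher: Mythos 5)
Your handling of the place above $2$ is correct and genuinely different from the paper's argument. The paper first reduces $\theta$ modulo $2$, kills the reduction at $2$ by Hilbert reciprocity, and then transfers this back to $\theta$ itself via the isomorphism of Proposition \ref{local cohomology at 2}; you instead pair $\theta$ directly against the global element $-1 \in \mu_{2^n}(K)$, using that the sum of local invariants of the global class $\theta \cup (-1) \in H^2(\mathcal{G}_K, \mu_{2^n})$ vanishes, together with local Tate duality at $K_2 \simeq \Q_4$, where $\mu_{2^n}(K_2) = \{\pm 1\}$ precisely because $2$ is inert. This is in effect the Pontryagin-dual form of the same computation and it works; note that you only use that the Poitou--Tate sequence is a complex (i.e.\ global reciprocity), which also sidesteps the cosmetic issue that the sequence in \cite[\S 8.6]{Neukirch} is stated for groups with restricted ramification rather than for all of $\mathcal{G}_K$.

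The gap is in your final step. You justify the injectivity of $H^2(\mathcal{G}_K, \Z/2^n\Z) \to \bigoplus_v H^2(\mathcal{G}_{K_v}, \Z/2^n\Z)$ by saying that $K$ is imaginary quadratic ``and therefore'' avoids the Grunwald--Wang special case. That implication is false. For $K = \Q(\sqrt{-14})$ and $n \geq 3$ one is in the special case for the full set of places: since $-7$ is a square in $\Q_2$, the completion at the unique prime above $2$ is $\Q_2(\sqrt{2})$, so $16$ is an $8$-th power in every completion of $K$ but not in $K$ (that would require $\sqrt{2}$, $\sqrt{-2}$ or $\sqrt{-1}$ in $K$); hence there is a nonzero locally trivial class in $H^1(\mathcal{G}_K, \mu_8)$ and, dually, a nonzero locally trivial class in $H^2(\mathcal{G}_K, \Z/8\Z)$, so the injectivity you assert fails for this imaginary quadratic field. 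What rules out the special case in the lemma is not imaginary-quadraticity but the standing hypothesis of this section that $2$ is inert: then $K_2 \simeq \Q_4$ contains none of $\sqrt{-1}, \sqrt{2}, \sqrt{-2}$, equivalently $2$ is totally ramified in $K(\zeta_{2^k})/K$, and this is exactly the condition under which \cite[Corollary 9.1.10]{Neukirch} gives the Hasse principle here (and it is the reason given in the paper's proof). With that justification replaced, your argument is complete.
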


\begin{proof}
Denote by $c$ the reduction of $\theta$ modulo $2$. Then $c$ is trivial at all odd places, and therefore also trivial at $2$ by Hilbert reciprocity. Since the reduction map $H^2(\mathcal{G}_{K_2}, \Z/2^n\Z) \to H^2(\mathcal{G}_{K_2}, \Z/2\Z)$ is an isomorphism by Proposition \ref{local cohomology at 2} and maps the restriction of $\theta$ to the restriction of $c$, we conclude that the restriction of $\theta$ is trivial in $H^2(\mathcal{G}_{K_2}, \Z/2^n\Z)$. We now invoke \cite[Corollary 9.1.10]{Neukirch}. Observe that we are not in the ``special case'' (i.e. Grunwald--Wang) of that corollary, because $2$ is totally ramified in $K(\zeta_{2^k})/K$, as $2$ is inert in $K$. Then \cite[Corollary 9.1.10]{Neukirch} shows that $\theta$ is globally trivial, as desired.
\end{proof}

We can now give the analogue of Proposition \ref{a totally ramified extension} for $\ell = 2$.

\begin{proposition}
\label{a totally ramified extension at 2}
For each $v \in \Omega_{\textup{fin}}$, there exists a character $\pi_v: \mathcal{G}_K \twoheadrightarrow \mathbb{Z}/2^{f(v)}\mathbb{Z}$ that is totally ramified at $v$, unramified at all the other odd places and that at $2$ decomposes as 
$$
\pi_v|_{\mathcal{G}_{K_2}} = \pi_v(\textup{unr}) + \chi_\epsilon,
$$
where $\pi_v(\textup{unr})$ is unramified and $\chi_\epsilon$ is the ramified character corresponding to $\epsilon := 2 + \sqrt{5}$ from Proposition \ref{Hilbert symbols at K2}. Furthermore, $2\cdot \pi_v$ gives the largest abelian $2$-power extension of $K$ unramified outside of $v$. 
\end{proposition}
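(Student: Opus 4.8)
The plan is to follow the argument of Proposition \ref{a totally ramified extension} via ray class groups, the essential new feature being the presence of $-1 \in \mathcal{O}_K^\ast$, which forces the character $\pi_v$ to ramify at $2$; the precise shape of this ramification is governed by the nontriviality of the Hilbert symbol $(\epsilon,-1)_{K_2}$ recorded in Proposition \ref{Hilbert symbols at K2}.

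Let $\pp$ denote the prime of $\mathcal{O}_K$ below $v$ and $\PP$ the prime above $2$ (inert in $K$). Fix an integer $a \geq 2$ large enough that $\chi_\epsilon$ has conductor dividing $\PP^a$, and set $\mm := \pp \cdot \PP^a$. Starting from the exact sequence
$$
0 \to \frac{(\mathcal{O}_K/\mm)^\ast}{\overline{\mathcal{O}_K^\ast}} \to \text{Cl}(K, \mm) \to \text{Cl}(K) \to 0,
$$
I would tensor with $\Z_2$. Since $\text{Cl}(K)[2] = \{0\}$ the last term dies, and since the only $2$-primary contribution of $\mathcal{O}_K^\ast$ comes from $-1$, one obtains an isomorphism
$$
\text{Cl}(K, \mm) \otimes \Z_2 \;\cong\; \frac{A \times B}{\langle (\tau_A, \tau_B) \rangle},
$$
where $A := (\mathcal{O}_K/\pp)^\ast \otimes \Z_2 \cong \Z/2^{f(v)}\Z$ with $\tau_A$ its unique element of order $2$, and $B := (\mathcal{O}_{K_2}/\PP^a)^\ast \otimes \Z_2$, in which the image $\tau_B$ of $-1$ is nonzero and not a square (here $a \geq 2$ is used).

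Next I would construct $\pi_v$. Pick an isomorphism $\chi_A : A \xrightarrow{\sim} \Z/2^{f(v)}\Z$ and let $\chi_B : B \to \Z/2^{f(v)}\Z$ be $\chi_\epsilon$ composed with the inclusion of $\Z/2\Z$ as the subgroup of order $2$. Proposition \ref{Hilbert symbols at K2} gives $\chi_\epsilon(-1) = (\epsilon,-1)_{K_2} \neq 0$, so $\chi_B(\tau_B) = 2^{f(v)-1} = \chi_A(\tau_A)$; hence the homomorphism $(a,b) \mapsto \chi_A(a) + \chi_B(b)$ annihilates $(\tau_A,\tau_B)$ and descends to a surjection $\chi : \text{Cl}(K,\mm) \otimes \Z_2 \twoheadrightarrow \Z/2^{f(v)}\Z$. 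Composing with the reciprocity map yields $\pi_v : \mathcal{G}_K \twoheadrightarrow \Z/2^{f(v)}\Z$. One then checks: $\pi_v$ is surjective because $\chi_A$ is; it is totally ramified at $v$ because its restriction to inertia at $v$ is $\chi_A$, an isomorphism; it is unramified at every odd place $\neq v$ because its conductor divides $\mm$, whose non-$\pp$ part is a power of $2$; and at $2$ its restriction to inertia agrees with that of $\chi_\epsilon$ by local--global compatibility of class field theory, so setting $\pi_v(\text{unr}) := \pi_v|_{\mathcal{G}_{K_2}} - \chi_\epsilon$ gives an unramified character and the decomposition $\pi_v|_{\mathcal{G}_{K_2}} = \pi_v(\text{unr}) + \chi_\epsilon$.

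For the final clause: an abelian $2$-extension of $K$ unramified outside $v$ is tame at $v$ (as $v \nmid 2$), hence corresponds to a quotient of $\text{Cl}(K,\pp) \otimes \Z_2$; running the displayed exact sequence with modulus $\pp$ and using $\text{Cl}(K)\otimes\Z_2 = 0$ gives $\text{Cl}(K,\pp)\otimes\Z_2 \cong A/\langle\tau_A\rangle \cong \Z/2^{f(v)-1}\Z$. Now $2\pi_v$ kills $\chi_\epsilon$, hence is unramified at $2$, is unramified at the odd places $\neq v$, and surjects onto $\Z/2^{f(v)-1}\Z$; comparing cardinalities, it must cut out exactly the maximal such extension. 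The only genuinely new point compared with Proposition \ref{a totally ramified extension}, and the step I expect to require the most care, is precisely this interplay with $-1$: without allowing the $\chi_\epsilon$-component one can realize only a character of order $2^{f(v)-1}$, and it is the nonvanishing of $(\epsilon,-1)_{K_2}$ that lets $-1 \in \mathcal{O}_K^\ast$ be absorbed into the ramification at $2$, producing a character of the full order $2^{f(v)}$. The remaining verifications are routine bookkeeping with conductors and local reciprocity maps.
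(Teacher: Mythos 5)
Your proposal is correct, but it reaches the character by a genuinely different route than the paper. The paper works in stages: it first adapts Proposition \ref{a totally ramified extension} with modulus $\pp$ to produce the maximal abelian $2$-power extension unramified outside $v$, i.e.\ a surjective character $\pi_v(0)$ onto $\Z/2^{f(v)-1}\Z$ (the unit $-1$ costs one factor of $2$); it then lifts $\pi_v(0)$ to a $\Z/2^{f(v)}\Z$-valued character by showing the lifting obstruction in $H^2(\mathcal{G}_K,\mathbb{F}_2)$ is unramified away from $v$, trivial at $v$ by Hilbert reciprocity, hence globally trivial; and finally it twists by the quadratic characters $\chi_{\gamma_{\qq}}$, $\chi_2$ and possibly $\chi_{-1}$ to remove stray ramification and, via Proposition \ref{Hilbert symbols at K2}, to put the component at $2$ in the form $\pi_v(\mathrm{unr})+\lambda\cdot\chi_\epsilon$, with $\lambda=1$ forced by the maximality of $\pi_v(0)$. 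You instead build $\pi_v$ in one stroke as a character of the $2$-part of $\mathrm{Cl}(K,\pp\PP^a)$, prescribing the local component at $2$ to be $\chi_\epsilon$ embedded in the $2$-torsion of $\Z/2^{f(v)}\Z$; the only nontrivial point, namely that $\chi_A+\chi_B$ annihilates the image $(\tau_A,\tau_B)$ of the global unit $-1$, is exactly the nonvanishing $(\epsilon,-1)_{K_2}=1$ recorded in Proposition \ref{Hilbert symbols at K2}, and the final clause then follows from the separate computation $\mathrm{Cl}(K,\pp)\otimes\Z_2\cong\Z/2^{f(v)-1}\Z$ together with a degree count. Both arguments rest on the same arithmetic input, but yours avoids the lifting-obstruction and twisting steps and gets the shape at $2$ by design, at the price of working with a modulus divisible by a power of $2$ and of invoking local--global compatibility of the Artin map at $v$ and at $2$ (harmless here, since the relevant inertial characters at $2$ have order $2$, so normalization conventions do not matter); the paper keeps the modulus prime to $2$ and extracts the $\chi_\epsilon$-component a posteriori from maximality. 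One small remark: your parenthetical appeal to $a\geq 2$ for $\tau_B$ being a non-square is superfluous, since that already follows from $(\epsilon,-1)_{K_2}=1$; what you genuinely need, and correctly impose, is that $a$ be large enough for the conductor of $\chi_\epsilon$.
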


\begin{proof}
A straighforward adaptation of the proof of Proposition \ref{a totally ramified extension} gives that the largest abelian $2$-power extension of $K$ unramified outside of $v$ is given by a surjective character
$$
\pi_v(0): \mathcal{G}_K \to \Z/2^{f(v) - 1}\Z.
$$
The problem of lifting this character to $\Z/2^{f(v)}\Z$ defines a central $\mathbb{F}_2$-extension and hence a class in $H^2(\mathcal{G}_K,\mathbb{F}_2)$. This class is unramified at all places different from $v$ and hence it is trivial also at $v$ by Hilbert reciprocity. Therefore thanks to the local-global principle for Brauer classes (given that $\mathbb{F}_2 \simeq \mu_2$) we obtain that the class is trivial globally. Let $\pi_v(1)$ be the character so obtained. 

We let $\gamma_{\mathfrak{q}}$ be a generator of $\mathfrak{q}^{\#\text{Cl}(K)}$ for each odd prime $\mathfrak{q}$. Because $\text{Cl}(K)[2] = \{0\}$, the quadratic character $\chi_{\gamma_{\mathfrak{q}}}$ ramifies at $\mathfrak{q}$ and potentially also at $2$. Using these characters, we twist the original character $\pi_v(1)$ to remove all the ramification outside of $v$ and $2$. The resulting character differs locally at $2$ from an unramified character via a quadratic character. Twisting with $\chi_2$ we ensure the resulting character comes from taking the square root of an element of $\mathcal{O}_{K_2}^\ast$ locally at $2$. After potentially twisting by $\chi_{-1}$, Proposition \ref{Hilbert symbols at K2} guarantees that the resulting character $\pi_v$ has the shape
$$
\pi_v|_{\mathcal{G}_{K_2}} = \pi_v(\textup{unr}) + \lambda \cdot \chi_\epsilon
$$
for some $\lambda \in \mathbb{F}_2$. We claim that $\lambda = 1$. But indeed, if not, then $\pi_v$ gives an abelian $2$-power extension of $K$ unramified outside of $v$ of order $2^{f(v)}$, contradicting the definition of $\pi_v(0)$.
\end{proof}

For each $v \in \Omega_{\text{fin}}$, fix a $\pi_v$ as guaranteed by Proposition \ref{a totally ramified extension at 2}. Also fix
$$
\pi_{v_2(1)} := \log_2 \circ \pi_{\text{cyc}}: \mathcal{G}_K \to \mathbb{Z}_2,
$$
where $\pi_{\text{cyc}}$ is the cyclotomic character from $\mathcal{G}_K$ to $1 + 4 \cdot \mathbb{Z}_2 = \frac{\Z_2^{\ast}}{\langle -1 \rangle}$ with this last identification coming from the natural projection map. Since $K = \Q(\sqrt{-p})$ with $p$ odd, $K$ is linearly disjoint from the unique $\mathbb{Z}_2$-extension of $\mathbb{Q}$. Therefore we deduce that $\pi_{v_2(1)}$ is \emph{surjective}. 

Recall that we fixed any surjective character $\pi_{v_2(2)}: \mathcal{G}_K \twoheadrightarrow \mathbb{Z}_2$ that reduced modulo $2$ coincides with $\chi_{-1}$: it is actually not hard to use the fact that $\text{Cl}(K)[2] = \{0\}$ to deduce that the largest pro-$2$ extension of $K$ unramified outside of $2$ has Galois group isomorphic to the free pro-$2$ group on two generators, which in particular implies the existence of this extension. 

Since $K$ is quadratic and unramified at $2$, we have that the two characters $\chi_2$, $\chi_{-1}$ are still linearly independent over $K$. Hence, by Nakayama's lemma, we have that the pair $(\pi_{v_2(1)},\pi_{v_2(2)})$ defines a surjective homomorphism onto $\mathbb{Z}_2^2$. The corresponding field is both the unique $\Z_2^2$-extension of $K$ as well as the largest pro-$2$ abelian extension of $K$ unramified outside of $2$. We can now package this construction into a single continuous homomorphism
$$
\pi_\bullet: \mathcal{G}_K \to \prod_{v \in \Omega} \mathbb{Z}/2^{f(v)}\mathbb{Z}.
$$

\begin{proposition} 
\label{describing abelianization at 2}
The homomorphism $\pi_\bullet$ is surjective. Furthermore, $\pi_\bullet$ factors through $\mathcal{G}_K(2)$ and induces an isomorphism between $\mathcal{G}_K(2)/[\mathcal{G}_K(2), \mathcal{G}_K(2)]$ and $\prod_{v \in \Omega} \mathbb{Z}/2^{f(v)}\mathbb{Z}$.
\end{proposition}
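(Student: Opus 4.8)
The plan is to transcribe the proof of Proposition \ref{describing abelianization} almost verbatim, adjusting only for the local behaviour at $2$. As there, it suffices to prove the final assertion: surjectivity of $\pi_\bullet$ follows from it, and the facts that $\pi_\bullet$ factors through $\mathcal{G}_K(2)$ and kills $[\mathcal{G}_K(2),\mathcal{G}_K(2)]$ are immediate since the target is an abelian pro-$2$ group (each $\pi_v$ with $v\in\Omega_{\text{fin}}$ has $2$-power order and $\pi_{v_2(1)},\pi_{v_2(2)}$ are pro-$2$). By Pontryagin duality it then remains to show that the dual map, sending a finitely supported tuple $(\gamma_v)_{v\in\Omega}$ to the character $\sum_{v\in\Omega}\gamma_v\cdot\pi_v$ of $\mathcal{G}_K$, is an isomorphism onto the group of $2$-power-order characters of $\mathcal{G}_K$.

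For injectivity, suppose $\sum_{v\in S}\gamma_v\cdot\pi_v=0$ for a finite $S\subseteq\Omega$; I must show $2^{f(v)}\mid\gamma_v$ for each $v$. If $v\in\Omega_{\text{fin}}\cap S$, restrict the relation to an inertia subgroup $I_v$ at $v$: by Proposition \ref{a totally ramified extension at 2} every other $\pi_w$ with $w\in\Omega_{\text{fin}}$ is unramified at $v$, and $\pi_{v_2(1)},\pi_{v_2(2)}$ are unramified at the odd place $v$, so only $\gamma_v\cdot\pi_v|_{I_v}$ survives; since $\pi_v$ is totally ramified at $v$ its restriction to $I_v$ is onto $\mathbb{Z}/2^{f(v)}\mathbb{Z}$, forcing $2^{f(v)}\mid\gamma_v$. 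Removing these terms leaves a relation between $\pi_{v_2(1)}$ and $\pi_{v_2(2)}$, which are $\mathbb{Z}_2$-linearly independent because the pair $(\pi_{v_2(1)},\pi_{v_2(2)})$ was arranged (via Nakayama, as recorded just before the statement) to be surjective onto $\mathbb{Z}_2^2$.

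For surjectivity, let $\chi$ be any $2$-power-order character of $\mathcal{G}_K$ and let $v_1,\dots,v_k$ be the odd finite places at which $\chi$ ramifies. Each $\chi|_{I_{v_i}}$ is tamely ramified, hence factors through the cyclic $2$-part of $(\mathcal{O}_K/v_i)^\ast$, on which $\pi_{v_i}$ restricts onto all of $\mathbb{Z}/2^{f(v_i)}\mathbb{Z}$; so there is $c_i$ with $(\chi-c_i\pi_{v_i})|_{I_{v_i}}=0$. Since each $\pi_{v_j}$ with $j\ne i$ is unramified at $v_i$, the character $\chi':=\chi-\sum_i c_i\pi_{v_i}$ is unramified at every odd place, i.e.\ unramified outside $2$. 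But the largest abelian pro-$2$ extension of $K$ unramified outside $2$ is exactly the $\mathbb{Z}_2^2$-extension cut out by $(\pi_{v_2(1)},\pi_{v_2(2)})$, so $\chi'$ is pulled back from a character of that $\mathbb{Z}_2^2$ and thus lies in the subgroup generated by $\pi_{v_2(1)}$ and $\pi_{v_2(2)}$; hence $\chi$ lies in the image of the dual map.

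The only genuine deviation from the odd case — and the point to be careful about — is that here each $\pi_v$ with $v$ odd is \emph{also} ramified at $2$, through its $\chi_\epsilon$-component, so the ``peel off one ramified prime and induct on the size of the ramification locus'' bookkeeping of Proposition \ref{describing abelianization} no longer literally works (subtracting $c_i\pi_{v_i}$ may create ramification at $2$). The fix, as above, is to clear the ramification at \emph{all} odd places in one step and only then invoke the explicit description of the maximal abelian pro-$2$ extension unramified outside $2$; everything else is a routine substitution of Proposition \ref{a totally ramified extension at 2} for Proposition \ref{a totally ramified extension} and of the $\mathbb{Z}_2^2$-computation for the $\mathbb{Z}_\ell$-linear independence of the cyclotomic and anti-cyclotomic characters. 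I do not expect any serious obstacle beyond this.
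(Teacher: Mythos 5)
Your proposal is correct and is essentially the adaptation the paper has in mind: its proof of Proposition \ref{describing abelianization at 2} is simply ``a straightforward adaptation'' of Proposition \ref{describing abelianization}, and your write-up (inertia restriction for injectivity, clearing ramification at odd places and then invoking that the maximal abelian pro-$2$ extension unramified outside $2$ is the $\Z_2^2$-extension cut out by $(\pi_{v_2(1)},\pi_{v_2(2)})$) is exactly that adaptation. Your observation that the $\chi_\epsilon$-component of $\pi_v$ forces one to clear the odd ramification first and treat the place $2$ globally at the end is the right way to handle the only genuine difference from the odd case.
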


\begin{proof}
This is a straightforward adaptation of the proof of Proposition \ref{describing abelianization}.
\end{proof}

For each $w \in \Omega_{\text{fin}}$, let us put
$$
q(w) := \# \mathcal{O}_K/w.
$$
We write $g(w)$ for the unique element of $(\mathbb{Z}/2^{f(w)}\mathbb{Z})^\ast$ satisfying
$$
q(w) \equiv 1+ 2^{f(w)} \cdot g(w) \bmod 2^{2f(w)}.
$$

\begin{proposition}
\label{Galois groups are ok at 2}
The pair $\mathcal{S}_K := (\mathcal{G}_K^2(2), \pi_\bullet)$ is a $2$-nilpotent pro-$2$ group on $(\Omega, f, g)$.  
\end{proposition}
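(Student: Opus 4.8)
The plan is to verify the four conditions (C1)--(C4) of Definition \ref{def: Rado groups at 2} in turn, following the proof of Proposition \ref{Galois groups are ok} for odd $\ell$ and indicating where the residue characteristic $2$ forces changes. Conditions (C1) and (C2) are immediate: $\mathcal{G}_K^2(2)$ is by construction a $2$-nilpotent pro-$2$ group, and (C2) is precisely the content of Proposition \ref{describing abelianization at 2}.

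For (C3), fix distinct $v,w \in \Omega$. As in the odd case I would first record the local behaviour of the relevant classes. The class $\pi_v \cup \pi_w$ is locally trivial at every odd place different from $v$ and $w$, since at such a place both characters are unramified and the restriction is inflated from $H^2(\hat{\mathbb{Z}},-)=0$; here, in contrast to the odd case, I make no claim at the place above $2$, as the $\pi_v$ with $v \in \Omega_{\textup{fin}}$ are ramified there by Proposition \ref{a totally ramified extension at 2} and $H^2(\mathcal{G}_{K_2},\mathbb{Z}/2^n\mathbb{Z})$ no longer vanishes. Exactly as in Step 2 of the odd proof, using $\mathcal{G}_{K_v}^{\textup{ab}}(2) \cong \mathbb{Z}_2 \times \mathbb{Z}/2^{f(v)}\mathbb{Z}$ and the presence of a primitive $2^{f(v)}$-th root of unity in $K_v$, I would show that $\tilde{\theta}_v$ restricts to a generator of $H^2(\mathcal{G}_{K_v},\mathbb{Z}/2^n\mathbb{Z})$ for all $n \le f(v)$ and is locally trivial away from $v$. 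Choosing $\lambda(w,v),\lambda(v,w)$ so as to kill $\pi_v\cup\pi_w$ locally at $v$ and at $w$ respectively, the class $\pi_v\cup\pi_w + \lambda(w,v)\tilde{\theta}_v + \lambda(v,w)\tilde{\theta}_w$ becomes locally trivial at all odd places, whereupon Lemma \ref{lLocalGlobal} — which replaces the combination of the Hasse principle and the vanishing at places above $\ell$ used in the odd case, and which crucially exploits that $2$ is inert in $K$ so that one is outside the Grunwald--Wang exceptional case — yields that it vanishes in $H^2(\mathcal{G}_K,\mathbb{Z}/2^{f(v,w)}\mathbb{Z})$, hence in $H^2(\mathcal{G}_K^2(2),\mathbb{Z}/2^{f(v,w)}\mathbb{Z})$ by the universal properties of the pro-$2$ completion and of the maximal $2$-nilpotent quotient. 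Since $H^2(\mathbb{Z}/2^{f(v)}\mathbb{Z}\times\mathbb{Z}/2^{f(w)}\mathbb{Z},\mathbb{Z}/2^{f(v,w)}\mathbb{Z})$ is generated by $\pi_v\cup\pi_w,\theta_v,\theta_w$ by equation (\ref{eExtWedge}), this gives (C3); the cases with one or both of $v,w$ in $\{v_2(1),v_2(2)\}$ go verbatim with the convention $\tilde{\theta}_{v_2(i)}=0$, taking an inverse limit over $n$ whenever an infinite value of $f$ appears.

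For (C4) there are two conditions. The first displayed condition (imposed only when $f(v)\ge 2$) is obtained by the commutator computation at the end of the proof of Proposition \ref{Galois groups are ok}: pushing $[\textup{Frob}_v,\sigma_v]=\sigma_v^{q(v)-1}$ through the homomorphism from $\mathcal{G}_K$ to the central extension cut out by $\pi_{v_2(1)}\cup\pi_v + \lambda(v_2(1),v)\tilde{\theta}_v$ and evaluating it once as a determinant and once as the power-by-$(q(v)-1)$ map, one gets $\lambda(v_2(1),v)=\pi_{v_2(1)}(\textup{Frob}_v)=\log_2(q(v))\equiv \log_2(1+2^{f(v)}g(v))\bmod 2^{f(v)}$, the congruence using $q(v)\equiv 1+2^{f(v)}g(v)\bmod 2^{2f(v)}$ and $2f(v)\ge f(v)+2$ (which is exactly why the condition is imposed only for $f(v)\ge 2$, the regime in which $\log_2$ is defined on $q(v)$). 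For the second condition, Proposition \ref{prop: defining the invariant at 2} applied to the pair $(v_2(2),v)$ — with $\tilde{\theta}_{v_2(2)}=0$ — gives $\pi_{v_2(2)}\cup\pi_v=-\lambda(v_2(2),v)\tilde{\theta}_v$ in $H^2(\mathcal{G}_K^2(2),\mathbb{Z}/2^{f(v)}\mathbb{Z})$, so reducing coefficients modulo $2$
\[
(\textup{red}_2\circ\pi_{v_2(2)})\cup(\textup{red}_2\circ\pi_v) = -\,\overline{\lambda(v_2(2),v)}\cdot\textup{red}_2(\tilde{\theta}_v) \quad \textup{in } H^2(\mathcal{G}_K^2(2),\mathbb{F}_2).
\]
As noted in the verification of (C3), $\textup{red}_2(\tilde{\theta}_v)$ restricts to a generator of $H^2(\mathcal{G}_{K_v},\mathbb{F}_2)\cong\mathbb{Z}/2\mathbb{Z}$, so it is nonzero in $H^2(\mathcal{G}_K,\mathbb{F}_2)$ and hence in $H^2(\mathcal{G}_K^2(2),\mathbb{F}_2)$; therefore the left-hand side vanishes if and only if $\overline{\lambda(v_2(2),v)}=0$. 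Finally I would show the latter holds exactly when $f(v)\ge 2$ by computing the Brauer class $(\textup{red}_2\circ\pi_{v_2(2)})\cup(\textup{red}_2\circ\pi_v)$ over $\mathcal{G}_K$ place by place: it is trivial at all odd places other than $v$; at $v$ it is the Hilbert symbol $(-1,\cdot)_v$ against an element of odd $v$-adic valuation (as $\textup{red}_2\circ\pi_v$ is ramified at $v$), which is trivial if and only if $-1$ is a square modulo $v$, i.e. if and only if $q(v)\equiv 1\bmod 4$, i.e. if and only if $f(v)\ge 2$; and at the prime above $2$ it equals its value at $v$ by Hilbert reciprocity. Combining this with the displayed identity and the nonvanishing of $\textup{red}_2(\tilde{\theta}_v)$ forces $\overline{\lambda(v_2(2),v)}=0$ precisely when $f(v)\ge 2$, which is exactly (C4)(b).

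The main obstacle is condition (C4)(b), which has no counterpart in the odd case, together with the fact that $H^2(\mathcal{G}_{K_2},\mathbb{Z}/2^n\mathbb{Z})$ is now nontrivial, so that the clean ``everything is locally trivial above $\ell$'' step of the odd proof is unavailable; its correct replacement is the local-to-global Lemma \ref{lLocalGlobal}, whose validity rests on Proposition \ref{local cohomology at 2}, on Hilbert reciprocity with $\mathbb{F}_2$-coefficients, and on $2$ being inert in $K$ (avoiding Grunwald--Wang). The genuinely delicate bookkeeping is then the Hilbert-symbol analysis at the prime above $2$, where Proposition \ref{Hilbert symbols at K2} is the relevant input, and the matching of the ``$\Longleftrightarrow f(v)\ge 2$'' dichotomy in (C4)(b) with the residue-field condition $q(v)\equiv 1\bmod 4$ at $v$.
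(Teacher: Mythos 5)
Your proposal is correct, and for conditions (C1), (C2), (C3) and the first half of (C4) it follows the paper's argument essentially verbatim: same local analysis at odd places, same generator property of $\tilde{\theta}_v$ at $v$, same appeal to Lemma \ref{lLocalGlobal} to replace the odd-$\ell$ vanishing of local cohomology above $\ell$, and the same Frobenius--inertia commutator computation for the first displayed condition in (C4). Where you genuinely diverge is the second condition of (C4). The paper dispatches it in one line by running the same commutator computation once more: $\lambda(v_2(2),w) \equiv \pi_{v_2(2)}(\textup{Frob}_w) \bmod 2^{f(w)}$, whose reduction modulo $2$ is $\chi_{-1}(\textup{Frob}_w)$, and this vanishes precisely when $q(w) \equiv 1 \bmod 4$, i.e. $f(w) \geq 2$. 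You instead express the cup product as $-\overline{\lambda(v_2(2),v)}\cdot\textup{red}_2(\tilde{\theta}_v)$ via Proposition \ref{prop: defining the invariant at 2} and then evaluate the Brauer class place by place, using the Hilbert symbol $(-1,\cdot)_v$ against a uniformizer-like element at $v$ together with Hilbert reciprocity to control the contribution at $2$. Both arguments are valid and reach the same dichotomy $q(v) \equiv 1 \bmod 4 \Longleftrightarrow f(v) \geq 2$; the paper's route is shorter because the cocycle machinery is already in place, while yours is closer in spirit to the Hilbert-symbol bookkeeping the paper postpones to Corollary \ref{Galois groups are reciprocity groups}, and has the advantage of making explicit why the local splitting behaviour of $-1$ at $v$ is what governs the condition. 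Your added remarks — that the congruence in (C4)(a) needs $2f(v) \geq f(v)+2$, hence the restriction to $f(v) \geq 2$, and that the archimedean place of the imaginary quadratic $K$ contributes nothing to reciprocity — are accurate and fill in details the paper leaves implicit.
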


\begin{proof}
The proof of properties (C1) and (C2) goes precisely as in the proof of Proposition \ref{Galois groups are ok}. We will now explain the relevant changes in the three steps of the verification of (C3).

\emph{Step 1:} Arguing as in Proposition \ref{Galois groups are ok}, we observe that $\pi_v \cup \pi_w$ is locally trivial at all places different from $v, w$ and $2$. 

\emph{Step 2:} Take $v \in \Omega_{\text{fin}}$. Firstly, let us observe that $\tilde{\theta}_v$ is trivial at all places different from $v$ and $2$, since it could be ramified only at $v$ and at $2$. Following the proof of Proposition \ref{Galois groups are ok}, we see that the restriction of $\tilde{\theta}_v$ to $\mathcal{G}_{K_v}$ is actually a \emph{generator} of $H^2(\mathcal{G}_{K_v}, \mathbb{Z}/2^{f(v)}\mathbb{Z})$.

\emph{Step 3:} We will distinguish cases. First consider the case that both $v, w \in \Omega_{\text{fin}}$. Following the proof of Proposition \ref{Galois groups are ok}, we find $\lambda(w, v)$ and $\lambda(v, w)$ such that 
$$
c := \pi_v \cup \pi_w + \lambda(w, v) \cdot \tilde{\theta}_v + \lambda(v, w) \cdot \tilde{\theta}_w
$$
is locally trivial at all places different from $2$. Therefore $c$ is globally trivial by Lemma \ref{lLocalGlobal}. Arguing as in Proposition \ref{Galois groups are ok}, we conclude that $c$ also vanishes in
$$
H^2(\mathcal{G}_K^2(2), \mathbb{Z}/2^{f(v, w)}\mathbb{Z}).
$$
As explained in Subsection \ref{intermezzo}, the cohomology group $H^2(\mathbb{Z}/2^{f(v)}\Z \times \mathbb{Z}/2^{f(w)}\mathbb{Z}, \mathbb{Z}/2^{f(v, w)}\mathbb{Z})$ is generated by $\pi_v \cup \pi_w$, $\theta_v$ and $\theta_w$, hence we have obtained the desired conclusion. In case $v$ or $w$ is not in $\Omega_{\text{fin}}$, the proof follows precisely the same lines.

It remains to sketch the necessary modifications for (C4). Following Proposition \ref{Galois groups are ok}, we conclude that $\lambda(v_2(1), w)$ equals the reduction of $\pi_{v_2(1)}(\text{Frob}_w)$ modulo $2^{f(w)}$. This is in turn equal to $\log_2(q(w)) \equiv \log_2(1+ 2^{f(w)} \cdot g(w)) \bmod 2^{f(w)}$ in case $f(w) \geq 2$ as desired. Finally, by the same argument, the label $\lambda(v_2(2), w)$ is congruent to $\pi_{v_2(2)}(\text{Frob}_w)$ modulo $2^{f(w)}$, which reduced modulo $2$ is precisely $\chi_{-1}(\text{Frob}_w)$. This vanishes if and only if $f(w) \geq 2$, completing the proof.
\end{proof}

We are now ready to compute the labels of the graph $\Phi_{\mathcal{S}_K}$.

\begin{proposition} 
\label{Computing labels at 2}
Let $v, w \in \Omega$ with $v < w$. Suppose that $w \in \Omega_{\textup{fin}}$. Then
$$
\lambda_{\mathcal{S}_K}(v, w) = \pi_v(\textup{Frob}_w). 
$$
Further suppose that $v \in \Omega_{\textup{fin}}$. Then
$$ 
\lambda_{\mathcal{S}_K}(w, v) = -\pi_w(\textup{Frob}_v). 
$$
\end{proposition}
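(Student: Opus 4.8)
The plan is to mirror the argument at the end of Proposition \ref{Galois groups are ok at 2}, extended from the single place $v_2(1)$ to a general $v<w$, exactly as Proposition \ref{Computing labels} does in the odd case. First I would fix, as in the proof of Proposition \ref{Galois groups are ok at 2}, a generator $\sigma_w$ of tame inertia at $w$ inside $\mathcal{G}_{K_w}(2)$ normalized so that $\pi_w(\sigma_w)=1$, together with a Frobenius lift $\text{Frob}_w$, and recall the fundamental relation $[\text{Frob}_w,\sigma_w]=\sigma_w^{q(w)-1}$. By Proposition \ref{prop: defining the invariant at 2} the class
$$
\theta := \pi_v \cup \pi_w + \lambda_{\mathcal{S}_K}(w, v) \cdot \tilde{\theta}_v + \lambda_{\mathcal{S}_K}(v, w) \cdot \tilde{\theta}_w
$$
vanishes in $H^2(\mathcal{G}_K^2(2), \mathbb{Z}/2^{f(v, w)}\mathbb{Z})$, hence the corresponding central extension is realized by a continuous epimorphism
$$
\mathcal{G}_K \to \left(\mathbb{Z}/2^{f(v, w)}\mathbb{Z} \times (\mathbb{Z}/2^{f(v)}\mathbb{Z} \times \mathbb{Z}/2^{f(w)}\mathbb{Z}), *_\theta\right)
$$
with group law as in equation (\ref{eCocycleGroup}). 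I would then evaluate the identity $[\text{Frob}_w,\sigma_w]=\sigma_w^{q(w)-1}$ under this homomorphism in two ways, just as in Proposition \ref{Galois groups are ok at 2}.

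For the left-hand side, the commutator pairing of the central extension is computed by the map from $H^2$ to $\text{Hom}(\wedge^2 A, B)$ in the split exact sequence (\ref{eExtWedge}) with $A = \mathbb{Z}/2^{f(v)}\mathbb{Z}\times\mathbb{Z}/2^{f(w)}\mathbb{Z}$ and $B=\mathbb{Z}/2^{f(v,w)}\mathbb{Z}$; under this map the abelian classes $\tilde\theta_v,\tilde\theta_w$ die, and $\pi_v\cup\pi_w$ maps to the determinant pairing. Thus the image of $[\text{Frob}_w,\sigma_w]$ in the center is the determinant of the matrix with rows $(\pi_v(\text{Frob}_w),\pi_v(\sigma_w))$ and $(\pi_w(\text{Frob}_w),\pi_w(\sigma_w)) = (\pi_w(\text{Frob}_w),1)$; since $\pi_v(\sigma_w)=0$ because the extension cut out by $\pi_v$ is unramified at $w\neq v$, this determinant equals $\pi_v(\text{Frob}_w)$ reduced modulo $2^{f(v,w)}$. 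For the right-hand side, I would use the homomorphism from $H^2(A,B)$ to $\text{Hom}(\mathbb{Z}/2^{f(w)}\mathbb{Z},\mathbb{Z}/2^{f(w)}\mathbb{Z})$ sending a class to the map $a\mapsto$ (lift $(0,a)$ and raise to the $(q(w)-1)$-st power), which lands in the center; on this map $\pi_v\cup\pi_w$ goes to zero while $\tilde\theta_w$ sends $1$ to $1$ by our normalization of $\theta_w$ (and $\tilde\theta_v$ contributes nothing since it involves the $v$-coordinate). Comparing the two computations yields $\lambda_{\mathcal{S}_K}(v,w)=\pi_v(\text{Frob}_w)$.

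For the second assertion, I would run the same argument at the place $v\in\Omega_{\text{fin}}$, using $[\text{Frob}_v,\sigma_v]=\sigma_v^{q(v)-1}$ with $\pi_v(\sigma_v)=1$ and $\pi_w(\sigma_v)=0$. Now the relevant determinant is that of the matrix with first column $(\pi_v(\text{Frob}_v),\pi_w(\text{Frob}_v))$ and second column $(\pi_v(\sigma_v),\pi_w(\sigma_v))=(1,0)$, whose value is $-\pi_w(\text{Frob}_v)$; matching against the $\tilde\theta_v$-term as before gives $\lambda_{\mathcal{S}_K}(w,v)=-\pi_w(\text{Frob}_v)$. In fact the entire computation is formally identical to the one already carried out in Proposition \ref{Computing labels} for odd $\ell$: the only ingredients that differ for $\ell=2$ are the local triviality facts established in Step 1 of Proposition \ref{Galois groups are ok at 2} and the local-global principle of Lemma \ref{lLocalGlobal}, both of which are already in hand, so the proof can be stated as "the argument is verbatim the same as that of Proposition \ref{Computing labels}, using Proposition \ref{prop: defining the invariant at 2} and Proposition \ref{Galois groups are ok at 2} in place of their odd-$\ell$ counterparts." The only point requiring a moment's care — and the mild obstacle — is checking that all the cohomological bookkeeping (the vanishing of $\tilde\theta_v$ under the relevant pairings, the normalization of $\theta_w$ mapping $1\mapsto 1$) goes through unchanged at $2$, which it does because Subsection \ref{intermezzo} was written to allow $\ell=2$.
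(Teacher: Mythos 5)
Your proposal is correct and is essentially the paper's own argument: the paper simply declares the proof identical to that of Proposition \ref{Computing labels}, which is exactly the computation you carry out, with Proposition \ref{prop: defining the invariant at 2} and the $\ell=2$ cohomological facts substituting for their odd-$\ell$ counterparts. No gaps.
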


\begin{proof}
Identical to the proof of Proposition \ref{Computing labels}. 
\end{proof}

We finally show that our group is a reciprocity group. 

\begin{corollary} 
\label{Galois groups are reciprocity groups}
The group $\mathcal{S}_K$ is a reciprocity $2$-nilpotent pro-$2$ group on $(\Omega, f, g)$.
\end{corollary}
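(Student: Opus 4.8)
The goal is to verify that for all $v, w \in \Omega_{\text{fin}}$ one has the reciprocity relation
$$
\lambda_{\mathcal{S}_K}(v, w) \equiv \lambda_{\mathcal{S}_K}(w, v) + \lambda_{\mathcal{S}_K}(v_2(2), v) \cdot \lambda_{\mathcal{S}_K}(v_2(2), w) \bmod 2.
$$
By Proposition \ref{Computing labels at 2}, after reducing modulo $2$ this is equivalent to
$$
\pi_v(\text{Frob}_w) \equiv -\pi_w(\text{Frob}_v) + \chi_{-1}(\text{Frob}_v) \cdot \chi_{-1}(\text{Frob}_w) \bmod 2,
$$
where I have used that $\lambda_{\mathcal{S}_K}(v_2(2), v) \equiv \pi_{v_2(2)}(\text{Frob}_v) \equiv \chi_{-1}(\text{Frob}_v) \bmod 2$ (and similarly for $w$), as established in the proof of Proposition \ref{Galois groups are ok at 2}. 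Since everything is now modulo $2$, the minus sign disappears, so the plan is to prove the symmetric identity
$$
\chi_v(\text{Frob}_w) + \chi_w(\text{Frob}_v) = \chi_{-1}(\text{Frob}_v) \cdot \chi_{-1}(\text{Frob}_w) \quad \text{in } \mathbb{F}_2,
$$
where $\chi_v := \text{red}_2 \circ \pi_v$ is the quadratic character attached to $\pi_v$, and similarly $\chi_w$.

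The key tool is Hilbert reciprocity: for the two quadratic characters $\chi_v$ and $\chi_w$, the sum over all places $\mathfrak{l}$ of $K$ of the local Hilbert symbols $(\chi_v, \chi_w)_{\mathfrak{l}}$ vanishes in $\mathbb{F}_2$. First I would recall how the local symbol translates into Frobenius evaluations: at a place $\mathfrak{l} \notin \{v, w, 2\}$ both characters are unramified, so $(\chi_v,\chi_w)_{\mathfrak{l}} = 0$. At $\mathfrak{l} = w$, the character $\chi_w$ is ramified (totally ramified at $w$ by Proposition \ref{a totally ramified extension at 2}) while $\chi_v$ is unramified, and the standard formula gives $(\chi_v, \chi_w)_w = \chi_v(\text{Frob}_w)$ (the value of the unramified character on Frobenius, using that $\chi_w$ corresponds to a uniformizer up to the relevant normalization and that $\alpha_w$ has $w$-adic valuation coprime to $2$, hence odd). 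Symmetrically $(\chi_v, \chi_w)_v = \chi_w(\text{Frob}_v)$. So Hilbert reciprocity collapses to
$$
\chi_v(\text{Frob}_w) + \chi_w(\text{Frob}_v) + (\chi_v, \chi_w)_2 = 0 \quad \text{in } \mathbb{F}_2,
$$
and it remains to compute the single local symbol at $2$.

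This is where Proposition \ref{Hilbert symbols at K2} does the work. By Proposition \ref{a totally ramified extension at 2}, locally at $2$ we have $\chi_v|_{\mathcal{G}_{K_2}} = \chi_v(\text{unr}) + \chi_\epsilon$ and likewise $\chi_w|_{\mathcal{G}_{K_2}} = \chi_w(\text{unr}) + \chi_\epsilon$, where $\epsilon = 2+\sqrt{5}$. Using bilinearity of the Hilbert symbol at $2$ and the fact that the unramified quadratic character pairs trivially with any unit character (both $\epsilon$ and the units giving $\chi_v(\text{unr})$, $\chi_w(\text{unr})$ lie in $\mathcal{O}_{K_2}^\ast$), the only surviving term is $(\epsilon, \epsilon)_2$, which by the matrix in Proposition \ref{Hilbert symbols at K2} equals $1$. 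Hence $(\chi_v,\chi_w)_2 = 1$ in $\mathbb{F}_2$, giving $\chi_v(\text{Frob}_w) + \chi_w(\text{Frob}_v) = 1$. Finally I need to match this with the right-hand side: since $f(w) \geq 2$ is equivalent to $q(w) \equiv 1 \bmod 4$, i.e. to $\chi_{-1}(\text{Frob}_w) = 0$ (the prime $w$ splits in $K(\sqrt{-1})$), and correspondingly for $v$, the product $\chi_{-1}(\text{Frob}_v)\chi_{-1}(\text{Frob}_w)$ equals $1$ precisely when $f(v) = f(w) = 1$. One checks that in exactly this case the normalization of $\chi_v(\text{unr})$ and $\chi_w(\text{unr})$ forces the extra contribution, whereas when at least one of $f(v), f(w)$ is $\geq 2$ the symbol at $2$ contributes differently; more cleanly, I would observe that $\chi_{-1}$ restricted to $\mathcal{G}_{K_2}$ is itself (up to the unramified part) the character $\chi_{-1}$ from Proposition \ref{Hilbert symbols at K2}, and recompute: the honest statement is that the ramified part of $\chi_v$ at $2$ is $\chi_\epsilon$ only when $f(v)=1$ and is $\chi_\epsilon + (\text{something involving } \chi_{-1})$ otherwise, so that $(\chi_v,\chi_w)_2$ genuinely equals $\chi_{-1}(\text{Frob}_v)\chi_{-1}(\text{Frob}_w) + 1$, and Hilbert reciprocity then yields the claim. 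The main obstacle is precisely this bookkeeping at the prime $2$: pinning down the exact ramified local component of $\pi_v$ at $2$ as a function of the parity of $f(v)$ and feeding it through the Hilbert symbol matrix of Proposition \ref{Hilbert symbols at K2}; once that computation is done correctly, the rest is Hilbert reciprocity together with Proposition \ref{Computing labels at 2}.
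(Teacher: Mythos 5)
Your overall strategy coincides with the paper's: reduce the reciprocity relation via Proposition \ref{Computing labels at 2} to the congruence $\chi_v(\textup{Frob}_w) + \chi_w(\textup{Frob}_v) \equiv \chi_{-1}(\textup{Frob}_v)\cdot\chi_{-1}(\textup{Frob}_w) \bmod 2$, apply Hilbert reciprocity to the quadratic characters $\chi_v = \textup{red}_2 \circ \pi_v$, $\chi_w = \textup{red}_2 \circ \pi_w$, identify the symbols at $v$ and $w$ with the Frobenius values, and evaluate the remaining symbol at $2$ using Propositions \ref{a totally ramified extension at 2} and \ref{Hilbert symbols at K2}. But the decisive step --- the value of $(\chi_v,\chi_w)_2$ --- is exactly where your argument breaks down, and you acknowledge as much ("once that computation is done correctly\dots"). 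Your unconditional claim that the only surviving term is $(\epsilon,\epsilon)_2$, so that $(\chi_v,\chi_w)_2 = 1$, is only valid when $f(v) = f(w) = 1$; and your attempted repair is wrong on two counts. First, the "honest statement" you propose contradicts Proposition \ref{a totally ramified extension at 2}: the ramified local component of $\pi_v$ at $2$ is \emph{always} exactly $\chi_\epsilon$ (there is no extra term involving $\chi_{-1}$ when $f(v) \geq 2$). The correct mechanism, which the paper uses, is that $\chi_\epsilon$ occurs as the order-$2$ part of $\pi_v|_{\mathcal{G}_{K_2}}$ inside $\Z/2^{f(v)}\Z$, so it dies under reduction modulo $2$ whenever $f(v) \geq 2$; hence $\chi_v$ is \emph{unramified} at $2$ in that case, and an unramified quadratic character pairs trivially with every unit class, giving $(\chi_v,\chi_w)_2 = 0$ whenever $\max(f(v), f(w)) \geq 2$, while the computation you did gives $(\chi_v,\chi_w)_2 = 1$ when $f(v) = f(w) = 1$.

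Second, your final formula $(\chi_v,\chi_w)_2 = \chi_{-1}(\textup{Frob}_v)\chi_{-1}(\textup{Frob}_w) + 1$ is off by one: since Hilbert reciprocity gives $\chi_v(\textup{Frob}_w) + \chi_w(\textup{Frob}_v) = (\chi_v,\chi_w)_2$ in $\mathbb{F}_2$, the identity you need is precisely $(\chi_v,\chi_w)_2 = \chi_{-1}(\textup{Frob}_v)\chi_{-1}(\textup{Frob}_w)$ (both sides equal $1$ exactly when $f(v) = f(w) = 1$, i.e.\ $q_v \equiv q_w \equiv 3 \bmod 4$); with your "$+1$" the argument would prove the negation of the reciprocity relation. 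So the gap is genuine: the case analysis at the prime $2$, which is the entire content of the corollary, is left incomplete, and the patch you sketch would not close it. Supplying the observation about the mod-$2$ reduction being unramified for $f \geq 2$, together with the fact that unramified characters pair trivially with units, is exactly what the paper's two-case computation does and is what your write-up is missing.
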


\begin{proof}
Thanks to Proposition \ref{Computing labels at 2}, this comes down to the equation
$$
\pi_v(\text{Frob}_w) \equiv \pi_w(\text{Frob}_v) + \frac{q_v - 1}{2}\cdot \frac{q_w - 1}{2} \bmod 2
$$
for $v, w \in \Omega_{\text{fin}}$. Let us denote by $\chi_v$ and by $\chi_w$ the reduction modulo $2$ of $\pi_v$ and $\pi_w$ respectively. By definition we have that $\pi_v(\text{Frob}_w) \equiv \chi_v(\text{Frob}_w) \bmod 2$ and $\pi_w(\text{Frob}_v) \equiv \chi_w(\text{Frob}_v) \bmod 2$. These quadratic characters are unramified outside of $v,w$ and $2$. Hence Hilbert reciprocity yields
$$
(\chi_v, \chi_w)_v + (\chi_v, \chi_w)_w + (\chi_v, \chi_w)_2 = 0.
$$
The first two terms are precisely $\chi_w(\text{Frob}_v)$ and $\chi_v(\text{Frob}_w)$ respectively. Assuming that $\max(f(v), f(w)) \geq 2$, then one of these two characters is unramified locally at $2$ and the other is given by the square root of a unit locally at $2$ thanks to Proposition \ref{a totally ramified extension at 2}. Therefore the Hilbert symbol vanishes thanks to Proposition \ref{Hilbert symbols at K2}. Likewise, precisely owing to the fact that $\max(f(v), f(w)) \geq 2$, we see that $\frac{q_v - 1}{2} \cdot \frac{q_w - 1}{2}$ vanishes modulo $2$, as desired. This proves the desired conclusion when $\max(f(v), f(w)) \geq 2$. 

Let us now examine the case that $f(v) = f(w) = 1$. Proposition \ref{a totally ramified extension at 2} yields that 
$$
\chi_v = \pi_v|_{\mathcal{G}_{K_2}} = \chi_\epsilon + \pi_v(\text{unr}), \quad \chi_w = \pi_w|_{\mathcal{G}_{K_2}} = \chi_\epsilon + \pi_w(\text{unr}),
$$
where $\pi_v(\text{unr}), \pi_w(\text{unr})$ are now \emph{quadratic} unramified characters, while $\chi_{\epsilon}$ is the character described in Proposition \ref{Hilbert symbols at K2}. Proposition \ref{Hilbert symbols at K2} and bilinearity imply that
$$
(\chi_v, \chi_w)_2 = (\chi_\epsilon, \chi_\epsilon)_2 + (\chi_\epsilon, \pi_w(\text{unr}))_2 + (\pi_v(\text{unr}), \chi_\epsilon) + (\pi_v(\text{unr}), \pi_w(\text{unr})) = 1 + 0 + 0 + 0 = 1,
$$
precisely as desired.
\end{proof}

Following the construction of the field $K(S_0, m)$ in Section \ref{section: Galois groups l odd}, we find a slightly smaller Galois group, due to the presence of a $2$-nd root of unity in $K$. When applying Chebotarev, this gives precisely the extra constraints of a decorated \emph{reciprocity} graph. Hence following the proof of Corollary \ref{Galois groups are Rado}, we get our next result.

\begin{corollary} 
\label{Galois groups are Rado at 2}
We have that $\mathcal{S}_K$ is weakly Rado.  
\end{corollary}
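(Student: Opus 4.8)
The plan is to run the proof of Corollary~\ref{Galois groups are Rado} essentially verbatim, with the auxiliary field $K(S_0, m)$ of Section~\ref{section: Galois groups l odd} replaced by its analogue built from the characters $\pi_v$, $\pi_{v_2(1)}$, $\pi_{v_2(2)}$ of Section~\ref{section: Galois groups at 2}, and carrying along the single extra relation imposed by $\mu_2 = \{\pm 1\} \subseteq K$. Concretely, I would fix a finite set $S_0 \subseteq \Omega_{\text{fin}}$, put $S := \{v_2(1), v_2(2)\} \cup S_0$, and fix data $n \in \Z_{\geq 1}$, a generator $\alpha$ of $\Z/2^n\Z$, and a pair $((\lambda_s(1))_{s \in S}, (\lambda_s(2))_{s \in S})$ subject to all the constraints on the data required by Definition~\ref{def: weakly Rado at 2}, i.e.\ those of Definition~\ref{def:Rado at 2} (the parity dichotomy tying $n = 1$ to $\lambda_{v_2(2)}(2)$ being odd, the $\log_2$-normalization of $\lambda_{v_2(1)}(2)$ for $n \geq 2$, and the reciprocity congruences modulo $2$). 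The goal is then to produce $v \in \Omega - S$ and $\gamma \in (\Z/2^n\Z)^\ast$ with $f(v) = n$, $g(v) = \alpha$, $\lambda(v, s) = \gamma \cdot \lambda_s(1)$ and $\lambda(s, v) = \lambda_s(2)$ for all $s \in S$.

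Next I would introduce $K(S_0, n)$ as the compositum of $K(\textup{red}_{2^{2n}} \circ \pi_{v_2(1)})$, $K(\textup{red}_{2^n} \circ \pi_{v_2(2)})$, $K(\{\alpha_v^{1/2^{\min(n, f(v))}} : v \in S_0\})$ and $K(\{\pi_v : v \in S_0\})$, where $\alpha_v$ generates $\mathfrak{p}_v^{\#\text{Cl}(K)}$, together with a choice of $2^n$-th root of unity exactly as in Section~\ref{section: Galois groups l odd}. The one genuine novelty compared with the odd case is that these Kummer and ray-class layers are no longer linearly disjoint: since $\mu_2 \subseteq K$, the group $K^\ast/K^{\ast 2}$ carries the Hilbert symbol pairing and Hilbert reciprocity imposes exactly one $\mathbb{F}_2$-linear relation per pair of the chosen square classes. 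Hence $\textup{Gal}(K(S_0, n)/K)$ is not the full semidirect product
$$
\left(\prod_{v \in S_0} \Z/2^{\min(n, f(v))}\Z\right)^2 \rtimes \left((\Z/2^{2n}\Z)^\ast \times \Z/2^n\Z\right),
$$
but the closed subgroup it cuts out; the crucial claim, proved exactly as in Corollary~\ref{Galois groups are reciprocity groups} using the Hilbert symbol matrix of Proposition~\ref{Hilbert symbols at K2} to evaluate the local contributions at $2$, is that this subgroup is described by \emph{precisely} the reciprocity constraints occurring in Definition~\ref{def:Rado at 2}. I would then check, again as in Section~\ref{section: Galois groups l odd} and using Proposition~\ref{Hilbert symbols at K2}, that the natural analogue of the map $\pi(S_0, m)$ surjects onto this subgroup.

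With this set up, the endgame is formally identical to Corollary~\ref{Galois groups are Rado}. By virtue of the reciprocity congruences it satisfies, the data $(n, \alpha, \lambda_s)$ assembles into an element $x \in \textup{Gal}(K(S_0, n)/K)$ whose conjugacy class $\mathcal{C}(x)$ consists of the translates of its first factor by $(\Z/2^n\Z)^\ast$, all remaining coordinates being central; Chebotarev then supplies $v \in \Omega - S$ with $v > s$ for all $s \in S$ and $\pi(S_0, n)(\textup{Frob}_v) \in \mathcal{C}(x)$. Feeding this into Proposition~\ref{Computing labels at 2} yields $f(v) = n$, $g(v) = \alpha$ and $\lambda(s, v) = \lambda_s(2)$ directly, while $\lambda(v, s) = -\pi_v(\textup{Frob}_s)$ matches $\gamma \cdot \lambda_s(1)$ after scaling by the unit $\gamma_1 := \#\text{Cl}(K) \in \Z_2^\ast$ (a unit because $\text{Cl}(K)[2] = \{0\}$) together with the further units coming from converting Frobenius into Artin symbols and from the Kummer normalization of the $\alpha_v$. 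The main obstacle I expect is the bookkeeping at the prime $2$: one must handle the dichotomy $n = 1$ versus $n \geq 2$, track the interaction of $\chi_{-1}$, $\chi_\epsilon$ and $\chi_2$ in the local decomposition of $\pi_v$ from Proposition~\ref{a totally ramified extension at 2}, and --- most delicately --- confirm that the reciprocity constraints built into Definition~\ref{def:Rado at 2} match, neither weaker nor stronger, those defining the image of $\pi(S_0, n)$, so that Chebotarev genuinely applies to the class $\mathcal{C}(x)$.
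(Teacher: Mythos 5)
Your proposal follows essentially the same route as the paper: the paper's own proof is exactly the two-sentence sketch you flesh out, namely build the analogue of $K(S_0,m)$ at $\ell = 2$, observe that the presence of $\mu_2 \subseteq K$ makes the Galois group a slightly smaller subgroup of the semidirect product cut out precisely by the Hilbert-reciprocity constraints of a decorated reciprocity graph, and then run the Chebotarev argument of Corollary \ref{Galois groups are Rado} verbatim via Proposition \ref{Computing labels at 2}. Your additional bookkeeping (the dichotomy $n=1$ versus $n \geq 2$, the role of $\chi_\epsilon$, $\chi_{-1}$, $\chi_2$ from Proposition \ref{a totally ramified extension at 2}, and matching the image constraints with Definition \ref{def:Rado at 2}) is exactly the content the paper leaves implicit.
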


This implies the main result of this section. 

\begin{theorem}
\label{tMain2}
Let $p_1$, $p_2$ be two positive prime numbers congruent to $3$ modulo $8$. Then 
$$
\mathcal{G}_{\Q(\sqrt{-p_1})}^2(2) \simeq_{\textup{top.gr.}} \mathcal{G}_{\Q(\sqrt{-p_2})}^2(2). 
$$
More precisely, both are isomorphic, as profinite groups, to $\mathcal{G}(\Phi(\textup{Rado}))$. 
\end{theorem}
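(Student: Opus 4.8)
The plan is to deduce Theorem \ref{tMain2} from the abstract machinery assembled in Sections \ref{decorated graphs at 2}, \ref{Rado groups at 2} and \ref{section: Galois groups at 2}, in exactly the same way Theorem \ref{tMain1} was deduced in the odd case. First I would record that the hypothesis $p_i \equiv 3 \bmod 8$ is, by Gauss' genus theory, equivalent to saying that $2$ is inert in $K_i := \Q(\sqrt{-p_i})$ and that $\textup{Cl}(K_i)[2] = \{0\}$, so that the entire construction of Section \ref{section: Galois groups at 2} applies verbatim to both fields. This yields, for each $i \in \{1, 2\}$, a triple $(\Omega_i, f_i, g_i)$ and a pair $\mathcal{S}_{K_i} := (\mathcal{G}_{K_i}^2(2), \pi_\bullet)$, which is a $2$-nilpotent pro-$2$ group on $(\Omega_i, f_i, g_i)$ by Proposition \ref{Galois groups are ok at 2}.

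The next step is to feed $\mathcal{S}_{K_1}$ and $\mathcal{S}_{K_2}$ into the isomorphism criterion. Corollary \ref{Galois groups are reciprocity groups} shows that each $\mathcal{S}_{K_i}$ is in fact a \emph{reciprocity} $2$-nilpotent pro-$2$ group, and Corollary \ref{Galois groups are Rado at 2} shows that each is moreover \emph{weakly Rado}. At that point the hypotheses of Theorem \ref{weakly Rado groups are isomorphic at 2} are met by the pair $(\mathcal{S}_{K_1}, \mathcal{S}_{K_2})$, and that theorem yields immediately that $\mathcal{G}_{K_1}^2(2) \simeq_{\textup{top.gr.}} \mathcal{G}_{K_2}^2(2)$. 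For the sharper "more precisely" assertion I would unwind the proof of Theorem \ref{weakly Rado groups are isomorphic at 2}: just as in the odd case (Theorem \ref{weakly Rado groups are isomorphic}), it proceeds by choosing a scaling $\gamma_\bullet$ so that $\Phi_{\gamma_\bullet \cdot \mathcal{S}_{K_i}}$ becomes Rado (Proposition \ref{rado up to scaling if and only if weakly rado at 2}), reconstructing $\gamma_\bullet \cdot \mathcal{S}_{K_i}$ from its decorated reciprocity graph (Theorem \ref{Thm: the graph determines the group at 2}), and then transporting the graph isomorphism provided by Theorem \ref{Thm:two Rado graphs are isomorphic for 2} to a group isomorphism via Proposition \ref{isomorphic graphs, isomorphic groups at 2}; since the scaling action leaves the underlying topological group untouched, this pins down both $\mathcal{G}_{K_i}^2(2)$ as isomorphic to $\mathcal{G}(\Phi(\textup{Rado}))$ for any fixed Rado decorated reciprocity graph $\Phi(\textup{Rado})$, whose existence is guaranteed by Theorem \ref{prob=1 for 2}.

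Since all of the substantive work — the back-and-forth argument, the group-theoretic reconstruction, and the Chebotarev input underpinning weak Rado-ness and the reciprocity constraints — has already been carried out in the preceding sections, I do not anticipate any genuine obstacle at this final stage. The only point requiring mild care is the bookkeeping of the scaling action so that the conclusion is stated as an isomorphism with the \emph{single} fixed group $\mathcal{G}(\Phi(\textup{Rado}))$ rather than merely as an abstract isomorphism between $\mathcal{G}_{K_1}^2(2)$ and $\mathcal{G}_{K_2}^2(2)$; this is handled exactly as in the proof of Theorem \ref{weakly Rado groups are isomorphic}.
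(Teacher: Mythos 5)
Your proposal is correct and follows exactly the paper's route: the paper's proof of Theorem \ref{tMain2} is precisely the one-line deduction from Corollary \ref{Galois groups are Rado at 2} together with Theorem \ref{weakly Rado groups are isomorphic at 2}, with the ``more precisely'' clause coming from unwinding the scaling and reconstruction steps as you describe. No discrepancies to report.
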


\begin{proof}
This is a consequence of Corollary \ref{Galois groups are Rado at 2} and Theorem \ref{weakly Rado groups are isomorphic at 2}.   
\end{proof}

\section{Proof of main theorems}
\label{sProof}

\subsection{Proof of Theorem \ref{Thm: main2}}
Theorem \ref{Thm: main2} follows from Theorem \ref{tMain1} and Theorem \ref{tMain2}.

\subsection{Proof of Corollary \ref{Cor: main3} and Theorem \ref{Thm: main1}}
Corollary \ref{Cor: main3} follows from Theorem \ref{Thm: main2} upon observing that $\Q(\sqrt{-11})$, $\Q(\sqrt{-19})$, $\Q(\sqrt{-43})$, $\Q(\sqrt{-67})$ and $\Q(\sqrt{-163})$ have class number $1$ and are of the shape $\Q(\sqrt{-p})$ with $p \equiv 3 \bmod 8$ and recalling that, if $K$ is a field, the natural map
$$\mathcal{G}_K^{2} \to \prod_{\ell \ \text{prime}}\mathcal{G}_K^2(\ell)
$$ 
is an isomorphism. Theorem \ref{Thm: main1} follows from Corollary \ref{Cor: main3}.

\subsection{Proof of Theorem \ref{Thm: reconstruction} and Corollary \ref{cQ7}}
Part $(a)$ of Theorem \ref{Thm: reconstruction} is an immediate consequence of Proposition \ref{reconstructing cyclotomic abstract} and Corollary \ref{Galois groups are Rado}. Likewise, part $(b)$ follows at once from Proposition \ref{reconstructing cyclotomic abstract at 2} and Corollary \ref{Galois groups are Rado at 2}. Corollary \ref{cQ7} follows rather directly from the following theorem, as $\chi_{-1}$ does not lift to a $\Z/4\Z$ character for $\Q(\sqrt{-7})$, but it does for the other fields. 

\begin{theorem}
\label{chi-1 in general}
Let $K$ be a number field. Let $\chi(0): \mathcal{G}_K \to \mathbb{F}_2$ be a continuous homomorphism such that for all $\chi \in \textup{Hom}_{\textup{top.gr.}}(\mathcal{G}_K, \mathbb{F}_2)$, we have that $\chi_0 \cup \chi$ vanishes in $H^2(\mathcal{G}_K,\mathbb{F}_2)$ if and only if $\chi \in 2 \cdot \textup{Hom}_{\textup{top.gr.}}(\mathcal{G}_K, \Z/4\Z)$. Then $\chi(0) = \chi_{-1}$. 

In particular,
$$
\textup{ker}(\chi_{-1}: \mathcal{G}_K^2(2) \to \mathbb{F}_2)
$$
is invariant under $\textup{Aut}_{\textup{top.gr.}}(\mathcal{G}_K^2(2))$.    
\end{theorem}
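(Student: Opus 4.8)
\emph{Proof idea.} I would argue as follows. Since $\mu_2 \subseteq K$, Kummer theory identifies $H^1(\mathcal{G}_K,\mathbb{F}_2)$ with $K^{\times}/(K^{\times})^2$, under which the cup product $H^1 \times H^1 \to H^2(\mathcal{G}_K,\mathbb{F}_2)$ becomes the Hilbert symbol $(\,\cdot\,,\,\cdot\,)$. From the exact sequence $0 \to \mathbb{Z}/2\mathbb{Z} \to \mathbb{Z}/4\mathbb{Z} \to \mathbb{Z}/2\mathbb{Z} \to 0$ one gets that a character $\chi$ lies in $2\cdot\mathrm{Hom}_{\textup{top.gr.}}(\mathcal{G}_K,\mathbb{Z}/4\mathbb{Z})$ precisely when its Bockstein vanishes, and the Bockstein of $\chi$ equals $\chi\cup\chi$; moreover $\chi\cup\chi = \chi\cup\chi_{-1}$ by the identity $\{b,b\} = \{b,-1\}$ in Milnor $K$-theory. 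Writing $a \in K^{\times}/(K^{\times})^2$ for the class of $\chi_0$, the hypothesis of the theorem therefore says exactly that $a$ and the class of $-1$ have the same orthogonal complement for the global Hilbert symbol, i.e.\ $\{b : (a,b)=0 \text{ in } H^2(\mathcal{G}_K,\mathbb{F}_2)\} = \{b : (-1,b)=0\}$ as subgroups of $K^{\times}/(K^{\times})^2$.

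Next I would compute these orthogonal complements. By the Albert--Brauer--Hasse--Noether theorem, $(a,b) = 0$ globally if and only if $(a,b)_v = 0$ at every place $v$; for a nonsquare $a$ this says that $b$ is a local norm from $K(\sqrt{a})$ at every place, so by the Hasse norm theorem applied to the cyclic extension $K(\sqrt{a})/K$ it says $b \in N_{K(\sqrt{a})/K}(K(\sqrt{a})^{\times})$. Since this norm group already contains $(K^{\times})^2$, the complement $a^{\perp}$ is exactly the image of $N_{K(\sqrt{a})/K}(K(\sqrt{a})^{\times})$ in $K^{\times}/(K^{\times})^2$ (and $a^{\perp}$ is everything if $a$ is a square). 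Hence the hypothesis forces $N_{K(\sqrt{a})/K}(K(\sqrt{a})^{\times}) = N_{K(\sqrt{-1})/K}(K(\sqrt{-1})^{\times})$ inside $K^{\times}$, and since a finite abelian extension of $K$ is determined by its norm group, $K(\sqrt{a}) = K(\sqrt{-1})$, i.e.\ $a$ is the class of $-1$, i.e.\ $\chi_0 = \chi_{-1}$. The degenerate cases, where $-1$ or $\chi_0$ is a square, follow at once from the fact that an element of $K^{\times}$ which is a square in every completion is a square in $K$ (no nontrivial finite extension is split at all places).

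For the final assertion, first note that $\chi_{-1}$ factors through $\mathcal{G}_K^2(2)$, and that inflation identifies $\mathrm{Hom}_{\textup{top.gr.}}(\mathcal{G}_K^2(2),\mathbb{F}_2)$ with $\mathrm{Hom}_{\textup{top.gr.}}(\mathcal{G}_K,\mathbb{F}_2)$ and $\mathrm{Hom}_{\textup{top.gr.}}(\mathcal{G}_K^2(2),\mathbb{Z}/4\mathbb{Z})$ with $\mathrm{Hom}_{\textup{top.gr.}}(\mathcal{G}_K,\mathbb{Z}/4\mathbb{Z})$, both being computed on the common abelianization $\mathcal{G}_K^{\mathrm{ab}}(2)$. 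For $\chi_0,\chi$ in these groups the classes $\chi_0\cup\chi$ and $\chi\cup\chi=\beta(\chi)$ are inflated from $H^2(\mathcal{G}_K^{\mathrm{ab}}(2),\mathbb{F}_2)$ and are classified by central extensions of $\mathcal{G}_K^{\mathrm{ab}}(2)$ by $\mathbb{F}_2$; such an extension is a $2$-nilpotent pro-$2$ group, so any continuous homomorphism from $\mathcal{G}_K$ into it factors through $\mathcal{G}_K^2(2)$, and therefore the splitting problem defining the vanishing of each of these classes is the same over $\mathcal{G}_K$ and over $\mathcal{G}_K^2(2)$. Consequently the property characterising $\chi_{-1}$ in the first part of the theorem is verbatim a property of $\chi_{-1}$ regarded as a character of the topological group $\mathcal{G}_K^2(2)$, and by that first part it singles out $\chi_{-1}$ uniquely; since $\mathrm{Aut}_{\textup{top.gr.}}(\mathcal{G}_K^2(2))$ acts on all the groups involved compatibly with cup products and Bocksteins, every automorphism must send $\chi_{-1}$ to a character with the same property, hence to $\chi_{-1}$ itself, and so it preserves $\ker(\chi_{-1})$.

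The step I expect to demand the most care is the second paragraph: one has to be sure the condition really transcribes into an equality of full norm groups (this is where the Hasse norm theorem and Hilbert reciprocity enter), and the degenerate cases must be isolated. The descent in the third paragraph is by contrast routine, once one notices that the relevant obstruction classes are classified by $2$-nilpotent pro-$2$ groups and hence are already ``seen'' by $\mathcal{G}_K^2(2)$.
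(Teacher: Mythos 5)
Your first paragraph (Kummer theory, $\chi\in 2\cdot\mathrm{Hom}(\mathcal{G}_K,\Z/4\Z)$ iff the Bockstein $\chi\cup\chi$ vanishes, and $\chi\cup\chi=\chi\cup\chi_{-1}$) and your handling of the degenerate cases reproduce the paper's reductions, and your third paragraph on descending the characterization to $\mathcal{G}_K^2(2)$ and deducing $\mathrm{Aut}$-invariance is sound. The gap is in the second paragraph, at the words ``since a finite abelian extension of $K$ is determined by its norm group.'' The uniqueness theorem of class field theory determines an abelian extension by its norm group of \emph{idele classes} (equivalently by $K^\times\cdot N_{L/K}(\mathbb{A}_L^\times)$, or by an ideal-theoretic congruence subgroup), not by the group of \emph{element} norms $N_{L/K}(L^\times)=K^\times\cap N_{L/K}(\mathbb{A}_L^\times)$ that the Hasse norm theorem hands you. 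Equality of these element norm groups modulo squares is literally a restatement of the equality of orthogonal complements $a^\perp=(-1)^\perp$ you started from, so the citation is circular: deducing $K(\sqrt{a})=K(\sqrt{-1})$ from it is exactly the non-formal content of the theorem. Note also that the global pairing takes values in $H^2(\mathcal{G}_K,\mathbb{F}_2)$, which injects into $\bigoplus_v \mathbb{F}_2$ and is far from one-dimensional, so ``non-degeneracy'' of the symbol does not formally imply that $a\mapsto a^\perp$ is injective, unlike in the local case.

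This separation statement is true, but proving it is where the paper does its real work: its Claim constructs, for linearly independent quadratic characters $\chi(1),\chi(2)$, an auxiliary character $\chi(3)=\chi_{\gamma_{n_1}\gamma_{n_2}}$ with $\chi(1)\cup\chi(3)=0$ and $\chi(2)\cup\chi(3)\neq 0$, by Chebotarev (primes split for $\chi(1)$, inert for $\chi(2)$) combined with a pigeonhole argument in the class group producing two principal products of such primes whose generators agree locally modulo squares at the bad set $S$ (ramified places, places over $2$, archimedean places). The naive fix of taking a single auxiliary prime subject to congruence conditions at $S$ does not work in general, because those conditions can force the prime to split in the field of $\chi(2)$ as well (for instance when that field lies in the relevant ray class or multiquadratic field), which is precisely why the pigeonhole construction with a product $\gamma_{n_1}\gamma_{n_2}$ is needed. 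So to complete your argument you must either prove the element-norm-group separation directly (essentially reproving the paper's Claim) or invoke a result that genuinely concerns norm groups of global elements rather than idele-class norm groups.
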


\begin{proof}
Let $\chi \in \textup{Hom}_{\textup{top.gr.}}(\mathcal{G}_K, \mathbb{F}_2)$. Using the identity $\chi_{-1} \cup \chi = \chi \cup \chi$, we get
\begin{align*}
\chi_{-1} \cup \chi \text{ vanishes in } H^2(\mathcal{G}_K, \mathbb{F}_2) 
&\Longleftrightarrow \chi \cup \chi \text{ vanishes in } H^2(\mathcal{G}_K, \mathbb{F}_2) \\
&\Longleftrightarrow \chi \in 2 \cdot \textup{Hom}_{\textup{top.gr.}}(\mathcal{G}_K, \Z/4\Z).
\end{align*}
Let us first suppose that $\chi_{-1} = 0$. Then we have that  
$$
\textup{Hom}_{\textup{top.gr.}}(\mathcal{G}_K, \mathbb{F}_2) = 2 \cdot \textup{Hom}_{\textup{top.gr.}}(\mathcal{G}_K, \Z/4\Z).
$$
Furthermore, $\chi(0) \cup \chi$ vanishes in $H^2(\mathcal{G}_K,\mathbb{F}_2)$ for each $\chi \in \textup{Hom}_{\textup{top.gr.}}(\mathcal{G}_K, \mathbb{F}_2)$. Since the local Hilbert pairing is non-degenerate, it follows that $\chi(0)$ restricts to the trivial character at every place of $K$. This forces $\chi(0) = 0$ by local-to-global \cite[Corollary 9.1.10]{Neukirch}. Performing the same argument with roles reversed, we obtain that $\chi(0) = 0$ implies $\chi_{-1} = 0$. 

From now on we assume that $\chi_{-1}$ and $\chi(0)$ are both non-zero. Hence, by contradiction, we may assume that $\chi_{-1}$ and $\chi(0)$ are linearly independent. We obtain an immediate contradiction with the following claim. 

\emph{Claim:} If $\chi(1)$, $\chi(2)$ are linearly independent characters in $\textup{Hom}_{\textup{top.gr.}}(\mathcal{G}_K, \mathbb{F}_2)$, then there exists a character $\chi(3) \in \textup{Hom}_{\textup{top.gr.}}(\mathcal{G}_K, \mathbb{F}_2)$ such that $\chi(1) \cup \chi(3)$ vanishes and $\chi(2) \cup \chi(3)$ does not vanish. 

\emph{Proof of Claim:} Let $S$ be the finite set of places where $\chi(1)$ or $\chi(2)$ ramifies together with the places of $K$ above $2$ and $\infty$. By the Chebotarev density theorem, we have an infinite set $T$ of prime ideals that are not in $S$, such that $\chi(1)(\text{Frob}_{\mathfrak{p}}) = 0$ and $\chi(2)(\text{Frob}_{\mathfrak{p}}) = 1$. Repeatedly applying the pigeonhole principle, we find an infinite sequence $(S_n)_{n \geq 1}$ of disjoint finite non-empty sets inside $T$, with the property that
$$
\prod_{\mathfrak{p} \in S_n} \mathfrak{p} = 0 
$$
in $\text{Cl}(K)$ for each $n \geq 1$. Let us fix, for each $n \geq 1$, a generator $\gamma_n$ of $\prod_{\mathfrak{p} \in S_n} \mathfrak{p}$. Yet another application of the pigeonhole principle yields two distinct positive integers $n_1$, $n_2$ such that locally at the primes in $S$ the elements $\gamma_{n_1}, \gamma_{n_2}$ land in the same class modulo squares. We claim that $\chi(3) := \chi_{\gamma_{n_1}\gamma_{n_2}}$ satisfies the desired conclusion. Since $\chi(2) \cup \chi(3)$ is not locally trivial at the places in $S_{n_1}$ and $S_{n_1}$ is non-empty, $\chi(2) \cup \chi(3)$ is certainly not globally trivial.

It remains to show that $\chi(1) \cup \chi(3)$ is trivial, to which end it suffices to show that $\chi(1) \cup \chi(3)$ is locally trivial everywhere by \cite[Corollary 9.1.10]{Neukirch}. By construction, $\chi(3)$ is the trivial character locally at all places in $S$ and hence $\chi(1) \cup \chi(3)$ vanishes locally at the places in $S$. By definition, places outside of $S$ are finite and odd. Locally at $S_{n_1}$ and $S_{n_2}$, the cup product $\chi(1) \cup \chi(3)$ is trivial. At the remaining places, both $\chi(1)$ and $\chi(3)$ are unramified, which implies that the cup product $\chi(1) \cup \chi(3)$ is locally trivial there as well.
\end{proof}


\begin{thebibliography}{49}
\bibitem{Ackermann}
W. Ackermann. 
Die Widerspruchsfreiheit der allgemeinen Mengenlehre,
\textit{Math. Ann.}, 114(1):305-315, 1937.

\bibitem{St}
A. Angelakis and P. Stevenhagen. 
Imaginary quadratic fields with isomorphic abelian Galois groups, 
\textit{ANTS X?Proceedings of the Tenth Algorithmic Number Theory Symposium}, 21-39, Open Book Ser., 1, \textit{Math. Sci. Publ., Berkeley, CA}, 2013. 

\bibitem{Berrevoets}
O. Berrevoets. 
Master thesis, Leiden University.

\bibitem{Betts-Stix}
A. Betts and J. Stix. 
\newblock Galois sections and $p$-adic period mappings, 
\newblock \textit{arXiv preprint:} 2204.13674.

\bibitem{Bo1}
F.A. Bogomolov. 
On two conjectures in birational algebraic geometry, 
\textit{Algebraic geometry and analytic geometry (Tokyo, 1990)}, 26-52, ICM-90 Satell. Conf. Proc., \textit{Springer, Tokyo}, 1991.

\bibitem{Bo2}
F.A. Bogomolov and Y. Tschinkel. 
Commuting elements of Galois groups of function fields,  
\textit{Motives, polylogarithms and Hodge theory, Part I (Irvine, CA, 1998)}, 75--120, Int. Press Lect. Ser., 3, I, \textit{Int. Press, Somerville, MA}, 2002.

\bibitem{Bo3}
F.A. Bogomolov and Y. Tschinkel. 
Reconstruction of function fields, 
\textit{Geom. Funct. Anal.}, 18(2):400-462, 2008. 

\bibitem{Bo4}
F.A. Bogomolov, M. Rovinski and Y. Tschinkel. 
Homomorphisms of multiplicative groups of fields preserving algebraic dependence, 
\textit{Eur. J. of Math.}, 5(3):656-685, 2019.

\bibitem{Co1}
G. Cornelissen, X. Li, M. Marcolli and H. Smit. 
Reconstructing global fields from dynamics in the abelianized Galois group, 
\textit{Selecta Math. (N.S.)} 25(2):Paper No. 24, 2019.

\bibitem{Co2}
G. Cornelissen, B. de Smit, X. Li, M. Marcolli and H. Smit. 
Characterization of global fields by Dirichlet $L$-series,
\textit{Res. Number Theory 5} 5(1):Paper No. 7, 2019.

\bibitem{de Smit}
B. de Smit and P. Solomatin. 
On Abelianized Absolute Galois Group of Global Function Fields, 
\textit{arXiv preprint:} 1703.05729.

\bibitem{de Smit 2}
B. de Smit and P. Solomatin. 
A remark On Abelianized Absolute Galois Group of Imaginary Quadratic Fields,
\textit{arXiv preprint:} 1703.07241. 

\bibitem{Erdos--Renyi}
P. Erd\H{o}s and A. R\'enyi. 
Asymmetric graphs, 
\textit{Acta Math. Acad. Sci. Hungar.} 14:295-315, 1963. 

\bibitem{Grothendieck}
A. Grothendieck. 
\emph{Letter to Faltings}, 
June 1983.

\bibitem{Hoshi}
Y. Hoshi. 
Existence of nongeometric pro-$p$ Galois sections of hyperbolic curves, 
\textit{Publ. Res. Inst. Math. Sci.} 46(4):829-848, 2010. 

\bibitem{Ivanov}
A. Ivanov. 
On a generalization of the Neukirch-Uchida theorem,
\textit{Mosc. Math. J.} 17(3):371-383, 2017.

\bibitem{Jarden}
M. Jarden and J. Ritter.
On the characterization of local fields by their absolute Galois groups,
\textit{J. Number Theory} 11(1):1-13, 1979. 

\bibitem{Kis1}
H. Kisilevsky and J. Sonn. 
On the minimal ramification problem for $\ell$-groups, 
\textit{Compos. Math.} 146(3):599?606, 2010.

\bibitem{Kis2}
H. Kisilevsky, D. Neftin and J. Sonn. 
On the minimal ramification problem for semiabelian groups, 
Algebra Number Theory 4(8):1077-1090, 2010. 

\bibitem{Koenigsman}
J. Koenigsmann.
From $p$-rigid Elements to Valuations (with a Galois-characterization of $p$-adic Fields). With an appendix by Florian Pop, 
\textit{J. Reine Angew. Math.} 465:165-182, 1995.

\bibitem{Koen2}
J. Koenigsmann. 
On the `section conjecture' in anabelian geometry, 
\textit{J. Reine Angew. Math.} 588:221?235, 2005.

\bibitem{KP}
P. Koymans and C. Pagano. 
Effective convergence of coranks of random R\'edei matrices, 
\textit{arXiv preprint:} 2005.12899. 

\bibitem{Kubota}
T. Kubota.
Galois group of the maximal abelian extension over an algebraic number field,
\textit{Nagoya Math. J.} 12:177-189, 1957.

\bibitem{MO}
K. Miyake and H. Opolka.
On nilpotent extensions of algebraic number fields I,
\textit{Nagoya Math. J.} 125:1-14, 1992.

\bibitem{Moch:local}
S. Mochizuki. 
A Version of the Grothendieck Conjecture for $p$-adic Local Fields, 
\textit{Internat. J. Math.} 8(4):499-506, 1997.

\bibitem{Moch: fin.generated}
S. Mochizuki. 
The local pro-$p$ anabelian geometry of curves, 
\textit{Invent. Math.} 138(2):319-423, 1999.

\bibitem{Neukirch}
J. Neukirch, A. Schmidt and K. Wingberg.
Cohomology of Number Fields.
Second edition. Grundlehren der Mathematischen Wissenschaften [Fundamental Principles of Mathematical Sciences], 323. \textit{Springer-Verlag, Berlin,} 2008.

\bibitem{Onabe}
M. Onabe. 
On the isomorphisms of the Galois groups of the maximal Abelian extensions
of imaginary quadratic fields, 
\textit{Natur. Sci. Rep. Ochanomizu Univ.} 27(2):155?161, 1976.


\bibitem{Perlis}
R. Perlis. 
On the equation $\zeta \sb{K}(s) = \zeta \sb{K'}(s)$, 
\textit{J. Number Theory} 9(3):342-360, 1977.

\bibitem{Plans}
B. Plans. 
On the minimal number of ramified primes in some solvable extensions of $\Q$, 
Pacific J. Math. 215(2):381?391, 2004.

\bibitem{Poizat}
B. Poizat. 
A course in model theory. An introduction to contemporary mathematical logic. 
Translated from the French by Moses Klein and revised by the author. 
Universitext. \textit{Springer-Verlag}, New York, 2000.

\bibitem{Pop1}
F. Pop. 
On Grothendieck's conjecture of birational anabelian geometry, 
\textit{Ann. of Math. (2)} 138(1):145?182, 1994.

\bibitem{Pop2}
F. Pop. 
On Grothendieck's conjecture of birational anabelian geometry II, 
Heidelberg?Mannheim Preprint series Arithmetik II, No. 16, Heidelberg 1995.

\bibitem{Pop3}
F. Pop. 
Pro-$\ell$ birational anabelian geometry over alg. closed fields I,
\textit{arXiv preprint:} 0307076. 

\bibitem{Pop4}
F. Pop. 
On the birational anabelian program initiated by Bogomolov I, 
\textit{Invent. Math.} 187(3):511-533, 2012.

\bibitem{Pop5}
F. Pop.
Lectures on anabelian phenomena in geometry and arithmetic. 
\textit{Non-abelian fundamental groups and Iwasawa theory}, 1-55, London Math. Soc. Lecture Note Ser., 393, \textit{Cambridge Univ. Press, Cambridge}, 2012.

\bibitem{Pop6}
F. Pop.
The $\Z/p$ metabelian birational $p$-adic section conjecture for varieties,
\textit{Compos. Math.} 153(7):1433-1445, 2017. 

\bibitem{PopStix}
F. Pop and J. Stix. 
Arithmetic in the fundamental group of a $p$-adic curve. On the $p$-adic section conjecture for curves, 
\textit{J. Reine Angew. Math} 725:1-40, 2017.

\bibitem{Rado}
R. Rado. 
Universal graphs and universal functions, 
Acta Arith. 9:331-340, 1964.

\bibitem{Saidi-Tama}
M. Saidi and A. Tamagawa. 
The $m$-step solvable anabelian geometry of number fields, 
\textit{J. Reine Angew. Math.} 789:153-186, 2022.

\bibitem{Sha}
I.R. Shafarevich. 
Extensions with given points of ramification (Russian). 
\emph{Inst. Hautes \`tudes Sci. Publ. Math.} 18, 71-95, 1963. 
English translation in: Collected mathematical papers. Springer-Verlag, Berlin, 1989.

\bibitem{Shimizu}
R. Shimizu.
The Neukirch?Uchida theorem with restricted ramification, 
\textit{J. Reine Angew. Math.} 785:187-217, 2022.

\bibitem{Shusterman}
M. Shusterman. 
The Tamely Ramified Geometric Quantitative Minimal Ramification Problem, 
\textit{arXiv preprint:} 2211.16983.

\bibitem{Stix3}
J. Stix. 
On the period-index problem in light of the section conjecture,
\textit{Amer. J. Math.} 132(1):157?180, 2010.

\bibitem{Stix2}
J. Stix. 
The Brauer-Manin obstruction for sections of the fundamental group,
\textit{J. Pure Appl. Algebra} 215(6):1371-1397, 2011.

\bibitem{Stix}
J. Stix. 
On the birational section conjecture with local conditions,
\textit{Invent. Math.} 199(1):239-265, 2015.

\bibitem{Topaz}
A. Topaz. 
Commuting-Liftable Subgroups of Galois Groups.
PhD thesis, University of Pennsylvania. 

\bibitem{Uchida}
K. Uchida. 
Isomorphisms of Galois groups of algebraic function fields, 
\textit{Ann. of Math. (2)} 106(3):589-598, 1977.

\bibitem{Ware}
R. Ware. 
Valuation Rings and rigid Elements in Fields, 
\textit{Can. J. Math.} 33(6):1338-1335, 1981.

\bibitem{Wiles}
A. Wiles. 
On class groups of imaginary quadratic fields, 
\textit{J. Lond. Math. Soc. (2)} 92(2):411-426, 2015.
\end{thebibliography}
\end{document}